\documentclass[a4paper,twoside]{article}  

\usepackage{amssymb,amsmath,amsthm,dsfont,amsfonts,color,latexsym}

\usepackage{hyperref}

\usepackage{mathrsfs} 

\definecolor{blue}{rgb}{0.00,0.00,1.00}
\definecolor{red}{rgb}{1.00,0.00,0.00}

\allowdisplaybreaks[4]
\renewcommand{\baselinestretch}{1.2}
\def\bq{\begin{equation}}
\def\eq{\end{equation}}
\def\ba{\begin{array}{ccc}}
\def\bal{\begin{array}{lll}}
\def\ea{\end{array}}
\def\bsp{\begin{split}}
\def\esp{\end{split}}

\def\({\left(}\def\){\right)}
\def\[{\left[}\def\]{\right]}

    \def \R   {\mathbb{R}}
    
    \def\C    {\mathbb{C}}
    
    \def\i    {\mathrm{i}}

    \def\S    {\mathbb{S}}
    \def\eps  {\epsilon}

    \def \div  {{\rm div}}
    \def \divx  {{\rm div}_x}

    \def\Tdx   {\nabla_x}

       \def\bq{\begin{equation}}
       \def\eq{\end{equation}}
       \def\be{\begin{equation}}
       \def\ee{\end{equation}}
       \def\bma#1\ema{{\allowdisplaybreaks\begin{align}#1\end{align}}}
       \def\bmas#1\emas{{\allowdisplaybreaks\begin{align*}#1\end{align*}}}
       \def\bln#1\eln{{\allowdisplaybreaks\begin{aligned}#1\end{aligned}}}
       \def\nnm{\notag}
       \def\bgr#1\egr{\allowdisplaybreaks\begin{gather}#1\end{gather}}
       \def\bgrs#1\egrs{\allowdisplaybreaks\begin{gather*}#1\end{gather*}}

       \theoremstyle{plain}
       \newtheorem{lem}{\bf Lemma}[section]
       \newtheorem{thm}[lem]{\textbf{Theorem}}

       \newtheorem{remark}[lem]{\bf Remark}

\begin{document}


\title{ Incompressible Navier-Stokes Limit of the Relativistic Boltzmann Equation with the Optimal Convergence Rate}

\author{ Yanchao Li$^{1,2}$,\, Mingying Zhong$^{1}$ \\[2mm]
 \emph
    {\small\it $^1$School of  Mathematics,
    Guangxi University,  China.}\\
    {\small\it  \& Center for Applied Mathematical of Guangxi (Guangxi University), Guangxi University,  China.}\\
    {\small\it E-mail:\ zhongmingying@gxu.edu.cn}\\
     {\small\it $^2$Post-doctoral Research Station of the First-level Discipline in Mathematics, Guangxi University,  China.}\\
    {\small\it E-mail:\ yanchaoli@gxu.edu.cn}}
\date{ }

\pagestyle{myheadings}
\markboth{Relativistic Boltzmann Equation}%
{Y.-C. Li, M.-Y. Zhong}

 \maketitle

 \thispagestyle{empty}

\begin{abstract}\noindent{
We study the diffusion limit of the classical solution to the relativistic Boltzmann equation with the
initial data near a global relativistic Maxwellian. By using spectral analysis,  we establish the convergence of the
classical solution to the relativistic Boltzmann equation to that of the incompressible Navier-Stokes system,
and give for the first time the optimal convergence rate and a precise estimate of the initial layer. Moreover,
through a refined analysis of the linear collision operator $L$, combined with Duhamel's principle, we obtain
the existence and uniqueness of a global strong solution to the relativistic Boltzmann equation, as well as a
convergence rate independent of the speed of light $\mathbf{c}.$
}

\medskip
 {\bf Key words:
 }relativistic Boltzmann equation, spectral analysis, diffusion limit, convergence rate, initial layer.

\medskip
 {\bf 2010 Mathematics Subject Classification}. 76P05, 82C40, 82D05.
\end{abstract}

\tableofcontents


\section{Introduction}
\label{sect1}
\setcounter{equation}{0}

Derived by Lichnerowicz and Marrot \cite{Lichnerowicz-1} in 1940, the relativistic Boltzmann equation is one of the important equations in relativistic kinetic theory. It describes the evolution of the relevant particles in a relativistic gas. We consider the rescaled relativistic Boltzmann equation \cite{Wang-2}:
\be
\partial_tF_{\epsilon}+\frac{1}{\epsilon}\tilde{v}\cdot\nabla_xF_{\epsilon}=\frac{1}{\epsilon^2}Q\(F_{\epsilon},F_{\epsilon}\),\label{rbe1}
\ee
where $\epsilon>0$ is the Knudsen number related to the mean free path, $F_{\epsilon}=F_{\epsilon}(t,x,v)$ is the distribution function with $(t,x,v)\in\mathbb{R}^{+}\times\mathbb{R}^3\times\mathbb{R}^3$. The relativistic velocity $\tilde{v}$ is defined by
$$
\tilde{v}=\frac{\mathbf{c}v}{v_0},\quad v_0=\sqrt{\mathbf{c}^2+|v|^2},
$$
where $\mathbf{c}\ge 1$ is the speed of light. Throughout this paper, we assume $\epsilon\in(0,1)$. The collision operator $Q(f,h)$ is given by
\bq\label{rBQ}
Q(f,h)=\frac{1}{2}\int_{\mathbb{R}^3}\int_{\mathbb{S}^2}v_M\sigma(g,\theta)\big(f(v')h(u')+f(u')h(v')-f(v)h(u)-f(u)h(v)\big)dud\omega,
\eq
where $v_M=v_M(v,u)$ is M{\o}ller velocity given by
\be\label{rbdl-vmgs}
v_M=\frac{\mathbf{c}g\sqrt{s}}{4u_0v_0},\quad g^2=s-4\mathbf{c}^2,\quad s=2(u_0v_0-u\cdot v+\mathbf{c}^2),
\ee
and $u,v$, $u',v'$  satisfy the conservation law of momentum and energy
$$
 u+v=u'+v',\quad u_0+v_0=u'_0+v'_0.
$$
The function $\sigma(g,\theta)$ is called the differential cross section or the scattering kernel, and it measures the interactions between particles.
The scattering angle $\theta$ is defined by
$$
\cos\theta=\frac{(v^{\mu}-u^{\mu})(v'_{\mu}-u'_{\mu})}{g^2}
$$
with $v^{\mu}=(v_0,v)$ and $v'_{\mu}=(-v'_0,v')$. In the following, we consider the hard ball model:
$$\sigma(g,\theta)=1.$$

There has been significant progress made on the well-posedness and long time behavior of solutions to the relativistic Boltzmann for fixed $\epsilon$. The background of relativistic Boltzmann equation is mentioned in \cite{Cercignani-1}.  The existence and uniqueness of the weak solution to the relativistic Boltzmann equation has been proved in \cite{Dudynski-1,Dudynski-2}. The global solution of relativistic Boltzmann equation near a relativistic Maxwellian is studied in \cite{Glassey-2,Glassey-3} for hard potentials, and in \cite{Duan-1,STRIN-1} for soft potentials. The global existence of solutions near vacuum was investigated in \cite{Glassey-1}. It is shown in \cite{Yang-1} that the global solutions to the relativistic Boltzmann and Landau equations tend to the equilibriums at $(1+t)^{-\frac{3}{4}}$ in $L^2$-norm by using compensating function and  energy method.  The spectrum of the relativistic Boltzmann equation with hard sphere and hard potential was constructed in \cite{ZHAO-1}. The pointwise behavior of the Green's function of the relativistic Boltzmann equation for hard sphere was verified in \cite{Li-2}. The Dirichlet boundary value problem of steady-state relativistic Boltzmann equation in half-line with hard potential model was studied in \cite{Wang-3}. The global well-posedness of the relativistic Boltzmann equation with hard potentials and diffuse reflection boundary condition in bounded domains was studied in \cite{Wang-4}.


The fluid limit to the Boltzmann equation is a classical problem with pioneer work by Bardos-Golse-Levermore in \cite{Bardos-1}. Later, \cite{Golse-1} made substantial progress concerning the convergence of renormalized solutions to Leray solutions of the Navier-Stokes system. Within the perturbative setting, spectral analysis provides a powerful tool for investigating fluid dynamical behavior. As an early example, Ellis and Pinsky \cite{Ellis-1} examined the linear compressible Euler limit of the linear Boltzmann equation, obtaining the convergence rate away from the initial layer. Such an initial layer emerges in fluid limits because of the mismatch in the initial data, especially owing to rapid oscillations of the system's eigenmodes. For the Boltzmann equation, the incompressible Navier-Stokes limit together with an estimate on the initial layer was first established by Bardos and Ukai \cite{Bardos-3} for the linear case. Their analysis in \cite{Bardos-3} relies on a semigroup estimate involving highly oscillatory eigenmodes, derived in \cite{Ukai-1}, which concerns the incompressible limit of the compressible Euler equations.

The fluid dynamical limit of the solution to the relativistic Boltzmann equation near relativistic Maxwellian was also studied in \cite{Wang-2,Yang-3}. In \cite{Wang-2}, the author justify rigorously the validity of the two independent limits from the special relativistic Boltzmann equation to the classical Euler equations without assuming any dependence between the Knudsen number $\epsilon$ and the light speed $\mathbf{c}$. In \cite{Yang-3}, based on the spectrum analysis, the author established a global convergence result of the solution to the relativistic Boltzmann equation towards a solution to the relativistic Euler equations. In contrast to the works on \cite{Bardos-1,Bardos-2,Bardos-3,Guo-4,Masi-1,Nishida-1,Wang-2,Yang-3}, the convergence and the optimal convergence rate of  the classical solution to the relativistic Boltzmann equation towards Navier--Stokes type limit and the estimation of the initial layer  have not been given.

In this paper, we study the diffusion limit of the strong solution to the rescaled relativistic Boltzmann equation \eqref{rbe1} with initial data near the normalized Maxwellian $M$, where (cf. \cite{STRIN-1,Wang-2})
\bq\label{mmv-dif}
M=p_0e^{-\frac{\mathbf{c}v_0}{k_0T}},\quad p_0=\frac{1}{4\pi\mathbf{c}k_0TK_2(\frac{\mathbf{c}^2}{k_0T})},
\eq
where $T$ is the temperature, $k_0$ is Boltzmann's constant and $K_j(\cdot)$ is the Bessel function
\bq\label{rbdl-kjzdif}
K_j(z)=\(\frac{z}{2}\)^j\frac{\Gamma(\frac{1}{2})}{\Gamma(j+\frac{1}{2})}\int_1^{\infty}e^{-zt}(t^2-1)^{j-\frac{1}{2}}dt,\quad j\geq0,~ z>0.
\eq

Hence, we denote the perturbation of $F_{\epsilon}$ as
$$
F_{\epsilon}=M+\epsilon\sqrt{M}f_{\epsilon}.
$$
Then the Cauchy problem of the relativistic Boltzmann equation \eqref{rbe1} for $f_{\epsilon}$ is
\bma
&\partial_tf_{\epsilon}+\frac{1}{\epsilon}\tilde{v}\cdot\nabla_xf_{\epsilon}-\frac{1}{\epsilon^2}Lf_{\epsilon}=\frac{1}{\epsilon}\Gamma(f_{\epsilon},f_{\epsilon}),\label{rbe2}\\
&f_{\epsilon}(0,x,v)=f_{0}(x,v),\label{Pm3}
\ema
where the linearized collision operator $Lf_{\epsilon}$ and nonlinear term $\Gamma\(f_{\epsilon},f_{\epsilon}\)$ are defined by
\bq\label{rbepz-1}
\left\{\bln
&Lf_{\epsilon}=\frac{1}{\sqrt{M}}\big[Q(M,\sqrt{M}f_{\epsilon})+Q(\sqrt{M}f_{\epsilon},M)\big], \\
&\Gamma\(f_{\epsilon},f_{\epsilon}\)=\frac{1}{\sqrt{M}}Q(\sqrt{M}f_{\epsilon},\sqrt{M}f_{\epsilon}).
\eln\right.
\eq
The linearized collision operator $L$ can be written as
\bq\label{rbenu-k}
\left\{\bln
&(Lh)(v)=(Kh)(v)-\nu(v)h(v),\\
&\nu(v)=\int_{\R^3}\int_{\S^2}v_MM(u)d\omega du,\\
&Kh=\int_{\R^3}\int_{\S^2}v_M\big(h(v') M^{\frac12}(u')+h(u')M^{\frac12}(v')-h(u)M^{\frac12}(v)\big) M^{\frac12}(u)d\omega du\\
&\quad~ =\int_{\R^3}k(v,u)h(u)du,
\eln\right.
\eq
where $\nu(v)$, the collision frequency, is a real function, and $K$ is a self-adjoint compact operator on $L^2(\R^3_v)$ with a real symmetric integral kernel $k(v,u)$. In addition, $\nu(v)$ satisfies
\bq\label{rbdl-nu1}
\nu_0w(v)\leq\nu(v)\leq\nu_1w(v),
\eq
where $\nu_0, \nu_1$ are positive constants depended on $k_0$ and $T$, and
\bmas
w(v)=
\left\{\bal
1+|v|,\quad&|v|\leq\mathbf{c},\\
\mathbf{c},\quad&|v|\geq\mathbf{c}.
\ea\right.
\emas

The null space of the operator $L$, denoted by $N_0$, is a subspace spanned by the orthogonal basis $\{\chi_j,~j=0,1,2,3,4\}$ with
\bq\label{chii-1}
\chi_0=\sqrt{M},\quad \chi_j=p_1v_j\sqrt{M} ~~ (j=1,2,3),\quad \chi_4=p_2(v_0-p_3)\sqrt{M},
\eq
where (cf. \cite{Cao-1})
\bq\label{rbdl-p-123}
\left\{\bln
p_1& =\bigg(k_0T\frac{K_3(\frac{\mathbf{c}^2}{k_0T})}{K_2(\frac{\mathbf{c}^2}{k_0T})}\bigg)^{-\frac{1}{2}}, \quad
p_3 =\mathbf{c}\frac{K_3(\frac{\mathbf{c}^2}{k_0T})}{K_2(\frac{\mathbf{c}^2}{k_0T})}-\frac{k_0T}{\mathbf{c}},\\
p_2&  = \bigg( \mathbf{c}^2\bigg(1-\frac{K_3(\frac{\mathbf{c}^2}{k_0T})^2}{K_2(\frac{\mathbf{c}^2}{k_0T})^2} \bigg) + 5k_0T \frac{K_3(\frac{\mathbf{c}^2}{k_0T})}{K_2(\frac{\mathbf{c}^2}{k_0T})}-\frac{(k_0T)^2}{\mathbf{c}^2}\bigg)^{-\frac{1}{2}}.
\eln\right.
\eq
It holds that (cf. Lemma \ref{p123})
$$p_1\sim\frac{1}{\sqrt{k_0T}}, \quad p_2\sim \frac{\mathbf{c}}{k_0T}, \quad  p_3\sim \mathbf{c}.$$
Here $q_1\sim q_2$ means that there exist positive constants $C_1,C_2$ independent of $\mathbf{c}$ such that $C_1q_1\le q_2\le C_2q_1.$

Let $L^2(\R^3)$ be a Hilbert space of complex-value functions $f(v)$ on $\R^3$ with the inner product and the norm
$$
(f_1,f_2)=\int_{\R^3}f_1(v)\overline{f_2(v)}dv,\quad \|f\|=\(\int_{\R^3}|f(v)|^2dv\)^{\frac{1}{2}}.
$$
Introduce the macro-micro decomposition as follows
\bq\label{mami}
f=P_0f+P_1f,\quad  P_0f=\sum^4_{i=0}(f,\chi_i)\chi_i.
\eq
The linearized operator $L$ is non-positive and moreover, $L$ is locally coercive in the sense that there is a constant $\mu>0$ such that (cf. \cite{Glassey-1,Glassey-2,STRIN-1})
\bq
(Lf,f)\leq-\mu\|P_1f\|^2,\quad f\in D(L),\label{LF-F}
\eq
where $D(L)$ is the domain of $L$ given by
$$
D(L)=\{f\in L^2(\R^3)\,|\,\nu(v)f\in L^2(\R^3)\}.
$$

This paper aims to prove the convergence and establish the convergence rate of strong solution $f_{\epsilon}$ towards $u$, where $u=n\chi_0+m\cdot \chi'+q\chi_4$  with $\chi'=(\chi_1,\chi_2,\chi_3)$, and $(n,m,q)(t,x)$ is the solution of the following incompressible Navier--Stokes (iNS) system:
\be
\left\{\bln
&\divx m=0,\quad bn+aq=0,\\
&\partial_tm-\kappa_1\Delta_xm+\nabla_xp=\eta_1\divx\(m\otimes m\),\\
&\partial_tq-\kappa_0\Delta_xq=\eta_0\divx\(qm\),
\eln\right.\label{rNS-1}
\ee
where $p$ is the pressure, and the initial data $(n,m,q)(0)$ satisfies
\bq\label{rbdl-innmq}
\left\{\bln
&m(0)=(f_0,\chi')-\Delta_x^{-1}\nabla_x\div_x(f_0,\chi'),\\
&n(0)=-\frac ab q(0)=-\frac{a}{\sqrt{a^2+b^2}}(f_0,E_0).
\eln\right.
\eq
Here, the parameters $a,b>0$ (See Lemma \ref{rbdl-ap-lp1}) and $E_0\in N_0$ are defined by
\bma
&a=(\tilde{v}_1\chi_1,\chi_4)=\frac{p_2}{3p_1}\bigg( \mathbf{c}+\frac{2k_0T}{\mathbf{c}}\frac{K_2(\frac{\mathbf{c}^2}{k_0T})}{K_3(\frac{\mathbf{c}^2}{k_0T})}\bigg) ,\label{rbdl-a}\\
&b=(\tilde{v}_1\chi_0,\chi_1)=\frac{2k_0Tp_1}{3} ,\label{rbdl-b}\\
& E_0=-\frac{a}{\sqrt{a^2+b^2}}\chi_0+ \frac{b}{\sqrt{a^2+b^2}}\chi_4,\label{rbdl-e0}
\ema
and the heat coefficient  $\kappa_0 >0$ and viscosity coefficient $ \kappa_1>0$ are given by
\bmas
&\kappa_0=-(L^{-1}P_1(\tilde{v}_1E_0),\tilde{v}_1E_0),\quad \kappa_1=-(L^{-1}P_1(\tilde{v}_1\chi_2),\tilde{v}_1\chi_2),\\
 &\eta_0= -p_1(P_1(v_1E_0),\tilde{v}_1E_0),\quad \eta_1=-p_1(P_1(v_1\chi_2),\tilde{v}_1\chi_2).
\emas
Note that (cf. Lemma \ref{rbdl-ap-lp1})
$$ a\sim  \frac{\mathbf{c}^2}{\sqrt{k_0T}},\quad b\sim \sqrt{k_0T} , \quad \kappa_j\sim1,\quad \eta_j\sim -\sqrt{k_0T},\quad j=0,1.$$

In general, the convergence is not uniform near $t=0$ because of the appearance of an initial layer. However, we can show that if the initial data $f_0$ satisfies
\bq\label{rbf0}
\left\{\bln
&f_0(x,v)=n_0(x)\chi_0+m_0(x)\cdot\chi'+q_0(x)\chi_4,\\
&\divx m_0=0,\quad bn_0+aq_0=0,
\eln\right.
\eq
then the uniform convergence is up to $t=0$.


\noindent\textbf{Notations:}\ \ Before state the main results in this paper, we list some notations. Throughout this paper, $C$ denote a generic positive constant. For any $\alpha=(\alpha_1,\alpha_2,\alpha_3)\in\mathbb{N}^3$, we denote
$$
\partial^{\alpha}_{x}=\partial^{\alpha_1}_{x_1}\partial^{\alpha_2}_{x_2}\partial^{\alpha_3}_{x_3}.
$$
The Fourier transform of $f=f(x,v)$ is defined by
\bq
\hat{f}(\xi,v)=\mathcal{F}f(x,v)=\frac{1}{(2\pi)^{\frac{3}{2}}}\int_{\mathbb{R}^3}e^{-\mathrm{i}x\cdot\xi}f(x,v)dx,
\eq
where and throughout this paper we denote $\mathrm{i}=\sqrt{-1}$.

Set the Sobolev space $H^N=\{f\in L^2\(\mathbb{R}^3_x\times\R^3_v\)|\|f\|_{H^N}<\infty\}$ equipped with the norms
$$
\|f\|_{H^N}=\sum_{|\alpha|\le N}\|\partial^{\alpha}_{x}f\|_{L^2(\mathbb{R}^3_x\times\R^3_v)}.
$$
Denote the Banach space $L^{\infty}=L^{\infty}_x(L^2_v)$ equipped with the norm
$$
L^{\infty}=L^{\infty} (\R^3_x,L^2(\R^3_v) ),\quad \|f\|_{L^{\infty}}=\sup_{x\in\R^3}\(\int_{\R^3}|f(x,v)|^2dv\)^{\frac{1}{2}}.
$$
For $q\geq1$ and $k\ge 1$, define
\bmas
L^{2,q}=L^2(\mathbb{R}^3_v,L^q(\mathbb{R}^3_x)),&\quad \|f\|_{L^{2,q}}=\bigg(\int_{\mathbb{R}^3}\(\int_{\mathbb{R}^3}|f(x,v)|^qdx\)^{\frac{2}{q}}dv\bigg)^{\frac{1}{2}},\\
W^{k,q}=L^2(\mathbb{R}^3_v,W^{k,q}(\mathbb{R}^3_x)),&\quad \|f\|_{W^{k,q}}=\bigg(\sum_{|\alpha|\leq k}\int_{\R^3}\(\int_{\R^3}\big|\partial_x^{\alpha}f(x,v)\big|^qdx\)^{\frac{2}{q}}dv\bigg)^{\frac{1}{2}}.
\emas
For simplicity, we denote $L^2=L^{2,2}$.

For $k\geq0$, define
$$
\|f\|_{L^{\infty}_{v,k}(H^N_x)}=\sup_{v\in\R^3}(1+|v|)^{k}\|f(\cdot,v)\|_{H^N_x}.
$$

Now we are ready to state main results in this paper.

\begin{thm}\label{thm-1}
(1) Let $N\geq2$. For any $\epsilon\in(0,1)$, there exists a small constant $\delta_0>0$ independent of $\mathbf{c}$ such that if $\|f_0\|_{L^{\infty}_{v,3}(H^N_x)}+\|f_0\|_{L^{2,1}}\leq\delta_0$, then the relativistic Boltzmann equation \eqref{rbe2}--\eqref{Pm3} admits a unique global solution $f_{\epsilon}=f_{\epsilon}(t,x,v)$ satisfying the following time-decay estimate:
\bq\label{thm-1-1}
\|f_{\epsilon}(t)\|_{L^{\infty}_{v,3}(H^N_x)}\leq C\delta_0(1+t)^{-\frac{3}{4}},
\eq
where $C>0$ is a constant independent of $\mathbf{c}$ and $\epsilon$.

(2) There exists a small constant $\delta_0>0$ such that if $\|f_0\|_{H^N}+\|f_0\|_{L^{2,1}}\leq\delta_0$, then the iNS system \eqref{rNS-1} admits a unique global solution $(n,m,q)(t,x)\in L^{\infty}_t(H^N_x)$. Moreover, $u(t,x,v)=n(t,x)\chi_0+m(t,x)\cdot \chi'+q(t,x)\chi_4$ has the following time-decay rate:
$$
\|u(t)\|_{H^N}\leq C\delta_0(1+t)^{-\frac{3}{4}},
$$
where $C>0$ is a constant.
\end{thm}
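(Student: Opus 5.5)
Part (1) will be proved by a linear–nonlinear bootstrap built on the spectral analysis of the rescaled linear operator
$$B_\epsilon(\xi)=\frac{1}{\epsilon^2}L-\frac{\i}{\epsilon}\tilde v\cdot\xi .$$
Let $e^{tB_\epsilon}$ denote the associated semigroup on $L^2_\xi(L^2_v)$. After Fourier transform in $x$, I would first establish spectral decay of the semigroup uniformly in $\epsilon$ and $\mathbf{c}$. For $\epsilon|\xi|\le r_0$ this means expanding the five fluid eigenvalues $\lambda_j(\epsilon|\xi|)/\epsilon^2$: the three of order $-\kappa|\xi|^2$ (two shear modes with $\kappa_1$, one thermal mode with $\kappa_0$) and two acoustic modes $\pm \i\sqrt{a^2+b^2}|\xi|/\epsilon-O(|\xi|^2)$. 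For $\epsilon|\xi|\ge r_0$ the bound is $e^{-\sigma t/\epsilon^2}$, which follows from the coercivity \eqref{LF-F} together with a compactness argument for $\nu^{-1}K$. The essential point is that every constant here ($\mu$, $\nu_0$, the expansion coefficients $a,b,\kappa_j$) is comparable to $\mathbf{c}$-independent quantities; this is where Lemma \ref{p123} and Lemma \ref{rbdl-ap-lp1} enter. The outcome is the estimate
$$\|e^{tB_\epsilon}f_0\|_{H^N}\le C(1+t)^{-3/4}\big(\|f_0\|_{H^N}+\|f_0\|_{L^{2,1}}\big),$$
and, for microscopic data, the extra gain $\|e^{tB_\epsilon}P_1 g\|\le C\big(\epsilon(1+t)^{-5/4}+e^{-\sigma t/\epsilon^2}\big)\|g\|$. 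Both hold with $C$ independent of $\epsilon$ and $\mathbf{c}$.

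Second, because the theorem is stated in the weighted $L^\infty_{v,3}(H^N_x)$ norm, I would pass from $L^2$ to $L^\infty_v$ in the standard way via the splitting $L=K-\nu$. Writing $\nu$ for the multiplication operator by $\nu(v)$, the equation is written as $\partial_t f+\frac1\epsilon\tilde v\cdot\nabla_x f+\frac{1}{\epsilon^2}\nu f=\frac1{\epsilon^2}Kf+\frac1\epsilon\Gamma(f,f)$. The damped transport semigroup decays like $e^{-\nu_0 t/\epsilon^2}$, and this decay is uniform in $\mathbf{c}$ because $w\ge1$. Iterating Duhamel's formula twice, one has to check that $K$ maps $L^2_v$ into $L^\infty_{v,3}$ and $L^\infty_{v,k}$ into $L^\infty_{v,k+1}$ with $\mathbf{c}$-uniform bounds. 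This is the \emph{refined analysis of $L$} mentioned in the abstract: pointwise bounds on the kernel $k(v,u)$ of the form $|k(v,u)|\le C\,|v-u|^{-1}e^{-c|v-u|}$ (or a similar Grad-type bound) uniformly in $\mathbf{c}\ge1$.

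Third, I would close the nonlinear problem. For the nonlinear term the key estimates are
$$\|\nu^{-1}\Gamma(f,g)\|_{L^\infty_{v,3}(H^N_x)}\le C\|f\|_{L^\infty_{v,3}(H^N_x)}\|g\|_{L^\infty_{v,3}(H^N_x)}, \qquad \Gamma(f,g)\in N_0^\perp ,$$
where the first uses $N\ge2$ and the embedding $H^2\subset L^\infty$. Set
$$Q(t)=\sup_{s\le t}(1+s)^{3/4}\|f_\epsilon(s)\|_{L^\infty_{v,3}(H^N_x)}.$$
Duhamel's principle gives $f_\epsilon=e^{tB_\epsilon}f_0+\frac1\epsilon\int_0^t e^{(t-s)B_\epsilon}\Gamma\,ds$. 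Since $\Gamma$ is microscopic, the factor $\frac1\epsilon$ is absorbed by the gain $\epsilon(1+t-s)^{-5/4}$ in the fluid part and by $\frac1\epsilon\int e^{-\sigma(t-s)/\epsilon^2}ds\le C\epsilon$ in the kinetic part. The $L^1_x$-level bound on $\Gamma$ comes from the product estimate $\|\Gamma\|_{L^{2,1}}\le C\|f\|^2$. Together these yield $Q(t)\le C\delta_0+CQ(t)^2$, and continuity closes the bootstrap. Local existence and uniqueness follow from a standard contraction in the same norm.

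Part (2) is classical. I would apply the Leray projection, write Duhamel's formula with the heat semigroups $e^{\kappa_1 t\Delta}$ and $e^{\kappa_0 t\Delta}$, and run the same bootstrap with decay $(1+t)^{-3/4}$, using $\|e^{\kappa t\Delta}\nabla g\|_{L^2}\le C(1+t)^{-5/4}\|g\|_{L^1}$ for the divergence-form nonlinearities $\eta_1\divx(m\otimes m)$ and $\eta_0\divx(qm)$. The relation $n=-\frac ab q$ recovers $n$.

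The main obstacle is the first and second steps done uniformly in $\mathbf{c}$. The low-frequency eigenvalue expansion depends on $a$ and $p_2$, which blow up like $\mathbf{c}^2$ and $\mathbf{c}$ respectively. I would try to handle this by working in the normalized basis containing $E_0$ and rescaling, so that only the ratios that Lemma \ref{rbdl-ap-lp1} controls (namely $\kappa_j\sim1$) appear in the remainders.
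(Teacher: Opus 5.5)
Your architecture matches the paper's proof. You get $L^2$ decay of $e^{tB_\epsilon/\epsilon^2}$ from the spectral decomposition (Lemma \ref{3dmvpbfas2}). You upgrade to $L^\infty_{v,3}(H^N_x)$ through $L=K-\nu$ and the damped transport semigroup $e^{tA_\epsilon/\epsilon^2}$. You control $\Gamma$ by Lemma \ref{dl-rbe-estgamfg} and close with a fixed point in the time-weighted norm (Lemma \ref{rbedl5}). Part (2) is the same Duhamel/contraction argument with $V(t)$ (Lemma \ref{rbedl6}). Two steps, however, fail as you state them.

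\textbf{First, the weight upgrade.} $K$ does not map $L^2_v$ into $L^\infty_{v,3}$. For fixed $v$, the norm of $f\mapsto (Kf)(v)$ on $L^2_v$ is exactly $\|k(v,\cdot)\|_{L^2}$, and for hard spheres this is of size $(1+|v|)^{-1/2}$ (cf.\ \eqref{rbdl-2svku1}), not $(1+|v|)^{-3}$.
\begin{itemize}
\item The paper only uses $K:L^2_v\to L^\infty_{v,0}$, followed by the one-power gain $\|Kf\|_{L^\infty_{v,k}}\le C\|f\|_{L^\infty_{v,k-1}}$ (Lemma \ref{rbegf8j2}).
\item So the Duhamel formula around $e^{tA_\epsilon/\epsilon^2}$ must be run once per unit of weight, i.e.\ four times, not twice. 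Each time the $\epsilon^{-2}$ in front of $K$ is absorbed by Lemma \ref{rbenletclm}.
\item That one-power gain cannot come from a bound $|k(v,u)|\le C|v-u|^{-1}e^{-c|v-u|}$, which only yields $\int|k(v,u)|(1+|u|)^{-\alpha}du\le C(1+|v|)^{-\alpha}$.
\item It comes from the Grad-type factor $E(u,v)$ in Lemma \ref{rbdl-estk1k2}, which penalizes $(|u|^2-|v|^2)^2/|u-v|^2$. After the angular integration in Lemma \ref{rbdl-prok1k2lem}, this gives the extra $(1+|v|)^{-1}$ in \eqref{rbdl-svkualfa}, uniformly in $\mathbf{c}$.
\end{itemize}

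\textbf{Second, uniformity in $\mathbf{c}$ at high frequency.} Coercivity \eqref{LF-F} plus compactness of $\nu^{-1}K$ does not give a high-frequency rate independent of $\mathbf{c}$. Compactness produces a gap for each fixed $\mathbf{c}$, with no control as $\mathbf{c}\to\infty$. Coercivity alone gives no dissipation of $P_0f$ at large $|\zeta|$.
\begin{itemize}
\item The paper's input here is the quantitative resolvent bound of Lemma \ref{rbesplamdaa}: $\|K(\lambda-A(\zeta))^{-1}\|^2\le C\delta^{-8/5}(1+|\zeta|)^{-2/5}$, together with its analogue in $|\mathrm{Im}\lambda|$, with $C$ independent of $\mathbf{c}$.
\item It is proved from the same uniform kernel bounds and from $\nu_0w\le\nu\le\nu_1w$ (Lemma \ref{ap-est-nuv}).
\item It is what the paper feeds into Lemma \ref{rbesp2-si2}, and from there into the $\mathbf{c}$-independent constants of Lemma \ref{3dmvpbsp10}.
\end{itemize}

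\textbf{The low-frequency obstacle you single out needs no rescaling.} In the paper the low-frequency decay rests on the lower bound for $\tau_2=\min_jA_j/2$, which is independent of $\mathbf{c}$ (Lemma \ref{rbdl-aj-est}). The quantities $a$, $p_2$ and $\mu_{\pm1}$ enter only through normalized eigenvectors and unimodular phases.

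\textbf{Two linear bounds need correcting.}
\begin{itemize}
\item The $\epsilon(1+t)^{-5/4}$ gain for microscopic data requires the $L^{2,1}$ norm on the right. At $L^2$ regularity it also needs an extra $\epsilon t^{-1/2}e^{-\tau_2t/2}$ term, as in \eqref{ns325-re}.
\item $\|e^{\kappa t\Delta}\nabla g\|_{L^2}\le C(1+t)^{-5/4}\|g\|_{L^1}$ fails as $t\to0$. It must be paired with a $t^{-1/2}e^{-ct}\|g\|_{L^2}$ term, as in \eqref{rbedl6-1}.
\end{itemize}
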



\begin{thm}\label{thm-2}
Let $f_{\epsilon}=f_{\epsilon}(t,x,v)$ be the global solution to the relativistic Boltzmann equation \eqref{rbe2}--\eqref{Pm3}, and let $(n,m,q)=(n,m,q)(t,x)$ be the global solution to the iNS system \eqref{rNS-1}. Then, there exists a small constant $\delta_0>0$ independent of $\mathbf{c}$ such that if $\|f_0\|_{L^{\infty}_{v,3}(H^4_x)}+\|f_0\|_{L^{2,1}}\leq\delta_0$, then we have
\bq
\|f_{\epsilon}(t)-u(t)\|_{L^{\infty}}\leq C\delta_0\bigg(\epsilon|\ln\epsilon|^2(1+t)^{-\frac{3}{4}}+\(1+\frac{t}{\epsilon}\)^{-1}\bigg),\label{thm-2-1}
\eq
where $u(t,x,v)=n(t,x)\chi_0+m(t,x)\cdot \chi'+q(t,x)\chi_4$.

Moreover, if the initial data $f_0$ satisfies \eqref{rbf0} and $\|f_0\|_{L^{\infty}_{v,3}(H^4_x)}+\|f_0\|_{L^{2,1}}\leq\delta_0$, then we have
\bq
\|f_{\epsilon}(t)-u(t)\|_{L^{\infty}}\leq C\delta_0\epsilon|\ln\epsilon|(1+t)^{-\frac{3}{4}}.\label{thm-2-2}
\eq
\end{thm}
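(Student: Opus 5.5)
The proof will follow the spectral approach of Bardos--Ukai and of the Navier--Stokes limit papers for the classical Boltzmann equation. We compare the mild forms of the two solutions in Fourier variables. The first step is to write $f_\epsilon$ by Duhamel's principle,
\[
f_\epsilon(t)=e^{\frac{t}{\epsilon^2}B_\epsilon}f_0+\int_0^t e^{\frac{t-s}{\epsilon^2}B_\epsilon}\frac1\epsilon\Gamma(f_\epsilon,f_\epsilon)(s)\,ds,
\]
where $B_\epsilon(\xi)=L-\i\epsilon\,\tilde v\cdot\xi$. We then use the spectral decomposition of $\hat B(\xi)=L-\i \tilde v\cdot\xi$ for $|\xi|\le r_0$ (from the analysis of \cite{ZHAO-1}, redone with constants uniform in $\mathbf{c}$). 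This gives five eigenvalues
\[
\lambda_{\pm1}=\pm\i\sqrt{a^2+b^2}\,|\xi|-\kappa_\pm|\xi|^2+O(|\xi|^3),\qquad
\lambda_2=\lambda_3=-\kappa_1|\xi|^2+O(|\xi|^3),\qquad
\lambda_0=-\kappa_0|\xi|^2+O(|\xi|^3),
\]
with eigenprojections $P_j(\xi)=\psi_j\otimes\overline{\psi_j}$, where $\psi_j=\psi_{j,0}+O(|\xi|)$ and $\psi_{j,0}\in N_0$. The directions $\psi_{0,0}=E_0$ and $\psi_{k,0}=\chi'\cdot W^k$, with $W^k\perp\xi$, are the incompressible/Boussinesq modes; $\lambda_{\pm1}$ are the acoustic modes. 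On the high-frequency part and on the range of $P_1$ the semigroup decays like $e^{-ct/\epsilon^2}$.

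Next we split $e^{\frac{t}{\epsilon^2}\hat B(\epsilon\xi)}$ into three pieces: the fluid modes $j=0,2,3$, the acoustic modes $j=\pm1$, and the remainder. The fluid part has the form $\sum_{j=0,2,3}e^{t\lambda_j(\epsilon\xi)/\epsilon^2}P_j(\epsilon\xi)$. Since $\lambda_j(\epsilon\xi)/\epsilon^2=-\kappa|\xi|^2+O(\epsilon|\xi|^3)$ and $P_j(\epsilon\xi)=P_{j,0}+O(\epsilon|\xi|)$, this part converges to the linearized iNS semigroup $e^{t\kappa\Delta}$ acting on the projection defining $n(0),m(0),q(0)$ in \eqref{rbdl-innmq}. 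The error is $C\epsilon(1+t)^{-3/4}$ in $L^\infty_x(L^2_v)$, obtained by splitting $|\xi|\le 1/\epsilon$ (Taylor expansion, using $L^{2,1}$ control of $f_0$ and $H^4$ regularity to absorb powers of $|\xi|$) and $|\xi|\ge 1/\epsilon$ (exponential decay).

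The acoustic part is $e^{\pm\i t\sqrt{a^2+b^2}|\xi|/\epsilon}e^{-\kappa_\pm t|\xi|^2}$ acting on data. It does not converge pointwise but oscillates; in $L^\infty_x$ we use the dispersive estimate for the wave group, $\|e^{\i t|\nabla|/\epsilon}g\|_{L^\infty}\lesssim (t/\epsilon)^{-1}\|g\|_{W^{k,1}}$, together with the heat factor. This yields the initial-layer term $(1+t/\epsilon)^{-1}$. If $f_0$ is well-prepared as in \eqref{rbf0}, then $P_{\pm1,0}f_0=0$, so the acoustic contribution is only $O(\epsilon|\xi|)$ and is absorbed into the $\epsilon$ rate.

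For the nonlinear term we use $\Gamma(f,f)\in N_0^\perp$. Then $P_j(\epsilon\xi)\Gamma=O(\epsilon|\xi|)$, which gives the fluid expansion
\[
\frac1\epsilon\,\widehat{\Gamma}\cdot\psi_j\;\to\;\i\xi\cdot(\text{quadratic macroscopic terms}).
\]
These are exactly the coefficients $\eta_0,\eta_1$, whose form comes from $\Gamma(u,u)=\frac12 L(P_1(\ldots))$ for $u\in N_0$. Writing $f_\epsilon-u$ through both Duhamel formulas, the difference consists of three contributions:
\begin{itemize}
\item a linear error, of size $\epsilon$ plus the initial layer;
\item the quadratic difference, of size $\|f_\epsilon-u\|$ times $\delta_0$, handled by a bootstrap;
\item the microscopic part of $f_\epsilon$, of size $O(\epsilon)$ after the initial layer, from the $e^{-ct/\epsilon^2}$ decay.
\end{itemize}
Integrating the time-convolution of the rates $(1+t-s)^{-\cdot}$ against $(1+s)^{-3/2}$ and against $(1+s/\epsilon)^{-1}$ produces the logarithmic losses: one $|\ln\epsilon|$ from $\int_0^t(1+s/\epsilon)^{-1}\cdots ds$, and a second from the interaction of the acoustic oscillation with the Duhamel time integral. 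This gives $\epsilon|\ln\epsilon|^2$ in general and $\epsilon|\ln\epsilon|$ for well-prepared data. The bounds on $f_\epsilon$ and $u$ come from Theorem \ref{thm-1}.

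The main obstacle is the nonlinear Duhamel term paired with the acoustic modes. There the factor $1/\epsilon$ in front of $\Gamma$ is compensated only by $P_{\pm1}(\epsilon\xi)\Gamma=O(\epsilon|\xi|)$, and the oscillation must be exploited in $L^\infty_x$ without losing uniformity in $\mathbf{c}$. To handle it, we would first integrate by parts in $s$ using the phase $e^{\i(t-s)\sqrt{a^2+b^2}|\xi|/\epsilon}$, trading $\partial_s$ of the nonlinearity (controlled via the equation) for a factor $\epsilon$. All constants ($a,b,\kappa_j$, the spectral radius $r_0$) must be checked to be $\mathbf{c}$-independent using Lemmas \ref{p123} and \ref{rbdl-ap-lp1}.
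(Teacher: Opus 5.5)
Your framework matches the paper's. You put both solutions in mild form and split low frequencies into the modes $j=0,2,3$, which reproduce $V(t)$, and the acoustic modes $j=\pm1$. The acoustic modes are controlled in $L^\infty_x$ by the dispersive bound of Lemma~\ref{3dmvpbfas4}, which produces the layer $(1+t/\epsilon)^{-1}$. You close with a bootstrap on a weighted $\sup_s\|f_\epsilon(s)-u(s)\|_{L^\infty}$.

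The gap is in the step you single out as the main obstacle. Integrating by parts in $s$ against $e^{\i(t-s)\sqrt{a^2+b^2}|\xi|/\epsilon}$ gains $\epsilon/(\sqrt{a^2+b^2}|\xi|)$ but costs $\partial_s\Gamma(f_\epsilon,f_\epsilon)$. Controlling this ``via the equation'' means using $\partial_sf_\epsilon=-\epsilon^{-1}\tilde v\cdot\nabla_xf_\epsilon+\epsilon^{-2}Lf_\epsilon+\epsilon^{-1}\Gamma(f_\epsilon,f_\epsilon)$, which is not bounded uniformly in $\epsilon$.
For ill-prepared data, $f_\epsilon$ contains the acoustic part $u^{osc}_\epsilon$. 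In $L^2$ this part is of size $\delta_0$ and not small in $\epsilon$; its $(1+t/\epsilon)^{-1}$ decay holds only in $L^\infty_x$. Its time derivative is of order $\delta_0/\epsilon$. In addition, $\epsilon^{-2}LP_1f_\epsilon$ is of order $\epsilon^{-2}$ for $s\lesssim\epsilon^2$.
So the exchange returns $O(\delta_0^2)$, not $O(\epsilon)$. Repairing it would require a separate analysis of the time oscillations inside $\Gamma$, since acoustic--acoustic and acoustic--incompressible products can resonate with the phase. The proposal does not supply this.

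The missing observation is that no integration by parts in time is needed. For each fixed $s$, $g=\Gamma(f_\epsilon,f_\epsilon)(s)$ satisfies $P_0g=0$, so the second-order approximation, Lemma~\ref{3dmvpbfas7}, applies to $\frac1\epsilon e^{\frac{(t-s)B_\epsilon}{\epsilon^2}}g$. The $1/\epsilon$ is absorbed by $P_{\pm1}(\epsilon\xi)g=O(\epsilon|\xi|)$. The acoustic modes are then bounded, by the same dispersive lemma, by $C(1+\frac{t-s}{\epsilon})^{-1}\|g\|_{H^4\cap W^{4,1}}\le C\delta_0^2(1+\frac{t-s}{\epsilon})^{-1}(1+s)^{-3/2}$. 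The oscillation is thus used through decay in $(t-s)/\epsilon$, and time integration alone gives
\begin{equation*}
\int_0^t\Big(1+\frac{t-s}{\epsilon}\Big)^{-1}(1+s)^{-\frac32}\,ds\le C\epsilon|\ln\epsilon|(1+t)^{-1}.
\end{equation*}
This is the only logarithm in \eqref{thm-2-2}, because the nonlinearity generates acoustic waves even from well-prepared data. It is also the logarithm your own count attributes to ``the interaction of the acoustic oscillation with the Duhamel time integral''. An integration by parts delivering the clean factor $\epsilon$ you claim would not have produced it.

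The second logarithm in \eqref{thm-2-1} comes from the bootstrap term $\int_0^tV(t-s)\div_x\big(Z(f_\epsilon)-Z(u)\big)ds$. For $t\ge\epsilon$, its kernel $(t-s)^{-1/2}(1+t-s)^{-3/4}$ convolved with the layer $(1+s/\epsilon)^{-1}$ gives $\epsilon|\ln\epsilon|t^{-1/2}\le\epsilon|\ln\epsilon|^2+\epsilon/t$.

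A smaller omission: for \eqref{thm-2-2} you also need that the non-eigen remainder acting on $N_0$-valued data is $O(\epsilon|\xi|)e^{-\tau_1t/\epsilon^2}$ (Lemma~\ref{3dmvpbfas5}). The bare bound $e^{-ct/\epsilon^2}\|f_0\|$ is not $O(\epsilon)$ near $t=0$.
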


\begin{remark}\label{dl-rmkil}
Define the oscillation part of $f_{\epsilon}$ as
\be
u_{\epsilon}^{osc}(t,x,v)=\sum_{j=-1,1}\mathcal{F}^{-1}\(e^{\frac{-\mathrm{i}|\xi|\mu_j t}{\epsilon}-A_j|\xi|^2t}(P_0\hat{f}_0,E_j(\xi)\big)E _j(\xi)\),\label{dlxg-uosc}
\ee
where $\mu_j$, $A_j$ and $E_j(\xi)$, $j=-1,1$ are defined by \eqref{sp4}. Then, under the first assumption of Theorem \ref{thm-2}, we have
$$
\|f_{\epsilon}(t)-u(t)-u_{\epsilon}^{osc}(t)-e^{\frac{tB_{\epsilon}}{\epsilon^2}}P_1f_0\|_{L^{\infty}}\leq C\delta_0\epsilon|\ln\epsilon|(1+t)^{-\frac{3}{4}}.
$$
Hence, $u_{\epsilon}^{osc}(t)$ and $e^{\frac{tB_{\epsilon}}{\epsilon^2}}P_1f_0$ are the essential components for generating the initial layer.
\end{remark}

\begin{remark}
The coefficients $|\ln\epsilon|^2$ and $|\ln\epsilon|$ in Theorem \ref{thm-2} come from the time convolutions with the initial layer. 
Precisely, under the first assumption of Theorem \ref{thm-2},  the terms related to $|\ln\epsilon|^2$ consist of two parts: the first part  associated with $|\ln\epsilon|$ mainly arises from the convolution of $e^{\frac{tB_{\eps}}{\eps^2}}$ and nonlinear term, namely (See \eqref{dlth2-3-1-2-1})
$$
\int^t_0\(1+\frac{t-s}{\epsilon}\)^{-1}(1+s)^{-\frac{3}{2}}ds\leq C\epsilon|\ln\epsilon|(1+t)^{-1},
$$
the second part  associated with $|\ln\epsilon|^2$  mainly arises from the convolution of $V(t)$ and the initial layer, such as (See \eqref{dlth2-4-7-6})
\bmas
&\quad\(\int_0^{\frac{t}{2}}+\int^t_{\frac{t}{2}}\)(1+t-s)^{-\frac{3}{4}}(t-s)^{-\frac{1}{2}}\(1+\frac{s}{\epsilon}\)^{-1}(1+s)^{-\frac{3}{4}}ds\\
&\leq C\epsilon |\ln\epsilon|(1+t)^{-\frac{3}{4}}t^{-\frac12}+\cdot\cdot\cdot\leq C\epsilon|\ln\epsilon|^2(1+t)^{-1}+\cdot\cdot\cdot.
\emas
However, under the second assumption of Theorem \ref{thm-2}, the initial layer vanishes, so the estimation  only involve the parts associated with $|\ln\epsilon|$ (See \eqref{dlth-2-2-1j1}--\eqref{dlth-2-2-5}).
\end{remark}

Before the rest of the introduction, we will briefly present the main ideas and the approach of the analysis in the proof. Based on the spectral analysis \cite{ZHAO-1} and the ideas inspired by \cite{Bardos-3,Li-1}, we prove the convergence rates given in Theorem \ref{thm-2} of diffusion limit of the relativistic Boltzmann equation. First of all, the solution $f_{\epsilon}=f_{\epsilon}(t,x,v)$ to the relativistic Boltzmann equation \eqref{rbe2}--\eqref{Pm3} can be represented by
$$
f_{\epsilon}(t)=e^{\frac{tB_{\epsilon}}{\epsilon^2}}f_0+\frac{1}{\epsilon}\int_0^te^{\frac{(t-s)B_{\epsilon}}{\epsilon^2}}\Gamma(f_{\epsilon},f_{\epsilon})ds,
$$
and the solution $u(t,x,v)=n(t,x)\chi_0+m(t,x)\cdot \chi'+q(t,x)\chi_4$ to the iNS system \eqref{rNS-1} can be represented by
\bmas
u(t)=&V(t)P_0f_0+\int^t_0V(t-s) P_{0}(\tilde{v}\cdot\nabla_xL^{-1}\Gamma(u,u)) ds,
\emas
where $V(t)$ is defined by \eqref{lns4j1-1}.

The linear relativistic Boltzmann operator $B_{\eps}(\xi)=L-\i\eps(\tilde{v}\cdot\xi)$ satisfies that $B_{\epsilon}(\xi)=B(\eps\xi)$. Thus, we can show that there exist five eigenvalue $\lambda_j(\epsilon|\xi|)$, $j=-1,0,1,2,3$ for $\epsilon|\xi|$ being small and they satisfy (See Lemma \ref{3dmvpbsp10}):
\bq\label{bjsp-1}
\lambda_j(\epsilon|\xi|)=-\mathrm{i}\epsilon|\xi| \mu_j-\epsilon^2|\xi|^2A_j+O(\epsilon^3|\xi|^3),\quad \epsilon|\xi|\leq r_0,
\eq
where $\mu_j$ and $A_j$ are constants defined by \eqref{sp4}. With the help of the expansion \eqref{bjsp-1}, we can rewrite $e^{\frac{tB(\epsilon\xi)}{\epsilon^2}}$ as
\bmas
e^{\frac{tB(\epsilon\xi)}{\epsilon^2}}\hat{f}_0=&\sum^3_{j=-1}e^{\frac{-\mathrm{i}|\xi|\mu_jt}{\epsilon}-|\xi|^2A_jt+O(\epsilon|\xi|^3)t}\(\big(P_0\hat{f}_0,E_j(\xi)\big)E _j(\xi)+O(\epsilon|\xi|)\)\\
& +S_2(t,\xi,\epsilon)\hat{f}_0,
\emas
where $E_j(\xi)$ $(j=-1,0,1,2,3)$ are given in \eqref{sp4}, and $S_2(t,\xi,\epsilon)$ is given in Lemma \ref{3dmvpbsp10} and satisfy $\|S_2(t,\xi,\epsilon)\hat{f}_0\|\le Ce^{-\frac{\tau_1t}{\eps^2}}\|\hat{f}_0\|$. By using the following key estimate (See Lemma \ref{3dmvpbfas4}):
$$
\bigg\|\mathcal{F}^{-1}\(e^{\frac{-\mathrm{i}|\xi|\mu_{j}t}{\epsilon}}\alpha(\omega)(1+|\xi|)^{-3}\)\bigg\|_{L^{\infty}_x}\leq C\(\frac{t}{\epsilon}\)^{-1}, \quad j=\pm1,
$$
with $\alpha(\omega)$ is a smooth function for $\omega=\frac{\xi}{|\xi|}\in\S^2$, we can establish the optimal convergence rate of the semigroup $e^{\frac{tB_{\epsilon}}{\epsilon^2}}$ to its first and second order fluid limits in  $L^{\infty}$ norm as given in Lemma \ref{3dmvpbfas6}.

By using estimates of the convergence rates for the fluid limits of the linear relativistic Boltzmann equation, we establish the optimal convergence rate from the strong solution $f_{\epsilon}=f_{\epsilon}(t,x,v)$ of the nonlinear relativistic Boltzmann equation towards the solution $u=u(t,x,v)$ of the iNS system. Consequently, we obtain precise structure of the initial layer.

The rest of this paper is organized as follows. In Section \ref{sect2}, we present results on the spectral analysis of the linear operator associated with the linearized relativistic Boltzmann equation. In Section \ref{sect3}, we establish first- and second-order fluid approximations for the solution of the linearized relativistic Boltzmann equation. In Section \ref{sect4}, we prove the convergence of the global solution of the original nonlinear relativistic Boltzmann equation to the solution of the nonlinear iNS system and establish its convergence rate.

%
%
%
%

\section{Spectral analysis}
\label{sect2}
\setcounter{equation}{0}

In this section, we are concerned with the spectral analysis of the linear relativistic Boltzmann operator $B(\epsilon\xi)$ defined by \eqref{bep1}, which will be applied to study diffusion limit of the solution to the relativistic Boltzmann equation \eqref{rbe2}--\eqref{Pm3}. From \eqref{rbe2}--\eqref{Pm3}, we have the following linearized relativistic Boltzmann equation:
\bq\label{Bm1}
\left\{\bal
\epsilon^2\partial_tf_{\epsilon}=B_{\epsilon}f_{\epsilon},\quad t>0,\\
f_{\epsilon}(0,x,v)=f_{0}(x,v),
\ea\right.
\eq
where
$$
B_{\epsilon}f=Lf-\epsilon \tilde{v}\cdot\nabla_xf.
$$

Taking Fourier transform to \eqref{Bm1} to get
\bq
\left\{\bal
\epsilon^2\partial_t\hat{f}_{\epsilon}=B(\epsilon\xi)\hat{f}_{\epsilon},\quad t>0,\\
\hat{f}_{\epsilon}(0,\xi,v)=\hat{f}_{0}(\xi,v),
\ea\right.
\eq
where
\bq
B(\epsilon\xi)=L-\mathrm{i}\epsilon\tilde{v}\cdot \xi.\label{bep1}
\eq

Let $\zeta=\epsilon\xi$, we have the following results about the spectrum structure and semigroup of the operator $B(\zeta)=L-\mathrm{i}\tilde{v}\cdot\zeta$.
\begin{lem}[\cite{ZHAO-1}]\label{rbesp1}
The operator $B(\zeta)$ generates a continuous contraction semigroup on $L^2(\R^3_v)$, which satisfies
\bq
\|e^{tB(\zeta)}f\|\leq\|f\|,\quad \forall t>0, \quad f\in L^2(\mathbb{R}^3).
\eq
\end{lem}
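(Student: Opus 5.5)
The claim is that $B(\zeta)=L-\mathrm{i}\tilde v\cdot\zeta$ generates a contraction semigroup on $L^2(\R^3_v)$. I will apply the Lumer--Phillips theorem with domain $D(B(\zeta))=D(L)=\{f:\nu f\in L^2\}$.

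\textbf{Bounded perturbation.} Since $|\tilde v|=\mathbf{c}|v|/v_0<\mathbf{c}$, multiplication by $\mathrm{i}\tilde v\cdot\zeta$ is a bounded operator on $L^2(\R^3_v)$, with norm at most $\mathbf{c}|\zeta|$. It is also skew-adjoint, because $\tilde v\cdot\zeta$ is real.

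\textbf{Dissipativity.} Let $f\in D(L)$. By \eqref{LF-F},
$$
\mathrm{Re}\,(B(\zeta)f,f)=(Lf,f)-\mathrm{Re}\,\big(\mathrm{i}\,(\tilde v\cdot\zeta f,f)\big)=(Lf,f)\le-\mu\|P_1f\|^2\le0,
$$
since $(\tilde v\cdot\zeta f,f)$ is real. So $B(\zeta)$ is dissipative.

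\textbf{$L$ is self-adjoint.} Write $L=-\nu+K$ as in \eqref{rbenu-k}. Multiplication by $-\nu$ is self-adjoint on $D(L)$, because $\nu$ is real and satisfies \eqref{rbdl-nu1}. $K$ is bounded and self-adjoint, since its kernel $k(v,u)$ is real and symmetric. By Kato--Rellich, $L$ is self-adjoint on $D(L)$, and it is non-positive by \eqref{LF-F}.

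\textbf{Range condition.} Since $L$ is self-adjoint and $L\le0$, the resolvent $(\lambda-L)^{-1}$ exists for every $\lambda>0$ and satisfies $\|(\lambda-L)^{-1}\|\le\lambda^{-1}$. The multiplication operator $A=\mathrm{i}\tilde v\cdot\zeta$ is bounded with $\|A\|\le\mathbf{c}|\zeta|$. Hence, for $\lambda>\mathbf{c}|\zeta|$, a Neumann series gives
$$
\lambda-B(\zeta)=(\lambda-L)\big(I+(\lambda-L)^{-1}A\big),
$$
and both factors are invertible. Therefore $\mathrm{Ran}(\lambda-B(\zeta))=L^2$ for such $\lambda$. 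Also $D(L)$ contains $C_c^\infty$, so it is dense.

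\textbf{Conclusion.} $B(\zeta)$ is closed, densely defined, dissipative, and satisfies the range condition. Lumer--Phillips then gives a $C_0$ contraction semigroup with $\|e^{tB(\zeta)}f\|\le\|f\|$.

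Alternatively, one can avoid Lumer--Phillips. $L$ generates a contraction semigroup, and $A$ is bounded and skew-adjoint. The bounded perturbation theorem gives a $C_0$ semigroup, and for $f\in D(L)$,
$$
\tfrac{d}{dt}\|e^{tB(\zeta)}f\|^2=2\,\mathrm{Re}\,(B(\zeta)e^{tB(\zeta)}f,e^{tB(\zeta)}f)\le0.
$$
The bound extends to all $f\in L^2$ by density.

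\textbf{Main obstacle.} The only delicate point is the self-adjointness of $L$ on $D(L)$, specifically the boundedness of $K$ on $L^2$ in the relativistic setting. I will take this from the cited literature \cite{Glassey-2,STRIN-1,ZHAO-1}, where $K$ is shown to be compact.
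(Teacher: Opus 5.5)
Your proof is correct. The paper gives no argument for this lemma and simply quotes it from \cite{ZHAO-1}, so your write-up supplies a self-contained proof.

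The core of any proof is your dissipativity step: $\mathrm{Re}(B(\zeta)f,f)=(Lf,f)\le 0$, because $\tilde v\cdot\zeta$ is real. Your Neumann-series check of the range condition is a valid way to finish Lumer--Phillips. A common alternative is to note that $B(\zeta)$ is closed and that its adjoint $B(\zeta)^*=B(-\zeta)$ is also dissipative. That yields generation without computing any resolvent.

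The paper's own estimates allow two simplifications.

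First, the step you flag as the main obstacle, boundedness of $K$ on $L^2$, follows from Lemma \ref{rbdl-svku1alfa} with $\alpha=0$. Together with the symmetry of $k(v,u)$, Schur's test gives it directly, so you do not need to appeal to compactness from the literature.

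Second, \eqref{rbdl-nu1} gives $\nu(v)\le\nu_1 w(v)\le\nu_1(1+\mathbf{c})$. So for each fixed $\mathbf{c}$ the collision frequency is bounded, $D(L)=L^2$, and $B(\zeta)$ is a bounded operator. Then $e^{tB(\zeta)}$ is just the exponential series. Your energy identity $\frac{d}{dt}\|e^{tB(\zeta)}f\|^2=2\,\mathrm{Re}\big(B(\zeta)e^{tB(\zeta)}f,e^{tB(\zeta)}f\big)\le0$ then holds for every $f\in L^2$. This makes Kato--Rellich, the range condition and the density argument unnecessary. The operator norm of $B(\zeta)$ grows with $\mathbf{c}$, but the contraction constant $1$ does not, and that is all the lemma asserts.
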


\begin{lem}[\cite{ZHAO-1}]\label{rbesp2-si1}
For all $\zeta\in\R^3$, the following conditions hold.

{\rm (1)} $\sigma_{ess}(B(\zeta))\subset\{\lambda\in\mathbb{C} \,|\,\mathrm{Re}\lambda\leq-\nu_0 \}\ \mathrm{and}\ \sigma(B(\zeta))\cap \{\lambda\in\mathbb{C} \,|-\nu_0<\mathrm{Re}\lambda\leq0 \}\subset\sigma_d(B(\zeta)).$

{\rm (2)} If $\lambda(\zeta)$ is an eigenvalue of $B(\zeta)$, then $\mathrm{Re} \lambda(\zeta)<0$ for any $|\zeta|\neq0$ and $\lambda(\zeta)=0$ iff $|\zeta|=0$.

\end{lem}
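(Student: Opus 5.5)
The statement is Lemma \ref{rbesp2-si1}, which is cited from \cite{ZHAO-1}. I would prove it by the standard Weyl-perturbation argument together with an energy identity.

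\emph{Part (1).} Write $B(\zeta)=A(\zeta)+K$ with $A(\zeta)=-\nu(v)-\mathrm{i}\tilde v\cdot\zeta$, which is a multiplication operator.

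First, the spectrum of $A(\zeta)$ is the closure of the range of $-\nu(v)-\mathrm{i}\tilde v\cdot\zeta$ over $v\in\R^3$. By \eqref{rbdl-nu1}, $\nu(v)\ge\nu_0 w(v)\ge\nu_0$, since $w\ge1$ when $\mathbf{c}\ge1$. Hence $\sigma(A(\zeta))\subset\{\mathrm{Re}\lambda\le-\nu_0\}$.

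Second, $K$ is compact on $L^2(\R^3_v)$, as recorded after \eqref{rbenu-k}. Weyl's theorem on the invariance of the essential spectrum under compact perturbations then gives $\sigma_{ess}(B(\zeta))=\sigma_{ess}(A(\zeta))\subset\{\mathrm{Re}\lambda\le-\nu_0\}$.

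Third, consider the region $\{\mathrm{Re}\lambda>-\nu_0\}$. It is connected and contains points of the resolvent set: $\|e^{tB}\|\le1$ by Lemma \ref{rbesp1}, so every $\lambda$ with $\mathrm{Re}\lambda>0$ is in the resolvent set. On this region write
\[
\lambda-B=(\lambda-A)\bigl(I-(\lambda-A)^{-1}K\bigr).
\]
The operator $(\lambda-A)^{-1}K$ is a compact-operator-valued analytic function of $\lambda$. The analytic Fredholm theorem shows that $(I-(\lambda-A)^{-1}K)^{-1}$ exists except at a discrete set of points. Those points are isolated eigenvalues of finite algebraic multiplicity, so they lie in $\sigma_d(B(\zeta))$. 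This gives the second claim of part (1).

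\emph{Part (2).} Let $B(\zeta)f=\lambda f$ with $f\neq0$, and take the inner product with $f$. Since $L$ is self-adjoint and $\mathrm{i}\tilde v\cdot\zeta$ is skew-adjoint,
\[
\mathrm{Re}\lambda\,\|f\|^2=(Lf,f)\le-\mu\|P_1f\|^2
\]
by \eqref{LF-F}, and
\[
\mathrm{Im}\lambda\,\|f\|^2=-\bigl((\tilde v\cdot\zeta)f,f\bigr).
\]
Thus $\mathrm{Re}\lambda\le0$, and equality forces $P_1f=0$, i.e.\ $f\in N_0$. In that case $Lf=0$, so the eigen-equation reduces to $-\mathrm{i}(\tilde v\cdot\zeta)f=\lambda f$ with $\lambda=\mathrm{i}\beta$ purely imaginary. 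Rewrite it as $(\tilde v\cdot\zeta+\beta)f=0$ for almost every $v$.

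Now take $\zeta\ne0$. The function $\tilde v\cdot\zeta+\beta$ vanishes only on a set of measure zero: it is real-analytic and non-constant in $v$, since $\tilde v=\mathbf{c}v/v_0$ is a diffeomorphism onto the ball of radius $\mathbf{c}$. Hence $f=0$, a contradiction, and so $\mathrm{Re}\lambda<0$ for $\zeta\neq0$.

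For $\zeta=0$ the operator is $B(0)=L$, and $0$ is an eigenvalue with eigenspace $N_0\neq\{0\}$. Conversely, if $\lambda=0$ is an eigenvalue, then $\mathrm{Re}\lambda=0$, which by the previous paragraph forces $\zeta=0$.

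The only step needing care is the analytic Fredholm argument. One must verify that $(\lambda-A)^{-1}$ is bounded uniformly on compact subsets of $\{\mathrm{Re}\lambda>-\nu_0\}$, and this follows from $|\lambda+\nu+\mathrm{i}\tilde v\cdot\zeta|\ge\mathrm{Re}\lambda+\nu_0>0$. The remaining steps are routine.
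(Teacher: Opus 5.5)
The paper gives no proof of this lemma; it simply cites \cite{ZHAO-1}. Your argument is correct and is the standard one behind that citation: Weyl's theorem applies to the compact perturbation $K$ of the multiplication operator $A(\zeta)$, whose spectrum lies in $\{\mathrm{Re}\lambda\le-\nu_0\}$ because $w\ge1$. The analytic Fredholm theorem (equivalently Kato's theorem on the complement of the essential spectrum) is anchored by $\{\mathrm{Re}\lambda>0\}\subset\rho(B(\zeta))$ from Lemma \ref{rbesp1}, and its finite-rank negative Laurent coefficients of $(I-(\lambda-A(\zeta))^{-1}K)^{-1}$ supply the finite algebraic multiplicity you left implicit. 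For part (2), the energy identity with \eqref{LF-F} followed by the observation that $(\tilde v\cdot\zeta+\beta)f=0$ a.e.\ forces $f=0$ when $\zeta\neq0$ is likewise the standard argument.
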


Let $A(\zeta)$ be the operator obtained by dropping $K$ from $B(\zeta)$, we have
\bma
&A(\zeta)f=(-\mathrm{i}\zeta\cdot\tilde{v}-\nu(v))f,\quad f\in D(A(\zeta)),\label{hataex-1}\\
&D(A(\zeta))=\big\{f\in L^2(\R^3)\,|\,\nu(v)f\in L^2(\R^3)\big\}.
\ema
For $\mathrm{Re}\lambda>-\nu_0$, we decompose  $B(\zeta)$ into
\bq
\lambda-B(\zeta)=\lambda-A(\zeta)-K=\(I-K(\lambda-A(\zeta))^{-1}\)(\lambda-A(\zeta)),\label{lamdaba}
\eq
and estimate the right hand terms of \eqref{lamdaba} as follows.

\begin{lem}\label{rbesplamdaa}
The following conditions hold.

{\rm (1)} For $\delta>0$, we have
\bq
\sup_{\mathrm{Re}\lambda>-\nu_0+\delta,\mathrm{Im}\lambda\in\R}\|K(\lambda-A(\zeta))^{-1}\|^{2}\leq C\delta^{-\frac{8}{5}}(1+|\zeta|)^{-\frac{2}{5}}.\label{rbesplamdaa-im}
\eq

{\rm (2)} For any $\delta,r_0>0$ and $|\mathrm{Im}\lambda|\ge 2\mathbf{c}r_0$, we have
\bq
\sup_{\mathrm{Re}\lambda>-\nu_0+\delta,|\zeta|\leq r_0}\|K(\lambda-A(\zeta))^{-1}\|^{2}\leq C\delta^{-\frac{8}{5}}(1+|\mathrm{Im}\lambda|)^{-\frac{2}{5}}.\label{rbesplamdaa-ze}
\eq
Here, $C>0$ is a constant independent of $\mathbf{c}$.
\end{lem}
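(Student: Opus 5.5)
We estimate $K(\lambda-A(\zeta))^{-1}$ by splitting $K$ into a smooth, compactly supported part and a small remainder, and then exploiting the fact that the multiplier $(\lambda+\nu(v)+\mathrm{i}\zeta\cdot\tilde v)^{-1}$ is large only on a thin slab in $v$-space. Write $\lambda=x+\mathrm{i}y$ with $x>-\nu_0+\delta$. By \eqref{rbdl-nu1},
$$|\lambda-A(\zeta)|=|x+\nu(v)+\mathrm{i}(y+\zeta\cdot\tilde v)|\ge \max\{\delta,\ |y+\zeta\cdot\tilde v|\}.$$
In particular $\|(\lambda-A(\zeta))^{-1}\|\le \delta^{-1}$.

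For any $h$ we have
$$\|K(\lambda-A)^{-1}h\|^2\le \int\!\!\int |k(v,u)|^2\,|\lambda-A(\zeta)(u)|^{-2}\,du\,dv\;\|h\|^2,$$
so it suffices to bound the weighted Hilbert–Schmidt quantity $\int\!\!\int |k(v,u)|^2 |x+\nu(u)+\mathrm{i}(y+\zeta\cdot\tilde u)|^{-2}\,du\,dv$.

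We first cut off large velocities. Using the known kernel bounds for the relativistic hard-sphere $K$ (the Glassey–Strauss type estimates, as in \cite{ZHAO-1,STRIN-1}), we want, for $R\ge1$,
$$\int\!\!\int_{|u|>R\ \text{or}\ |v|>R}|k(v,u)|^2\,du\,dv\le CR^{-1},$$
with $C$ independent of $\mathbf{c}$. This uses the exponential factor $e^{-c|u-v|}$ together with the Maxwellian decay $e^{-\mathbf{c}(v_0-\mathbf{c})/k_0T}$, which is uniform in $\mathbf{c}\ge1$ because $\mathbf{c}(v_0-\mathbf{c})\ge |v|^2/(1+|v|/\mathbf{c})\gtrsim \min(|v|^2,|v|)$. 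The part with large velocity, paired with the multiplier, then contributes at most $C\delta^{-2}R^{-1}$. On the bounded region $|u|,|v|\le R$, where $\int|k(v,u)|^2dv\le C$, it remains to estimate
$$\int_{|u|\le R}\frac{du}{\delta^2+|y+\zeta\cdot\tilde u|^2}.$$

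\textbf{Part (1).} Write $\zeta=|\zeta|\omega$ and consider the slab $S=\{u:|y+\zeta\cdot\tilde u|\le \rho\}$. On the complement of $S$ the integrand is at most $\rho^{-2}$, so that piece contributes $CR^3\rho^{-2}$; on $S$ it is at most $\delta^{-2}$, so that piece contributes $\delta^{-2}|S\cap B_R|$. The crux is a measure bound for $S$ that is uniform in $\mathbf{c}$. The map $u\mapsto \tilde u$ has Jacobian $\mathbf{c}^3/v_0^5\ge C(1+R)^{-2}\cdot \mathbf{c}^3/v_0^3$, and on $|u|\le R$ we have $\mathbf{c}/v_0\ge (1+R)^{-1}$ for $\mathbf{c}\ge1$. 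Hence, changing variables, $|S\cap B_R|\le C(1+R)^{k}\rho/|\zeta|$ for some fixed power $k$; more directly, $\partial_{u_\omega}(\omega\cdot\tilde u)\ge \mathbf{c}^3/v_0^3\ge C(1+R)^{-3}$.

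Collecting the pieces,
$$\|K(\lambda-A)^{-1}\|^2\le C\Big(\delta^{-2}R^{-1}+R^3\rho^{-2}+\delta^{-2}R^{a}\rho|\zeta|^{-1}\Big).$$
For $|\zeta|\le1$ we simply use $C\delta^{-2}$. For $|\zeta|\ge1$ we choose $\rho$ and $R$ as powers of $|\zeta|$ and $\delta$ to balance the three terms. The main obstacle is tracking the exponents so that the result is exactly $\delta^{-8/5}|\zeta|^{-2/5}$. We would attempt this by also using $\delta$-interpolation, namely $|\lambda-A|^{-2}\le \delta^{-2+\theta}\,|y+\zeta\cdot\tilde u|^{-\theta}$. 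We would also sharpen the $R$-dependence by using $\int|k(v,u)|^2dv\le C(1+|u|)^{-1}$, which gives a weighted version and avoids the crude $R^3$ factor.

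\textbf{Part (2).} When $|\zeta|\le r_0$ and $|y|\ge 2\mathbf{c}r_0$, we have $|\zeta\cdot\tilde u|\le \mathbf{c}r_0\le |y|/2$ because $|\tilde u|<\mathbf{c}$. Hence $|\lambda-A(\zeta)|\ge \max\{\delta,|y|/2\}$ uniformly in $u$, with no slab at all. The same Hilbert–Schmidt bound then gives $\|K(\lambda-A)^{-1}\|^2\le C\min\{\delta^{-2},|y|^{-2}\}$, which is bounded by $C\delta^{-8/5}(1+|y|)^{-2/5}$ by interpolation: $\min\{a,b\}\le a^{4/5}b^{1/5}$.
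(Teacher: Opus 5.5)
\paragraph{The large-velocity cutoff is false.} Your reduction in Part (1) breaks down at this step. You bound $\|K(\lambda-A(\zeta))^{-1}\|^2$ by the weighted Hilbert--Schmidt integral $\iint|k(v,u)|^2|\lambda-A(\zeta)(u)|^{-2}\,du\,dv$. You then assert $\iint_{\{|u|>R\}\cup\{|v|>R\}}|k(v,u)|^2\,du\,dv\le CR^{-1}$. This is false: $K$ is not Hilbert--Schmidt, and that integral is $+\infty$ for every $R$.

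The Maxwellian decay you invoke appears only in the loss kernel $k_1$. The gain kernel $k_2$ decays only in $|u-v|$ and transversally to the sphere $|u|=|v|$; this is what $E(u,v)$ in \eqref{rbdl-euv} encodes. For $|v|\gg\mathbf{c}$ one checks from \eqref{rbdl-k2} and Lemma \ref{rbdlap-l1} that $k_2(v,u)\gtrsim\mathbf{c}/v_0$ on a set of $u$ of measure $\sim\mathbf{c}^{-1}$ near $\{|u|=|v|,\ 1\le|u-v|\le2\}$. Hence $\int|k(v,u)|^2du\gtrsim\mathbf{c}\,v_0^{-2}$, which is not integrable over $v\in\R^3$. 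The same happens for the classical hard-sphere kernel, where $\int|k(v,u)|^2du\sim|v|^{-1}$, and the paper's bound \eqref{rbdl-2svku1} is likewise non-integrable. Since $|\lambda-A(\zeta)(u)|^{-1}$ is bounded below for fixed $\lambda,\zeta,\mathbf{c}$, your weighted Hilbert--Schmidt quantity is infinite and the argument yields nothing.

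The paper cuts only in $u$. On $\{|u|\le R\}$ it applies Cauchy--Schwarz and uses $\sup_u\int|k(v,u)|^2dv\le C$, so no cutoff in $v$ is needed. On $\{|u|>R\}$ it applies a Schur-type Cauchy--Schwarz to $|k|$ rather than $|k|^2$:
\begin{equation*}
\int\Big(\int_{|u|>R}|k(v,u)||f(u)|\,du\Big)^2dv\le\int\Big(\int|k(v,u)|\,du\Big)\Big(\int_{|u|>R}|k(v,u)||f(u)|^2du\Big)dv\le C\int_{|u|>R}\frac{|f(u)|^2}{(1+|u|)^{2}}\,du .
\end{equation*}
This uses \eqref{rbdl-svkualfa} with $\alpha=0$, and then, by the symmetry of $k$, with $\alpha=1$. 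Together with the factor $\delta^{-2}$ from the multiplier, this gives $C\delta^{-2}R^{-2}$. Note the power is $R^{-2}$, not $R^{-1}$.

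\paragraph{The exponents are not reached.} Even with the correct tail, your slab bookkeeping cannot produce the stated exponents, a point you leave open. Splitting into $\{|y+\zeta\cdot\tilde u|\le\rho\}$ and its complement gives $R^3\rho^{-2}+\delta^{-2}R^{a}\rho|\zeta|^{-1}$, with $a\ge3$ needed for bounds uniform in $\mathbf{c}\ge1$. Optimizing over $\rho$ and $R$ then yields strictly weaker powers. For example, with the tail $\delta^{-2}R^{-2}$ and $a=3$ one gets only $\delta^{-26/15}|\zeta|^{-4/15}$.

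The missing step is to integrate the Lorentzian exactly. In polar coordinates about $\omega$, with $t=\cos\varphi$ and $z=y+|\zeta|\mathbf{c}rt/\sqrt{\mathbf{c}^2+r^2}$,
\begin{equation*}
\int_{-1}^{1}\frac{r^2\,dt}{\delta^2+(y+|\zeta|\mathbf{c}rt/\sqrt{\mathbf{c}^2+r^2})^2}\le\frac{r\sqrt{\mathbf{c}^2+r^2}}{\mathbf{c}|\zeta|}\int_{\R}\frac{dz}{\delta^2+z^2}\le\frac{\pi r(1+r)}{\delta|\zeta|},
\end{equation*}
uniformly in $\mathbf{c}\ge1$. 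So the bounded-velocity part is at most $CR^3\delta^{-1}|\zeta|^{-1}$. Choosing $R=(|\zeta|/\delta)^{1/5}$ balances it against $\delta^{-2}R^{-2}$ and gives $C\delta^{-8/5}|\zeta|^{-2/5}$.

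\paragraph{Part (2) and small $|\zeta|$.} Your Part (2) is correct, and simpler than the paper's proof, which repeats the $R$-splitting. Since $|\zeta\cdot\tilde u|<\mathbf{c}r_0\le|y|/2$ for every $u$, the multiplier is bounded by $\min\{\delta^{-1},2|y|^{-1}\}$. Then $\|K(\lambda-A(\zeta))^{-1}\|\le\|K\|\min\{\delta^{-1},2|y|^{-1}\}$, and your interpolation finishes the proof. You must, however, use the operator bound $\|K\|\le C$, obtained from the Schur test via \eqref{rbdl-svkualfa} with $\alpha=0$ and the symmetry of $k$, not a Hilbert--Schmidt bound.

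Finally, your fallback $C\delta^{-2}$ for $|\zeta|\le1$ is not $\le C\delta^{-8/5}$ as $\delta\to0$. The paper's computation also yields only $\delta^{-8/5}|\zeta|^{-2/5}$ in that range. This is therefore an issue with the $(1+|\zeta|)$ form of the statement rather than with your method.
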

\begin{proof}
Firstly, we want to show \eqref{rbesplamdaa-im}. By \eqref{hataex-1}, we have
\bma
\|K(\lambda-A(\zeta))^{-1}f\|^2&=\int_{\R^3}\(\int_{\R^3}k(v,u)(\lambda+\nu(u)+\mathrm{i}\zeta\cdot\tilde{u})^{-1}f(u)du\)^2dv\nnm\\
&\leq2\int_{\R^3}\(\int_{\{|u|\leq R\}}k(v,u)(\lambda+\nu(u)+\mathrm{i}\zeta\cdot\tilde{u})^{-1}f(u)du\)^2dv\nnm\\
&\quad+2\int_{\R^3}\(\int_{\{|u|> R\}}k(v,u)(\lambda+\nu(u)+\mathrm{i}\zeta\cdot\tilde{u})^{-1}f(u)du\)^2dv\nnm\\
&=:I_1+I_2.
\ema
For $I_1$, we have by Lemma \ref{rbdl-svku1alfa} that
\bma
I_1&\leq2\int_{\R^3}\(\int_{\{|u|\leq R\}}|\lambda+\nu(u)+\mathrm{i}\zeta\cdot\tilde{u}|^{-2}du\)\(\int_{\{|u|\leq R\}}|k(v,u)|^2|f(u)|^2du\)dv\nnm\\
&\leq C\|f\|^2\int_{\{|u|\leq R\}}|\lambda+\nu(u)+\mathrm{i}\zeta\cdot\tilde{u}|^{-2}du\nnm\\
&=C\|f\|^2\int_{\{|u|\leq R\}}\frac{1}{|\lambda+\nu(u)+\mathrm{i}|\zeta|\tilde{u}_1|^2}du\nnm\\
&\leq C\|f\|^2\int_{\{|u|\leq R\}}\frac{1}{(\mathrm{Re}\lambda+\nu_0)^2+(\mathrm{Im}\lambda+|\zeta|\tilde{u}_1)^2}du\nnm\\
&=C\|f\|^2\int_0^{2\pi}d\theta\int_0^R \int_0^{\pi}\frac{r^2\sin\varphi}{(\mathrm{Re}\lambda+\nu_0)^2+(\mathrm{Im}\lambda+|\zeta|\frac{\mathbf{c}r}{\sqrt{\mathbf{c}^2+r^2}}\cos\varphi)^2}d \varphi  dr\nnm\\
&=C\|f\|^2\int_0^{2\pi}d\theta\int_0^R\int_{-1}^{1}\frac{r^2}{(\mathrm{Re}\lambda+\nu_0)^2+(\mathrm{Im}\lambda+|\zeta|\frac{\mathbf{c}rt}{\sqrt{\mathbf{c}^2+r^2}})^2}dt dr\nnm\\
&=C|\zeta|^{-1}\|f\|^2\int_0^{2\pi}d\theta\int_0^R\frac{1}{\mathbf{c}}r\sqrt{\mathbf{c}^2+r^2}\int_{\mathrm{Im}\lambda-\frac{|\zeta|r\mathbf{c}}{\sqrt{\mathbf{c}^2+r^2}}}^{\mathrm{Im}\lambda+\frac{|\zeta|r\mathbf{c}}{\sqrt{\mathbf{c}^2+r^2}}}\frac{1}{(\mathrm{Re}\lambda+\nu_0)^2+z^2}dz dr\nnm\\
&\leq C\delta^{-1}|\zeta|^{-1}\|f\|^2\int_0^Rr(1+r)dr\leq CR^3 \delta^{-1}|\zeta|^{-1}\|f\|^2.
\ema

For $I_2$, we have by Lemma \ref{rbdl-svku1alfa} that
\bma
I_2&\le C\delta^{-2}\int_{\R^3}\(\int_{\{|u|> R\}}|k(v,u)| du\)\(\int_{\{|u|>R\}}|k(v,u)| |f(u)|^2 du\)dv\nnm\\
&\le C\delta^{-2}\int_{\{|u|> R\}} |f(u)|^2\(\int_{\R^3}|k(v,u)|(1+|v|)^{-1} dv\)du\nnm\\
&\leq C\delta^{-2}R^{-2}\|f\|^2.
\ema
By taking $R=\(\frac{|\zeta|}{\delta}\)^{\frac{1}{5}}$, we can obtain \eqref{rbesplamdaa-im}.

Now, we will prove \eqref{rbesplamdaa-ze}. If $|\zeta|<r_0$, $u\leq R$ and $|\mathrm{Im}\lambda|\geq2\mathbf{c}r_0$, we have
\bq
|\mathrm{Im}\lambda+\tilde{u}\cdot\zeta| \geq|\mathrm{Im}\lambda|-|\tilde{u}||\zeta|\geq|\mathrm{Im}\lambda|-\frac{\mathbf{c}Rr_0}{\sqrt{ \mathbf{c}^2 +R^2}}\geq\frac{|\mathrm{Im}\lambda|}{2},\quad  R>\mathbf{c}.
\eq
Thus,
\bma
I_1&\leq C\|f\|^2\int_{\{|u|\leq R\}}|\lambda+\nu(u)+\mathrm{i}\zeta\cdot\tilde{u}|^{-2}du\nnm\\
&\leq C(\delta^2+|\mathrm{Im}\lambda|^2)^{-1}R^3\|f\|^2\nnm\\
&\leq C\delta^{-1}|\mathrm{Im}\lambda|^{-1}R^3\|f\|^2.
\ema
By taking $ R=\(\frac{|\mathrm{Im}\lambda|}{\delta}\)^{\frac{1}{5}}$, we can verify \eqref{rbesplamdaa-ze}. The proof of the lemma is completed.
\end{proof}

\begin{lem}\label{rbesp2-si2}
{\rm (1)} For any $r_1>0$, there exists $\alpha=\alpha(r_1)>0$ such that when $|\zeta|>r_1$,
\bq
\sigma(B(\zeta))\subset\{\lambda\in \mathbb{C}\,|\,\mathrm{Re}\lambda<-\alpha\}.
\eq

{\rm (2)} For any $\delta>0$ and all $\zeta\in\R^3$, there exists $y_1=y_1(\delta)>0$ such that
\bq
\rho(B(\zeta))\supset\{\lambda\in\mathbb{C}\,|\,\mathrm{Re}\lambda\geq-\nu_0+\delta,\,|\mathrm{Im}\lambda|\geq y_1\}\cup\{\lambda\in\mathbb{C}\,|\,\mathrm{Re}\lambda>0\}.
\eq
\end{lem}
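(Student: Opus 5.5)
Both parts will be handled with the factorization \eqref{lamdaba}, $\lambda-B(\zeta)=(I-K(\lambda-A(\zeta))^{-1})(\lambda-A(\zeta))$, combined with Lemma \ref{rbesp2-si1} and the bounds of Lemma \ref{rbesplamdaa}. For $\mathrm{Re}\lambda>-\nu_0$ the multiplication operator $\lambda-A(\zeta)=\lambda+\nu(v)+\mathrm{i}\zeta\cdot\tilde v$ is boundedly invertible, since $|\lambda+\nu+\mathrm{i}\zeta\cdot\tilde v|\ge \mathrm{Re}\lambda+\nu_0$. Hence $\lambda\in\rho(B(\zeta))$ as soon as $\|K(\lambda-A(\zeta))^{-1}\|<1$, because then the Neumann series inverts the first factor.

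\textbf{Part (2).} The half-plane $\mathrm{Re}\lambda>0$ lies in $\rho(B(\zeta))$ because $B(\zeta)$ generates a contraction semigroup (Lemma \ref{rbesp1}); Hille--Yosida gives $\|(\lambda-B(\zeta))^{-1}\|\le(\mathrm{Re}\lambda)^{-1}$. For the strip with large $|\mathrm{Im}\lambda|$ I will split into two cases.

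\emph{Large $|\zeta|$.} For $|\zeta|>r_0$, estimate \eqref{rbesplamdaa-im} gives $\|K(\lambda-A)^{-1}\|^2\le C\delta^{-8/5}(1+r_0)^{-2/5}$. Choosing $r_0=r_0(\delta)$ large makes this at most $1/4$, uniformly in $\lambda$ with $\mathrm{Re}\lambda\ge-\nu_0+\delta$.

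\emph{Small $|\zeta|$.} For $|\zeta|\le r_0$ and $|\mathrm{Im}\lambda|\ge 2\mathbf{c}r_0$, estimate \eqref{rbesplamdaa-ze} gives $\|K(\lambda-A)^{-1}\|^2\le C\delta^{-8/5}(1+|\mathrm{Im}\lambda|)^{-2/5}$. This is at most $1/4$ once $|\mathrm{Im}\lambda|\ge y_1:=\max\{2\mathbf{c}r_0,\,(4C)^{5/2}\delta^{-4}\}$.

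\textbf{Main obstacle.} The threshold $y_1$ then depends on $\mathbf{c}$. This is acceptable since the statement only requires $y_1=y_1(\delta)$ for fixed $\mathbf{c}$, but I would note it.

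\textbf{Part (1).} Fix $r_1>0$. By the large-$|\zeta|$ case above (with $\delta=\nu_0/2$), there is $R_1$ such that for $|\zeta|>R_1$ the whole half-plane $\mathrm{Re}\lambda\ge-\nu_0/2$ lies in $\rho(B(\zeta))$. It remains to treat the compact shell $r_1\le|\zeta|\le R_1$, which I will do by contradiction plus a compactness argument.

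Suppose there are $\zeta_n$ in the shell and eigenvalues $\lambda_n$ of $B(\zeta_n)$ with $\mathrm{Re}\lambda_n\to0$. By part (2), with $\delta=\nu_0/2$, the imaginary parts satisfy $|\mathrm{Im}\lambda_n|\le y_1$. Passing to a subsequence, $\zeta_n\to\zeta_*$ with $|\zeta_*|\ge r_1$ and $\lambda_n\to\lambda_*$ with $\mathrm{Re}\lambda_*=0$.

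Write the eigenfunctions as $f_n=(\lambda_n-A(\zeta_n))^{-1}g_n$, where $\|g_n\|=1$ and $g_n=K(\lambda_n-A(\zeta_n))^{-1}g_n$. Since $K$ is compact and $(\lambda_n-A(\zeta_n))^{-1}$ converges in operator norm (the multipliers converge uniformly on $v$, using $|\tilde v|\le\mathbf{c}$), $K(\lambda_n-A(\zeta_n))^{-1}$ converges in norm to $K(\lambda_*-A(\zeta_*))^{-1}$. A subsequence of $g_n$ then converges to some $g_*\ne0$ with $g_*=K(\lambda_*-A(\zeta_*))^{-1}g_*$. Thus $\lambda_*$ is an eigenvalue of $B(\zeta_*)$ with $\mathrm{Re}\lambda_*=0$ and $\zeta_*\ne0$, contradicting Lemma \ref{rbesp2-si1}(2).

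Therefore there is $\alpha>0$ with $\sigma(B(\zeta))\subset\{\mathrm{Re}\lambda<-\alpha\}$ on the shell, after taking $\alpha\le\nu_0/2$ and using Lemma \ref{rbesp2-si1}(1) for the essential spectrum. The compactness and norm-continuity step is the main technical point; in particular, the uniform convergence of the multiplier $\mathrm{i}\zeta_n\cdot\tilde v$ is what makes the limit passage work.
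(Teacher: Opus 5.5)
Your proposal is correct and takes essentially the route the paper points to. The paper's proof only says to combine Lemma \ref{rbesplamdaa} with the argument of Lemma 2.6 in \cite{ZHAO-1}, and that is what you do: the factorization \eqref{lamdaba} with a Neumann series handles part (2) and the large-$|\zeta|$ regime, and a compactness/contradiction argument against Lemma \ref{rbesp2-si1}(2) handles part (1) on the shell $r_1\le|\zeta|\le R_1$. Your remark that $y_1$ depends on $\mathbf{c}$ through $|\mathrm{Im}\lambda|\ge 2\mathbf{c}r_0$ is accurate and applies equally to the paper's version, which the statement permits. Using $\mathrm{Re}\lambda\ge-\nu_0+\delta$ where Lemma \ref{rbesplamdaa} has strict inequality is harmless: replace $\delta$ by $\delta/2$.
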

\begin{proof}
By using Lemma \ref{rbesplamdaa} and the same argument as Lemma
2.6 in \cite{ZHAO-1}, we can prove the lemma. The detail of the proof is omitted for brevity.
\end{proof}

\begin{lem}[\cite{ZHAO-1}]\label{3dmvpbsp10}
{\rm (1)} There exists a constant $r_0>0$ such that for $|\zeta|\leq r_0$,
$$\sigma(B(\zeta))\cap\Big\{\lambda\in\mathbb{C}\,|\,\mathrm{Re}\lambda\geq-\frac{\nu_0}{2}\Big\}=\{\lambda_j(|\zeta|),~j=-1,0,1,2,3\}.$$
The eigenvalues $\lambda_j(|\zeta|)$, $j=-1,0,1,2,3$ are $C^{\infty}$ functions of $|\zeta|$, and satisfy the following expansions:
\bq\label{rbdl-sp-lamze}
\lambda_j(|\zeta|)=-\mathrm{i} \mu_j|\zeta|-|\zeta|^2A_j+O(|\zeta|^3),\quad|\zeta|\leq r_0,
\eq
where
\bq\label{sp4}
\left\{\bln
&\mu_{\pm1}=\mp \sqrt{a^2+b^2},\quad \mu_k=0,\quad k=0,2,3,\\
&A_{j}=-\(L^{-1}P_1(\tilde{v}\cdot\omega)E_j(\zeta),(\tilde{v}\cdot\omega)E_j(\zeta)\),\quad j=-1,0,1,2,3,\\
&E_{\pm1}(\zeta)=\frac{b}{\sqrt{2a^2+2b^2}}\chi_0\mp \frac{1}{\sqrt{2}} \frac{\zeta\cdot\chi'}{|\zeta|}+\frac{a}{\sqrt{2a^2+2b^2}}\chi_4,\\
&E_0(\zeta)=-\frac{a}{\sqrt{a^2+b^2}}\chi_0+\frac{b}{\sqrt{a^2+b^2}}\chi_4,\\
&E_l(\zeta)=W^l\cdot\chi',\quad l=2,3,\quad \chi'=(\chi_1,\chi_2,\chi_3),
\eln\right.
\eq
with $\omega=\zeta/|\zeta|$, and $W^l~(l=2,3)$ are orthonormal vectors satisfying $W^l\cdot\omega=0$.

{\rm (2)} The eigenfunctions $\psi_j(\zeta)=\psi_j(\zeta,v)$, $j=-1,0,1,2,3$ satisfy
\bq\label{egf1}
\left\{\bln
&\big(\psi_j(\zeta),\overline{\psi_k}(\zeta)\big)=\delta_{jk},\quad -1\leq j,k\leq3,\\
&\psi_j(\zeta)=P_0\psi_j(\zeta)+P_1\psi_j(\zeta),\\
&P_0\psi_j(\zeta)=E_j(\zeta)+O(|\zeta|),\\
&P_1\psi_j(\zeta)=\mathrm{i} L^{-1}P_1(\tilde{v}\cdot\zeta) E_j(\zeta)+O(|\zeta|^2).
\eln\right.
\eq

{\rm (3)} The semigroup $S(t,\zeta)=e^{\frac{tB(\zeta)}{\epsilon^2}}$ with $\zeta\in\mathbb{R}^3$ has the following decomposition:
\bq\label{se1}
S(t,\zeta)f=S_1(t,\zeta)f+S_2(t,\zeta)f,\quad f\in L^2(\mathbb{R}^3), \quad t>0,
\eq
where
\bq\label{se2}
S_1(t,\zeta)f=\sum^{3}_{j=-1}e^{\frac{t\lambda_j(|\zeta|)}{\epsilon^2}}\big(f,\overline{\psi_j(\zeta)}\big)\psi_j(\zeta),\quad |\zeta|\leq r_0,
\eq
with $\big(\lambda_j(|\zeta|),\psi_j(\zeta)\big)$ being the eigenvalue and eigenfunction of the operator $B(\zeta)$ for $|\zeta|\leq r_0$, and
$S_2(t,\zeta)f=S(t,\zeta)f-S_1(t,\zeta)f$
satisfies for two constants $\tau_1>0$ and $C>0$ independent of $\xi$, $\mathbf{c}$ and $\epsilon$ that
\bq\label{se3}
\|S_2(t,\zeta)f\|\leq Ce^{-\frac{\tau_1 t}{\epsilon^2}}\|f\|,\quad t>0.
\eq
\end{lem}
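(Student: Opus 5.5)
This lemma is cited from \cite{ZHAO-1}, so the plan follows the standard Ellis--Pinsky style perturbation argument for $B(\zeta)=L-\mathrm{i}\tilde v\cdot\zeta$. The novelty to check is that constants stay uniform in $\mathbf{c}$, using Lemmas \ref{rbesplamdaa} and \ref{rbesp2-si2}.

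\textbf{Part (1).} I will use a Lyapunov--Schmidt reduction onto $N_0$. Writing $\psi=P_0\psi+P_1\psi$, the eigenvalue problem $\lambda\psi=B(\zeta)\psi$ splits into two equations:
\begin{align*}
&\lambda P_0\psi=-\mathrm{i}P_0(\tilde v\cdot\zeta)(P_0\psi+P_1\psi),\\
&\big(\lambda-P_1B(\zeta)P_1\big)P_1\psi=-\mathrm{i}P_1(\tilde v\cdot\zeta)P_0\psi.
\end{align*}
For $\mathrm{Re}\lambda\ge-\nu_0/2$ and $|\zeta|$ small, $\lambda-P_1BP_1$ is invertible on $N_0^\perp$, by coercivity \eqref{LF-F} together with the decomposition \eqref{lamdaba}. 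We then solve for $P_1\psi$ and substitute into the first equation. This reduces the problem to a $5\times5$ matrix equation $\lambda X=-\mathrm{i}|\zeta|\,\mathbb{A}(\omega)X+|\zeta|^2\mathbb{D}(\lambda,\zeta)X$. Here $\mathbb{A}(\omega)$ is the matrix $\big((\tilde v\cdot\omega)\chi_i,\chi_j\big)$, whose entries involve only $a$ and $b$ by \eqref{rbdl-a}--\eqref{rbdl-b}. Its eigenvalues are $\pm\sqrt{a^2+b^2}$ and a triple zero, with eigenvectors $E_j$ as in \eqref{sp4}.

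Rescaling $\lambda=|\zeta|\beta$ and applying the implicit function theorem yields $\beta_{\pm1}$ near $\mp\sqrt{a^2+b^2}$. The triple eigenvalue $0$ needs a second-order analysis. Projecting onto $\mathrm{span}\{E_0,E_2,E_3\}$ gives the matrix $\big(L^{-1}P_1(\tilde v\cdot\omega)E_j,(\tilde v\cdot\omega)E_k\big)$. By rotational symmetry it is diagonal, with entries $-A_j$, and the $A_j$ are split. Smoothness in $|\zeta|$ then follows, and this gives \eqref{rbdl-sp-lamze}.

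To show these are the only spectral points with $\mathrm{Re}\lambda\ge-\nu_0/2$, I will use two facts. Lemma \ref{rbesp2-si2}(2) excludes $|\mathrm{Im}\lambda|\ge y_1$. On the remaining compact region, $\|K(\lambda-A)^{-1}\|$ and the reduced resolvent are bounded, so any eigenvalue must satisfy $|\lambda|=O(|\zeta|)$ and therefore lies in the perturbative branch.

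\textbf{Part (2).} The eigenvectors come from the reduction: $P_0\psi_j=E_j+O(|\zeta|)$, and $P_1\psi_j=\mathrm{i}L^{-1}P_1(\tilde v\cdot\zeta)E_j+O(|\zeta|^2)$ since $(\lambda-P_1BP_1)^{-1}=L^{-1}+O(|\zeta|)$ on $N_0^\perp$. The normalization $(\psi_j,\overline{\psi_k})=\delta_{jk}$ uses the symmetry $\overline{B(\zeta)}^*=B(\zeta)$ in the bilinear pairing. Eigenvectors with distinct eigenvalues are then orthogonal in this pairing, and we normalize each one.

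\textbf{Part (3).} I write $S$ via the inverse Laplace transform of the resolvent along $\mathrm{Re}\lambda=\sigma>0$ and shift the contour to $\mathrm{Re}\lambda=-\tau_1$. For $|\zeta|\le r_0$ the residues are exactly $S_1$. For $|\zeta|>r_0$, Lemma \ref{rbesp2-si2}(1) gives a spectral gap.

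The main obstacle is controlling the contour integral uniformly at large $|\mathrm{Im}\lambda|$, where the resolvent is not integrable. I would handle this in the standard way: subtract $e^{tA(\zeta)}$, which decays like $e^{-\nu_0t}$, and use the Duhamel iteration $e^{tB}=e^{tA}+\int e^{(t-s)A}Ke^{sB}$ twice. The remainder resolvent term $(\lambda-A)^{-1}[K(\lambda-A)^{-1}]^2(I-K(\lambda-A)^{-1})^{-1}$ then decays like $|\mathrm{Im}\lambda|^{-1-2/5\cdot 2/2}$ via \eqref{rbesplamdaa-ze}. After the time rescaling $t/\epsilon^2$, this gives \eqref{se3} with $\tau_1,C$ independent of $\mathbf{c}$.
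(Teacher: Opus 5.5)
Your architecture (Lyapunov--Schmidt reduction onto $N_0$, implicit function theorem, Laplace inversion with a resolvent expansion) is the standard Ellis--Pinsky route. The paper itself gives no proof and just cites \cite{ZHAO-1}. Two steps fail as written, and a third needs a caveat.

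\textbf{Degeneracy in Parts (1)--(2).} You resolve the triple root $\beta=0$ by saying the second-order matrix on $\mathrm{span}\{E_0,E_2,E_3\}$ is diagonal ``and the $A_j$ are split''. They are not split: $A_2=A_3$. The paper records $A_2=A_3=\kappa_1$, and Lemma~\ref{rbdl-aj-est} writes both as $-(L^{-1}P_1\tilde v_1\chi_2,\tilde v_1\chi_2)$. Nothing shows $A_0\neq A_2$ either. So the second-order reduction still has a repeated eigenvalue, the implicit function theorem does not apply, and a repeated eigenvalue of a non-self-adjoint family may branch non-smoothly.

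The missing idea is to decouple the reduced problem by symmetry \emph{before} perturbing. Rotate so that $\omega=e_1$. Then $B(se_1)$, $P_0$ and $P_1$ commute with $v_2\mapsto-v_2$, $v_3\mapsto-v_3$ and $v_2\leftrightarrow v_3$. The reduced $5\times5$ system therefore splits into a $3\times3$ block in $(\chi_0,\chi_1,\chi_4)$ and two identical scalar equations for $\chi_2,\chi_3$, and the determinant factors as $D_0(\lambda,s)D_1(\lambda,s)^2$. After $\lambda=sz$, the $3\times3$ block has simple roots $z=-\mathrm{i}\mu_j$, $j=-1,0,1$, so the IFT gives $\lambda_{-1},\lambda_0,\lambda_1$. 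The scalar equation $\lambda+s^2R(\lambda,s)=0$, with $R(0,0)=A_2$, is solved by the IFT at $(0,0)$ and gives $\lambda_2\equiv\lambda_3$.

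The same splitting gives $(\psi_2,\overline{\psi_3})=0$ in Part (2); your bilinear-symmetry argument only separates distinct eigenvalues. It also gives semisimplicity of the rank-two residue at $\lambda_2=\lambda_3$ in Part (3).

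\textbf{The threshold $-\nu_0/2$.} Coercivity \eqref{LF-F} inverts $\lambda-P_1B(\zeta)P_1$ on $N_0^\perp$ only for $\mathrm{Re}\lambda>-\mu$. The decomposition \eqref{lamdaba} shows only that the spectrum in $\mathrm{Re}\lambda>-\nu_0$ is discrete, not that it is empty. Your exclusion argument therefore works on $\mathrm{Re}\lambda\ge-\mu/2$. Reaching $-\nu_0/2$ requires knowing that $L$ has no eigenvalues in $[-\nu_0/2,0)$.

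\textbf{Uniformity in Part (3).} Your bound $|\mathrm{Im}\lambda|^{-7/5}$ for the remainder $(\lambda-A)^{-1}[K(\lambda-A)^{-1}]^2(I-K(\lambda-A)^{-1})^{-1}$ uses $\|(\lambda-A(\zeta))^{-1}\|\le2|\mathrm{Im}\lambda|^{-1}$ and \eqref{rbesplamdaa-ze}. Both need $|\mathrm{Im}\lambda|\gtrsim\mathbf{c}|\zeta|$, and \eqref{rbesplamdaa-ze} is stated only for $|\zeta|\le r_0$, $|\mathrm{Im}\lambda|\ge2\mathbf{c}r_0$. When $|\mathrm{Im}\lambda|<\mathbf{c}|\zeta|$, some velocities satisfy $\mathrm{Im}\lambda+\zeta\cdot\tilde v=0$. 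At those velocities the multiplier $(\lambda+\nu+\mathrm{i}\zeta\cdot\tilde v)^{-1}$ has modulus $(\mathrm{Re}\lambda+\nu(v))^{-1}$.

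Hence on $\mathrm{Re}\lambda=-\tau_1$ there is a window of length $\sim\mathbf{c}\max\{|\zeta|,r_0\}$ on which your operator-norm bound does not decay. Integrating over it yields a constant growing with $|\zeta|$ and $\mathbf{c}$. But \eqref{se3} is needed for all $\zeta=\epsilon\xi\in\mathbb{R}^3$ with $C$ independent of both.

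What is needed is an $L^2$ bound in $\mathrm{Im}\lambda$, not a better pointwise bound. Write the remainder as $(\lambda-A)^{-1}K(\lambda-B)^{-1}K(\lambda-A)^{-1}$ (it equals yours) and pair it with $g$. Then use
\begin{equation*}
\int_{\mathbb{R}}\|(\lambda-A(\zeta))^{-1}f\|^2\,d(\mathrm{Im}\lambda)=\pi\int_{\mathbb{R}^3}\frac{|f(v)|^2}{\mathrm{Re}\lambda+\nu(v)}\,dv\le\frac{\pi\|f\|^2}{\mathrm{Re}\lambda+\nu_0}
\end{equation*}
for $f$, the same identity for $g$ via $A(\zeta)^*=A(-\zeta)$, and Cauchy--Schwarz in $\mathrm{Im}\lambda$. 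This is uniform in $\zeta$ and $\mathbf{c}$. It leaves only $\sup_{\mathrm{Re}\lambda=-\tau_1}\|K(\lambda-B(\zeta))^{-1}K\|$ to control. That follows from:
\begin{itemize}
\item \eqref{rbesplamdaa-im} for large $|\zeta|$;
\item the argument of \eqref{rbesplamdaa-ze} for large $|\mathrm{Im}\lambda|$ and bounded $|\zeta|$;
\item the spectral gap of Lemma~\ref{rbesp2-si2} on the remaining compact set.
\end{itemize}
This last step is only for fixed $\mathbf{c}$; uniformity in $\mathbf{c}$ on that set is a further point your outline does not address. The terms $e^{tA}$ and $\int_0^te^{(t-s)A}Ke^{sA}ds$ are bounded directly by $e^{-\nu_0t}$ and $\|K\|te^{-\nu_0t}$.
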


\begin{remark}From Lemma \ref{rbdl-aj-est}, it holds that
\be \mu_{\pm1}\sim \mp \frac{\mathbf{c}^2}{\sqrt{k_0T}}, \quad A_j\sim 1,\quad j=0,2,3,\quad A_{\pm1}\sim \mathbf{c}^4.  \label{Aj}\ee
\end{remark}

%
%
%
%

%
%
%
%

\section{Fluid approximation of semigroup}\setcounter{equation}{0}
\label{sect3}
In this section, we give the first and second order fluid approximations of the semigroup $e^{\frac{tB_{\epsilon}}{\epsilon^2}}$, which will be used to prove the convergence and establish the convergence rate of the solution to the relativistic Boltzmann equation \eqref{rbe2}--\eqref{Pm3} towards the solution to the iNS system \eqref{rNS-1}.

For any $f_0\in L^2$, set
\bq
e^{\frac{tB_{\epsilon}}{\epsilon^2}}f_0=\(\mathcal{F}^{-1}e^{\frac{tB(\epsilon\xi)}{\epsilon^2}}\mathcal{F}\)f_0.
\eq
it hold that
$$
\|e^{\frac{tB_{\epsilon}}{\epsilon^2}}f_0\|^2_{H^k}=\int_{\mathbb{R}^3}(1+|\xi|^2)^k\|e^{\frac{tB(\epsilon\xi)}{\epsilon^2}}\hat{f}_0\|^2d\xi\leq \int_{\mathbb{R}^3}(1+|\xi|^2)^k\|\hat{f}_0\|^2d\xi=\|f_0\|^2_{H^k}.
$$
This means that the operator $\frac{B_{\epsilon}}{\epsilon^2}$ generates a strongly continuous contraction semigroup $e^{\frac{tB_{\epsilon}}{\epsilon^2}}$ in $H^k$.  Therefore, $f(t,x,v)=e^{\frac{tB_{\epsilon}}{\epsilon^2}}f_0$ is a global solution to the linearized relativistic Boltzmann equation \eqref{Bm1} for any $f_0\in H^k$.

\subsection{Semigroup of the linear iNS system}
In this subsection, we are going to study the solution to the linear iNS system. Consider the following linear iNS system for $(n,m,q) $:
\bma
&\divx m=0,\quad bn+aq=0,\label{lrNS-1}\\
&\partial_tm-A_2\Delta_xm+\nabla_xp=H_1,\label{lrNS-2}\\
&\partial_tq-A_0\Delta_xq= \frac{b}{\sqrt{a^2+b^2}}H_2,\label{lrNS-3}
\ema
where $H_1= (H^1_1,H^2_1,H^3_1 )$ and $H_2$ are given functions, $p$ is the pressure satisfying $p=\Delta_x^{-1}\div_xH_1$, and the initial data $(n,m,q)(0)$ satisfies \eqref{rbdl-innmq}. For any $U_0=U_0(x,v)\in N_0$, we define
\bq\label{lns4j1}
V(t,\xi)\hat{U}_0=\sum_{j=0,2,3}e^{-A_j|\xi|^2t}\big(\hat{U}_0,E_j(\xi)\big)E_j(\xi),
\eq
where $A_j>0$ and $E_j(\xi)\in N_0$, $j=0,2,3$ are defined by \eqref{sp4}. Note that $A_0=\kappa_0$ and $A_2=A_3=\kappa_1$.

Set
\bq\label{lns4j1-1}
V(t)U_0=\(\mathcal{F}^{-1}V(t,\xi)\mathcal{F}\)U_0.
\eq
Then, we can represent the solution to the iNS system \eqref{lrNS-1}--\eqref{lrNS-3} by the semigroup $V(t)$ as follows.
\begin{lem}\label{3dmvpbfas1}
For any $f_0\in L^2$ and $H \in  L^1_t(L^2)$, define
$$
U(t,x,v)=V(t)P_0f_0+\int^t_0V(t-s)H(s)ds.
$$
Let $(n,m,q)=((U,\chi_0),(U,\chi'),(U,\chi_4))$. Then $(n,m,q)(t,x)\in L^{\infty}_t(L^2_x)$ is an unique global solution to the linear iNS system \eqref{lrNS-1}--\eqref{lrNS-3} with the initial data $(n,m,q)(0)$ given by \eqref{rbdl-innmq} and the inhomogeneous terms $H_1,H_2$ given by
$$ (H_1)_\bot=(H, \chi')_\bot , \quad H_2= (H,E_0).$$
Here $Y_\bot=Y-\Delta_x^{-1}\Tdx\div_x Y$ with $Y=(Y_1,Y_2,Y_3)(x)$.
\end{lem}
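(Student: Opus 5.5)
The plan is to work entirely on the Fourier side, where $V(t,\xi)$ is an explicit finite-rank operator on $N_0$. The first step is to record the geometry of the eigenvectors $E_0,E_2,E_3$. For fixed $\xi\neq0$ with $\omega=\xi/|\xi|$:
\begin{itemize}
\item $E_0$ spans the direction $-a\chi_0+b\chi_4$ (normalized). Its coefficients $n=-a/\sqrt{a^2+b^2}$ and $q=b/\sqrt{a^2+b^2}$ satisfy $bn+aq=0$.
\item $E_2,E_3$ span $\{W\cdot\chi'\,|\,W\perp\omega\}$. Their momentum parts are divergence free in Fourier variables, i.e. $\xi\cdot\hat m=0$.
\end{itemize}
Hence for $\hat U=V(t,\xi)\hat U_0$ the coefficients $(\hat n,\hat m,\hat q)=((\hat U,\chi_0),(\hat U,\chi'),(\hat U,\chi_4))$ satisfy $\xi\cdot\hat m=0$ and $b\hat n+a\hat q=0$. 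The same holds for the Duhamel term $\int_0^tV(t-s,\xi)\hat H(s)ds$, because it is a superposition of vectors in the same span. This gives the constraint equations in \eqref{lrNS-1} for all $t\ge0$.

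Next I compute the projections explicitly.
\begin{itemize}
\item Since $\sum_{l=2,3}W^l\otimes W^l=I-\omega\otimes\omega$, we get
\[
\sum_{l=2,3}(\hat U_0,E_l)(E_l,\chi')=(I-\omega\otimes\omega)(\hat U_0,\chi'),
\]
which is the Fourier symbol of $Y\mapsto Y_\bot=Y-\Delta_x^{-1}\nabla_x\div_xY$.
\item Because $E_0\perp\chi'$, the momentum component of $V(t,\xi)\hat U_0$ is
\[
\hat m(t)=e^{-\kappa_1|\xi|^2t}\,(I-\omega\otimes\omega)(\hat U_0,\chi').
\]
\item The $\chi_4$ component is
\[
\hat q(t)=e^{-\kappa_0|\xi|^2t}\,\frac{b}{\sqrt{a^2+b^2}}\,(\hat U_0,E_0),
\]
and $\hat n=-\frac ab\hat q$.
\end{itemize}
At $t=0$, with $U_0=P_0f_0$ and $(P_0f_0,\chi_j)=(f_0,\chi_j)$, this reproduces exactly \eqref{rbdl-innmq}: $m(0)=(f_0,\chi')_\bot$ and $n(0)=-\frac{a}{\sqrt{a^2+b^2}}(f_0,E_0)$.

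For the inhomogeneous part I differentiate in $t$. Writing $\hat H_1$ for any vector field with $(\hat H_1)_\bot=(I-\omega\otimes\omega)(\hat H,\chi')$, we get
\[
\partial_t\hat m+\kappa_1|\xi|^2\hat m=(I-\omega\otimes\omega)(\hat H,\chi').
\]
This equals $\hat H_1-\mathrm{i}\xi\hat p$ with $\hat p$ the transform of $p=\Delta_x^{-1}\div_xH_1$, since $(I-\omega\otimes\omega)\hat H_1=\hat H_1-\xi(\xi\cdot\hat H_1)/|\xi|^2$. Inverting the Fourier transform gives \eqref{lrNS-2} with $A_2=\kappa_1$. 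Similarly,
\[
\partial_t\hat q+\kappa_0|\xi|^2\hat q=\frac{b}{\sqrt{a^2+b^2}}(\hat H,E_0),
\]
which is \eqref{lrNS-3}. These computations are justified in the distributional/mild sense for $f_0\in L^2$ and $H\in L^1_t(L^2)$.

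The regularity $(n,m,q)\in L^\infty_t(L^2_x)$ follows from $\|V(t,\xi)\|\le1$, since it is an orthogonal-type projection times contracting exponentials. Plancherel then gives
\[
\|U(t)\|\le\|f_0\|+\|H\|_{L^1_t(L^2)}.
\]
For uniqueness I take the difference of two solutions with zero data and zero forcing. In Fourier variables:
\begin{itemize}
\item The constraints force $\hat m\perp\xi$ and $(\hat n,\hat q)\parallel(-a,b)$.
\item Projecting the momentum equation by $I-\omega\otimes\omega$ kills $\nabla_xp$.
\item This leaves the scalar ODEs $\partial_t\hat y=-A_j|\xi|^2\hat y$ with zero initial data, so each coefficient vanishes for a.e. $\xi$.
\end{itemize}

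The main obstacle is bookkeeping rather than analysis. One point is to check that the normalization constants in $E_0$ and $E_{2,3}$ yield precisely the factor $\frac{b}{\sqrt{a^2+b^2}}$ and the Leray projection; this requires $\chi_j$ orthonormal and $E_0$ unit norm. The other is to interpret $H_1$ properly: it is only determined modulo gradients, which are absorbed into the pressure.
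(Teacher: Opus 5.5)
Your proposal is correct and takes essentially the same route as the paper. Both arguments work on the Fourier side and rest on the same facts: $(E_0,\chi')=0$, $(E_l,\chi_0)=(E_l,\chi_4)=0$, and $\sum_{l=2,3}W^l\otimes W^l=I-\omega\otimes\omega$. These identify the components of $V(t,\xi)$ with the Leray-projected heat flow for $\hat m$ and the scalar heat flow for $(b\hat q-a\hat n)/\sqrt{a^2+b^2}$. The only difference is direction: the paper solves these Fourier ODEs by Duhamel's formula and matches the result with $(U,\chi_j)$, so uniqueness follows from ODE uniqueness, whereas you verify the equations for $U$ and then prove uniqueness separately — the same computation run the other way.
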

\begin{proof}
By taking Fourier transform to \eqref{lrNS-1}--\eqref{lrNS-3}, we have
\bma
&\mathrm{i}\xi\cdot\hat{m}=0,\quad b\hat{n}+a\hat{q}=0,\label{lrnsf1}\\
&\partial_t\hat{m}+A_2|\xi|^2\hat{m} =\mathbb{O}_1\hat{H}_1,\label{lrnsf2}\\
&\partial_t\hat{q}+A_0|\xi|^2\hat{q}= \frac{b}{\sqrt{a^2+b^2}}\hat{H}_2,\label{lrnsf3}
\ema
where $\mathbb{O}_1=|\xi|^{-2}\xi\times \xi\times $, and the initial data $(\hat{n},\hat{m},\hat{q})(0)$ satisfies
\bq\label{lnsi1}
\hat{m}(0)=\mathbb{O}_1 (P_0\hat{f}_0,\chi' ),\quad b\hat{q}(0)-a\hat{n}(0) =(P_0\hat{f}_0,b\chi_4-a\chi_0 ).
\eq

By \eqref{lrnsf3} and using the relation $q=\frac{b}{a^2+b^2}(bq-an)$, we obtain
\be
\partial_t\(\frac{b\hat{q}-a\hat{n}}{\sqrt{a^2+b^2}}\)+A_0|\xi|^2\frac{b\hat{q}-a\hat{n}}{\sqrt{a^2+b^2}}= \hat{H}_2.\label{lnsj2}
\ee
It follows from \eqref{lnsi1}--\eqref{lnsj2} that
\bma
\frac{b\hat{q}-a\hat{n}}{\sqrt{a^2+b^2}}&=e^{-A_0|\xi|^2t}\frac{b\hat{q}(0)-a\hat{n}(0)}{\sqrt{a^2+b^2}}+\int^t_0e^{-A_0|\xi|^2(t-s)} \hat{H}_2(s)ds\nnm\\
&=e^{-A_0|\xi|^2t}\big(P_0\hat{f}_0,E_{0}(\xi)\big)\big(E_0(\xi),E_0(\xi)\big)\nnm\\
&\quad+\int^t_0e^{-A_0|\xi|^2(t-s)}\big(\hat{H}(s),E_0(\xi)\big)\big(E_0(\xi),E_0(\xi)\big)ds.\label{lnsj3}
\ema

By \eqref{lrnsf1}--\eqref{lrnsf2}, we have
\bma\label{lnsj5}
\hat{m}(t,\xi)&=e^{-A_2|\xi|^2t} \hat{m}(0)+\int^t_0e^{-A_2|\xi|^2(t-s)}\mathbb{O}_1\hat{H}_1(s)ds\nnm\\
&=\sum_{j=2,3}e^{-A_j|\xi|^2t}\big(P_0\hat{f}_0,E_{j}(\xi)\big)\big(E_j(\xi),\chi'\big)\nnm\\
&\quad+\sum_{j=2,3}\int^t_0e^{-A_j|\xi|^2(t-s)}\big(\hat{H}(s),E_j(\xi)\big)\big(E_j(\xi),\chi'\big)ds,
\ema
where $\chi'=(\chi_1,\chi_2,\chi_3)$. Noting that $(E_0(\xi),\chi')=0$ and $(E_j(\xi),\chi_0)=(E_j(\xi),\chi_4)=0$, $j=2,3,$ we can prove the lemma by using \eqref{lnsj3}--\eqref{lnsj5}.
\end{proof}

\subsection{Fluid approximation of $e^{\frac{t B_{\epsilon}}{\epsilon^2}}$}
We have the time decay rates of the semigroups $e^{\frac{tB_{\epsilon}}{\epsilon^2}}$ and $V(t)$ as follows.
\begin{lem}\label{3dmvpbfas2}
For any $\epsilon\in(0,1)$, $\alpha\in\mathbb{R}^3$ and any $f_0\in L^2$, we have
\bma
&\|P_0\partial^{\alpha}_xe^{\frac{tB_{\epsilon}}{\epsilon^2}}f_0\|_{L^2}\leq C(1+t)^{-\frac{3+2m}{4}}(\|\partial^{\alpha}_xf_0\|_{L^2}+\|\partial^{\alpha'}_xf_0\|_{L^{2,1}}),\label{ns321}\\
&\|P_1\partial^{\alpha}_xe^{\frac{tB_{\epsilon}}{\epsilon^2}}f_0\|_{L^2}\leq
C\(\epsilon(1+t)^{-\frac{5+2m}{4}}+e^{-\frac{\tau_1 t}{\epsilon^2}}\)(\|\partial^{\alpha}_xf_0\|_{H^1}+\|\partial^{\alpha'}_xf_0\|_{L^{2,1}}),\label{ns322}
\ema
where $\alpha'\leq\alpha$, $m=|\alpha-\alpha'|$, $\tau_1>0$ and $C>0$ are two constants independent of $\epsilon$ and $\mathbf{c}$.

Moreover, if $P_0f_0=0$, then
\bma
&\|P_0\partial^{\alpha}_xe^{\frac{tB_{\epsilon}}{\epsilon^2}}f_0\|_{L^2}\leq C\(\epsilon(1+t)^{-\frac{5+2m}{4}}+e^{-\frac{\tau_1 t}{\epsilon^2}}\)(\|\partial^{\alpha}_xf_0\|_{H^1}+\|\partial^{\alpha'}_xf_0\|_{L^{2,1}}),\label{ns325}\\
&\|P_1\partial^{\alpha}_xe^{\frac{tB_{\epsilon}}{\epsilon^2}}f_0\|_{L^2}\leq
C\(\epsilon^2(1+t)^{-\frac{7+2m}{4}}+e^{-\frac{\tau_1 t}{\epsilon^2}}\)(\|\partial^{\alpha}_xf_0\|_{H^2}+\|\partial^{\alpha'}_xf_0\|_{L^{2,1}}).\label{ns326}
\ema

In particular, it holds that for $P_0f_0=0$,
\be
\|\partial^{\alpha}_xe^{\frac{tB_{\epsilon}}{\epsilon^2}}f_0\|_{L^2}\leq C\(\epsilon(1+t)^{-\frac{5+2m}{4}}+\epsilon t^{-\frac{1}{2}}e^{-\frac{\tau_2t}{2}}+e^{-\frac{\tau_1 t}{\epsilon^2}}\)(\|\partial^{\alpha}_xf_0\|_{L^2}+\|\partial^{\alpha'}_xf_0\|_{L^{2,1}}).\label{ns325-re}
\ee
\end{lem}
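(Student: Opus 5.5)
We work entirely in Fourier variables and use Plancherel, so that
$$\|P_0\partial^\alpha_x e^{\frac{tB_\epsilon}{\epsilon^2}}f_0\|_{L^2}^2=\int_{\R^3}|\xi^{\alpha}|^2\|P_0e^{\frac{tB(\epsilon\xi)}{\epsilon^2}}\hat f_0\|^2d\xi .$$
We split the frequency domain into the low-frequency part $\epsilon|\xi|\le r_0$ and the high-frequency part $\epsilon|\xi|>r_0$. On the high-frequency part, Lemma \ref{rbesp2-si2}(1) together with the resolvent bounds of Lemma \ref{rbesplamdaa} gives $\|e^{\frac{tB(\epsilon\xi)}{\epsilon^2}}\|\le Ce^{-\frac{\tau_1t}{\epsilon^2}}$, uniformly in $\mathbf{c}$. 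This contributes $Ce^{-\frac{\tau_1t}{\epsilon^2}}\|\partial^\alpha_x f_0\|_{L^2}$. On the low-frequency part we use the decomposition $S=S_1+S_2$ of Lemma \ref{3dmvpbsp10}(3). The $S_2$ piece is again bounded by $e^{-\frac{\tau_1 t}{\epsilon^2}}$ by \eqref{se3}. Everything therefore reduces to estimating
$$S_1(t,\epsilon\xi)\hat f_0=\sum_{j}e^{\frac{t\lambda_j(\epsilon|\xi|)}{\epsilon^2}}\big(\hat f_0,\overline{\psi_j(\epsilon\xi)}\big)\psi_j(\epsilon\xi).$$

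By \eqref{rbdl-sp-lamze} and $\mathrm{Re}\,A_j>0$, shrinking $r_0$ if needed gives $\mathrm{Re}\,\lambda_j(\epsilon|\xi|)/\epsilon^2\le -\tau_2|\xi|^2$. Hence $|e^{t\lambda_j/\epsilon^2}|\le e^{-\tau_2|\xi|^2t}$. The constants here must be controlled uniformly in $\mathbf{c}$; this uses $A_j\sim1$ or $\mathbf{c}^4$ from \eqref{Aj}, and $r_0$ is chosen so that the $O(|\zeta|^3)$ remainder is dominated. I expect this to be the main technical point, and I would simply take it from the cited spectral analysis.

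For the $P_0$ estimate \eqref{ns321}, we use $\|P_0\psi_j\|\le C$ from \eqref{egf1}. This gives
$$\int_{\epsilon|\xi|\le r_0}|\xi|^{2|\alpha|}e^{-2\tau_2|\xi|^2t}\|\hat f_0\|^2d\xi .$$
We write $|\xi^\alpha|\le|\xi|^{m}|\xi^{\alpha'}|$ and split further at $|\xi|\le1$ and $|\xi|>1$. On $|\xi|\le1$ we bound $\|\xi^{\alpha'}\hat f_0\|\le\|\partial^{\alpha'}_xf_0\|_{L^{2,1}}$ pointwise via the Hausdorff–Young/Minkowski inequality, and then $\int_{|\xi|\le1}|\xi|^{2m}e^{-2\tau_2|\xi|^2t}d\xi\le C(1+t)^{-\frac{3+2m}{2}}$. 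On $|\xi|>1$ we use $e^{-\tau_2 t}\|\partial^\alpha_x f_0\|_{L^2}$.

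For the $P_1$ estimate \eqref{ns322}, the key fact is $P_1\psi_j(\zeta)=O(|\zeta|)$ by \eqref{egf1}. This yields an extra factor $\epsilon|\xi|$, which produces $\epsilon(1+t)^{-\frac{5+2m}{4}}$ on $|\xi|\le1$. On $1<|\xi|\le r_0/\epsilon$ we bound $\epsilon|\xi|e^{-\tau_2|\xi|^2t}\le\epsilon e^{-\tau_2t/2}|\xi|$; this costs one derivative, hence the $H^1$ norm.

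If $P_0f_0=0$, then $(\hat f_0,\overline{\psi_j})=(\hat f_0,\overline{P_1\psi_j})=O(\epsilon|\xi|)\|\hat f_0\|$. This gains one factor $\epsilon|\xi|$ for the $P_0$ part, giving \eqref{ns325}, and two factors for the $P_1$ part, giving \eqref{ns326} with the $H^2$ norm. For \eqref{ns325-re}, we keep the $L^2$ norm rather than $H^1$ on $1<|\xi|\le r_0/\epsilon$ by using
$$\epsilon|\xi|e^{-\tau_2|\xi|^2t}\le \epsilon t^{-1/2}\sup_y ye^{-\tau_2y^2/2}\,e^{-\tau_2 t/2},$$
since $|\xi|>1$. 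This produces the term $\epsilon t^{-\frac12}e^{-\frac{\tau_2t}{2}}$, and summing $P_0$ and $P_1$ concludes.
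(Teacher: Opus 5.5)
Your proposal is correct and follows essentially the same route as the paper. Both use Plancherel, the splitting $S=S_1+S_2$ with \eqref{se3} for the exponentially decaying part, and the gain of $\epsilon|\xi|$ from $P_1\psi_j=O(\epsilon|\xi|)$, once or twice depending on whether $P_0f_0=0$. Both also use the split at $|\xi|=1$ with the $L^{2,1}$ bound on low frequencies, and the bound $\sup_{|\xi|\ge1}|\xi|^2e^{-\tau_2|\xi|^2t}\le Ct^{-1}$ for \eqref{ns325-re}. The only cosmetic difference is that you treat $\epsilon|\xi|>r_0$ separately via the spectral gap, whereas the paper simply takes $S_2=S$ there and applies \eqref{se3} on all of $\mathbb{R}^3$.
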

\begin{proof}
By \eqref{se1}, we have that for $j=0,1$,
\bma
\|P_j\partial^{\alpha}_xe^{\frac{tB_{\epsilon}}{\epsilon^2}}f_0\|^2_{L^2}&=\int_{\mathbb{R}^3}\|P_j\xi^{\alpha}e^{\frac{tB(\epsilon\xi)}{\epsilon^2}}\hat{f}_0\|^2d\xi\nonumber\\
&\leq\int_{\{|\xi|\leq\frac{r_0}{\epsilon}\}}\|\xi^{\alpha}P_jS_1(t,\epsilon\xi)\hat{f}_0\|^2d\xi+\int_{\mathbb{R}^3}\|\xi^{\alpha}S_2(t,\epsilon\xi)\hat{f}_0\|^2d\xi,\label{lnsp1}
\ema
where
\be
\int_{\mathbb{R}^3}\|\xi^{\alpha}S_2(t,\epsilon\xi)\hat{f}_0\|^2d\xi \leq Ce^{-\frac{2\tau_1 t}{\epsilon^2}}\int_{\mathbb{R}^3}(\xi^{\alpha})^2\|\hat{f}_0\|^2d\xi\leq Ce^{-\frac{2\tau_1 t}{\epsilon^2}}\|\partial^{\alpha}_xf_0\|^2_{L^2}.\label{lnsp3}
\ee
By \eqref{se2}, we have for $\epsilon|\xi|\leq r_0$,
\be\label{s111}
S_1(t,\epsilon\xi)\hat{f}_0=\sum^{3}_{j=-1}e^{\frac{-\mathrm{i}|\xi|\mu_j t}{\epsilon}-|\xi|^2A_j t+O(\epsilon|\xi|^3)t}\big(\hat{f}_0,  \overline{\psi_j(\eps\xi)} \big) \psi_j(\eps\xi).
\ee
Thus,
\bma
\int_{\{|\xi|\leq\frac{r_0}{\epsilon}\}}\|\xi^{\alpha}P_0S_1(t,\epsilon\xi)\hat{f}_0\|^2d\xi
&\leq C\int_{\{|\xi|\leq\frac{r_0}{\epsilon}\}}e^{-2\tau_2|\xi|^2t}(\xi^{\alpha})^2\|\hat{f}_0\|^2d\xi\nnm\\
&\leq C\sup_{\xi\le1}\|\xi^{\alpha'}\hat{f}_0\|^2\int_{\{|\xi|\leq1\}}e^{-2\tau_2|\xi|^2t}|\xi|^{2|\alpha-\alpha'|}d\xi\nnm\\
&\quad+Ce^{-2\tau_2t}\int_{\{|\xi|\geq1\}}(\xi^{\alpha})^2\|\hat{f}_0\|^2d\xi\nnm\\
&\leq C(1+t)^{-\frac{3+2m}{2}}\big(\|\partial^{\alpha}_xf_0\|^2_{L^2}+\|\partial^{\alpha'}_xf_0\|^2_{L^{2,1}}\big),\label{lnsp4}
\ema
where $m=|\alpha-\alpha'|$ and $\tau_2=\min\{\frac{A_j}{2},j=-1,0,1,2,3\}$.

By \eqref{s111}, we can obtain
\bma
\int_{\{|\xi|\leq\frac{r_0}{\epsilon}\}}\|\xi^{\alpha}P_1S_1(t,\epsilon\xi)\hat{f}_0\|^2d\xi&\leq C\epsilon^{2}\int_{\{|\xi|\leq\frac{r_0}{\epsilon}\}}e^{-2\tau_2|\xi|^2t}|\xi|^2(\xi^{\alpha})^2\|\hat{f}_0\|^2d\xi\nnm\\
&\leq C\epsilon^2(1+t)^{-\frac{5+2m}{2}}\big(\|\partial^{\alpha}_xf_0\|^2_{H^1}+\|\partial^{\alpha'}_xf_0\|^2_{L^{2,1}}\big),\label{lnsp5}
\ema
where we have used the fact that $P_1\psi_j(\eps\xi)=O(\eps|\xi|)$. Combining \eqref{lnsp1}--\eqref{lnsp5}, we prove \eqref{ns321}--\eqref{ns322}.

If $P_0f_0=0$, then it holds that for $\epsilon|\xi|\leq r_0$,
\be\label{s222}
S_1(t,\epsilon\xi)\hat{f}_0=\mathrm{i}\sum^{3}_{j=-1}e^{\frac{-\mathrm{i}|\xi|\mu_j t}{\epsilon}-|\xi|^2A_j t+O(\epsilon|\xi|^3)t} \big(\hat{f}_0, P_1\overline{\psi_j(\eps\xi)} \big) \psi_j(\eps\xi) .
\ee
Thus, 
\bma
\int_{\{|\xi|\leq\frac{r_0}{\epsilon}\}}\|\xi^{\alpha}P_0S_1(t,\epsilon\xi)\hat{f}_0\|^2d\xi&\leq C\epsilon^{2}\int_{\{|\xi|\leq\frac{r_0}{\epsilon}\}}e^{-2\tau_2|\xi|^2t}|\xi|^2(\xi^{\alpha})^2\|\hat{f}_0\|^2d\xi\nnm\\
&\leq C\epsilon^{2}(1+t)^{-\frac{5+2m}{2}}\big(\|\partial^{\alpha}_xf_0\|^2_{H^1}+\|\partial^{\alpha'}_xf_0\|^2_{L^{2,1}}\big),\label{lnsp8}\\
\int_{\{|\xi|\leq\frac{r_0}{\epsilon}\}}\|\xi^{\alpha}P_1S_1(t,\epsilon\xi)\hat{f}_0\|^2d\xi&\leq C\int_{\{|\xi|\leq\frac{r_0}{\epsilon}\}}\epsilon^{4}e^{-2\tau_2|\xi|^2t}|\xi|^4(\xi^{\alpha})^2\|\hat{f}_0\|^2d\xi\nnm\\
&\leq C\epsilon^4(1+t)^{-\frac{7+2m}{2}}\big(\|\partial^{\alpha}_xf_0\|^2_{H^2}+\|\partial^{\alpha'}_xf_0\|^2_{L^{2,1}}\big),\label{lnsp9}
\ema
where we have used the fact that $P_1\psi_j(\eps\xi)=O(\eps|\xi|)$. Combining \eqref{lnsp1}--\eqref{lnsp3} and \eqref{lnsp8}--\eqref{lnsp9}, we obtain \eqref{ns325}--\eqref{ns326}.

Finally, we prove \eqref{ns325-re}. Indeed, we can rewrite \eqref{lnsp8} as
\bmas
 \int_{\{|\xi|\leq\frac{r_0}{\epsilon}\}}\|\xi^{\alpha}S_1(t,\epsilon\xi)\hat{f}_0\|^2d\xi
&\leq C\epsilon^{2}\sup_{\xi\le 1}\|\xi^{\alpha'}\hat{f}_0\|^2\int_{\{|\xi|\leq1\}}e^{-2\tau_2|\xi|^2t}|\xi|^{2+2|\alpha-\alpha'|}d\xi\nnm\\
&\quad+C\epsilon^{2}e^{-\tau_2t}\sup_{|\xi|\geq1}(e^{-\tau_2|\xi|^2t}|\xi|^2)\int_{\{|\xi|\geq1\}}(\xi^{\alpha})^2\|\hat{f}_0\|^2d\xi\nnm\\
&\leq C\epsilon^{2}\((1+t)^{-\frac{5+2m}{2}}+t^{-1}e^{-\tau_2t}\)\big(\|\partial^{\alpha}_xf_0\|^2_{L^2}+\|\partial^{\alpha'}_xf_0\|^2_{L^{2,1}}\big).
\emas
This and \eqref{lnsp3} implies that \eqref{ns325-re} holds. The proof of the lemma is completed.
\end{proof}


\begin{lem}\label{3dmvpbfas3}
For any $\alpha\in\mathbb{N}^3$ and any $u_0\in N_0$, we have
\bma
\|\partial^{\alpha}_xV(t)u_0\|_{L^2}&\leq C(1+t)^{-\frac{3+2m}{4}}(\|\partial^{\alpha}_xu_0\|_{L^2}+\|\partial^{\alpha'}_xu_0\|_{L^{2,1}}),\\
\|\partial^{\alpha}_xV(t)u_0\|_{L^{\infty}}&\leq C(1+t)^{-\frac{3}{4}}\beta_m(t)(\|\partial^{\alpha'}_xu_0\|_{L^{\infty}}+\|\partial^{\alpha'}_xu_0\|_{L^2}),
\ema
where $\alpha'\leq\alpha$, $m=|\alpha-\alpha'|$, $C>0$ is a constant and $\beta_0(t)=\ln\(2+\frac{1}{t}\)$, $\beta_m(t)=t^{-\frac{m}{2}}$, $m\geq1$. 
\end{lem}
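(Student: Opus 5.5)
We work in Fourier variables and use that $V(t,\xi)$ in \eqref{lns4j1} is a finite sum of heat multipliers $e^{-A_j|\xi|^2t}$ ($j=0,2,3$, with $A_j\sim1$) composed with the projections onto $E_j(\xi)$. The $E_j(\xi)$ depend on $\xi$ only through $\omega=\xi/|\xi|$. Each mode is a scalar symbol of the form $e^{-A_j|\xi|^2t}\alpha_j(\omega)$ acting on the coefficients $(u_0,\chi_k)$, and $\alpha_j$ is smooth on $\S^2$: it is a polynomial in $\omega$ of degree at most $2$, for instance $\delta_{kl}-\omega_k\omega_l$ for $j=2,3$ combined. Since $u_0\in N_0$ is a finite combination of the $\chi_k$, it suffices to estimate scalar operators $T_j(t)g=\mathcal F^{-1}\big(e^{-A_j|\xi|^2t}\alpha_j(\omega)\hat g\big)$ acting on $g=\partial^{\alpha'}_x(u_0,\chi_k)$.

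\textbf{$L^2$ bound.} We use Plancherel and split into $|\xi|\le1$ and $|\xi|\ge1$, exactly as in \eqref{lnsp4}. On $|\xi|\le1$ we bound $\|\xi^{\alpha'}\hat u_0\|\le C\|\partial^{\alpha'}_xu_0\|_{L^{2,1}}$ and then use
\[
\int_{|\xi|\le1}e^{-2A|\xi|^2t}|\xi|^{2m}\,d\xi\le C(1+t)^{-\frac{3+2m}{2}} .
\]
On $|\xi|\ge1$ the factor $e^{-A|\xi|^2t}\le e^{-At}$ is bounded against $\|\partial^\alpha_xu_0\|_{L^2}$. Since $|\alpha_j|\le C$, this gives the first inequality directly.

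\textbf{$L^\infty$ bound.} We write $\partial^\alpha_x T_j(t)g=\partial^{\alpha-\alpha'}_xT_j(t)\,\partial^{\alpha'}_x(u_0,\chi_k)$ and split the frequency domain with a smooth cutoff $\phi(\xi)$ supported in $|\xi|\le2$.

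For the low-frequency part, Hausdorff--Young and Cauchy--Schwarz give the bound
\[
\|\hat g\|_{L^2}\Big(\int_{|\xi|\le2}e^{-2A|\xi|^2t}|\xi|^{2m}\,d\xi\Big)^{1/2}\le C(1+t)^{-\frac{3+2m}{4}}\|g\|_{L^2}.
\]
This is dominated by $(1+t)^{-\frac34}\beta_m(t)$: for $m\ge1$ because $(1+t)^{-m/2}\le t^{-m/2}$, and for $m=0$ because $\beta_0\ge\ln2$. This is why the $\|\cdot\|_{L^2}$ term appears on the right-hand side.

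For the high-frequency part, the $L^2$ term again gives $e^{-ct}$-decay for $t\ge1$, so it only remains to treat $t\le1$, and there we must use $\|g\|_{L^\infty}$. This is the main obstacle, because $\alpha_j(\omega)$ makes the multiplier non-smooth at the origin, so its kernel is not integrable. We handle it by showing that the kernel
\[
k_t(x)=\mathcal F^{-1}\big((1-\phi(\xi))\,\xi^{\alpha-\alpha'}e^{-A|\xi|^2t}\alpha_j(\omega)\big)
\]
satisfies $\|k_t\|_{L^1}\le C\beta_m(t)$.

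For the scaling we set $\xi=\eta/\sqrt t$, so that
\[
k_t(x)=t^{-\frac{3+m}{2}}K\big(x/\sqrt t\big),\qquad \hat K(\eta)=(1-\phi(\eta/\sqrt t))\,\eta^{\beta}e^{-A|\eta|^2}\alpha_j(\omega),
\]
and $\|k_t\|_{L^1}=t^{-m/2}\|K\|_{L^1}$.

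For $m\ge1$ the symbol $\eta^\beta\alpha_j(\omega)$ is homogeneous of degree $m\ge1$ times a Gaussian. Its inverse transform is bounded by $C|x|^{-3-m}$ at infinity and is locally integrable, uniformly in the cutoff. Hence $\|K\|_{L^1}\le C$, which gives $\beta_m(t)=t^{-m/2}$.

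For $m=0$ the multiplier is a zero-homogeneous function $\alpha_j(\omega)$, for example the Riesz-type projection $\delta_{kl}-\omega_k\omega_l$, times a Gaussian, with the frequency region $|\eta|\lesssim\sqrt t$ removed. Its kernel behaves like a Calder\'on--Zygmund kernel $|x|^{-3}$ on the annulus $1\lesssim|x|\lesssim t^{-1/2}$ and is rapidly decaying beyond that. Integrating gives $\|K\|_{L^1}\le C(1+\ln(1/t))\le C\ln(2+1/t)=C\beta_0(t)$.

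Combining the low- and high-frequency parts and summing over $j$ and $k$ yields the second inequality. The delicate point is to verify these kernel bounds rigorously, via repeated integration by parts in $\eta$ using the smoothness of $\alpha_j$ away from the origin, while keeping the constants uniform in the cutoff.
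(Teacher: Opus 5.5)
Your proposal is correct and follows the paper's route. The paper only writes $V(t,\xi)\hat u_0$ explicitly as in \eqref{lnsp10}, i.e.\ heat multipliers $e^{-A_j|\xi|^2t}$ times the constant projection onto $a\chi_0-b\chi_4$ and, for $j=2,3$ combined, times the smooth symbol $\delta_{kl}-\omega_k\omega_l$, and then defers the estimates to Lemma 3.5 of \cite{Li-1}. Your argument supplies exactly those omitted details: the $L^2$ part is as in \eqref{lnsp4}, and you correctly trace the factor $\beta_0(t)=\ln(2+\frac1t)$ to the zero-homogeneous part $\omega_k\omega_l$.

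The kernel bounds you flag as delicate are right, and can be closed quickly via the identity $\frac{\xi_k\xi_l}{|\xi|^2}e^{-A|\xi|^2t}=A\,\xi_k\xi_l\int_t^\infty e^{-A|\xi|^2s}\,ds$. It gives the pointwise bound $C(t+|x|^2)^{-\frac{3+m}{2}}$ for the $\partial^\beta_x$-derivative ($|\beta|=m$) of the corresponding kernel. For $m\ge1$ this gives an $L^1$ norm of $Ct^{-\frac m2}$. For $m=0$ it gives a contribution $C\ln(2+\frac1t)$ from $|x|\le1$ (paired against $L^\infty$), plus a tail on $|x|\ge1$ that is uniformly bounded in $L^2$ (paired against $L^2$).
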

\begin{proof}
By \eqref{lns4j1}, we have
\bma
V(t,\xi)\hat{u}_0&=\frac{1}{a^2+b^2}e^{-A_0|\xi|^2t} (a\hat{U}_0-b\hat{U}_4)(a\chi_0-b\chi_4)\nnm\\
&\quad+e^{-A_2|\xi|^2t}\sum_{j=1}^3  \hat{U}_j\(\chi_j-\frac{(\xi\cdot\chi')}{|\xi|^2}\xi_j\),\label{lnsp10}
\ema
where $\hat{U}_j=(\hat{u}_0,\chi_j),~j=0,1,2,3,4$.
Then, we can estimate $\partial^{\alpha}_xV(t)u_0$ precisely by using an same argument as that of Lemma 3.5 in \cite{Li-1}. Hence, we omit the detail of the proof for brevity.
\end{proof}


We now prepare two Lemmas \ref{3dmvpbfas4}--\ref{3dmvpbfas5} that will be used to study the fluid dynamical approximation of the semigroup $e^{\frac{tB_{\epsilon}}{\epsilon^2}}$.
\begin{lem}[\cite{Yang-2}]\label{3dmvpbfas4}
For any functions $\phi(r)$ satisfying $\big|\phi^{(k)}(r)\big|\leq C(1+r)^{-2-k-\delta}$ for any $\delta>0$ and $k=0,1$, we have
$$
\bigg|\int_{\mathbb{R}^3}e^{\mathrm{i}x\cdot\xi}e^{\mathrm{i}\vartheta|\xi|}\alpha(\omega)\phi(|\xi|)d\xi\bigg|\leq C|\vartheta|^{-1},
$$
where $\alpha(\omega)$ is a smooth function for $\omega=\frac{\xi}{|\xi|}\in\S^2$ and $\vartheta\in\R$.
\end{lem}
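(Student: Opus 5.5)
Since $\alpha$ is smooth on $\S^2$ and $\phi$ depends only on $r=|\xi|$, I would write the integral in polar coordinates $\xi=r\omega$:
$$
I(x,\vartheta)=\int_0^\infty e^{\mathrm{i}\vartheta r}\phi(r)r^2\Big(\int_{\S^2}e^{\mathrm{i}r x\cdot\omega}\alpha(\omega)\,d\omega\Big)dr=:\int_0^\infty e^{\mathrm{i}\vartheta r}\phi(r)r^2 G(rx)\,dr .
$$
For $|\vartheta|\le1$ the bound is trivial: $|G|\le C$ and $\phi(r)r^2\in L^1$ because $\phi(r)r^2\le C(1+r)^{-\delta}$... — here I notice that decay $(1+r)^{-2-\delta}$ against $r^2$ is \emph{not} integrable. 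So the trivial bound must instead be absorbed into the large-$\vartheta$ argument, or one treats the statement as an oscillatory integral in the distributional sense. My plan is to integrate by parts once in $r$ using $e^{\mathrm{i}\vartheta r}=(\mathrm{i}\vartheta)^{-1}\partial_r e^{\mathrm{i}\vartheta r}$. The boundary term at $r=0$ vanishes because of the factor $r^2$. The boundary term at $\infty$ is handled by inserting a cutoff and passing to the limit. This leaves
$$
I=-(\mathrm{i}\vartheta)^{-1}\int_0^\infty e^{\mathrm{i}\vartheta r}\partial_r\big(\phi(r)r^2G(rx)\big)dr,
$$
and the task is to show the remaining integral is bounded uniformly in $x$.

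Expanding the derivative gives three terms. The term $\phi'(r)r^2G$ is bounded by $(1+r)^{-1-\delta}$, which is integrable. The term $2r\phi G$ has the same bound. The term $\phi r^2\,x\cdot(\nabla G)(rx)$ is the dangerous one, since $|x|$ can be large. For this term I would use the stationary phase structure of $G(y)=\int_{\S^2}e^{\mathrm{i}y\cdot\omega}\alpha\,d\omega$. This gives $G(y)=e^{\mathrm{i}|y|}a_+(y)+e^{-\mathrm{i}|y|}a_-(y)$ with symbols satisfying $|a_\pm|\lesssim(1+|y|)^{-1}$ and $|\nabla a_\pm|\lesssim(1+|y|)^{-2}$. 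Consequently $r\,\partial_r[G(rx)]$ equals $\pm\mathrm{i}r|x|e^{\pm\mathrm{i}r|x|}a_\pm+O((1+r|x|)^{-1})$. The leading oscillation then recombines with $e^{\mathrm{i}\vartheta r}$ into $e^{\mathrm{i}(\vartheta\pm|x|)r}$, and I would treat the $r$-integral as a one-dimensional oscillatory integral with amplitude $r|x|a_\pm(rx)\phi(r)r$. That amplitude is bounded by $r\phi(r)\cdot\min(r|x|,1)\cdot(\ldots)$.

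I expect this term to be the main obstacle: making the amplitude integrable uniformly in $x$ when $|x|\approx|\vartheta|$ and there is no cancellation. If needed, I would use the full hypothesis $k=0,1$ more carefully. Alternatively, I could instead first integrate in the direction $\omega$, splitting $\S^2$ into a cap near $\pm x/|x|$ and its complement. On the complement one integrates by parts in $\omega$. On the cap the phase $r(\vartheta+x\cdot\omega)$ is handled directly in $r$.

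Since this is Lemma 3.4 quoted from \cite{Yang-2}, I would ultimately just verify that the hypotheses there match those stated here and cite it.
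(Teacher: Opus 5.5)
The paper gives no proof of this lemma; it quotes it from \cite{Yang-2}. So your closing fallback (check the hypotheses and cite) is exactly what the paper does. As a self-contained argument, however, your sketch stops at the one step that matters. You call the term $r^2\phi(r)\,x\cdot(\nabla G)(rx)$ the main obstacle and leave it open. Instead you propose a stationary-phase splitting of $G$ and a second oscillatory treatment of $e^{\mathrm{i}(\vartheta\pm|x|)r}$. Extracting oscillation there would indeed fail when $|x|=\mp\vartheta$, because the recombined phase is then constant. But none is needed. After your single integration by parts the factor $|\vartheta|^{-1}$ is already out, and it only remains to show
\begin{equation*}
\sup_{x\in\R^3}\int_0^\infty\big|\partial_r F(r)\big|\,dr<\infty,\qquad F(r)=r^2\phi(r)G(rx).
\end{equation*}
This is a question of absolute integrability, not cancellation.

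The missing observation is that $\nabla G$ decays like $G$. The derivative $\partial_r[G(rx)]=\mathrm{i}\int_{\S^2}(x\cdot\omega)e^{\mathrm{i}rx\cdot\omega}\alpha(\omega)\,d\omega$ is again the Fourier transform of a smooth density on $\S^2$, with amplitude $\mathrm{i}\omega\alpha(\omega)$. Hence $|\partial_r[G(rx)]|\le C|x|(1+r|x|)^{-1}$; to see this elementarily, rotate $x$ onto the $\omega_3$-axis, write the sphere integral as $\int_{-1}^1 e^{\mathrm{i}r|x|s}B(s)\,ds$ with $B'$ integrable, and integrate by parts once. Therefore
\begin{equation*}
r^2|\phi(r)|\,\big|\partial_r[G(rx)]\big|\le Cr|\phi(r)|\,\frac{r|x|}{1+r|x|}\le Cr(1+r)^{-2-\delta},
\end{equation*}
which lies in $L^1(0,\infty)$ uniformly in $x$. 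This is literally your own bound ``amplitude $\lesssim r\phi(r)\min(r|x|,1)$''; you only had to notice that $r\phi(r)\in L^1(0,\infty)$ because $\delta>0$.

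The other two terms are bounded by $Cr(1+r)^{-2-\delta}$ and $Cr^2(1+r)^{-3-\delta}$, as you say; this is where $\delta>0$ and the $k=1$ hypothesis enter. The boundary terms vanish, since $F(0)=0$ and $|F(R)|\le CR^2(1+R)^{-2-\delta}\le C(1+R)^{-\delta}\to0$. The same identity shows that $\lim_{R\to\infty}\int_{|\xi|\le R}$ exists, which is the right meaning of the integral, since $r^2\phi(r)$ need not be integrable. Thus $|I(x,\vartheta)|\le|\vartheta|^{-1}\|F'\|_{L^1(0,\infty)}\le C|\vartheta|^{-1}$ for every $\vartheta\neq0$, uniformly in $x$. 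No separate small-$\vartheta$ case, stationary-phase decomposition or splitting of $\S^2$ into caps is needed.
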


\begin{lem}[\cite{Li-1}]\label{3dmvpbfas5}
For any $f_0\in N_0$, we have
\bq\label{f-sS-2}
\|S_2(t,\epsilon\xi)f_0\|\leq C\big(\epsilon|\xi|1_{\{\epsilon|\xi|\leq r_0\}}+1_{\{\epsilon|\xi|\geq r_0\}}\big)e^{-\frac{\tau_1 t}{\epsilon^2}}\|f_0\|.
\eq
\end{lem}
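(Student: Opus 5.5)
The plan is to split according to whether $\epsilon|\xi|\ge r_0$ or $\epsilon|\xi|\le r_0$. In the high-frequency case, Lemma \ref{3dmvpbsp10} defines $S_1(t,\epsilon\xi)$ only for $|\zeta|\le r_0$, so we set $S_1=0$ there, which gives $S_2(t,\epsilon\xi)=S(t,\epsilon\xi)$. The required bound $Ce^{-\frac{\tau_1t}{\epsilon^2}}\|f_0\|$ is then exactly \eqref{se3}, and this case needs nothing further. The content of the lemma is the extra factor $\epsilon|\xi|$ in the low-frequency region. That factor should come from the fact that, for $f_0\in N_0$, the part of $f_0$ not captured by the five eigenprojections is only $O(|\zeta|)$.

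For $\zeta=\epsilon\xi$ with $|\zeta|\le r_0$, let $P_\zeta f=\sum_{j=-1}^3(f,\overline{\psi_j(\zeta)})\psi_j(\zeta)$. By the biorthogonality in \eqref{egf1}, this is the Riesz spectral projection of $B(\zeta)$ onto $\{\lambda_j(|\zeta|)\}_{j=-1}^3$. We then have $S_1(t,\zeta)=S(t,\zeta)P_\zeta$, hence $S_2(t,\zeta)=S(t,\zeta)(I-P_\zeta)$. Since $S(t,\zeta)$ commutes with $P_\zeta$, we also have $S_2(t,\zeta)=S_2(t,\zeta)(I-P_\zeta)$, because $(I-P_\zeta)^2=I-P_\zeta$. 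Applying \eqref{se3} to $g=(I-P_\zeta)f_0$ gives
$$\|S_2(t,\zeta)f_0\|\le Ce^{-\frac{\tau_1t}{\epsilon^2}}\|(I-P_\zeta)f_0\|.$$
It therefore suffices to prove $\|(I-P_\zeta)f_0\|\le C|\zeta|\|f_0\|$ for $f_0\in N_0$ and $|\zeta|\le r_0$.

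To prove this, use \eqref{egf1}: $\psi_j(\zeta)=E_j(\zeta)+R_j(\zeta)$ with $\|R_j(\zeta)\|\le C|\zeta|$. Here $R_j(\zeta)$ collects the $O(|\zeta|)$ part of $P_0\psi_j(\zeta)$ together with $P_1\psi_j(\zeta)=\mathrm{i}L^{-1}P_1(\tilde v\cdot\zeta)E_j+O(|\zeta|^2)$. The $E_j(\zeta)$ in \eqref{sp4} are real-valued and form an orthonormal basis of $N_0$: this is checked directly from the formulas, using $\omega,W^2,W^3$ orthonormal and the fact that $E_{\pm1},E_0$ span $\{\chi_0,\chi_4,\omega\cdot\chi'\}$ orthonormally. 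Consequently $\sum_j(f_0,E_j(\zeta))E_j(\zeta)=f_0$ for $f_0\in N_0$. Expanding $P_\zeta f_0$ then leaves
$$(I-P_\zeta)f_0=-\sum_j\big[(f_0,\overline{R_j})E_j+(f_0,\overline{E_j})R_j+(f_0,\overline{R_j})R_j\big],$$
which has norm at most $C|\zeta|\|f_0\|$. Substituting $\zeta=\epsilon\xi$ yields the factor $\epsilon|\xi|1_{\{\epsilon|\xi|\le r_0\}}$, and combining the two frequency regions gives \eqref{f-sS-2}.

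I expect the main obstacle to be making the $O(|\zeta|)$ remainders uniform: uniform in the direction $\omega$, uniform on $|\zeta|\le r_0$, and, given the paper's emphasis, independent of $\mathbf c$. The expansion \eqref{egf1} is only asymptotic, so I would first go back to the construction of $\psi_j$ in \cite{ZHAO-1}. There the eigenfunctions are smooth in $|\zeta|$ on the compact interval $[0,r_0]$ with bounded derivatives, which gives the uniform remainder bound by Taylor's theorem. If $\mathbf c$-uniformity is not available from that construction, I would instead bound $\|L^{-1}P_1(\tilde v\cdot\omega)E_j\|$ uniformly in $\mathbf c$ using \eqref{LF-F} and $|\tilde v|\le|v|$.
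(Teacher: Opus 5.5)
Your proposal is correct. The paper gives no proof of this lemma; it is imported from \cite{Li-1}, so there is no in-paper argument to match. Your argument is a self-contained derivation from Lemma \ref{3dmvpbsp10}:
\begin{itemize}
\item For $\epsilon|\xi|>r_0$ one has $S_1=0$, and the bound is just \eqref{se3}.
\item For $|\zeta|=\epsilon|\xi|\le r_0$, the identity $S_2=S_2(I-P_\zeta)$ together with \eqref{se3} reduces the claim to $\|(I-P_\zeta)f_0\|\le C|\zeta|\|f_0\|$ on $N_0$. This follows from \eqref{egf1}, since the real functions $E_j(\zeta)$ form an orthonormal basis of $N_0$. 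It also covers complex-valued $f_0\in N_0$, which is what is needed when the lemma is applied to $\hat f_0(\xi)$ in Lemma \ref{3dmvpbfas6}.
\end{itemize}

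You can drop the Riesz-projection identification and the claim that $P_\zeta$ commutes with $S(t,\zeta)$; neither is needed. Idempotence $P_\zeta^2=P_\zeta$ is immediate from $(\psi_j,\overline{\psi_k})=\delta_{jk}$. The identity $S_1=SP_\zeta$ follows from $S(t,\zeta)\psi_j=e^{t\lambda_j/\epsilon^2}\psi_j$. Then $S_2(I-P_\zeta)=S(I-P_\zeta)^2=S_2$ directly.

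Your closing worry is a caution rather than a gap. Your constant inherits exactly the uniformity (in $\omega$, in $|\zeta|\le r_0$, and in $\mathbf{c}$) of the remainders in \eqref{egf1}. The paper already relies on that same uniformity elsewhere, for example $P_1\psi_j(\epsilon\xi)=O(\epsilon|\xi|)$ in the proof of Lemma \ref{3dmvpbfas2}.

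Another natural route would write $S_2$ as a resolvent contour integral and extract the factor $|\zeta|$ from $B(\zeta)f_0=-\mathrm{i}(\tilde v\cdot\zeta)f_0$ for $f_0\in N_0$. Compared with that, yours needs no resolvent estimates and uses only facts already stated in the paper.
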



Now we are going to estimate the first and second order expansions of the semigroup $e^{\frac{tB_{\epsilon}}{\epsilon^2}}$.
\begin{lem}\label{3dmvpbfas6}
For any $\epsilon\in (0,1)$ and any $f_0\in L^2,$ we have
\be
\big\|e^{\frac{tB_{\epsilon}}{\epsilon^2}}f_0-V(t)P_0f_0\big\|_{L^{\infty}} \leq C\bigg(\epsilon(1+t)^{-2}+\(1+\frac{t}{\epsilon}\)^{-1}\bigg)\(\|f_0\|_{H^{3}}+\|f_0\|_{W^{3,1}}\),\label{lsp2j1}
\ee
where $V(t)$ is given in \eqref{lns4j1}, and $C>0$ is a constant independent of $\epsilon$ and $\mathbf{c}$. Moreover, if $f_0$ satisfies \eqref{rbf0}, then
\bq\label{lsp2j2}
\big\|e^{\frac{tB_{\epsilon}}{\epsilon^2}}f_0-V(t)P_0f_0\big\|_{L^{\infty}}\leq C\epsilon(1+t)^{-2}\(\|f_0\|_{H^{3}}+\|f_0\|_{W^{3,1}}\).
\eq
\end{lem}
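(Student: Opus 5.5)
Work on the Fourier side and use $\|g\|_{L^\infty_x(L^2_v)}\le C\int_{\R^3}\|\hat g(\xi)\|\,d\xi$. Split
$$e^{\frac{tB(\epsilon\xi)}{\epsilon^2}}\hat f_0-V(t,\xi)P_0\hat f_0 = 1_{\{\epsilon|\xi|\le r_0\}}S_1(t,\epsilon\xi)\hat f_0+S_2(t,\epsilon\xi)\hat f_0 - V(t,\xi)P_0\hat f_0 .$$

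\textbf{High frequencies and $S_2$.} By \eqref{se3},
$$\int\|S_2\hat f_0\|\,d\xi\le Ce^{-\frac{\tau_1t}{\epsilon^2}}\int(1+|\xi|)^{-3}(1+|\xi|)^3\|\hat f_0\|\,d\xi\le Ce^{-\frac{\tau_1t}{\epsilon^2}}\|f_0\|_{W^{3,1}},$$
since $\sup_\xi|\xi|^k\|\hat f_0\|\le C\|f_0\|_{W^{k,1}}$. Moreover $e^{-\tau_1t/\epsilon^2}\le C(1+t/\epsilon)^{-1}$.

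The part of $V(t,\xi)$ with $|\xi|\ge r_0/\epsilon$ is bounded by
$$\int_{|\xi|\ge r_0/\epsilon}e^{-\tau_2|\xi|^2t}\|\hat f_0\|\,d\xi\le C\epsilon(1+t)^{-2}\|f_0\|_{W^{3,1}}.$$
To get this, use $|\xi|^{-3}\le C\epsilon^3$ on that set, together with the Gaussian factor. If $t$ is small, $\epsilon^3|\xi|^3$ weights give $\epsilon$ directly.

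\textbf{Low frequencies, $\epsilon|\xi|\le r_0$.} Insert the expansions of Lemma \ref{3dmvpbsp10}. Write $\psi_j(\epsilon\xi)=E_j(\xi)+O(\epsilon|\xi|)$ and $\lambda_j(\epsilon|\xi|)/\epsilon^2=-\mathrm{i}\mu_j|\xi|/\epsilon-A_j|\xi|^2+O(\epsilon|\xi|^3)$. Then $S_1\hat f_0$ equals
$$\sum_{j=-1}^3 e^{-\mathrm{i}\mu_j|\xi|t/\epsilon-A_j|\xi|^2t}(P_0\hat f_0,E_j)E_j$$
plus errors of three kinds:
\begin{itemize}
\item eigenfunction corrections, bounded by $C\epsilon|\xi|e^{-\tau_2|\xi|^2t}\|\hat f_0\|$;
\item the exponent correction, bounded through $|e^{O(\epsilon|\xi|^3)t}-1|\le C\epsilon|\xi|^3t\,e^{-\tau_2|\xi|^2t/2}$ (valid since $\epsilon|\xi|\le r_0$ keeps $\mathrm{Re}$ dominated);
\item the $P_1\hat f_0$ part, which pairs with $P_1\overline{\psi_j}=O(\epsilon|\xi|)$.
\end{itemize}
Integrate against $\|\hat f_0\|\le C(1+|\xi|)^{-3}(\|f_0\|_{H^3}+\|f_0\|_{W^{3,1}})$, or use $\sup\|\hat f_0\|\le\|f_0\|_{L^{2,1}}$ for $|\xi|\le1$. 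Both $\int\epsilon|\xi|e^{-c|\xi|^2t}\,d\xi$ and $\int\epsilon|\xi|^3t\,e^{-c|\xi|^2t}\,d\xi$ behave like $\epsilon(1+t)^{-2}$ for large $t$ and are $O(\epsilon)$ for small $t$, using the $(1+|\xi|)^{-3}$ decay. The terms $j=0,2,3$ reproduce $V(t,\xi)P_0\hat f_0$ exactly on $\epsilon|\xi|\le r_0$.

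\textbf{Oscillatory modes $j=\pm1$ (main obstacle).} These must be bounded by $C(1+t/\epsilon)^{-1}$. Write the term as
$$e^{-\mathrm{i}\mu_j|\xi|t/\epsilon}\,e^{-A_j|\xi|^2t}\,1_{\{\epsilon|\xi|\le r_0\}}(P_0\hat f_0,E_j(\xi))E_j(\xi).$$
Since $E_{\pm1}$ depends on $\xi$ only through $\omega$, this is a finite sum of products of angular factors $\alpha(\omega)$ with radial amplitudes. I would apply Lemma \ref{3dmvpbfas4} with $\vartheta=-\mu_jt/\epsilon$, after moving $\hat f_0$ to $x$-space by convolution: the $L^\infty_x$ norm of $\mathcal F^{-1}(\text{kernel}\cdot\hat f_0)$ is at most $\|\mathcal F^{-1}\text{kernel}\|_{L^\infty}\,\|f_0\|_{L^{2,1}}$-type quantities. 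Concretely, put $(1+|\xi|)^{-3}$ into the kernel and $(1+|\xi|)^3\hat f_0$ onto the data, giving the $W^{3,1}$ norm.

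The radial amplitude $\phi(r)=(1+r)^{-3}e^{-A_jr^2t}\chi(\epsilon r)$, with $\chi$ a smooth cutoff replacing the indicator, satisfies $|\phi^{(k)}|\le C(1+r)^{-3-k}$ uniformly in $t$ and $\epsilon$. Here $\partial_r e^{-A_jr^2t}=-2A_jrt\,e^{-A_jr^2t}$ is bounded by $C(1+r)^{-1}\sup_s(se^{-s})$, and the cutoff derivative is $O(\epsilon)\le C(1+r)^{-1}$ on its support. The smooth-cutoff remainder is absorbed into the $S_2$/high-frequency estimates.

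This yields $C(t/\epsilon)^{-1}$, and combined with the trivial $L^1_\xi$ bound it gives $C(1+t/\epsilon)^{-1}$. The key point to check is that $|\mu_{\pm1}|$ is bounded below, so constants stay independent of $\mathbf c$.

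\textbf{Special data \eqref{rbf0}.} Under \eqref{rbf0}, $P_1f_0=0$. Also $\xi\cdot\hat m_0=0$ and $b\hat n_0+a\hat q_0=0$ imply $(P_0\hat f_0,E_{\pm1}(\xi))=0$, so the oscillatory terms vanish identically. For $S_2$, apply Lemma \ref{3dmvpbfas5}: it gives an extra factor $\epsilon|\xi|$ on low frequencies, so
$$e^{-\tau_1t/\epsilon^2}\epsilon\le C\epsilon(1+t)^{-2}.$$
For high frequencies, the $(1+|\xi|)^{-3}\le C\epsilon^3$ gain again gives $\epsilon$. This gives \eqref{lsp2j2}.
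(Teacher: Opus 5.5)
Your architecture matches the paper's. You split into the low-frequency part of $S_1$, the high-frequency part of $V$, and $S_2$, then expand $S_1$ around the fluid modes and treat $j=\pm1$ with Lemma~\ref{3dmvpbfas4}. In the well-prepared case you use $(P_0\hat f_0,E_{\pm1}(\xi))=0$ and Lemma~\ref{3dmvpbfas5}. However, the step you flag as the main obstacle fails as you set it up.

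\textbf{The oscillatory step.} You absorb the heat factor into the radial amplitude, $\phi(r)=(1+r)^{-3}e^{-A_jr^2t}\chi(\epsilon r)$, and assert $|\phi'(r)|\le C(1+r)^{-4}$ uniformly in $t$. But $\sup_s se^{-s}$ only gives $2A_jrt\,e^{-A_jr^2t}\le 2/(er)$, which is $O(r^{-1})$, not $O((1+r)^{-1})$. At $r=(2A_jt)^{-1/2}$ this derivative equals $e^{-1/2}\sqrt{2A_jt}$, which is unbounded as $t\to\infty$. Since $A_{\pm1}\sim\mathbf{c}^4$, it is of order $\mathbf{c}^2\sqrt t$ even for fixed $t$. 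So the hypothesis of Lemma~\ref{3dmvpbfas4} fails with any constant independent of $t$ and $\mathbf{c}$.

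The missing idea is to keep the Gaussian out of $\phi$ and use it as a separate $L^1$ convolution factor. The paper writes the mode as $G^k_j*\mathcal{C}_j*F$, where $\hat G^k_j=e^{-\mathrm{i}|\xi|\mu_jt/\epsilon}\alpha_k(\omega)(1+|\xi|)^{-3}$, $\hat{\mathcal{C}}_j=e^{-A_j|\xi|^2t}$ and $\hat F=(1+|\xi|)^3(\hat n_0,\hat m_0,\hat q_0)$. It then bounds this by $\|G^k_j\|_{L^\infty_x}\|\mathcal{C}_j\|_{L^1_x}\|F\|_{L^1_x}$. Because $\mathcal{C}_j$ is a rescaled heat kernel, $\|\mathcal{C}_j\|_{L^1_x}$ is a constant independent of both $t$ and $A_j$. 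Lemma~\ref{3dmvpbfas4} is then applied only to the $t$-independent amplitude $(1+r)^{-3}$. Your cutoff $\chi(\epsilon r)$ could stay in that amplitude; the paper instead integrates over all of $\R^3$ and subtracts $\{|\xi|>r_0/\epsilon\}$, at cost $Ce^{-2\tau_2r_0^2t/\epsilon^2}\|f_0\|_{H^2}$.

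\textbf{The $L^1_\xi$ bounds.} Several of them use only the pointwise decay $\|\hat f_0(\xi)\|\le C(1+|\xi|)^{-3}\|f_0\|_{W^{3,1}}$, but $(1+|\xi|)^{-3}$ is not integrable on $\R^3$. This breaks three of your estimates:
\begin{itemize}
\item Your $S_2$ estimate contains the divergent factor $\int_{\R^3}(1+|\xi|)^{-3}d\xi$.
\item The high-frequency part of $V$ becomes $4\pi\int_{r_0/\epsilon}^\infty e^{-\tau_2r^2t}r^{-1}dr$, which blows up as $t\to0$.
\item The eigenfunction correction gives $\epsilon\int_{|\xi|\le r_0/\epsilon}|\xi|(1+|\xi|)^{-3}d\xi$. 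This is of order $r_0$ at $t=0$ and of order $\min\{r_0,\epsilon t^{-1/2}\}$ for small $t$, not $O(\epsilon)$.
\end{itemize}
For $|\xi|\ge1$ you need the $L^2$ information and Cauchy--Schwarz, as the paper does. That gives $\int\|\hat f_0\|d\xi\le C\|f_0\|_{H^2}$ and $\epsilon\int_{|\xi|\ge1}|\xi|\|\hat f_0\|d\xi\le\epsilon(\int_{|\xi|\ge1}|\xi|^{-4}d\xi)^{1/2}\|f_0\|_{H^3}$. The same applies to the $\epsilon|\xi|^3t$ term and to the high-frequency bounds in the well-prepared case. The $W^{3,1}$ norm is needed only for the oscillatory $L^\infty_x$ estimate.
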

\begin{proof}
First, we prove \eqref{lsp2j1} as follows. By Lemma \ref{3dmvpbsp10}, we have
\bma\label{F8-1-1}
\big\|e^{\frac{tB_{\epsilon}}{\epsilon^2}}f_0-V(t)P_0f_0\big\|
&=\bigg\|\int_{\mathbb{R}^3}e^{\mathrm{i}x\cdot\xi}\(e^{\frac{tB(\epsilon\xi)}{\epsilon^2}}\hat{f}_0-V(t,\xi)P_0\hat{f}_0\)d\xi\bigg\|\nonumber\\
&\leq \bigg\|\int_{\{|\xi|\leq\frac{r_0}{\epsilon}\}}e^{\mathrm{i}x\cdot\xi}(S_1(t,\epsilon\xi)\hat{f}_0-V(t,\xi)P_0\hat{f}_0)d\xi\bigg\|\nonumber\\
&\quad+\int_{\{|\xi|\geq\frac{r_0}{\epsilon}\}}\|V(t,\xi)P_0\hat{f}_0\|d\xi+\int_{\mathbb{R}^3}\|S_2(t,\epsilon\xi)\hat{f}_0\|d\xi\nonumber\\
&=:I_1+I_2+I_3.
\ema
We estimates $I_j$, $j=1,2,3$ as follows. By Lemma \ref{3dmvpbsp10}, we have
\bq
S_1(t,\epsilon\xi)\hat{f}_0=\sum^3_{j=-1}e^{\frac{-\mathrm{i}|\xi|\mu_j t}{\epsilon}-A_j|\xi|^2t+O(\epsilon|\xi|^3)t}\(\big(P_0\hat{f}_0,E_j(\xi)\big)E _j(\xi)+O(\epsilon|\xi|)\),
\eq
which leads to
\bma\label{F6}
I_1&\leq \sum^3_{j=-1}\int_{\{|\xi|\leq\frac{r_0}{\epsilon}\}}\Big\|e^{\frac{-\mathrm{i}|\xi|\mu_jt}{\epsilon}-A_j|\xi|^2t+O(\epsilon|\xi|^3)t}\(\big(P_0\hat{f}_0,E_j(\xi)\big)E _j(\xi)+O(\epsilon|\xi|)\)\nonumber\\
&\qquad -e^{\frac{-\mathrm{i}|\xi|\mu_jt}{\epsilon}-A_j|\xi|^2t}\big(P_0\hat{f}_0,E_{j}(\xi)\big)E_{j}(\xi)\Big\|d\xi\nonumber\\
&\quad+\sum_{j=-1,1}\bigg\|\int_{\{|\xi|\leq\frac{r_0}{\epsilon}\}}e^{\mathrm{i}x\cdot\xi}e^{\frac{-\mathrm{i}|\xi|\mu_jt}{\epsilon}-A_j|\xi|^2t}\big(P_0\hat{f}_0,E_j(\xi)\big)E_j(\xi)d\xi\bigg\|\nonumber\\
&=:I_{11}+I_{12}.
\ema

For $I_{11}$, it follows from \eqref{sp4} that
\bma\label{F7}
I_{11}&\leq C\epsilon\int_{\{|\xi|\leq\frac{r_0}{\epsilon}\}}e^{-2\tau_2|\xi|^2t}(|\xi|^3t\|\hat{f}_0\|+|\xi|\|\hat{f}_0\|)d\xi\nnm\\
&\leq C\epsilon\sup_{|\xi|\leq1}\|\hat{f}_0\|\int_{\{|\xi|\leq1\}}e^{-2\tau_2|\xi|^2t}(|\xi|+|\xi|^3t)d\xi\nnm\\
&\quad+  C\epsilon\bigg(\int_{\{|\xi|>1\}}e^{-4\tau_2|\xi|^2t}\frac{(1+|\xi|^2t)^2}{|\xi|^{4}} d\xi\bigg)^{\frac{1}{2}}\bigg(\int_{\{|\xi|>1\}}|\xi|^6\|\hat{f}_0\|^2d\xi\bigg)^{\frac{1}{2}}\nnm\\
&\leq C\epsilon(1+t)^{-2}\(\|f_0\|_{H^3}+\|f_0\|_{L^{2,1}}\),
\ema
where $\tau_2=\min\{\frac{A_j}{2},j=-1,0,1,2,3\}>0$ is a constant independent of $\mathbf{c}$ (cf. Lemma \ref{rbdl-aj-est}).

Then, we estimate $I_{12}$ as follows.
\bma
I_{12}&=\sum_{j=-1,1}\bigg\|\bigg(\int_{\R^3}-\int_{\{|\xi|>\frac{r_0}{\epsilon}\}}\bigg)e^{\mathrm{i}x\cdot\xi}e^{\frac{-\mathrm{i}|\xi|\mu_jt}{\epsilon}-A_j|\xi|^2t}\big(P_0\hat{f}_0,E_j(\xi)\big)E_j(\xi)d\xi\bigg\|\nnm\\
&\leq\sum_{j=-1,1}\bigg\|\int_{\R^3}e^{\mathrm{i}x\cdot\xi}\hat{H}_j(t,\xi)d\xi\bigg\|+\sum_{j=-1,1}\bigg\|\int_{\{|\xi|>\frac{r_0}{\epsilon}\}}e^{\mathrm{i}x\cdot\xi}\hat{H}_j(t,\xi)d\xi\bigg\|\nnm\\
&=:I_{12}^1+I_{12}^2,\label{F7-1}
\ema
where
$$
\hat{H}_j(t,\xi)=e^{\frac{-\mathrm{i}|\xi|\mu_jt}{\epsilon}-A_j|\xi|^2t}\big(P_0\hat{f}_0,E_j(\xi)\big)E_j(\xi),\quad j=\pm1.
$$
For $I_{12}^1$ and $I_{12}^2$, we have
\bma
I_{12}^1&\leq C\int_{\R^3}e^{-2\tau_2|\xi|^2t}\|P_0\hat{f}_0\|d\xi\nnm\\
&\leq C\(\int_{\R^3}e^{-2\tau_2|\xi|^2t}\frac{1}{(1+|\xi|^2)^2}d\xi\)^{\frac{1}{2}}\(\int_{\R^3}(1+|\xi|^2)^2\|\hat{f}_0\|^2d\xi\)^{\frac{1}{2}}\nnm\\
&\leq C\|f_0\|_{H^{2}},\label{F7-2}\\
I_{12}^2&\leq Ce^{-\frac{2\tau_2r_0^2t}{\epsilon^2}}\int_{\{|\xi|>\frac{r_0}{\epsilon}\}}\|P_0\hat{f}_0\|d\xi\nnm\\
&\leq Ce^{-\frac{2\tau_2r_0^2t}{\epsilon^2}}\bigg(\int_{\R^3}\frac{1}{(1+|\xi|^2)^2}d\xi\bigg)^{\frac{1}{2}}\bigg(\int_{\R^3}(1+|\xi|^2)^2\|\hat{f}_0\|^2d\xi\bigg)^{\frac{1}{2}}\nnm\\
&\leq Ce^{-\frac{2\tau_2r_0^2t}{\epsilon^2}}\|f_0\|_{H^2}.\label{F7-3}
\ema

By \eqref{sp4}, we have
\bma
\big(P_0\hat{f}_0,E_j \big)E_{j}&=\frac{1}{2a^2+2b^2}\big(b\hat{n}_0-j \sqrt{a^2+b^2}\hat{m}_0\cdot\omega+a\hat{q}_0\big)\big(b\chi_0+a\chi_4\big)\nnm\\
&\quad-j\frac{1}{2\sqrt{a^2+b^2}}\big( b \hat{n}_0-j\sqrt{a^2+b^2}\hat{m}_0\cdot\omega+a\hat{q}_0\big)\omega\cdot\chi',\label{F7-4}
\ema
where $j=\pm1$, and
$$
\big(\hat{n}_0,\hat{m}_0,\hat{q}_0\big)=\big((\hat{f}_0,\chi_0),(\hat{f}_0,\chi'),(\hat{f}_0,\chi_4)\big).
$$
Thus,
\bma
&\big(\hat{H}_j,b\chi_0+a\chi_4\big) =\frac{b}{2}\hat{G}_{j}^{1}\hat{\mathcal{C}}_j\hat{F}_0+\frac{a}{2}\hat{G}_{j}^{1}\hat{\mathcal{C}}_j\hat{F}_2-j \frac{\sqrt{a^2+b^2}}{2} \hat{G}_{j}^{2}\hat{\mathcal{C}}_j\cdot\hat{F}_1,\label{F7-5}\\
&\big(\hat{H}_j,\chi'\big)=-j\frac{b}{2\sqrt{a^2+b^2}}\hat{G}_{j}^{2}\hat{\mathcal{C}}_j\hat{F}_0-j\frac{a}{2\sqrt{a^2+b^2}}\hat{G}_{j}^{2}\hat{\mathcal{C}}_j\hat{F}_2+\frac{1}{2}\hat{G}_{j}^{3}\hat{\mathcal{C}}_j\cdot\hat{F}_1,\label{F7-6}
\ema
where $j=-1,1$, and
\bq\label{F7-6-1}
\left\{\bln
&\hat{G}_{j}^{k}(t,\xi)=e^{\frac{-\mathrm{i}|\xi|\mu_jt}{\epsilon}}\alpha_k(\omega)(1+|\xi|)^{-3}, \quad  k=1,2,3,\\
&\hat{\mathcal{C}}_j(t,\xi)=e^{-A_j|\xi|^2t},\\
&(\hat{F}_0,\hat{F}_1,\hat{F}_2)=(1+|\xi|)^3(\hat{n}_0,\hat{m}_0,\hat{q}_0),\\
&\alpha_1(\omega)=1,\quad \alpha_2(\omega)=\omega,\quad \alpha_3(\omega)=\omega\otimes\omega.
\eln\right.
\eq
Then, we have
\bma
I_{12}^1&\leq C\big\| (H_j(t),b\chi_0+a\chi_4 )\big\|_{L^{\infty}_x}+C\big\|(H_j(t),\chi')\big\|_{L^{\infty}_x}\nnm\\
&\leq   C\(\|G_{j}^{1}\|_{L^{\infty}_x}+\|G_{j}^{2}\|_{L^{\infty}_x}+\|G_{j}^{3}\|_{L^{\infty}_x}\)\|\mathcal{C}_j\|_{L^1_x}\|(F_0,F_1,F_2)\|_{L^{1}_x}\nnm\\
&\leq C\(\frac{t}{\epsilon}\)^{-1}\|f_0\|_{W^{3,1}},\label{F7-7}
\ema
where we have used Lemma \ref{3dmvpbfas4} and
$$
\|\mathcal{C}_j\|_{L^1_x}=C\int_{\R^3}t^{-\frac{3}{2}}e^{-\frac{|x|^2}{4A_jt}}dx\leq C.
$$

By combining \eqref{F7-2}--\eqref{F7-3} and \eqref{F7-7}, we have
\bq
I_{12}\leq C\(1+\frac{t}{\epsilon}\)^{-1}\(\|f_0\|_{H^3}+\|f_0\|_{W^{3,1}}\),\label{F8}
\eq
which together with \eqref{F6}--\eqref{F7} implies that
\bq\label{F9}
I_1\leq C\bigg(\epsilon(1+t)^{-2}+\bigg(1+\frac{t}{\epsilon}\bigg)^{-1}\bigg)\(\|f_0\|_{H^3}+\|f_0\|_{W^{3,1}}\).
\eq

By Lemma \ref{3dmvpbsp10} and \eqref{lnsp10}, we have
\bma
I_2&\le C\eps \int_{\{|\xi|\geq\frac{r_0}{\epsilon}\}}e^{-2\tau_2|\xi|^2t}|\xi|\|P_0\hat{f}_0\|d\xi \nnm\\
&\leq C\eps e^{-\frac{2\tau_2r_0^2t}{\epsilon^2}}\bigg(\int_{\{|\xi|\geq\frac{r_0}{\epsilon}\}}\frac{1}{(1+|\xi|^2)^2}d\xi\bigg)^{\frac{1}{2}}\bigg(\int_{\{|\xi|\geq\frac{r_0}{\epsilon}\}}(1+|\xi|^2)^3 \| \hat{f}_0\|^2d\xi\bigg)^{\frac{1}{2}}\nonumber\\
&\leq C\eps e^{-\frac{2\tau_2r_0^2t}{\epsilon^2}}\|f_0\|_{H^3},\label{F10}\\
I_3&\leq Ce^{-\frac{\tau_1 t}{\epsilon^2}}\bigg(\int_{\R^3}\frac{1}{(1+|\xi|^2)^2}d\xi\bigg)^{\frac{1}{2}}\bigg(\int_{\R^3}(1+|\xi|^2)^2\|\hat{f}_0\|^2d\xi\bigg)^{\frac{1}{2}}\nnm\\
&\leq Ce^{-\frac{\tau_1 t}{\epsilon^2}}\|f_0\|_{H^2}.\label{F11}
\ema
Therefore, it follows from \eqref{F9}--\eqref{F11} that
\be\label{F12}
\big\|e^{\frac{tB_{\epsilon}}{\epsilon^2}}f_0-V(t)P_0f_0\big\|_{L^{\infty}}\leq C\bigg(\epsilon(1+t)^{-2}+\(1+\frac{t}{\epsilon}\)^{-1}\bigg)\(\|f_0\|_{H^3}+\|f_0\|_{W^{3,1}}\).
\ee
Thus, we proved \eqref{lsp2j1}.

Finally, we turn to show \eqref{lsp2j2}. If $f_0$ satisfies \eqref{rbf0}, then we have
$$
\big(P_0\hat{f}_0,E_j(\xi)\big)=0,\quad j=\pm1,
$$
which implies that $I_{12}=0$. The term $I_{11}$ satisfies \eqref{F7}. It follows from Lemma \ref{3dmvpbfas5}  that
\bma
I_3&\leq C\epsilon\int_{\{|\xi|\leq\frac{r_0}{\epsilon}\}}e^{-\frac{\tau_1 t}{\epsilon^2}}|\xi|\|\hat{f}_0\|d\xi +C\epsilon\int_{\{|\xi|\geq\frac{r_0}{\epsilon}\}}e^{-\frac{\tau_1 t}{\epsilon^2}}|\xi|\|\hat{f}_0\|d\xi\nnm\\
&\leq C\eps e^{-\frac{\tau_1 t}{\epsilon^2}}\|f_0\|_{H^3}. \label{I3}
\ema
Thus, by combining \eqref{F7}, \eqref{F10} and \eqref{I3},  we can obtain \eqref{lsp2j2}. The proof of the lemma is completed.
\end{proof}

\begin{remark}
From Lemma \ref{3dmvpbfas6}, we have
$$
\|e^{\frac{tB_{\epsilon}}{\epsilon^2}}P_0f_0-V(t)P_0f_0-u_{\epsilon}^{osc}(t)\|_{L^{\infty}}\leq C\epsilon(1+t)^{-2}\(\|f_0\|_{H^{3}}+\|f_0\|_{W^{3,1}}\),
$$
where $u_{\epsilon}^{osc}(t)=u_{\epsilon}^{osc}(t,x,v)$ is the high oscillation part of $e^{\frac{tB_{\epsilon}}{\epsilon^2}}f_0$ defined by \eqref{dlxg-uosc}.
\end{remark}

\begin{lem}\label{3dmvpbfas7}
For any $\epsilon\in(0,1)$ and any $f_0\in L^2$ satisfying $P_0f_0=0$, we have
\bma
&\quad\bigg\|\frac{1}{\epsilon}e^{\frac{tB_{\epsilon}}{\epsilon^2}}f_0-V(t)P_{0}(\tilde{v}\cdot\nabla_xL^{-1}f_0)\bigg\|_{L^{\infty}}\nnm\\
&\leq C\bigg(\epsilon(1+t)^{-\frac{5}{2}}+\(1+\frac{t}{\epsilon}\)^{-1}+\frac{1}{\epsilon}e^{-\frac{\tau_1 t}{\epsilon^2}}\bigg)\(\|f_0\|_{H^{4}}+\|f_0\|_{W^{4,1}}\).
\ema
where $V(t)$ is given in \eqref{lns4j1},  and $C>0$ is a constant independent of $\epsilon$.
\end{lem}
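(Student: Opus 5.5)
The plan is to mirror the proof of Lemma \ref{3dmvpbfas6}, using the representation \eqref{s222} of $S_1(t,\epsilon\xi)\hat f_0$ for $P_0f_0=0$. In Fourier variables the target reads $\frac1\epsilon e^{\frac{tB(\epsilon\xi)}{\epsilon^2}}\hat f_0 - V(t,\xi)P_0(\mathrm{i}\tilde v\cdot\xi L^{-1}\hat f_0)$. I split it as
$$
\frac1\epsilon\int_{\{|\xi|\le \frac{r_0}{\epsilon}\}}e^{\mathrm{i}x\cdot\xi}S_1\hat f_0\,d\xi
-\int_{\R^3} e^{\mathrm{i}x\cdot\xi}V(t,\xi)P_0(\mathrm{i}\tilde v\cdot\xi L^{-1}\hat f_0)\,d\xi
+\frac1\epsilon\int_{\R^3}e^{\mathrm{i}x\cdot\xi}S_2\hat f_0\,d\xi .
$$
The $S_2$ part is bounded by $C\epsilon^{-1}e^{-\frac{\tau_1t}{\epsilon^2}}\|f_0\|_{H^2}$, via \eqref{se3} and Cauchy--Schwarz with weight $(1+|\xi|^2)^{-2}$, exactly as in \eqref{F11}. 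This produces the last term in the statement.

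For the $S_1$ part, use \eqref{egf1}. Since $P_0f_0=0$,
$$
(\hat f_0,\overline{\psi_j(\epsilon\xi)})=(\hat f_0,P_1\overline{\psi_j})=\epsilon\,(\hat f_0,\mathrm{i}L^{-1}P_1(\tilde v\cdot\xi)E_j)^{-}+O(\epsilon^2|\xi|^2)\|\hat f_0\|.
$$
By self-adjointness of $L^{-1}$ on $N_0^\perp$, the leading term equals $\epsilon\,(P_0(\mathrm{i}\tilde v\cdot\xi L^{-1}\hat f_0),E_j)$ up to sign and conjugation conventions, which I will check carefully. After dividing by $\epsilon$, the $j=0,2,3$ terms with $\mu_j=0$ thus reproduce $V(t,\xi)P_0(\mathrm{i}\tilde v\cdot\xi L^{-1}\hat f_0)$. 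The remaining errors are of three kinds:
\begin{itemize}
\item[(a)] the replacements $e^{O(\epsilon|\xi|^3)t}-1$ and $\psi_j-E_j=O(\epsilon|\xi|)$;
\item[(b)] the $O(\epsilon^2|\xi|^2)$ remainder in the coefficient;
\item[(c)] the high-frequency part $|\xi|>r_0/\epsilon$ of the $V$ term.
\end{itemize}
Each error carries one extra factor $\epsilon|\xi|$ relative to a term of size $|\xi|e^{-2\tau_2|\xi|^2t}\|\hat f_0\|$. As in \eqref{F7}, low frequencies give $\epsilon\int_{|\xi|\le1}e^{-2\tau_2|\xi|^2t}(|\xi|^2+|\xi|^4t)\,d\xi\sup\|\hat f_0\|\le C\epsilon(1+t)^{-5/2}\|f_0\|_{L^{2,1}}$. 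High frequencies are handled by Cauchy--Schwarz, using $H^4$ to absorb $|\xi|^{2}(1+|\xi|^2t)$. Part (c) is exponentially small, $Ce^{-2\tau_2r_0^2t/\epsilon^2}\|f_0\|_{H^4}$. This loses one more power of $\xi$ than Lemma \ref{3dmvpbfas6}, which is why the statement requires $H^4$ and $W^{4,1}$.

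The main obstacle is the oscillating modes $j=\pm1$, which have no counterpart in $V$. After dividing by $\epsilon$, their leading term is
$$
\hat H_j=e^{\frac{-\mathrm{i}|\xi|\mu_jt}{\epsilon}-A_j|\xi|^2t}\,(\hat f_0,\mathrm{i}L^{-1}P_1(\tilde v\cdot\xi)E_j)^{-}E_j(\xi).
$$
I extend it to all of $\R^3$, paying an exponentially small cutoff error as in \eqref{F7-3}. I then write it as in \eqref{F7-5}--\eqref{F7-6-1}: $\hat G^k_j=e^{\frac{-\mathrm{i}|\xi|\mu_jt}{\epsilon}}\alpha_k(\omega)(1+|\xi|)^{-3}$ multiplies $\hat{\mathcal C}_j$ and $(1+|\xi|)^3|\xi|$ times the $v$-projections of $\hat f_0$ onto $L^{-1}P_1(\tilde v\cdot\omega)E_j$. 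The angular factors are smooth functions of $\omega$, so Lemma \ref{3dmvpbfas4} gives $\|G_j^k\|_{L^\infty_x}\le C(t/\epsilon)^{-1}$. Young's inequality with $\|\mathcal C_j\|_{L^1_x}\le C$ then bounds this contribution by $C(t/\epsilon)^{-1}\|f_0\|_{W^{4,1}}$. For small $t$ the crude bound $C\|f_0\|_{H^4}$ holds, and together these give the $(1+t/\epsilon)^{-1}$ term.

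The delicate points are twofold. First, $L^{-1}P_1(\tilde v\cdot\omega)E_j$ must be bounded uniformly in $\mathbf c$; this follows because $A_j$ are finite, though with $A_{\pm1}\sim\mathbf c^4$ I will need to check the $\mathbf c$-dependence, which the statement does not claim to control. Second, the factor $|\xi|$ must be absorbed into the $(1+|\xi|)^{3}$ weight, requiring one more derivative than Lemma \ref{3dmvpbfas6}. Collecting all pieces gives the claimed bound.
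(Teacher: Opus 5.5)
Your proposal is correct and follows essentially the same route as the paper. It uses the same three-way split into the low-frequency $S_1$-minus-$V$ part, the high-frequency tail of $V$, and $\frac{1}{\epsilon}S_2$; the $\mu_j=0$ modes reproduce $V$, the expansion errors give $\epsilon(1+t)^{-5/2}$ via the $L^{2,1}$/$H^4$ low/high-frequency split, and the acoustic modes $j=\pm1$ are handled by Lemma~\ref{3dmvpbfas4} plus Young's inequality.

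The coefficient identity you derive from \eqref{egf1} holds with exactly the signs needed. Since the pairing is bilinear, $(\hat f_0,\overline{\psi_j(\epsilon\xi)})=\int\hat f_0\,\psi_j\,dv=\epsilon(\mathrm{i}\tilde v\cdot\xi L^{-1}\hat f_0,E_j(\xi))+O(\epsilon^2|\xi|^2)\|\hat f_0\|$. Your $\mathbf{c}$-dependence caveat does not affect this lemma, since the statement only claims $C$ independent of $\epsilon$.
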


\begin{proof}
By Lemma \ref{3dmvpbsp10}, we obtain
\bma
&\quad\bigg\|\frac{1}{\epsilon}e^{\frac{tB_{\epsilon}}{\epsilon^2}}f_0-V(t)P_{0}(\tilde{v}\cdot\nabla_xL^{-1}f_0)\bigg\|\nonumber\\
&\leq \bigg\|\int_{\{|\xi|\leq\frac{r_0}{\epsilon}\}}e^{\mathrm{i}x\cdot\xi}\(\frac{1}{\epsilon}S_1(t,\epsilon\xi)\hat{f}_0-V(t,\xi)P_{0}(\mathrm{i}\tilde{v}\cdot\xi L^{-1}\hat{f}_0)\)d\xi\bigg\|\nonumber\\
&\quad+\int_{\{|\xi|\geq\frac{r_0}{\epsilon}\}}\big\|V(t,\xi)P_{0}(\mathrm{i}\tilde{v}\cdot\xi L^{-1}\hat{f}_0)\big\|d\xi+\int_{\R^3}\bigg\|\frac{1}{\epsilon}S_2(t,\epsilon\xi)\hat{f}_0\bigg\|d\xi\nonumber\\
&=:I_1+I_2+I_3.\label{fas7-0}
\ema
We estimate $I_j$, $j=1,2,3$ as follows. By Lemma \ref{3dmvpbsp10}, for any $f_0\in L^{2}$ satisfying $P_0f_0=0$, we have
\bq
S_1(t,\epsilon\xi)\hat{f}_0=\mathrm{i}\epsilon\sum^3_{j=-1}e^{\frac{-\mathrm{i}|\xi|\mu_jt}{\epsilon}-A_j|\xi|^2t+O(\epsilon|\xi|^3)t}\(\big(\tilde{v}\cdot\xi L^{-1}\hat{f}_0,E_j(\xi)\big)E_j(\xi)+O(\epsilon|\xi|^2)\),
\eq
which leads to
\bma
I_1&\leq \sum^3_{j=-1}\int_{\{|\xi|\leq \frac{r_0}{\epsilon}\}}\Big\|e^{\frac{-\mathrm{i}|\xi|\mu_jt}{\epsilon}-A_j|\xi|^2t+O(\epsilon|\xi|^3)t}\(\big(\tilde{v}\cdot\xi L^{-1}\hat{f}_0,E_j(\xi)\big)E_j(\xi)+O(\epsilon|\xi|^2)\)\nnm\\
&\quad-e^{\frac{-\mathrm{i}|\xi|\mu_jt}{\epsilon}-A_j|\xi|^2t}\big(\tilde{v}\cdot\xi L^{-1}\hat{f}_0,E_{j}(\xi)\big)E_{j}(\xi)\Big\|d\xi\nnm\\
&\quad+\sum_{j=-1,1}\bigg\|\int_{\{|\xi|\leq\frac{r_0}{\epsilon}\}}e^{\mathrm{i}x\cdot\xi}e^{\frac{-\mathrm{i}|\xi|\mu_jt}{\epsilon}-A_j|\xi|^2t}\big(\tilde{v}\cdot\xi L^{-1}\hat{f}_0,E_j(\xi)\big)E_j(\xi)d\xi\bigg\|\nnm\\
&=:I_{11}+I_{12}.\label{fas7-1}
\ema

For $I_{11}$, it holds that
\bma
I_{11}&\leq C\epsilon\int_{\{|\xi|\leq\frac{r_0}{\epsilon}\}}e^{-2\tau_2|\xi|^2t}\(|\xi|^3t\|P_{0}(\tilde{v}\cdot\xi L^{-1}\hat{f}_0)\|+|\xi|^2\|\hat{f}_0\|\)d\xi\nnm\\
&\leq C\epsilon(1+t)^{-\frac{5}{2}}\(\|f_0\|_{H^4}+\|f_0\|_{L^{2,1}}\).\label{fas7-1-1}
\ema
Then, we estimate $I_{12}$ as follows.
\bma
I_{12}&\leq\sum_{j=-1,1}\bigg\|\int_{\R^3}e^{\mathrm{i}x\cdot\xi}\hat{J}_j(t,\xi)d\xi\bigg\|+\sum_{j=-1,1}\bigg\|\int_{\{|\xi|>\frac{r_0}{\epsilon}\}}e^{\mathrm{i}x\cdot\xi}\hat{J}_j(t,\xi)d\xi\bigg\|\nnm\\
&=:I_{12}^1+I_{12}^2,\label{fas7-1-2}
\ema
where
$$
\hat{J}_j(t,\xi)=e^{\frac{-\mathrm{i}|\xi|\mu_jt}{\epsilon}-A_j|\xi|^2t}\big(\tilde{v}\cdot\xi L^{-1}\hat{f}_0,E_j(\xi)\big)E_j(\xi),\quad j=\pm1.
$$
For $I_{12}^j$, $j=1,2$, we obtain by  a similar argument as \eqref{F7-2}--\eqref{F7-3} that
\bma
I_{12}^1&\leq C\int_{\R^3}e^{-2\tau_2|\xi|^2t}|\xi|\|\hat{f}_0\|d\xi\leq C\|f_0\|_{H^{3}},\label{fas7-1-3}\\
I_{12}^2&\leq Ce^{-\frac{2\tau_2r_0^2t}{\epsilon^2}}\int_{\{|\xi|>\frac{r_0}{\epsilon}\}}|\xi|\|\hat{f}_0\|d\xi\leq Ce^{-\frac{2\tau_2r_0^2t}{\epsilon^2}}\|f_0\|_{H^3}.\label{fas7-1-4}
\ema
By \eqref{sp4}, we have
\bma
\big(\tilde{v}\cdot\xi L^{-1}\hat{f}_0,E_j\big)E_{j}&=\frac{1}{2a^2+2b^2}\big(b\hat{l}_1-j \sqrt{a^2+b^2}\hat{l}_2\cdot\omega+a \hat{l}_3\big)\(b\chi_0+a\chi_4\)\nnm\\
&\quad-j\frac{1}{2\sqrt{a^2+b^2}}\big(b \hat{l}_1-j\sqrt{a^2+b^2} \hat{l}_2\cdot\omega+a \hat{l}_3\big)\omega\cdot\chi',\label{fas7-1-5}
\ema
where
$$
(\hat{l}_1,\hat{l}_2,\hat{l}_3)= \big((\tilde{v}\cdot\xi L^{-1}\hat{f}_0,\chi_0),(\tilde{v}\cdot\xi L^{-1}\hat{f}_0,\chi'),(\tilde{v}\cdot\xi L^{-1}\hat{f}_0,\chi_4)\big).
$$
Thus,
\bma
&\big(\hat{J}_j,b\chi_0+a\chi_4\big)=\frac{b}{2}\hat{G}_{j}^{1}\hat{\mathcal{C}}_j\hat{F}_3+\frac{a}{2}\hat{G}_{j}^{1}\hat{\mathcal{C}}_j\hat{F}_5-j \frac{\sqrt{a^2+b^2}}{2}\hat{G}_{j}^{2}\hat{\mathcal{C}}_j\cdot\hat{F}_4,\label{fas7-1-6}\\
&\big(\hat{J}_j,v\chi_0\big) =-j \frac{b}{2\sqrt{a^2+b^2}} \hat{G}_{j}^{2}\hat{\mathcal{C}}_j\hat{F}_3-j\frac{a}{2\sqrt{a^2+b^2}}\hat{G}_{j}^{2}\hat{\mathcal{C}}_j\hat{F}_5+\frac{1}{2}\hat{G}_{j}^{3}\hat{\mathcal{C}}_j\cdot\hat{F}_4,\label{fas7-1-7}
\ema
where $\hat{G}_{j}^{kl}$, $\hat{\mathcal{C}}_j$ ($j=\pm1$) are given in \eqref{F7-6-1}, and
$
(\hat{F}_3,\hat{F}_4,\hat{F}_5)=(1+|\xi|)^3(\hat{l}_1,\hat{l}_2,\hat{l}_3).
$

Thus, by Lemma \ref{3dmvpbfas4}, we have
 \bma
I_{12}^1&\leq C\big\|(J_j(t),b\chi_0+a\chi_4)\big\|_{L^{\infty}_x}+C\big\|(J_j(t),\chi')\big\|_{L^{\infty}_x}\nnm\\
&\leq   C\(\|G_{j}^{1}\|_{L^{\infty}_x}+\|G_{j}^{2}\|_{L^{\infty}_x}+\|G_{j}^{3}\|_{L^{\infty}_x}\)\|\mathcal{C}_j\|_{L^1_x}\|(F_3,F_4,F_5)\|_{L^{1}_x}\nnm\\
&\leq C\(\frac{t}{\epsilon}\)^{-1}\|f_0\|_{W^{4,1}}.\label{fas7-1-8}
\ema
By combining \eqref{fas7-1-3}--\eqref{fas7-1-4} and \eqref{fas7-1-8}, we have
\bq
I_{12}\leq C\(1+\frac{t}{\epsilon}\)^{-1}\(\|f_0\|_{H^4}+\|f_0\|_{W^{4,1}}\),\label{fas7-2}
\eq
which together with \eqref{fas7-1}--\eqref{fas7-1-1} and \eqref{fas7-2} implies that
\bq\label{fas7-3}
I_1\leq C\bigg(\epsilon(1+t)^{-\frac{5}{2}}+\(1+\frac{t}{\epsilon}\)^{-1}\bigg)\(\|f_0\|_{H^4}+\|f_0\|_{W^{4,1}}\).
\eq

By Lemma \ref{3dmvpbsp10} and \eqref{lnsp10}, we have
\bma
I_2&\leq Ce^{-\frac{2\tau_2r_0^2t}{\epsilon^2}}\bigg(\int_{\{|\xi|\geq\frac{r_0}{\epsilon}\}}\frac{1}{(1+|\xi|^2)^2}d\xi\bigg)^{\frac{1}{2}}\bigg(\int_{\{|\xi|\geq\frac{r_0}{\epsilon}\}}(1+|\xi|^2)^2|\xi|^2\|\hat{f}_0\|^2d\xi\bigg)^{\frac{1}{2}}\nonumber\\
&\leq Ce^{-\frac{2\tau_2r_0^2t}{\epsilon^2}}\|f_0\|_{H^3},\label{fas7-4}\\
I_3&\leq Ce^{-\frac{\tau_1 t}{\epsilon^2}}\frac{1}{\epsilon}\bigg(\int_{\R^3}\frac{1}{(1+|\xi|^2)^2}d\xi\bigg)^{\frac{1}{2}}\bigg(\int_{\R^3}(1+|\xi|^2)^2\|\hat{f}_0\|^2d\xi\bigg)^{\frac{1}{2}}\nnm\\
&\leq C\frac{1}{\epsilon}e^{-\frac{\tau_1 t}{\epsilon^2}}\|f_0\|_{H^2}.\label{fas7-5}
\ema
Therefore, it follows from \eqref{fas7-0} and \eqref{fas7-3}--\eqref{fas7-5} that
\bmas
&\quad\bigg\|\frac{1}{\epsilon}e^{\frac{tB_{\epsilon}}{\epsilon^2}}f_0-V(t)P_{0}\big(\tilde{v}\cdot\nabla_xL^{-1}f_0\big)\bigg\|_{L^{\infty}}\nnm\\
&\leq C\bigg(\epsilon(1+t)^{-\frac{5}{2}}+\(1+\frac{t}{\epsilon}\)^{-1}+\frac{1}{\epsilon}e^{-\frac{\tau_1 t}{\epsilon^2}}\bigg)\(\|f_0\|_{H^{4}}+\|f_0\|_{W^{4,1}}\).
\emas
The proof of the lemma is completed.
\end{proof}

%
%
%
%

\section{Diffusion Limit and Optimal Convergence Rate}
\label{sect4}
\setcounter{equation}{0}

In this section, we study the diffusion limit of the solution to the nonlinear relativistic Boltzmann equation \eqref{rbe2}--\eqref{Pm3} based on the fluid approximations of the semigroup given in Section \ref{sect3}.

\subsection{Global existence}

\begin{lem}\label{rbdl-1}
For any $j,k=1,2,3$, we have
\bma
\Gamma\(v_j\chi_0,v_k\chi_0\)&=-\frac{1}{2}LP_1(v_jv_k\chi_0),\label{rbdl-1-4}\\
\Gamma\(v_0\chi_0,v_j\chi_0\)&=-\frac{1}{2}LP_1(v_jv_0\chi_0),\label{rbdl-1-5}\\
\Gamma\(v_0\chi_0,v_0\chi_0\)&=-\frac{1}{2}LP_1(v_0^2\chi_0).\label{rbdl-1-6}
\ema
\end{lem}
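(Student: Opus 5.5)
The identities follow from one algebraic fact: $\Gamma(\phi\chi_0,\phi\chi_0)$ and $L(\phi^2\chi_0)$ are both controlled by the same collision integrand whenever $\phi$ is a collision invariant. Fix $\phi,\psi\in\{1,v_1,v_2,v_3,v_0\}$, which are exactly the relativistic collision invariants, so that $\phi(v')+\phi(u')=\phi(v)+\phi(u)$. Recall $\chi_0=\sqrt M$, and note $M(v')M(u')=M(v)M(u)$ by energy conservation $v_0'+u_0'=v_0+u_0$.

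\textbf{Step 1 (expand $\Gamma$).} From \eqref{rBQ} and \eqref{rbepz-1},
\[
\Gamma(\phi\chi_0,\psi\chi_0)=\frac{1}{2\sqrt{M(v)}}\int_{\R^3}\int_{\S^2}v_M M(v)M(u)\big(\phi'\psi_*'+\phi_*'\psi'-\phi\psi_*-\phi_*\psi\big)\,d\omega\, du ,
\]
where $\phi=\phi(v)$, $\phi_*=\phi(u)$, and primes denote evaluation at $v',u'$. This uses $M(v')M(u')=M(v)M(u)$.

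\textbf{Step 2 (expand $L$).} From \eqref{rbenu-k}, $L(g\chi_0)=\frac{1}{\sqrt M}\int\!\!\int v_M M M_*(g'+g_*'-g-g_*)\,d\omega\, du$. Apply this with $g=\phi\psi$.

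\textbf{Step 3 (use the invariants).} Since $(\phi+\phi_*)(\psi+\psi_*)=(\phi'+\phi_*')(\psi'+\psi_*')$, expanding both sides gives
\[
\phi'\psi_*'+\phi_*'\psi'-\phi\psi_*-\phi_*\psi=-\big(\phi'\psi'+\phi_*'\psi_*'-\phi\psi-\phi_*\psi_*\big).
\]
Substituting this into Step 1 and comparing with Step 2 yields
\[
\Gamma(\phi\chi_0,\psi\chi_0)=-\tfrac12 L(\phi\psi\chi_0).
\]
Finally $L=LP_1$ because $LP_0=0$, since $N_0$ is the null space of $L$. Taking $(\phi,\psi)=(v_j,v_k),(v_0,v_j),(v_0,v_0)$ gives \eqref{rbdl-1-4}--\eqref{rbdl-1-6}.

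\textbf{Expected obstacle.} The only delicate points are the measure-theoretic ones. First, the integrals must converge: $v_M\le \mathbf c$, $M$ decays exponentially, and $\phi$ grows at most linearly, so they do. Second, the pre/post-collision integrand must be written consistently with the symmetrized form \eqref{rBQ}, and the cancellation in Step 3 has to be done pointwise in $(v,u,\omega)$. Because the cancellation is pointwise, no change of variables $(v,u)\leftrightarrow(v',u')$ is needed, and the argument is purely algebraic.
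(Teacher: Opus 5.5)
Your proof is correct and follows the paper's argument. The paper also writes $\Gamma$ of two invariant-weighted copies of $\chi_0$ as $\frac12\sqrt{M}\int\!\int v_M[\cdots]M(u)\,du\,d\omega$, multiplies the conservation laws to get the pointwise identity you use, flips the sign of the bracket, and recognizes the result as $-\frac12 LP_1(\phi\psi\chi_0)$. Your version just treats all three cases at once for an arbitrary pair of collision invariants.
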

\begin{proof}
Since
$$
u_j+v_j=u'_j+v'_j,\quad j=1,2,3,\quad u_0+v_0=u'_0+v'_0,
$$
 we have
\bq \label{rbe3}
\left\{\bal
(u_k+v_k)(u_j+v_j)=(u'_k+v'_k)(u'_j+v'_j),\\
(u_j+v_j)(u_0+v_0)=(u'_j+v'_j)(u'_0+v'_0),\\
(u_0+v_0)(u_0+v_0)=(u'_0+v'_0)(u'_0+v'_0).
\ea\right.
\eq

By \eqref{rBQ} and \eqref{rbe3}, we have
\bmas
\Gamma\(v_j\chi_0,v_k\chi_0\)&=\frac{1}{2}\sqrt{M}\int_{\R^3}\int_{\mathbb{S}^2}v_M\sigma[v'_ju'_k+u'_jv'_k-v_ju_k-u_jv_k]M(u)dud\omega\\
&=-\frac{1}{2}\sqrt{M}\int_{\R^3}\int_{\mathbb{S}^2}v_M\sigma[u'_ju'_k+v'_jv'_k-u_ju_k-v_jv_k]M(u)dud\omega\\
&=-\frac{1}{2}LP_1(v_jv_k\chi_0),\\
\Gamma\(v_j\chi_0,v_0\chi_0\)&=\frac{1}{2}\sqrt{M}\int_{\R^3}\int_{\mathbb{S}^2}v_M\sigma[v'_ju'_0+u'_jv'_0-v_ju_0-u_jv_0]M(u)dud\omega\\
&=-\frac{1}{2}\sqrt{M}\int_{\R^3}\int_{\mathbb{S}^2}v_M\sigma[u'_ju'_0+v'_jv'_0-u_ju_0-v_jv_0]M(u)dud\omega\\
&=-\frac{1}{2}LP_1(v_jv_0\chi_0),\\
\Gamma\(v_0\chi_0,v_0\chi_0\)&=\frac{1}{2}\sqrt{M}\int_{\R^3}\int_{\mathbb{S}^2}v_M\sigma[v'_0u'_0+u'_0v'_0-v_0u_0-u_0v_0]M(u)dud\omega\\
&=-\frac{1}{2}\sqrt{M}\int_{\R^3}\int_{\mathbb{S}^2}v_M\sigma[u'_0u'_0+v'_0v'_0-u_0u_0-v_0v_0]M(u)dud\omega\\
&=-\frac{1}{2}LP_1(v_0^2\chi_0).
\emas
The proof of the lemma is completed.
\end{proof}

\begin{lem}\label{uuu}
Let  $(n,m,q)(t,x) $ to be the global solution to the iNS system \eqref{rNS-1}--\eqref{rbdl-innmq}. Then, $u(t,x,v)=n(t,x)\chi_0+m(t,x)\cdot\chi'+q(t,x)\chi_4$  can be represented by
\be
 u(t,x,v)=V(t)P_0f_0+\int^t_0V(t-s)P_{0}\big(\tilde{v}\cdot\nabla_xL^{-1}\Gamma(u,u)\big)ds. \label{uuu1}
\ee
\end{lem}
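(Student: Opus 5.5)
By Lemma \ref{3dmvpbfas1}, the right-hand side of \eqref{uuu1} gives the unique solution of the linear iNS system \eqref{lrNS-1}--\eqref{lrNS-3} with initial data \eqref{rbdl-innmq} and forcing $H=P_0(\tilde v\cdot\nabla_x L^{-1}\Gamma(u,u))$. So the plan is to show that, for $u=n\chi_0+m\cdot\chi'+q\chi_4$ built from the iNS solution, this forcing reproduces the nonlinear terms of \eqref{rNS-1}. Concretely, we need
\[
(H_1)_\bot=\big(\eta_1\divx(m\otimes m)\big)_\bot,\qquad \frac{b}{\sqrt{a^2+b^2}}H_2=\eta_0\divx(qm).
\]
Since $A_2=\kappa_1$ and $A_0=\kappa_0$, and the pressure absorbs the gradient part of $H_1$, uniqueness in Lemma \ref{3dmvpbfas1} then identifies $u$ with the right-hand side of \eqref{uuu1}.

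\textbf{Step 1: compute $\Gamma(u,u)$.} Expand $\Gamma(u,u)$ bilinearly in the basis $\chi_0,\chi',\chi_4$. Because $\chi_0=\sqrt M$ is the collision invariant giving the Maxwellian, every term with a factor $\chi_0$ vanishes: $\Gamma(\chi_0,\cdot)=\Gamma(\cdot,\chi_0)=0$ up to symmetrization, since $Q(M,\cdot)$ terms are already placed in $L$. By \eqref{chii-1}, $\chi_j=p_1v_j\chi_0$ and $\chi_4=p_2(v_0-p_3)\chi_0$. Lemma \ref{rbdl-1} then gives
\[
\Gamma(u,u)=-\tfrac12 L P_1\big[(p_1 m\cdot v+p_2 q(v_0-p_3))^2\chi_0\big].
\]
Applying $L^{-1}$, which is legitimate on the range of $P_1$, yields
\[
L^{-1}\Gamma(u,u)=-\tfrac12P_1\big[(p_1m\cdot v+p_2q v_0)^2\chi_0\big].
\]
The constant $p_3$ drops out because $P_1$ kills the linear terms.

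\textbf{Step 2: project onto $\chi'$ and $E_0$.} Compute $(\tilde v\cdot\nabla_x L^{-1}\Gamma(u,u),\chi')$ and $(\tilde v\cdot\nabla_x L^{-1}\Gamma(u,u),E_0)$ by expanding the square into the $m\otimes m$ part, the $qm$ part and the $q^2$ part.

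\emph{The $\chi'$ component.} By rotational symmetry, the $m\otimes m$ part gives a constant times $\divx(m\otimes m)$ plus a gradient term. Using $\divx m=0$, the coefficient should match $\eta_1=-p_1(P_1(v_1\chi_2),\tilde v_1\chi_2)$. The $qm$ cross term has odd parity in $v$ and drops. The $q^2$ term is a pure gradient and is removed by $\bot$.

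\emph{The $E_0$ component.} Parity kills the $m\otimes m$ and $q^2$ terms. The cross term gives a multiple of $\divx(qm)$. Using $n=-\tfrac ab q$ to express things through $E_0$, we should match this to $\tfrac{\sqrt{a^2+b^2}}{b}\eta_0\divx(qm)$, with $\eta_0=-p_1(P_1(v_1E_0),\tilde v_1E_0)$.

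\textbf{Main obstacle.} The hard part is the constant-matching in Step 2. One must rewrite $(P_1(v_iv_j\chi_0),\tilde v_k\chi_l)$ and $(P_1(v_jv_0\chi_0),\tilde v_jE_0)$ in terms of $\eta_0,\eta_1$, using isotropy identities together with the relation $E_0\propto -a\chi_0+b\chi_4$. Here $\tilde v_1E_0$ involves $v_0$ through $\chi_4$, and the $\chi_0$ component is invisible after $P_1$ in the pairing. I would first verify the constants by writing $(P_1(v_1v_0\chi_0),\tilde v_1 E_0)=\tfrac{\sqrt{a^2+b^2}}{b p_2}(P_1(v_1E_0),\tilde v_1E_0)$, using that $P_1(v_1\chi_0)=0$ so that only the $\chi_4$ part of $E_0$ contributes.

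Finally, check the initial data. We have $V(t)P_0f_0$ at $t=0$, and \eqref{lnsp10} shows that $V(0)P_0f_0$ is exactly the projection in \eqref{rbdl-innmq}. Then conclude by uniqueness.
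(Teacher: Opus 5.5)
Your proposal is correct and follows essentially the same route as the paper. Lemma \ref{rbdl-1} writes $L^{-1}\Gamma(u,u)$ as $-\frac12 P_1$ of a quadratic in $(m\cdot v,\,qv_0)$ times $\chi_0$; parity and isotropy (the table in Lemma \ref{rbdl-ap-lp1}) then reduce the $\chi'$-projection to $\eta_1\divx(m\otimes m)$ plus gradients, and the $E_0$-projection to $\frac{\sqrt{a^2+b^2}}{b}\eta_0\divx(qm)$ through exactly the identity $(P_1(v_0v_1\chi_0),\tilde{v}_1E_0)=\frac{\sqrt{a^2+b^2}}{bp_2}(P_1(v_1E_0),\tilde{v}_1E_0)$ you single out, before Lemma \ref{3dmvpbfas1} concludes. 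One small correction to your Step 1: $\Gamma(\chi_0,h)=\frac12 Lh$ is not zero in general, so the $\chi_0$-terms drop only because $h\in N_0$ here (equivalently, $P_1$ removes the constant and linear parts of the squared collision invariant).
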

\begin{proof}
By Lemma \ref{rbdl-1}, we have
\bmas
  \big(\tilde{v}\cdot\nabla_xL^{-1}\Gamma(u,u),\chi_l\big)
&=-\frac{p_1}{2}\sum^3_{i,j,k=1} (P_1(v_i\chi_j),\tilde{v}_{k}\chi_l)\partial_{x_k}(m^im^j) \\
&\quad -\frac{p_2^2}{2} \partial_{x_l}(q^2)(P_1(v_0^2\chi_0),\tilde{v}_l\chi_l) ,\\
  \big(\tilde{v}\cdot\nabla_xL^{-1}\Gamma(u,u),E_0\big)
&=- p_1p_2\sum^3_{k=1}\(P_1(v_kv_0\chi_0),\tilde{v}_{k}E_0\)\partial_{x_k}(qm^k).
\emas
From  Lemma \ref{rbdl-ap-lp1}, it holds that
\bq\label{p2}
\left\{\bal
(P_1(v_i\chi_j),\tilde{v}_k\chi_l)=0,\quad (k,l)\neq(i,j)~\mathrm{or}~(j,i),\\
(P_1(v_i\chi_j),\tilde{v}_i\chi_j)=(P_1(v_i\chi_j),\tilde{v}_j\chi_i)=b_1,\quad 1\leq i\neq j\leq3,\\
(P_1(v_i\chi_i),\tilde{v}_i\chi_i)=3b_1-b_2,\\ (P_1(v_i\chi_i),\tilde{v}_j\chi_j)=b_1-b_2,\quad 1\leq i\neq j\leq3,\\
(P_1(v_0v_k\chi_0),\tilde{v}_kE_0)=\frac{\sqrt{a^2+b^2}}{bp_2}(P_1(v_1E_0),\tilde{v}_1E_0)=\frac{\sqrt{a^2+b^2}}{bp_2}b_3,\\
(P_1(v_0^2\chi_0),\tilde{v}_k\chi_k)=b_4,\quad k=1,2,3,
\ea\right.
\eq
where $b_i$, $i=1,2,3$ are given in Lemma \ref{rbdl-ap-lp1}. Thus, we can obtain
\bmas
 \big(\tilde{v}\cdot\nabla_xL^{-1}\Gamma(u,u),\chi'\big)&=-p_1b_1 \divx(m\otimes m )  -\frac{p_1}{2}(b_1-b_2) \Tdx|m|^2  -\frac{p_2^2}{2}b_4 \Tdx|q|^2 ,\\
 \big(\tilde{v}\cdot\nabla_xL^{-1}\Gamma(u,u),E_0\big)
&=-\frac{\sqrt{a^2+b^2}}{b}p_1b_3  \divx(qm ).
\emas
Note that $(\Tdx|m |^2)_\bot=0$ and $(\Tdx|q |^2)_\bot=0$. These together with Lemma \ref{3dmvpbfas1} implies \eqref{uuu1}.
\end{proof}

\begin{lem}\label{rbenletclm}
Let $ A_{\epsilon}=-\nu(v)-\epsilon\tilde{v}\cdot\nabla_x$. For any $d\geq0$, there exists a constant $C>0$ such that
\be
\|e^{\frac{tA_{\epsilon}}{\epsilon^2}}f_0 \|_{L^{\infty}_{v,d}(L^2_x)}\leq C e^{-\frac{\nu_0t}{\epsilon^2}}\|f_0\|_{L^{\infty}_{v,d}(L^2_x)}.\label{rbenletclm-1}
\ee

Moreover, if the function $F=F(t,x,v)$ satisfies
$$
\|\nu^{-1}F(t )\|_{L^{\infty}_{v,d}(L^2_x)}\leq C(1+t)^{-k},\quad \forall k>0,
$$
then
\be
\bigg\|\int_0^te^{\frac{(t-s)A_{\epsilon}}{\epsilon^2}}F(s)ds\bigg\|_{L^{\infty}_{v,d}(L^2_x)}\leq C\epsilon^2(1+t)^{-k}.\label{rbenletclm-2}
\ee
\end{lem}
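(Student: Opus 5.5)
The operator $A_\epsilon=-\nu(v)-\epsilon\tilde v\cdot\nabla_x$ contains no velocity integral, so $e^{\frac{tA_\epsilon}{\epsilon^2}}$ has an explicit formula. For each fixed $v$ the generator is multiplication by $-\nu(v)/\epsilon^2$ plus a transport with constant speed $\tilde v/\epsilon$. Solving along characteristics gives
\[
\big(e^{\frac{tA_{\epsilon}}{\epsilon^2}}f_0\big)(x,v)=e^{-\frac{\nu(v)t}{\epsilon^2}}f_0\Big(x-\frac{\tilde{v}t}{\epsilon},v\Big).
\]
Equivalently, on the Fourier side in $x$ it is $e^{-\frac{(\nu(v)+\mathrm{i}\epsilon\tilde v\cdot\xi)t}{\epsilon^2}}\hat f_0(\xi,v)$.

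For \eqref{rbenletclm-1} I fix $v$. The spatial translation is an isometry of $L^2_x$, so
\[
\|e^{\frac{tA_\epsilon}{\epsilon^2}}f_0(\cdot,v)\|_{L^2_x}=e^{-\frac{\nu(v)t}{\epsilon^2}}\|f_0(\cdot,v)\|_{L^2_x}.
\]
By \eqref{rbdl-nu1}, $\nu(v)\ge\nu_0w(v)\ge\nu_0$, which bounds the right side by $e^{-\frac{\nu_0t}{\epsilon^2}}\|f_0(\cdot,v)\|_{L^2_x}$. Multiplying by $(1+|v|)^d$ and taking the supremum over $v$ gives \eqref{rbenletclm-1} with $C=1$.

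For \eqref{rbenletclm-2} I again fix $v$ and use the same isometry inside the time integral, but this time I keep the exact factor $\nu(v)$ instead of $\nu_0$:
\[
(1+|v|)^d\Big\|\int_0^te^{\frac{(t-s)A_\epsilon}{\epsilon^2}}F(s)ds\Big\|_{L^2_x}\le\int_0^te^{-\frac{\nu(v)(t-s)}{\epsilon^2}}\,\nu(v)\,(1+|v|)^d\|\nu^{-1}F(s,\cdot,v)\|_{L^2_x}ds.
\]
By the hypothesis, $(1+|v|)^d\|\nu^{-1}F(s,\cdot,v)\|_{L^2_x}\le C(1+s)^{-k}$. This reduces everything to the scalar estimate
\[
\int_0^t\nu e^{-\frac{\nu(t-s)}{\epsilon^2}}(1+s)^{-k}ds\le C\epsilon^2(1+t)^{-k},\qquad \nu\ge\nu_0,
\]
uniformly in $\nu$ and $\epsilon\in(0,1)$.

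To prove the scalar estimate I split the integral at $s=t/2$.
\begin{itemize}
\item On $[t/2,t]$ I have $(1+s)^{-k}\le 2^k(1+t)^{-k}$. The remaining factor integrates exactly: $\int_0^\infty\nu e^{-\nu\tau/\epsilon^2}d\tau=\epsilon^2$. This piece is therefore at most $2^k\epsilon^2(1+t)^{-k}$.
\item On $[0,t/2]$ I have $t-s\ge t/2$, so the exponential is at most $e^{-\frac{\nu t}{4\epsilon^2}}\cdot e^{-\frac{\nu (t-s)}{2\epsilon^2}}$. Integrating the second factor against $\nu$ gives at most $2\epsilon^2$, and $(1+s)^{-k}\le1$. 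The first factor satisfies $e^{-\frac{\nu t}{4\epsilon^2}}\le e^{-\frac{\nu_0 t}{4}}\le C_k(1+t)^{-k}$, using $\epsilon<1$. This piece is therefore at most $C\epsilon^2(1+t)^{-k}$.
\end{itemize}
Adding the two pieces gives the scalar estimate, and taking the supremum over $v$ yields \eqref{rbenletclm-2}.

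The main point needing care is that $\nu(v)$ grows like $w(v)$ and is not bounded, so I must not bound $\nu$ by a constant. The factor $\nu^{-1}$ in the hypothesis is exactly what lets the $\nu$ produced by the time integral cancel. With that bookkeeping, and the fact that the constants do not depend on $\mathbf{c}$ (only $\nu_0$ from \eqref{rbdl-nu1} enters), the rest is elementary.
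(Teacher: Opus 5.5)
Your proof is correct and follows the paper's argument. It uses the explicit characteristic formula $e^{\frac{tA_\epsilon}{\epsilon^2}}f_0=e^{-\frac{\nu(v)t}{\epsilon^2}}f_0(x-\frac{\tilde v t}{\epsilon},v)$, translation invariance of $L^2_x$ together with $\nu\ge\nu_0$ for the first bound, and keeps the exact factor $\nu(v)$ so that it cancels the $\nu^{-1}$ in the hypothesis for the second bound. The only difference is in how the scalar estimate is finished: the paper does it in one line by writing $\nu(v)e^{-\frac{\nu(v)(t-s)}{\epsilon^2}}ds=\epsilon^2\,de^{-\frac{\nu(v)(t-s)}{\epsilon^2}}$, while your split at $s=t/2$ spells out the step that this line leaves implicit.
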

\begin{proof}
Since $e^{\frac{tA_{\epsilon}}{\epsilon^2}}f_0$ can be represented as
$$
e^{\frac{tA_{\epsilon}}{\epsilon^2}}f_0(x,v)=e^{-\frac{\nu(v)t}{\epsilon^2}}f_0\(x-\frac{1}{\epsilon}\tilde{v}t,v\),
$$
 we can obtain \eqref{rbenletclm-1}.

Since
$$
\int_0^te^{\frac{(t-s)A_{\epsilon}}{\epsilon^2}}F(s)ds=\int_0^te^{-\frac{\nu(v)(t-s)}{\epsilon^2}}F\(x-\frac{1}{\epsilon}\tilde{v}(t-s),v,s\)ds,
$$
it follows that
\bmas
(1+|v|)^{d}\bigg\|\int_0^te^{\frac{(t-s)A_{\epsilon}}{\epsilon^2}}F(s)ds\bigg\|_{L^2_x}
&\leq \int_0^te^{-\frac{\nu(v)(t-s)}{\epsilon^2}}\nu(v)\|\nu^{-1}F(s)\|_{L^{\infty}_{v,d}(L^2_x)}ds\nnm\\
&\leq C\int_0^te^{-\frac{\nu(v)(t-s)}{\epsilon^2}}\nu(v)(1+s)^{-k}ds\nnm\\
&= C\epsilon^2\int_0^t(1+s)^{-k}de^{-\frac{\nu(v)(t-s)}{\epsilon^2}}\leq C\epsilon^2(1+t)^{-k}.
\emas
This gives \eqref{rbenletclm-2}. The proof of the lemma is completed.
\end{proof}

By virtue of Lemma \ref{3dmvpbfas2}, Lemma \ref{rbenletclm} and \eqref{nldh1}, we have following lemma.
\begin{lem}\label{rbedl5}
Let $N\geq2$. For any $\epsilon\in(0,1)$, there exists a small constant $\delta_0>0$ independent of $\mathbf{c}$ such that if $\|f_0\|_{L^{\infty}_{v,3}(H^N_x)}+\|f_0\|_{L^{2,1}}\leq\delta_0$, then the relativistic Boltzmann equation \eqref{rbe2}--\eqref{Pm3} admits a unique global solution $f_{\epsilon}=f_{\epsilon}(t,x,v)$ satisfies
\be
\|f_{\epsilon}(t)\|_{L^{\infty}_{v,3}(H^N_x)}\leq C\delta_0(1+t)^{-\frac{3}{4}},\label{rbedl5-1}
\ee
where   $C>0$ is a constant independent of $\mathbf{c}$ and $\epsilon$.

In particular, we have
\bq\label{rbedl5-3}
\|P_1f_{\epsilon}(t)\|_{H^{N-2}}\leq C\delta_0\(\epsilon(1+t)^{-\frac{5}{4}}+e^{ -\frac{\tau_1 t}{4\epsilon^2}}\),
\eq
where $\tau_1,C>0$ are two constants independent of $\mathbf{c}$ and $\epsilon$.
\end{lem}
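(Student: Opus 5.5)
The plan is to close a bootstrap argument built on two weighted norms. Set
\[
Q(t)=\sup_{0\le s\le t}(1+s)^{\frac34}\|f_\epsilon(s)\|_{L^\infty_{v,3}(H^N_x)},
\]
and let $Q_2(t)$ be the analogous $L^2$-type quantity built from $\|f_\epsilon(s)\|_{H^N}$. Local existence and uniqueness come from a standard iteration on the mild formulation. The target is the a priori estimate $Q(t)\le C\delta_0+CQ(t)^2$ with $C$ independent of $\epsilon$ and $\mathbf{c}$. A continuity argument then gives global existence together with \eqref{rbedl5-1}.

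\emph{Step 1: $L^2$ part.} Write the Duhamel form
\[
f_\epsilon(t)=e^{\frac{tB_\epsilon}{\epsilon^2}}f_0+\frac1\epsilon\int_0^te^{\frac{(t-s)B_\epsilon}{\epsilon^2}}\Gamma(f_\epsilon,f_\epsilon)\,ds .
\]
Since $P_0\Gamma=0$, the nonlinear term can be estimated with \eqref{ns325-re} and \eqref{ns326}, taking $\alpha'=0$ when $m=|\alpha|$ is large enough. Those bounds carry an extra factor $\epsilon$, which cancels the prefactor $1/\epsilon$. The $e^{-\tau_1(t-s)/\epsilon^2}$ piece integrates to $\epsilon^2$, which absorbs $1/\epsilon$ as well. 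For the nonlinear term I use the standard relativistic estimate $\|\nu^{-1}\Gamma(f,g)\|\lesssim\|f\|_{L^\infty_{v,3}}\|g\|$ (this is presumably \eqref{nldh1}), with constants uniform in $\mathbf{c}$ because $\nu\sim w$. That gives
\[
\|\Gamma(f_\epsilon,f_\epsilon)\|_{H^N}+\|\Gamma(f_\epsilon,f_\epsilon)\|_{L^{2,1}}\lesssim Q^2(1+s)^{-\frac32}.
\]
The integrals $\int_0^t(1+t-s)^{-\frac54}(1+s)^{-\frac32}ds$ and $\int_0^t (t-s)^{-\frac12}e^{-\tau_2(t-s)/2}(1+s)^{-\frac32}ds$ are both $O((1+t)^{-\frac34})$. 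The linear part decays like $(1+t)^{-\frac34}$ by \eqref{ns321}–\eqref{ns322}. Hence $Q_2(t)\lesssim\delta_0+Q^2$.

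\emph{Step 2: weighted $L^\infty_v$ part.} Go back to the equation and treat $K$ as a source:
\[
f_\epsilon=e^{\frac{tA_\epsilon}{\epsilon^2}}f_0+\int_0^te^{\frac{(t-s)A_\epsilon}{\epsilon^2}}\Big(\frac1{\epsilon^2}Kf_\epsilon+\frac1\epsilon\Gamma(f_\epsilon,f_\epsilon)\Big)ds .
\]
By \eqref{rbenletclm-1}, the first term decays like $e^{-\nu_0t/\epsilon^2}$. For the $K$ term, the pointwise bounds of $k(v,u)$ give
\[
\|\nu^{-1}Kf\|_{L^\infty_{v,3}}\lesssim\|\nu^{-1}Kf\|_{L^\infty_{v,2}}+\dots
\]
Iterating the $A_\epsilon$ mild form once more (the standard $K$-twice trick) reduces this to $\|f\|_{H^N}$ plus a small multiple of the $L^\infty_{v,3}$ norm. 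Then \eqref{rbenletclm-2} with factor $\epsilon^2$ exactly cancels $1/\epsilon^2$, leaving $\lesssim Q_2(1+t)^{-\frac34}+\eta Q$.

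\emph{Main obstacle.} The nonlinear term in this route carries $\frac1\epsilon$, and \eqref{rbenletclm-2} only recovers $\epsilon^2\cdot\frac1\epsilon=\epsilon$, so it is harmless. The real difficulty is making the $K$-bounds in weighted $L^\infty_v$ uniform in $\mathbf{c}$. I would rely on the kernel estimates of Lemma \ref{rbdl-svku1alfa}, specifically $\int|k(v,u)|(1+|u|)^{-3}du\lesssim(1+|v|)^{-3}\cdot$(a decaying factor), as used in Lemma \ref{rbesplamdaa}.

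\emph{Step 3: the $P_1$ bound \eqref{rbedl5-3}.} Apply \eqref{ns322} to the linear part. For the nonlinear part, use \eqref{ns326} at level $H^{N-2}$. The extra $\epsilon^2$ yields $\epsilon\int(1+t-s)^{-\frac74}(1+s)^{-\frac32}ds\lesssim\epsilon(1+t)^{-\frac54}$. The exponential piece gives
\[
\frac1\epsilon\int_0^t e^{-\tau_1(t-s)/\epsilon^2}(1+s)^{-\frac32}ds\lesssim\epsilon(1+t)^{-\frac32}.
\]
The term $e^{-\tau_1t/(4\epsilon^2)}$ comes only from the linear part, after relaxing the rate.
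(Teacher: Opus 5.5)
Your proposal is correct and follows the paper's proof in all essentials: the $H^N$ decay comes from Duhamel's formula and Lemma \ref{3dmvpbfas2}, with $P_0\Gamma=0$ absorbing the $1/\epsilon$; the $L^\infty_{v,3}(H^N_x)$ bound comes from the $A_\epsilon$-mild form with $K$ as a source, with Lemma \ref{rbenletclm} absorbing the $1/\epsilon^2$; and \eqref{rbedl5-3} comes from \eqref{ns322} and \eqref{ns326}. The differences are minor: the paper uses a contraction mapping instead of a continuity argument, and it needs no $K$-twice trick or absorption of $\eta Q$, because the $x$-norm is already $L^2$ and it simply climbs $L^2_v\to L^\infty_{v,0}\to\cdots\to L^\infty_{v,3}$ using $\|Kf\|_{L^\infty_{v,0}}\le C\|f\|_{L^2_v}$ and $\|Kf\|_{L^\infty_{v,k}}\le C\|f\|_{L^\infty_{v,k-1}}$ from Lemma \ref{rbegf8j2} (this is what your inequality $\|\nu^{-1}Kf\|_{L^\infty_{v,3}}\lesssim\|\nu^{-1}Kf\|_{L^\infty_{v,2}}$ should read), and the nonlinear bound you need is Lemma \ref{dl-rbe-estgamfg}, not \eqref{nldh1}, which is just the Duhamel formula.
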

\begin{proof}
We prove the existence of the solution $f$  by the contraction mapping theorem. Define the mapping $T$ as
\bq\label{rbethpr2-1}
(Tf_{\epsilon})(t)=e^{\frac{tB_{\epsilon}}{\epsilon^2}}f_0+\frac{1}{\epsilon}\int^t_0e^{\frac{(t-s)B_{\epsilon}}{\epsilon^2}}\Gamma(f_{\epsilon},f_{\epsilon})ds=:I_1+I_2.
\eq
For $\delta>0$, set the solution space $X$ as
$$
X=\Big\{f_{\epsilon}\in L^{\infty}((0,\infty),H^N)\,|\,\|f_{\epsilon}(t)\|_{X}=\sup_{ 0\leq s\leq t}\|f_{\epsilon}(s)\|_{L^{\infty}_{v,3}(H^N_x)}(1+s)^{\frac{3}{4}}\le \delta\Big\}.
$$

By Lemma \ref{dl-rbe-estgamfg}, 
 it holds for $0\le s\le t$ that
\bma
&\quad\|\nu^{-1}\Gamma(f_{\epsilon},f_{\epsilon})\|_{L^{\infty}_{v,3}(H^N_x)}+\|\Gamma(f_{\epsilon},f_{\epsilon})(s)\|_{L^{2,1}}\nnm\\
&\leq C\|f_{\epsilon}\|_{L^{\infty}_{v,3}(H^N_x)}\|f_{\epsilon}\|_{L^{\infty}_{v,3}(H^N_x)}+C\|f_{\epsilon}\|_{L^{2}}\|\nu f_{\epsilon}\|_{L^{2}}\nnm\\
&\leq C(1+s)^{-\frac{3}{2}}\|f_{\epsilon}(t)\|_X^2.\label{rbedl5-1-2}
\ema
 This and Lemma \ref{3dmvpbfas2} implies that
\bma
\|I_1\|_{H^N}&\leq C(1+t)^{-\frac{3}{4}}\(\|f_0\|_{H^N}+\|f_0\|_{L^{2,1}}\),\label{rbetf-h2-i1}\\
\|I_2\|_{H^N}&\leq C\int^{t}_0\((1+t-s)^{-\frac{5}{4}}+(t-s)^{-\frac{1}{2}}e^{-\frac{\tau_2(t-s)}{2}}+\frac{1}{\epsilon}e^{-\frac{\tau_1(t-s)}{\epsilon^2}}\)\|\Gamma(f_{\epsilon},f_{\epsilon})\|_{H^N\cap L^{2,1}}ds\nnm\\
&\leq C\|f_{\epsilon}(t)\|_X^2\int^{t}_0\((1+t-s)^{-\frac{5}{4}}+(t-s)^{-\frac{1}{2}}e^{-\frac{\tau_2(t-s)}{2}}+\frac{1}{\epsilon}e^{-\frac{\tau_1(t-s)}{\epsilon^2}}\)(1+s)^{-\frac{3}{2}}ds\nnm\\
&\leq C(1+t)^{-1}\|f_{\epsilon}(t)\|_X^2.\label{rbetf-h2-i2}
\ema
By taking summation of \eqref{rbetf-h2-i1}--\eqref{rbetf-h2-i2}, we obtain
\bq\label{rbetf-h2}
\|(Tf_{\epsilon})(t)\|_{H^N}\leq C(1+t)^{-\frac{3}{4}}\(\|f_0\|_{H^N}+\|f_0\|_{L^{2,1}}\)+C(1+t)^{-1}\|f_{\epsilon}(t)\|_X^2.
\eq

Note that $I_1,I_2$ satisfies
\bmas
&\partial_tI_1+\frac{1}{\epsilon}\tilde{v}\cdot\nabla_x I_1+\frac{1}{\epsilon^2}\nu(v)I_1=\frac{1}{\epsilon^2}KI_1, \quad I_1(0)=f_0,\\
&\partial_tI_2+\frac{1}{\epsilon}\tilde{v}\cdot\nabla_x I_2+\frac{1}{\epsilon^2}\nu(v)I_2=\frac{1}{\epsilon^2}KI_2+\frac{1}{\epsilon}\Gamma(f_{\epsilon},f_{\epsilon}),\quad I_2(0)=0.
\emas
Thus, we can represent $I_1,I_2$ as
\bma
I_1 &=e^{\frac{tA_{\epsilon}}{\epsilon^2}}f_0+\int^t_0\frac{1}{\epsilon^2}e^{\frac{(t-s)A_{\epsilon}}{\epsilon^2}} KI_1 ds,\label{rbenlv-2-i1}\\
I_2 &= \int^t_0e^{\frac{(t-s)A_{\epsilon}}{\epsilon^2}} \(\frac{1}{\epsilon^2}KI_2+\frac{1}{\epsilon}\Gamma(f_{\epsilon},f_{\epsilon})\) ds,\label{rbenlv-2-i2}
\ema
where $A_{\epsilon}=-\nu(v)-\epsilon\tilde{v}\cdot\nabla_x$.
By Lemma \ref{rbenletclm}, we have
\be
\|e^{\frac{tA_{\epsilon}}{\epsilon^2}}f_0\|_{L^{\infty}_{v,k}(H^N_x)}\leq Ce^{-\frac{\nu_0t}{\epsilon^2}}\|f_0\|_{L^{\infty}_{v,k}(H^N_x)},\quad k\geq0.\label{rbe-etce}
\ee
By Lemma \ref{rbegf8j2}, we can obtain
$$
\left\{\bln
&\|KI_1\|_{L^{\infty}_{v,0}(H^N_x)} \leq C\|I_1\|_{H^N}\leq C(1+t)^{-\frac{3}{4}}(\|f_0\|_{H^N}+\|f_0\|_{L^{2,1}}),  \\
&\|KI_2\|_{L^{\infty}_{v,0}(H^N_x)} \leq C\|I_2\|_{H^N}\leq C(1+t)^{-1}\|f_{\epsilon}(t)\|_X^2,
\eln\right.
$$
which, together with \eqref{rbenlv-2-i1}--\eqref{rbe-etce}, implies that
\bmas
\|I_1\|_{L^{\infty}_{v,0}(H^N_x)}& \leq C(1+t)^{-\frac{3}{4}}(\|f_0\|_{L^{\infty}_{v,3}(H^N_x)}+\|f_0\|_{L^{2,1}}),\\
\|I_2\|_{L^{\infty}_{v,0}(H^N_x)}& \leq C(1+t)^{-1}\|f(t)\|_{X}^2.
\emas

By Lemma \ref{rbegf8j2}, we can obtain that for $k>0$ and $i=1,2$,
$$
\|KI_i\|_{L^{\infty}_{v,k}(H^N_x)}\leq C\|I_i\|_{L^{\infty}_{v,k-1}(H^N_x)}.
$$
This and  \eqref{rbenlv-2-i1}--\eqref{rbenlv-2-i2}, Lemma \ref{rbenletclm} implies that
\bma
\|I_1\|_{L^{\infty}_{v,3}(H^N_x)}&= \|e^{\frac{tB_{\epsilon}}{\epsilon^2}}f_0\|_{L^{\infty}_{v,3}(H^N_x)}\leq C (1+t)^{-\frac{3}{4}}(\|f_0\|_{L^{\infty}_{v,3}(H^N_x)}+\|f_0\|_{L^{2,1}}),\label{rbenlv-4-1}\\
\|I_2\|_{L^{\infty}_{v,3}(H^N_x)}&=\bigg\|\frac{1}{\epsilon}\int^t_0e^{\frac{(t-s)B_{\epsilon}}{\epsilon^2}}\Gamma(f_{\epsilon},f_{\epsilon})ds\bigg\|_{L^{\infty}_{v,3}(H^N_x)}\leq C  (1+t)^{-1}\|f_{\epsilon}(t)\|_{X}^2. \label{gamma}
\ema
This gives
\bma
\|(Tf_{\epsilon})(t)\|_{L^{\infty}_{v,3}(H^N_x)}&\leq C_1(1+t)^{-\frac{3}{4}}(\|f_0\|_{L^{\infty}_{v,3}(H^N_x)}+\|f_0\|_{L^{2,1}})\nnm\\
&\quad+C_2(1+t)^{-\frac{3}{4}}\|f(t)\|_{X}^2.\label{rbenlv-6}
\ema
Let $\|f_0\|_{L^{\infty}_{v,3}(H^N_x)}+\|f_0\|_{L^{2,1}}\leq\delta_0$. Thus, for $f_{\epsilon}\in X$,
$$
\|(Tf_{\epsilon})(t)\|_{X}\leq C_1\delta_0+C_2\delta^2,
$$
which implies that $T$ is the mapping of $X\rightarrow X$ so long as
$$
\delta_0\leq\frac{\delta}{2C_1}\quad \mathrm{and}\quad 0<\delta<\frac{1}{2C_2}.
$$

Note that for any $f_1,f_2\in X$,
$$
\Gamma(f_2,f_2)-\Gamma(f_1,f_1)=\Gamma(f_2-f_1,f_2)+\Gamma(f_1,f_2-f_1).
$$
Thus, by  \eqref{gamma},  we can obtain
\bmas
\|T(f_2)-T(f_1)\|_{L^{\infty}_{v,3}(H^N_x)}&= \bigg\|\int^t_0\frac{1}{\epsilon}e^{\frac{(t-s)B_{\epsilon}}{\epsilon^2}}\[\Gamma(f_2,f_2)-\Gamma(f_1,f_1)\]ds\bigg\|_{L^{\infty}_{v,3}(H^N_x)}\nnm\\
&\leq C(1+t)^{-1}(\|f_1\|_X+\|f_2\|_X)\|f_2-f_1\|_X\nnm\\
&\leq 2C\delta(1+t)^{-1}\|f_2-f_1\|_X.
\emas
Thus, $T$ is a contraction mapping of $X\rightarrow X$. By the contraction mapping theorem, there exists a unique $f_{\epsilon}\in X$ such that $Tf_{\epsilon}=f_{\epsilon}$, i.e., $f_{\epsilon}$ is a unique global solution to the relativistic Boltzmann equation \eqref{rbe2}--\eqref{Pm3}.

Finally, we deal with \eqref{rbedl5-3}. It follows from Lemma \ref{3dmvpbfas2}, \eqref{nldh1} and \eqref{rbedl5-1} that
\bma
\| P_1f_{\epsilon}(t)\|_{H^k}&\leq C_1\(\epsilon(1+t)^{-\frac{5}{4}}+e^{-\frac{\tau_1t}{\epsilon^2}}\)\(\| f_0\|_{H^{1+k}}+\|f_0\|_{L^{2,1}}\)\nnm\\
&\quad+C_2\int^{t}_0\(\epsilon(1+t-s)^{-\frac{7}{4}}+\frac{1}{\epsilon}e^{-\frac{\tau_1(t-s)}{\epsilon^2}}\)\|\Gamma(f_{\epsilon},f_{\epsilon})\|_{H^{2+k}\cap L^{2,1}}ds\nnm\\
&\leq \(C_1\delta_0+C_2\delta_0^2\)\(\epsilon(1+t)^{-\frac{5}{4}}+e^{-\frac{\tau_1t}{4\epsilon^2}}\),\label{rbedl5-3-1}
\ema
where $k\leq N-2$. The proof of the lemma is completed.
\end{proof}

By virtue of Lemma \ref{3dmvpbfas3} and \eqref{nlnsdh1}, we have following lemma.

\begin{lem}\label{rbedl6}
Let $N\geq2$. There exists a small constant $\delta_0>0$ such that if $\|f_0\|_{H^N}+\|f_0\|_{L^{2,1}}\leq\delta_0$ then the iNS system \eqref{rNS-1} admits a unique global solution $(n,m,q)(t,x)\in L^{\infty}_t\big(L^2_x\big)$.
Moreover, $u(t,x,v)=n(t,x)\chi_0+m(t,x)\cdot \chi'+q(t,x)\chi_4$ has the following time-decay rate:
\bq\label{rbedl6-0}
\|u(t)\|_{H^N}\leq C\delta_0(1+t)^{-\frac{3}{4}},
\eq
where $C>0$ is a constant.
\end{lem}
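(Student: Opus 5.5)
We construct the solution as a fixed point of the mild formulation given by Lemma \ref{uuu}. For $u\in N_0$ set
\[
(\mathcal{T}u)(t)=V(t)P_0f_0+\int_0^tV(t-s)P_0\big(\tilde v\cdot\nabla_xL^{-1}\Gamma(u,u)\big)(s)\,ds .
\]
We work in the space
\[
Y=\Big\{u=n\chi_0+m\cdot\chi'+q\chi_4:\ \|u\|_Y=\sup_{t\ge0}(1+t)^{\frac34}\|u(t)\|_{H^N}\le\delta\Big\}.
\]
By Lemma \ref{3dmvpbfas1}, a fixed point of $\mathcal{T}$ gives $(n,m,q)=((u,\chi_0),(u,\chi'),(u,\chi_4))$. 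This triple solves the linear iNS system with forcing $(H_1)_\bot=(H,\chi')_\bot$ and $H_2=(H,E_0)$, where $H=P_0(\tilde v\cdot\nabla_xL^{-1}\Gamma(u,u))$. The computation in the proof of Lemma \ref{uuu} (via Lemma \ref{rbdl-1} and \eqref{p2}) shows that these forcings are exactly $\eta_1\divx(m\otimes m)$ up to gradients absorbed into the pressure, and $\eta_0\divx(qm)$. Hence fixed points of $\mathcal{T}$ are exactly solutions of \eqref{rNS-1}--\eqref{rbdl-innmq}.

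\textbf{Linear part.} Lemma \ref{3dmvpbfas3} with $\alpha'=0$ gives
\[
\|V(t)P_0f_0\|_{H^N}\le C(1+t)^{-\frac34}\big(\|f_0\|_{H^N}+\|f_0\|_{L^{2,1}}\big)\le C_1\delta_0(1+t)^{-\frac34}.
\]

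\textbf{Nonlinear part.} Write $H(s)=\nabla_x\cdot G(s)$ with $G$ quadratic in $(n,m,q)$ (products $m\otimes m$, $qm$, $|m|^2$, $q^2$). The derivative is handled by moving it onto the semigroup. For the low-frequency $L^{2,1}$ part we use $\|G\|_{L^1_x}\le C\|u\|_{L^2}^2$. For the high-frequency part, since $N\ge2$, the algebra property of $H^N$ together with Sobolev embedding gives $\|G\|_{H^N}\le C\|u\|_{H^N}^2$. Applying Lemma \ref{3dmvpbfas3} with $m=1$ to the $L^{2,1}$ part, and splitting frequencies as in Lemma \ref{3dmvpbfas2} to get the factor $(t-s)^{-\frac12}e^{-c(t-s)}$ for the top derivative, we obtain
\[
\Big\|\int_0^tV(t-s)H(s)\,ds\Big\|_{H^N}\le C\int_0^t\Big((1+t-s)^{-\frac54}+(t-s)^{-\frac12}e^{-c(t-s)}\Big)(1+s)^{-\frac32}\,ds\;\|u\|_Y^2 .
\]
The right-hand side is bounded by $C_2(1+t)^{-\frac34}\|u\|_Y^2$. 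Choosing $\delta_0\le\delta/(2C_1)$ and $\delta<1/(2C_2)$ makes $\mathcal{T}$ map $Y$ into itself.

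\textbf{Contraction.} The bilinear identity $\Gamma(u_2,u_2)-\Gamma(u_1,u_1)=\Gamma(u_2-u_1,u_2)+\Gamma(u_1,u_2-u_1)$ gives
\[
\|\mathcal{T}u_2-\mathcal{T}u_1\|_Y\le 2C_2\delta\|u_2-u_1\|_Y .
\]
So $\mathcal{T}$ is a contraction on $Y$, which yields the unique global solution and \eqref{rbedl6-0}. Uniqueness among $L^\infty_t(L^2_x)$ solutions follows from the same representation by Lemma \ref{3dmvpbfas1}.

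\textbf{Main obstacle.} The key technical point is placing the extra derivative from $\tilde v\cdot\nabla_x$ so that the time kernel is integrable near $s=t$. This needs the $(t-s)^{-1/2}$ smoothing at high frequency, and at low frequency the gain $(1+t-s)^{-5/4}$ from $\|\cdot\|_{L^{2,1}}$. Both must be combined without losing a derivative in $H^N$.
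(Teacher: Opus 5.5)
Your proposal is correct and follows the paper's proof essentially step for step. Both run a contraction for the mild formulation $V(t)P_0f_0+\int_0^tV(t-s)\div_x Z(u)\,ds$ in the space with norm $\sup(1+s)^{\frac34}\|u(s)\|_{H^N}$. Both use the frequency-split bound on $\|\partial_x^\alpha V(t)\div_x U_0\|_{L^2}$, which produces the kernel $(1+t-s)^{-\frac54}+(t-s)^{-\frac12}e^{-\rho_0(t-s)}$, together with $\|Z(u)\|_{H^N}+\|Z(u)\|_{L^{2,1}}\le C(1+s)^{-\frac32}\|u\|^2$.

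The only overreach is your final remark on uniqueness. The contraction gives uniqueness only within the small ball $Y$. Lemma \ref{3dmvpbfas1} concerns the linear system, so it does not yield uniqueness among all $L^\infty_t(L^2_x)$ solutions of the nonlinear system, and neither your argument nor the paper's establishes that.
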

\begin{proof}
We prove the existence of the solution $u$  by the contraction mapping theorem. Define the mapping $T$ as
\bq\label{rbedl6-tu}
(Tu)(t)=V(t)P_0f_0+\int^t_0V(t-s)\div_xZ(u(s))ds,
\eq
where $Z(u)=P_{0} (\tilde{v} L^{-1}\Gamma(u,u) )$.

Set the solution space
$$
\Omega=\Big\{u\in L^{\infty}((0,\infty),H^N)\,|\,\|u(t)\|_{\Omega}=\sup_{0\leq s\leq t}(1+s)^{\frac{3}{4}}\|u(s)\|_{H^N}\leq\delta\Big\}.
$$

For any $U_0\in\R^3$, we have
\bma\label{rbedl6-1}
\|\partial_x^{\alpha}V(t)\div_xU_0\|^2_{L^2}&\leq C\int_{\R^3}(\xi^{\alpha})^2e^{-2\rho_0|\xi|^2t}|\xi|^2\|\hat{U}_0\|^2d\xi\nnm\\
&\leq C\sup_{|\xi|\leq1}(\xi^{\alpha'})^2\|\hat{U}_0\|^2\int_{|\xi|\leq1}(\xi^{\alpha-\alpha'})^2|\xi|^2e^{-2\rho_0|\xi|^2t}d\xi\nnm\\
&\quad+C\sup_{|\xi|\leq1}\(|\xi|^2e^{-2\rho_0|\xi|^2t}\)\int_{|\xi|\geq1}(\xi^{\alpha})^2\|\hat{U}_0\|^2d\xi\nnm\\
&\leq C\((1+t)^{-\frac{5+2m}{2}}+t^{-1}e^{-2\rho_0t}\)\(\|\partial_x^{\alpha}U_0\|^2_{L^2}+\|\partial_x^{\alpha'}U_0\|^2_{L^{2,1}}\),
\ema
where $\alpha'\leq\alpha$, $m=|\alpha-\alpha'|$, $\rho_0=\min\{A_j,j=0,2,3\}$ and $C>0$ is a constant.

Thus, by Lemma \ref{3dmvpbfas3}, \eqref{nlnsdh1} and \eqref{rbedl6-1}, we have
\bma\label{rbedl6-2}
\|Tu(t)\|_{H^N}&\leq C(1+t)^{-\frac{3}{4}}\(\|P_0f_0\|_{H^N}+\|P_0f_0\|_{L^{2,1}}\)\nnm\\
&\quad+C\int_0^t\((1+t-s)^{-\frac{5}{4}}+(t-s)^{-\frac{1}{2}}e^{-\rho_0(t-s)}\)\nnm\\
&\qquad\times\(\|Z(u(s))\|_{H^N}+\|Z(u(s))\|_{L^{2,1}}\)ds\nnm\\
&\leq C\delta_0(1+t)^{-\frac{3}{4}}+C\|u(t)\|_{\Omega}^2(1+t)^{-\frac{3}{4}},
\ema
where we have used
$$
\|Z(u(s))\|_{H^N}+\|Z(u(s))\|_{L^{2,1}}\leq C\|u(t)\|_{\Omega}^2(1+s)^{-\frac{3}{2}}.
$$
By \eqref{rbedl6-2}, we can obtain
$$
\|Tu(t)\|_{\Omega}\leq C_3\delta_0+C_4\delta^2,
$$
which implies that $T$ is the mapping of $\Omega\rightarrow \Omega$ so long as $\delta_0\leq\frac{\delta}{2C_3}$ and $0<\delta<\frac{1}{2C_4}$ is small enough. This proves \eqref{rbedl6-0}. The existence of the solution can be proved by the contraction mapping theorem, the details are omitted. The proof of the lemma is completed.
\end{proof}

\subsection{Optimal convergence rate}
In this subsection, we will complete the proof of Theorem \ref{thm-2} about the convergence rate of the diffusion limit.
The solution $f_{\epsilon}(t)=f_{\epsilon}(t,x,v)$ to the relativistic Boltzmann equation \eqref{rbe2}--\eqref{Pm3} can be represented by
\be
f_{\epsilon}(t)=e^{\frac{tB_{\epsilon}}{\epsilon^2}}f_0+\frac{1}{\epsilon}\int_0^te^{\frac{(t-s)B_{\epsilon}}{\epsilon^2}}\Gamma(f_{\epsilon},f_{\epsilon})ds.\label{nldh1}
\ee
Let $(n,m,q)(t,x)$ be the global solution to the incompressible Navier-Stokes system \eqref{rNS-1}. Then by Lemma \ref{uuu}, $u(t,x,v)=n(t,x)\chi_0+m(t,x)\cdot\chi'+q(t,x)\chi_4$ can be represented by
\bq
u(t)=V(t)P_0f_0+\int^t_0V(t-s)\div_xZ(u) ds,\label{nlnsdh1}
\eq
where $Z(u)=P_{0} (\tilde{v} L^{-1}\Gamma(u,u) )$.

\noindent\textbf{\underline{Proof of Theorem 1.2.}} Firstly, we prove \eqref{thm-2-1}. Define
\be\label{dlth2-0}
\Lambda_{\epsilon}(t)=\sup_{0\leq s\leq t}\bigg\{\(\epsilon|\ln\epsilon|^2(1+s)^{-\frac{3}{4}}+\(1+\frac{s}{\epsilon}\)^{-1}\)^{-1}\|f_{\epsilon}(s)-u(s)\|_{L^{\infty}}\bigg\}.
\ee

By \eqref{nldh1}--\eqref{nlnsdh1}, we have
\bma\label{dlth2-1}
\|f_{\epsilon}(t)-u(t)\|_{L^{\infty}}&\leq \Big\|e^{\frac{tB_{\epsilon}}{\epsilon^2}}f_0-V(t)P_0f_0\Big\|_{L^{\infty}}+\int_0^t\bigg\|\frac{1}{\epsilon}e^{\frac{(t-s)B_{\epsilon}}{\epsilon^2}}\Gamma(f_{\epsilon},f_{\epsilon})-V(t-s)\div_xZ(u)\bigg\|_{L^{\infty}}ds\nnm\\
&=:I_1+I_2.
\ema

For $I_1$, by Lemma \ref{3dmvpbfas6}, we have
\bma\label{dlth2-2}
I_1&\leq C\bigg(\epsilon(1+t)^{-2}+\(1+\frac{t}{\epsilon}\)^{-1}\bigg)\(\|f_0\|_{H^{3}}+\|f_0\|_{W^{3,1}}\)\nnm\\
&\leq C\delta_0\bigg(\epsilon(1+t)^{-2}+\(1+\frac{t}{\epsilon}\)^{-1}\bigg).
\ema

For $I_2$, we decompose
\bma\label{dlth2-4}
I_2&\leq\int_0^t\bigg\|\frac{1}{\epsilon}e^{\frac{(t-s)B_{\epsilon}}{\epsilon^2}}\Gamma(f_{\epsilon},f_{\epsilon})-V(t-s)P_0\big(\tilde{v}\cdot\nabla_xL^{-1}\Gamma(f_{\epsilon},f_{\epsilon})\big)\bigg\|_{L^{\infty}}ds\nnm\\
&\quad+ \int_0^t\big\|V(t-s)P_0\big(\tilde{v}\cdot\nabla_xL^{-1}\Gamma(f_{\epsilon},f_{\epsilon})\big)-V(t-s)\div_xZ(u)\big\|_{L^{\infty}}ds\nnm\\
&=:I_{21}+I_{22}.
\ema

For $I_{21}$, by Lemma \ref{3dmvpbfas7} and \eqref{rbedl5-1-2}, and noticing that $P_0\Gamma(f_{\epsilon},f_{\epsilon})=0$, we have
\bma
I_{21}&\leq C\int_0^t\(\epsilon(1+t-s)^{-\frac{5}{2}}+\(1+\frac{t-s}{\epsilon}\)^{-1}+\frac{1}{\epsilon}e^{-\frac{\tau_1 (t-s)}{\epsilon^2}}\)\|\Gamma(f_{\epsilon},f_{\epsilon})\|_{H^{4}\cap W^{4,1}} ds\nnm\\
&\leq C\delta_0^2\int_0^t\(\epsilon(1+t-s)^{-\frac{5}{2}}+\(1+\frac{t-s}{\epsilon}\)^{-1}+\frac{1}{\epsilon}e^{-\frac{\tau_1 (t-s)}{\epsilon^2}}\)(1+s)^{-\frac{3}{2}}ds.\label{dlth2-4-1}
\ema
Note that for $t\leq1$,
\bq\label{dlth2-3-1-2-1}
\int^t_0\(1+\frac{t-s}{\epsilon}\)^{-1}(1+s)^{-\frac{3}{2}}ds\leq\int^t_0\(1+\frac{t-s}{\epsilon}\)^{-1}ds\leq C\epsilon|\ln\epsilon|,
\eq
and for $t\geq1$,
\bq\label{dlth2-3-1-2-2}
\int^t_0\(1+\frac{t-s}{\epsilon}\)^{-1}(1+s)^{-\frac{3}{2}}ds\leq \epsilon\int^t_0(t-s)^{-1}(1+s)^{-\frac{3}{2}}ds\leq C\epsilon(1+t)^{-1}.
\eq
Thus, we have
\be
I_{21}\leq C\delta_0^2\epsilon|\ln\epsilon|(1+t)^{-1}.\label{dlth2-4-2}
\ee

For $I_{22}$,  by Lemma \ref{3dmvpbfas3} and Lemmas \ref{rbedl5}--\ref{rbedl6}, we obtain
\bma\label{dlth2-4-7}
I_{22}&\leq \int_0^t\|V(t-s)\div_x(P_{0} (\tilde{v} L^{-1}\Gamma(f_{\epsilon},f_{\epsilon}) )-P_{0} (\tilde{v} L^{-1}\Gamma(u,u) )\|_{L^{\infty}}ds \nnm\\
&\leq C\int_0^t(1+t-s)^{-\frac{3}{4}}(t-s)^{-\frac{1}{2}}\| f_{\epsilon}-u\|_{L^{\infty}}(\| f_{\epsilon}\|_{H^2}+\|u\|_{H^2})ds\nnm\\
&\leq C\delta_0\Lambda_{\epsilon}(t)\int_0^t(1+t-s)^{-\frac{3}{4}}(t-s)^{-\frac{1}{2}}\nnm\\
&\qquad \times\(\epsilon|\ln\epsilon|^2(1+s)^{-\frac{3}{4}}+\(1+\frac{s}{\epsilon}\)^{-1}\)(1+s)^{-\frac{3}{4}}ds.
\ema
For $t\leq\epsilon$,
\bma
&\quad\int^t_{0}(1+t-s)^{-\frac{3}{4}}(t-s)^{-\frac{1}{2}}\(1+\frac{s}{\epsilon}\)^{-1}(1+s)^{-\frac{3}{4}}ds\nnm\\
&\leq C\int_0^t(t-s)^{-\frac{1}{2}}ds\leq   C\le C\(1+\frac{t}{\epsilon}\)^{-1}.\label{dlth2-4-7-2}
\ema
For $t\geq\epsilon$,
\bma
&\quad\(\int_0^{\frac{t}{2}}+\int^t_{\frac{t}{2}}\)(1+t-s)^{-\frac{3}{4}}(t-s)^{-\frac{1}{2}}\(1+\frac{s}{\epsilon}\)^{-1}(1+s)^{-\frac{3}{4}}ds\nnm\\
&\leq C\epsilon |\ln\epsilon|(1+t)^{-\frac{3}{4}}t^{-\frac12}+C\(1+\frac{t}{\epsilon}\)^{-1}(1+t)^{-\frac{3}{4}}\sqrt{t}\nnm\\
&\leq C\bigg(\epsilon|\ln\epsilon|^2(1+t)^{-1}+\(1+\frac{t}{\epsilon}\)^{-1}\bigg).\label{dlth2-4-7-6}
\ema
Thus, by \eqref{dlth2-4-7}--\eqref{dlth2-4-7-6}, we have
\be\label{dlth2-4-8}
I_{22} \leq C\(\delta_0^2+\delta_0\Lambda_{\epsilon}(t)\)\bigg(\epsilon|\ln\epsilon|^2(1+t)^{-1}+\(1+\frac{t}{\epsilon}\)^{-1}\bigg).
\ee

By combining  \eqref{dlth2-2}, \eqref{dlth2-4-2} and \eqref{dlth2-4-8}, we can obtain
\bq
\Lambda_{\epsilon}(t)\leq C\delta_0+C\delta_0^2+C\delta_0\Lambda_{\epsilon}(t),
\eq
where $C>0$ is a constant independent of $\epsilon$. By taking $\delta_0>0$ small enough, we can obtain \eqref{dlth2-0}, which
proves \eqref{thm-2-1}.

Finally, we prove \eqref{thm-2-2}. Define
\bq\label{dlth-2-2}
\Omega_{\epsilon}(t)=\sup_{0\leq s\leq t}\(\epsilon|\ln\epsilon|\)^{-1}(1+s)^{\frac{3}{4}}\(\|f_{\epsilon}(s)-u(s)\|_{L^{\infty}}\).
\eq
If $f_0$ satisfies \eqref{rbf0}, then we have by Lemma \ref{3dmvpbfas6} that
\be
I_1\leq C\delta_0\epsilon(1+t)^{-2}.\label{dlth-2-2-1}
\ee
For $I_{21}$, we have by \eqref{dlth2-4-1} that
\be\label{dlth-2-2-1j1}
I_{21}\le C\delta_0^2\eps|\ln \eps|(1+t)^{-1}.
\ee
For $I_{22}$, we have by Lemma \ref{3dmvpbfas3} and \eqref{rbedl5-1-2} that
\bma
I_{22}&\leq C\delta_0\Omega_{\epsilon}(t)\epsilon|\ln\epsilon|\int_0^t(1+t-s)^{-\frac{3}{4}}(t-s)^{-\frac{1}{2}}(1+s)^{-\frac{3}{2}}ds\nnm\\
&\quad+C\delta_0^2\int_0^t(1+t-s)^{-\frac{3}{4}}(t-s)^{-\frac{1}{2}}\(\epsilon(1+s)^{-\frac{5}{4}}+e^{-\frac{\tau_1 s}{4\epsilon^2}}\)(1+s)^{-\frac{3}{4}}ds\nnm\\
&\leq C\(\delta_0\Omega_{\epsilon}(t)+\delta_0^2\)\epsilon|\ln\epsilon|(1+t)^{-\frac{3}{4}},\label{dlth-2-2-5}
\ema
where we have used \eqref{dlth2-3-1-2-1}--\eqref{dlth2-3-1-2-2}.

By combining \eqref{dlth-2-2-1}--\eqref{dlth-2-2-5}, we have
\be\label{dlth2-5a}
\|f_{\epsilon}(t)-u(t)\|_{L^{\infty}}\leq C\(\delta_0+\delta_0^2+\delta_0\Omega_{\epsilon}(t)\) \epsilon|\ln\epsilon| (1+t)^{-\frac{3}{4}} .
\ee
Thus, we can obtain
$$
\Omega_{\epsilon}(t)\leq C\delta_0+C\delta_0^2+C\delta_0\Omega_{\epsilon}(t),
$$
where $C>0$ is a constant independent of $\epsilon$. By taking $\delta_0>0$ small enough, we can obtain \eqref{dlth-2-2}, which
proves \eqref{thm-2-2}. The proof is completed.

%
%
%
%

\section{Appendix}
\label{sect5}
\setcounter{equation}{0}

\subsection{Estimation of important parameters}

\begin{lem}[\cite{Cao-1}]\label{rbdl-bessel-kj}
It holds that
$$
K_{j+1}(z)=\frac{2j}{z}K_j(z)+K_{j-1}(z),\quad j\geq1,~~ z>0,
$$
 where $K_j(z)$ is given in \eqref{rbdl-kjzdif}. The asymptotic expansion for $K_j(z)$ takes the form
$$
K_j(z)=\sqrt{\frac{\pi}{2z}}\frac{1}{e^z}\[\sum^{n-1}_{m=0}\mathcal{A}_{j,m}z^{-m}+\gamma_{j,n}(z)z^{-n}\],\quad j\geq0,~~ z>0,~~ n\geq1,
$$
where the following additional identities and inequalities also holds:
\bmas
&K_j(z)<K_{j+1}(z),\quad j\geq0,\quad \mathcal{A}_{j,0}=1,\\
&\mathcal{A}_{j,m}=\frac{(4j^2-1)(4j^2-3^2)\cdots(4j^2-(2m-1)^2)}{m!8^m},\quad j\geq0,~~ m\geq1,\\
&|\gamma_{j,n}(z)|\leq2|\mathcal{A}_{j,n}|e^{\frac{(j^2-\frac{1}{4})}{z}},\quad j\geq0,~~ n\geq1.
\emas
\end{lem}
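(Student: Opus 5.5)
The statement consists of the recurrence $K_{j+1}(z)=\frac{2j}{z}K_j(z)+K_{j-1}(z)$, the asymptotic expansion with the explicit coefficients $\mathcal{A}_{j,m}$ and the remainder bound, and the monotonicity $K_j<K_{j+1}$. I would derive all three from the integral representation \eqref{rbdl-kjzdif}, which is the Basset/Poisson form. The first step is to convert it to the hyperbolic form $K_j(z)=\int_0^\infty e^{-z\cosh s}\cosh(js)\,ds$. This is the classical identity: substitute $t=\cosh s$ and integrate by parts $j$ times in the Poisson integral. Equivalently, both sides satisfy the modified Bessel equation $z^2y''+zy'-(z^2+j^2)y=0$ and decay at infinity, and their normalizations agree as $z\to0$.

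\textbf{Recurrence.} From the hyperbolic form, write $\cosh((j+1)s)-\cosh((j-1)s)=2\sinh s\,\sinh(js)$. Then
$$K_{j+1}(z)-K_{j-1}(z)=2\int_0^\infty e^{-z\cosh s}\sinh s\,\sinh(js)\,ds.$$
Integrate by parts using $\frac{d}{ds}e^{-z\cosh s}=-z\sinh s\,e^{-z\cosh s}$. The boundary terms vanish because $\sinh 0=0$ and the exponential decays. This leaves $\frac{2j}{z}\int_0^\infty e^{-z\cosh s}\cosh(js)\,ds=\frac{2j}{z}K_j(z)$, which is the recurrence.

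\textbf{Monotonicity.} For $j\ge0$ and $s>0$ we have $\cosh((j+1)s)>\cosh(js)$, since $\cosh$ is increasing on $[0,\infty)$. Integrating against the positive weight $e^{-z\cosh s}$ gives $K_j(z)<K_{j+1}(z)$.

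\textbf{Asymptotics.} This is the main work. In \eqref{rbdl-kjzdif}, substitute $t=1+\tau/z$. This gives
$$K_j(z)=c_j\,z^{-1/2}e^{-z}\int_0^\infty e^{-\tau}\tau^{j-\frac12}\Big(1+\frac{\tau}{2z}\Big)^{j-\frac12}d\tau,$$
after collecting the powers of $z$ and $2$; here $c_j$ combines the Gamma factors. Expand $(1+\frac{\tau}{2z})^{j-1/2}$ by Taylor's theorem with integral remainder up to order $n-1$. Termwise integration gives Gamma integrals $\Gamma(j+m+\frac12)$. Together with the prefactor $\frac{\Gamma(1/2)}{\Gamma(j+1/2)}$ and $2^{-m}\binom{j-1/2}{m}$, these produce exactly
$$\mathcal{A}_{j,m}=\frac{\prod_{k=1}^m\big(4j^2-(2k-1)^2\big)}{m!\,8^m}.$$
This follows from the identity $\binom{j-1/2}{m}\Gamma(j+m+\tfrac12)/\Gamma(j+\tfrac12)=\prod_{k=1}^m(j^2-(k-\tfrac12)^2)/m!$. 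The constant term is $\mathcal{A}_{j,0}=1$, which pins $c_j$ to $\sqrt{\pi/2}$.

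\textbf{Remainder bound.} This is the expected obstacle. The Taylor remainder is bounded by $|\binom{j-1/2}{n}|(\frac{\tau}{2z})^n\sup_{\theta\in[0,1]}(1+\theta\frac{\tau}{2z})^{j-1/2-n}$. When $n\ge j-\frac12$ this supremum is at most $1$ and gives a bound of $|\mathcal{A}_{j,n}|$ directly. When $n<j-\frac12$ the sup factor grows in $\tau$, and I would control it by $(1+\frac{\tau}{2z})^{p}\le e^{p\tau/(2z)}$. Absorbing this into $e^{-\tau}$ changes the Gamma integral by a factor $(1-\frac{p}{2z})^{-(\cdot)}$, which I would bound by $2e^{(j^2-1/4)/z}$. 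This step may need the alternative route of Watson's lemma with Olver's error bounds, which is the classical source of this exact constant; I would cite that route if the direct estimate falls short. Since the lemma is quoted from \cite{Cao-1}, it is legitimate to defer these constants to that reference and verify only the structural identities above.
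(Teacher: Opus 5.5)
The paper gives no proof of its own here; the lemma is simply quoted from \cite{Cao-1}. So your proposal is a genuinely different, self-contained route, and its structural parts are correct.

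\begin{itemize}
\item The recurrence (via $\cosh((j+1)s)-\cosh((j-1)s)=2\sinh s\,\sinh(js)$ and one integration by parts in the $\cosh$-representation) is correct.
\item The monotonicity $K_j<K_{j+1}$ is correct.
\item The coefficients $\mathcal{A}_{j,m}$ are correct. The substitution $t=1+\tau/z$ gives exactly $K_j(z)=\sqrt{\pi/(2z)}\,e^{-z}\,\Gamma(j+\tfrac12)^{-1}\int_0^\infty e^{-\tau}\tau^{j-\frac12}(1+\frac{\tau}{2z})^{j-\frac12}d\tau$. So $\mathcal{A}_{j,0}=1$ is an output of the computation, not something that ``pins'' a constant.
\end{itemize}

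The paper also uses half-integer orders ($K_{5/2}$, $K_{n/2}$). For those, ``integrating by parts $j$ times'' does not apply, so justify the $\cosh$-representation by your ODE/uniqueness argument instead.

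\textbf{The failing step.} The genuine gap is the remainder bound in the case $n<j-\frac12$. Put $p=j-\frac12-n>0$ and $a=j+n+\frac12$. Your step $(1+\frac{\tau}{2z})^p\le e^{p\tau/(2z)}$ turns the integral into $\Gamma(a)(1-\frac{p}{2z})^{-a}$. This is $+\infty$ for $z\le p/2$ and blows up as $z\downarrow p/2$, whereas $2e^{(j^2-1/4)/z}$ stays bounded there. So this route proves the claim only for $z$ bounded below (e.g.\ $z\ge p$), not for all $z>0$.

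\textbf{A fix.} Keep polynomial growth instead of exponentiating. With $q=\lceil p\rceil$, use $(1+x)^p\le(1+x)^q$ and expand. The Gamma moments satisfy $\binom{q}{k}\frac{\Gamma(a+k)}{\Gamma(a)}2^{-k}\le\frac{1}{k!}\big(\frac{q(a+q-1)}{2}\big)^k$ for $0\le k\le q$. Moreover $\frac{q(a+q-1)}{2}<(p+1)\frac{a+p}{2}=(j+\frac12-n)j\le j(j-\frac12)\le j^2-\frac14$, where the middle step uses $n\ge1$. This yields $|\gamma_{j,n}(z)|\le|\mathcal{A}_{j,n}|e^{(j^2-\frac14)/z}$ for every $z>0$, with no need for Olver's error bounds.

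\textbf{The case $n\ge j-\frac12$ and the statement itself.} Here you correctly get $|\gamma_{j,n}|\le|\mathcal{A}_{j,n}|$. But this implies the stated inequality only when $j\ge\frac12$, because only then is $e^{(j^2-\frac14)/z}\ge1$. For $0\le j<\frac12$ the inequality as printed is actually false. At $j=0$, $n=1$, $z=0.1$ one has $|\gamma_{0,1}(0.1)|=0.1\,|\sqrt{0.2/\pi}\,e^{0.1}K_0(0.1)-1|\approx0.032$. Meanwhile $2|\mathcal{A}_{0,1}|e^{-1/(4z)}=\frac14e^{-5/2}\approx0.021$. Olver's bound for the Hankel expansion (Olver, \emph{Asymptotics and Special Functions}, Ch.~7) has $|j^2-\frac14|$ in the exponent. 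The paper only uses orders $\ge2$, so it is unaffected. You should still restrict to $j\ge\frac12$ (or correct the exponent) rather than assert that this case gives the bound ``directly''.
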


\begin{lem} \label{p123}
It holds that
\be
\left\{\bln
p_1&=\frac{1}{\sqrt{k_0T}}\(1+O\(\frac{k_0T}{\mathbf{c}^2}\)\),\\
p_2&=\sqrt{\frac{2}{3}}\frac{\mathbf{c}}{k_0T}\(1+O\(\frac{k_0T}{\mathbf{c}^2}\)\),\\
p_3& = \mathbf{c}+O\(\frac{k_0T}{\mathbf{c}}\),
\eln\right.
\ee where $p_i$, $i=1,2,3$ are given in \eqref{rbdl-p-123}.
\end{lem}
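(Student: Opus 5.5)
The plan is to insert the large-$z$ expansion of Lemma \ref{rbdl-bessel-kj} into the definitions \eqref{rbdl-p-123}, with $z=\frac{\mathbf{c}^2}{k_0T}$ so that $z^{-1}=\frac{k_0T}{\mathbf{c}^2}$. We may assume $z\ge z_*>0$ for some fixed $z_*$; this covers the relevant regime $\mathbf{c}\ge1$ with $k_0T$ fixed. On a compact range of $z$ all the quantities are continuous and positive, so the $O(\cdot)$ statements are trivial there. The basic object is the ratio
$$
R(z):=\frac{K_3(z)}{K_2(z)}=\frac{1+\mathcal{A}_{3,1}z^{-1}+\mathcal{A}_{3,2}z^{-2}+O(z^{-3})}{1+\mathcal{A}_{2,1}z^{-1}+\mathcal{A}_{2,2}z^{-2}+O(z^{-3})}.
$$
The common factor $\sqrt{\pi/(2z)}e^{-z}$ cancels in this quotient. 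The remainder bounds $|\gamma_{j,n}|\le 2|\mathcal{A}_{j,n}|e^{(j^2-1/4)/z}$ are uniformly bounded for $z\ge z_*$, which makes the $O(z^{-3})$ terms legitimate.

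From the coefficients $\mathcal{A}_{2,1}=\frac{15}{8}$ and $\mathcal{A}_{3,1}=\frac{35}{8}$, together with $\mathcal{A}_{2,2}=\frac{105}{128}$ and $\mathcal{A}_{3,2}=\frac{945}{128}$, I compute
$$
R(z)=1+\frac{5}{2}z^{-1}+\frac{15}{8}z^{-2}+O(z^{-3}).
$$
A cleaner check uses the recurrence: $K_3=\frac{4}{z}K_2+K_1$ gives $R=\frac4z+\frac{K_1}{K_2}$, and $K_1/K_2$ is expanded the same way.

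\textbf{$p_1$ and $p_3$.} We have $p_1^{-2}=k_0T\,R(z)=k_0T(1+O(z^{-1}))$, which gives the claim for $p_1$. For $p_3$,
$$
p_3=\mathbf{c}R(z)-\frac{k_0T}{\mathbf{c}}=\mathbf{c}+\frac{5}{2}\frac{k_0T}{\mathbf{c}}-\frac{k_0T}{\mathbf{c}}+O\Big(\frac{(k_0T)^2}{\mathbf{c}^3}\Big)=\mathbf{c}+O\Big(\frac{k_0T}{\mathbf{c}}\Big).
$$

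\textbf{$p_2$, the main obstacle.} Here
$$
p_2^{-2}=\mathbf{c}^2(1-R^2)+5k_0T\,R-\frac{(k_0T)^2}{\mathbf{c}^2}.
$$
The two leading contributions cancel: $\mathbf{c}^2(1-R^2)=-5k_0T+O(k_0T z^{-1})$ while $5k_0T R=5k_0T+O(k_0Tz^{-1})$. So $p_2^{-2}$ is of order $(k_0T)^2/\mathbf{c}^2$, and I need $R$ correct to order $z^{-2}$ (and $R^2$ to order $z^{-2}$, with an $O(z^{-3})$ error).

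Using $R^2=1+5z^{-1}+(\frac{25}{4}+\frac{15}{4})z^{-2}+O(z^{-3})=1+5z^{-1}+10z^{-2}+O(z^{-3})$, I get
$$
\mathbf{c}^2(1-R^2)=-5k_0T-10\frac{(k_0T)^2}{\mathbf{c}^2}+O\Big(\frac{(k_0T)^3}{\mathbf{c}^4}\Big),
$$
$$
5k_0TR=5k_0T+\frac{25}{2}\frac{(k_0T)^2}{\mathbf{c}^2}+O\Big(\frac{(k_0T)^3}{\mathbf{c}^4}\Big).
$$
Summing the three terms,
$$
p_2^{-2}=\Big(-10+\frac{25}{2}-1\Big)\frac{(k_0T)^2}{\mathbf{c}^2}+O\Big(\frac{(k_0T)^3}{\mathbf{c}^4}\Big)=\frac32\frac{(k_0T)^2}{\mathbf{c}^2}\Big(1+O\Big(\frac{k_0T}{\mathbf{c}^2}\Big)\Big).
$$
Hence $p_2=\sqrt{\frac23}\frac{\mathbf{c}}{k_0T}(1+O(\frac{k_0T}{\mathbf{c}^2}))$.

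I will double-check the second-order coefficient of $R$, since any slip there changes the constant $\frac32$. As an independent check, the nonrelativistic limit should reproduce the classical normalization $\chi_4\propto(\frac{|v|^2}{2k_0T}-\frac32)\sqrt M$, whose squared norm is $\frac32$.
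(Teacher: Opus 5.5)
Your proposal is correct and follows the same route as the paper. Like the paper, you insert the second-order expansion of Lemma \ref{rbdl-bessel-kj} to get $K_3/K_2=1+\frac52\frac{k_0T}{\mathbf{c}^2}+\frac{15}{8}\frac{(k_0T)^2}{\mathbf{c}^4}+O\big(\frac{(k_0T)^3}{\mathbf{c}^6}\big)$, read off $p_1$ and $p_3$ directly, and for $p_2$ track the cancellation of the $\pm5k_0T$ terms to reach $p_2^{-2}=\frac32\frac{(k_0T)^2}{\mathbf{c}^2}+O\big(\frac{(k_0T)^3}{\mathbf{c}^4}\big)$. Your coefficients all check out, and your treatment of the remainders $\gamma_{j,n}$ is somewhat more explicit than the paper's.
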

\begin{proof}
Since
$$  K_j\(\frac{\mathbf{c}^2}{k_0T}\)=\sqrt{\frac{\pi k_0T}{2\mathbf{c}^2}}e^{-\frac{\mathbf{c}^2}{k_0T}}\(1+\frac{4j^2-1}{8}\frac{k_0T}{\mathbf{c}^2}+\frac{(4j^2-1)(4j^2-9)}{128}\frac{k_0^2T^2}{\mathbf{c}^4}+O\(\frac{k_0^3T^3}{\mathbf{c}^6}\)\),
$$
it follows that
$$ \frac{K_3(\frac{\mathbf{c}^2}{k_0T})}{K_2(\frac{\mathbf{c}^2}{k_0T})}=1+\frac52\frac{k_0T}{\mathbf{c}^2}+\frac{15}{8}\frac{k_0^2T^2}{\mathbf{c}^4}+O\(\frac{k_0^3T^3}{\mathbf{c}^6}\).
$$
Thus,
\bma p_1&=\bigg(k_0T\frac{K_3(\frac{\mathbf{c}^2}{k_0T})}{K_2(\frac{\mathbf{c}^2}{k_0T})}\bigg)^{-\frac{1}{2}}=\frac{1}{\sqrt{k_0T}}\(1+O\(\frac{k_0T}{\mathbf{c}^2}\)\),
\\
p_3&=\mathbf{c}\frac{K_3(\frac{\mathbf{c}^2}{k_0T})}{K_2(\frac{\mathbf{c}^2}{k_0T})}-\frac{k_0T}{\mathbf{c}}= \mathbf{c}+O\(\frac{k_0T}{\mathbf{c}}\).
\ema

Note that
$$ \mathbf{c}^2\bigg(1-\frac{K_3(\frac{\mathbf{c}^2}{k_0T})^2}{K_2(\frac{\mathbf{c}^2}{k_0T})^2}\bigg)
=\mathbf{c}^2\bigg(1-\frac{K_3(\frac{\mathbf{c}^2}{k_0T})}{K_2(\frac{\mathbf{c}^2}{k_0T})}\bigg) \bigg(1+\frac{K_3(\frac{\mathbf{c}^2}{k_0T})}{K_2(\frac{\mathbf{c}^2}{k_0T})}\bigg) =-5k_0T-10\frac{k_0^2T^2}{\mathbf{c}^2}+O\(\frac{k_0^3T^3}{\mathbf{c}^4}\).
$$
Thus
$$ p_2=\(\frac{3}{2}\frac{k_0^2T^2}{\mathbf{c}^2}+O\(\frac{k_0^3T^3}{\mathbf{c}^4}\)\)^{-\frac12} =\sqrt{\frac{2}{3}}\frac{\mathbf{c}}{k_0T}\(1+O\(\frac{k_0T}{\mathbf{c}^2}\)\).
$$
The proof is completed.
\end{proof}

\begin{lem}\label{rbdl-ap-lp1-aaa}
For any $ n\ge 0$, it holds that
\bma
\int_{0}^{\infty}\frac{r^n}{\sqrt{\mathbf{c}^2+r^2}}e^{-\frac{\mathbf{c}\sqrt{\mathbf{c}^2+r^2}}{k_0T}}dr&=\frac{(2k_0T)^{\frac{n}{2}}\Gamma\(\frac{n+1}{2}\)}{\Gamma\(\frac{1}{2}\)}K_{\frac{n}{2}}\(\frac{\mathbf{c}^{2}}{k_0T}\),\label{rbdl-ap-lp1-aaa-1}
\\
\int_0^{\infty}r^n\sqrt{\mathbf{c}^2 +r^2 }e^{-\frac{\mathbf{c}\sqrt{\mathbf{c}^2 +r^2}}{k_0T}}dr
&=\frac{\(2k_0T\)^{\frac{n+2}{2}}\Gamma\(\frac{n+3}{2}\)}{\Gamma\(\frac{1}{2}\)}K_{\frac{n+2}{2}}\(\frac{\mathbf{c}^{2}}{k_0T}\)\nnm\\
&\quad+\mathbf{c}^{2}\frac{(2k_0T)^{\frac{n}{2}}\Gamma\(\frac{n+1}{2}\)}{\Gamma\(\frac{1}{2}\)}K_{\frac{n}{2}}\(\frac{\mathbf{c}^{2}}{k_0T}\).\label{rbdl-ap-lp1-aaa-2}
\ema
\end{lem}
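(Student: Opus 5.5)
Both identities reduce to the substitution $r=\mathbf{c}\sinh\theta$, or equivalently $\sqrt{\mathbf{c}^2+r^2}=\mathbf{c}\tau$ with $\tau\in[1,\infty)$. We then compare the result with the integral representation \eqref{rbdl-kjzdif} of $K_j$, taking $z=\frac{\mathbf{c}^2}{k_0T}$.

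\textbf{First identity.} Set $\tau=\sqrt{\mathbf{c}^2+r^2}/\mathbf{c}$. Then
$$r=\mathbf{c}\sqrt{\tau^2-1},\qquad dr=\frac{\mathbf{c}\tau}{\sqrt{\tau^2-1}}\,d\tau,\qquad \frac{dr}{\sqrt{\mathbf{c}^2+r^2}}=\frac{d\tau}{\sqrt{\tau^2-1}}.$$
The left side of \eqref{rbdl-ap-lp1-aaa-1} therefore equals
$$\mathbf{c}^n\int_1^\infty(\tau^2-1)^{\frac{n-1}{2}}e^{-z\tau}\,d\tau .$$
Now take $j=\frac n2$ in \eqref{rbdl-kjzdif}. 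This gives
$$\int_1^\infty e^{-z\tau}(\tau^2-1)^{\frac{n-1}{2}}\,d\tau=K_{\frac n2}(z)\Big(\frac z2\Big)^{-\frac n2}\frac{\Gamma(\frac{n+1}{2})}{\Gamma(\frac12)}.$$
Multiplying by $\mathbf{c}^n$ and using
$$\mathbf{c}^n\Big(\frac{2}{z}\Big)^{\frac n2}=\mathbf{c}^n\Big(\frac{2k_0T}{\mathbf{c}^2}\Big)^{\frac n2}=(2k_0T)^{\frac n2}$$
yields exactly the right side of \eqref{rbdl-ap-lp1-aaa-1}. For $n=0$ the exponent $\frac{n-1}{2}=-\frac12$ is still integrable at $\tau=1$, and \eqref{rbdl-kjzdif} is valid for $j\ge0$, so this case needs no separate treatment.

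\textbf{Second identity.} Write
$$r^n\sqrt{\mathbf{c}^2+r^2}=\frac{r^n(\mathbf{c}^2+r^2)}{\sqrt{\mathbf{c}^2+r^2}}=\frac{r^{n+2}}{\sqrt{\mathbf{c}^2+r^2}}+\mathbf{c}^2\frac{r^n}{\sqrt{\mathbf{c}^2+r^2}}.$$
Apply \eqref{rbdl-ap-lp1-aaa-1} with $n+2$ in place of $n$ to the first term, and with $n$ itself to the second term. The first term produces $(2k_0T)^{\frac{n+2}{2}}\Gamma(\frac{n+3}{2})K_{\frac{n+2}{2}}/\Gamma(\frac12)$, and the second produces $\mathbf{c}^2$ times the right side of \eqref{rbdl-ap-lp1-aaa-1}. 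Their sum is \eqref{rbdl-ap-lp1-aaa-2}.

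\textbf{Checks.} No step is a genuine obstacle; the only care needed is bookkeeping. One must read \eqref{rbdl-kjzdif} with non-integer index $j=\frac n2$, which its definition allows since it only requires $j\ge0$. One must also confirm that the factors $\mathbf{c}^n$ from the substitution cancel exactly against $(z/2)^{-n/2}$, leaving no residual power of $\mathbf{c}$. Convergence is immediate: the factor $e^{-z\tau}$ makes every integral finite, and near $\tau=1$ the exponent $\frac{n-1}{2}\ge-\frac12$ is integrable.
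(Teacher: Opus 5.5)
Your proposal is correct and follows essentially the same route as the paper. Both use the substitution $\tau=\sqrt{1+r^2/\mathbf{c}^2}$ to reduce the first integral to the representation \eqref{rbdl-kjzdif} of $K_{\frac n2}$. For the second identity both use the same splitting of $r^2+\mathbf{c}^2$; you apply \eqref{rbdl-ap-lp1-aaa-1} with $n+2$ and with $n$, whereas the paper redoes the substitution for each piece.
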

\begin{proof}
By \eqref{rbdl-kjzdif} and $d\sqrt{\mathbf{c}^2+r^2}=\frac{r}{\sqrt{\mathbf{c}^2+r^2}}dr$, we can obtain
\bma
\int_{0}^{\infty}\frac{r^n}{\sqrt{\mathbf{c}^2+r^2}}e^{-\frac{\mathbf{c}\sqrt{\mathbf{c}^2+r^2}}{k_0T}}dr
&=\int_{0}^{\infty}r^{n-1}e^{-\frac{\mathbf{c}\sqrt{\mathbf{c}^2+r^2}}{k_0T}}d\sqrt{\mathbf{c}^2+r^2}\nnm\\
&=\mathbf{c}^{n}\int_{0}^{\infty}\(\frac{r^2}{\mathbf{c}^{2}}+1-1\)^{\frac{n-1}{2}}e^{-\frac{\mathbf{c}^2\sqrt{1+\frac{r^2}{\mathbf{c}^2}}}{k_0T}}d\sqrt{1+\frac{r^2}{\mathbf{c}^2}}\nnm\\
&=\mathbf{c}^{n}\int_{1}^{\infty}\(t^2-1\)^{\frac{n-1}{2}}e^{-\frac{\mathbf{c}^2t}{k_0T}}dt 
\nnm\\
&=\frac{(2k_0T)^{\frac{n}{2}}\Gamma\(\frac{n+1}{2}\)}{\Gamma\(\frac{1}{2}\)}K_{\frac{n}{2}}\(\frac{\mathbf{c}^{2}}{k_0T}\).
\ema
This proves \eqref{rbdl-ap-lp1-aaa-1}.

By \eqref{rbdl-kjzdif} and $d\sqrt{\mathbf{c}^2+r^2}=\frac{r}{\sqrt{\mathbf{c}^2+r^2}}dr$, we have
\bma
&\quad\int_0^{\infty}r^n\sqrt{r^2+\mathbf{c}^2}e^{-\frac{\mathbf{c}\sqrt{r^2+\mathbf{c}^2}}{k_0T}}dr\nnm\\
&=\int_0^{\infty}r^{n-1}(r^2+\mathbf{c}^2)e^{-\frac{\mathbf{c}\sqrt{r^2+\mathbf{c}^2}}{k_0T}}d\sqrt{\mathbf{c}^2+r^2}\nnm\\
&=\mathbf{c}^{n+2}\int_0^{\infty}\(\frac{r^2}{\mathbf{c}^2}+1-1\)^{\frac{n+1}{2}}e^{-\frac{\mathbf{c}^2\sqrt{\frac{r^2}{\mathbf{c}^2}+1}}{k_0T}}d\sqrt{1+\frac{r^2}{\mathbf{c}^2}}\nnm\\
&\quad+\mathbf{c}^{n+2}\int_0^{\infty}\(\frac{r^2}{\mathbf{c}^2}+1-1\)^{\frac{n-1}{2}}e^{-\frac{\mathbf{c}^2\sqrt{\frac{r^2}{\mathbf{c}^2}+1}}{k_0T}}d\sqrt{1+\frac{r^2}{\mathbf{c}^2}}\nnm\\
&=\mathbf{c}^{n+2}\int_1^{\infty}\(t^2-1\)^{\frac{n+1}{2}}e^{-\frac{\mathbf{c}^2t}{k_0T}}dt+\mathbf{c}^{n+2}\int_1^{\infty}\(t^2-1\)^{\frac{n-1}{2}}e^{-\frac{\mathbf{c}^2t}{k_0T}}dt 
\nnm\\
&=\frac{\(2k_0T\)^{\frac{n+2}{2}}\Gamma\(\frac{n+3}{2}\)}{\Gamma\(\frac{1}{2}\)}K_{\frac{n+2}{2}}\(\frac{\mathbf{c}^{2}}{k_0T}\)+\mathbf{c}^{2}\frac{(2k_0T)^{\frac{n}{2}}\Gamma\(\frac{n+1}{2}\)}{\Gamma\(\frac{1}{2}\)}K_{\frac{n}{2}}\(\frac{\mathbf{c}^{2}}{k_0T}\).
\ema
This gives \eqref{rbdl-ap-lp1-aaa-2}. The proof of the Lemma is completed.
\end{proof}

With the help of Lemma \ref{rbdl-ap-lp1-aaa}, we have the following lemma.

\begin{lem}\label{rbdl-ap-lp1}
Let $\vartheta_{ijkl}=(P_1(v_i\chi_j),\tilde{v}_k\chi_l) $ and $z_{kk}=(P_1(v_kE_0),\tilde{v}_kE_0)$ with $1\leq i,j, k,l\leq3$. The following conditions hold.
\be \label{rbdl-ap-lp1-1}
\left\{\bln
&\vartheta_{ikik} =\vartheta_{ikki} =\vartheta_{1212}=b_1,\quad &1\leq i\neq k\leq3,\\
&\vartheta_{kkkk} =\vartheta_{1111}=3b_1-b_2,\quad &1\leq k\leq3,\\
&\vartheta_{iikk} =\vartheta_{1122}= b_1-b_2,\quad &1\leq i\neq k\leq3, \\
&\vartheta_{ijkl} =0,\qquad\qquad\qquad\qquad &(k,l)\neq(i,j)~\mathrm{or}~(j,i),\\
&z_{kk}=z_{11}=b_3, \qquad\qquad\qquad  &k=1,2,3,
\eln\right.
\ee
where
\bmas
&b_1=k_0T, \quad b_2=\frac{2k_0T}{3}+\frac{\mathbf{c}ap_2}{p_1} \(\frac{6k_0T}{\mathbf{c}^2}+ \frac{K_2(\frac{\mathbf{c}^2}{k_0T})}{K_3(\frac{\mathbf{c}^2}{k_0T})}-\frac{K_3(\frac{\mathbf{c}^2}{k_0T})}{K_2(\frac{\mathbf{c}^2}{k_0T})} \),\\
&b_3= \frac{\mathbf{c}p_2ab}{p_1^2 (a^2+b^2) }-\frac{\mathbf{c}p_2^2b^2}{p_1^2 (a^2+b^2) } \(\frac{6k_0T}{\mathbf{c}}+\mathbf{c}\(\frac{K_2(\frac{\mathbf{c}^2}{k_0T})}{K_3(\frac{\mathbf{c}^2}{k_0T})}-\frac{K_3(\frac{\mathbf{c}^2}{k_0T})}{K_2(\frac{\mathbf{c}^2}{k_0T})}\)\),\\
&a=\frac{p_2}{3p_1}\( \mathbf{c}+\frac{2k_0T}{\mathbf{c}}\frac{K_2(\frac{\mathbf{c}^2}{k_0T})}{K_3(\frac{\mathbf{c}^2}{k_0T})}\),\quad b=\frac{2k_0Tp_1}{3},
\emas
and $p_j$ $(j=1,2,3)$ are given in \eqref{rbdl-p-123}.  Moreover, it holds that
\bmas
&a=\sqrt{\frac{2}{3}}\frac{\mathbf{c}^2}{3\sqrt{k_0T}}\(1+O\(\frac{k_0T}{\mathbf{c}^2}\)\),\quad b=\frac{2\sqrt{k_0T}}{3}\(1+O\(\frac{k_0T}{\mathbf{c}^2}\)\),\\
&b_2=\frac{2}{9}\mathbf{c}^2\(1+O\(\frac{k_0T}{\mathbf{c}^2}\)\),\quad b_3=\frac{2}{3}k_0T\(1+O\(\frac{k_0T}{\mathbf{c}^2}\)\),
\emas
\end{lem}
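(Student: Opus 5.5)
The plan is to reduce everything to one-dimensional radial integrals against the Jüttner weight $M=p_0e^{-\mathbf{c}v_0/(k_0T)}$. These integrals are then evaluated through Lemma \ref{rbdl-ap-lp1-aaa} in terms of $K_j(\mathbf{c}^2/(k_0T))$, and expanded using Lemma \ref{rbdl-bessel-kj} and Lemma \ref{p123}.

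\emph{Step 1: the vanishing and symmetry relations.} Since $P_0$ and $P_1$ are orthogonal projections, we have $\vartheta_{ijkl}=(v_i\chi_j,\tilde v_k\chi_l)-(P_0(v_i\chi_j),\tilde v_k\chi_l)$. The first term equals $p_1^2\int v_iv_j\tilde v_k v_l M\,dv$. For $P_0(v_i\chi_j)$, only $\chi_0$ and $\chi_4$ can occur, and only when $i=j$, because odd moments vanish. The second term therefore equals $\delta_{ij}\delta_{kl}$ times a constant, since $(\chi_0,\tilde v_k\chi_k)$ and $(\chi_4,\tilde v_k\chi_k)$ are independent of $k$.

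Radial symmetry of $M$, with $\tilde v_k=\mathbf{c}v_k/v_0$, gives
\[
\int v_iv_jv_kv_l\,\frac{\mathbf{c}}{v_0}M\,dv=\frac{I_4}{15}\big(\delta_{ij}\delta_{kl}+\delta_{ik}\delta_{jl}+\delta_{il}\delta_{jk}\big),
\]
where $I_4=\int|v|^4\frac{\mathbf{c}}{v_0}M\,dv$. This yields all the index identities at once:
\begin{itemize}
\item $b_1=p_1^2I_4/15$;
\item $3b_1-b_2$ on the diagonal;
\item $b_1-b_2$ for $i=j\neq k=l$;
\item zero otherwise;
\item $b_2$ equal to the projected part.
\end{itemize}
The same isotropy argument shows that $z_{kk}$ is independent of $k$.

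\emph{Step 2: explicit constants.} In spherical coordinates, $I_4=\frac{4\pi\mathbf{c}p_0}{3}\int_0^\infty \frac{r^6}{\sqrt{\mathbf{c}^2+r^2}}e^{-\mathbf{c}\sqrt{\mathbf{c}^2+r^2}/k_0T}dr$. Applying \eqref{rbdl-ap-lp1-aaa-1} with $n=6$ gives a multiple of $K_3$. Combined with $p_0$ from \eqref{mmv-dif} and $p_1^2=(k_0TK_3/K_2)^{-1}$, this should produce $b_1=k_0T$ exactly.

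For $a$ and $b$, compute $(\tilde v_1\chi_1,\chi_4)$ and $(\tilde v_1\chi_0,\chi_1)$ as $\frac{1}{3}$ of radial moments. These involve $\int r^4/\sqrt{\mathbf{c}^2+r^2}$ (a multiple of $K_2$) and $\int r^4$ (use \eqref{rbdl-ap-lp1-aaa-2} with $n=2$, or write $v_0\cdot\frac{1}{v_0}$). Reduce the result using the recurrence $K_3=\frac{4}{z}K_2+K_1$ to reach the stated closed forms.

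The constant $b_2$ is obtained from $(v_1\chi_1,\chi_0)(\chi_0,\tilde v_1\chi_1)+(v_1\chi_1,\chi_4)(\chi_4,\tilde v_1\chi_1)$. The first factor is $1/p_1$ times $p_1^2k_0T\cdot$const, and the second is $a/p_1$ times a moment of $v_0|v|^2$.

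For $b_3$, write $E_0$ via \eqref{rbdl-e0}. Expand $(P_1(v_1E_0),\tilde v_1E_0)$ into moments of $|v|^2$, $|v|^2v_0$, $|v|^2v_0^2$ weighted by $1/v_0$ or $1$, subtract the $P_0$ part (only the $\chi_1$ direction survives, contributing $(v_1E_0,\chi_1)(\chi_1,\tilde v_1E_0)$), and use Lemma \ref{rbdl-ap-lp1-aaa}. The stated relation $(P_1(v_0v_k\chi_0),\tilde v_kE_0)=\frac{\sqrt{a^2+b^2}}{bp_2}b_3$ follows because $v_1E_0$ and $\frac{b}{\sqrt{a^2+b^2}}p_2v_1v_0\chi_0$ differ by an element of $N_0$, which is annihilated by $P_1$.

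\emph{Step 3: asymptotics.} Substitute the expansion $K_3/K_2=1+\frac52 z^{-1}+\frac{15}{8}z^{-2}+O(z^{-3})$ with $z=\mathbf{c}^2/k_0T$, also $K_2/K_3=1-\frac52z^{-1}+\dots$, together with Lemma \ref{p123}. This directly gives $a\approx\frac{p_2\mathbf{c}}{3p_1}=\sqrt{\frac23}\frac{\mathbf{c}^2}{3\sqrt{k_0T}}$ and $b\approx\frac23\sqrt{k_0T}$.

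\emph{Main obstacle.} I expect the main difficulty to be $b_2$ and $b_3$, where leading terms cancel. In $\frac{6}{z}+\frac{K_2}{K_3}-\frac{K_3}{K_2}$, the $O(1)$ terms cancel and the $z^{-1}$ terms give $(6-5)z^{-1}$. One must therefore carry the expansion to second order to confirm that the remainder is genuinely $O(z^{-2})$ relative to the leading term.

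Then $b_2\approx\frac{2k_0T}{3}+\frac{\mathbf{c}ap_2}{p_1}\cdot\frac{k_0T}{\mathbf{c}^2}$, with $\frac{ap_2}{p_1}\sim\frac{p_2^2\mathbf{c}}{3p_1^2}=\frac{2\mathbf{c}^3}{9k_0T}$, giving $\frac{2}{9}\mathbf{c}^2$. Similarly in $b_3$, the first term $\sim\frac{\mathbf{c}p_2 b}{p_1^2 a}$ must be checked against the second, using $a^2+b^2\approx a^2$. I would verify these cancellations numerically in the leading coefficients before writing the argument.
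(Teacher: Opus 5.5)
Your route is the paper's: isotropy for the index pattern, then radial moments via Lemma \ref{rbdl-ap-lp1-aaa}. Your Step 1 is in fact sharper. The identity $\vartheta_{ijkl}=b_1(\delta_{ij}\delta_{kl}+\delta_{ik}\delta_{jl}+\delta_{il}\delta_{jk})-b_2\delta_{ij}\delta_{kl}$ correctly keeps $\vartheta_{iikk}=b_1-b_2\neq0$. The fourth line of the statement, and the paper's $v_i\to-v_i$ argument read literally, would set it to zero.

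\textbf{The gap is in Steps 2--3.} You never compute $a$ and $b$. You assert the recurrence will ``reach the stated closed forms'' and then read $a\approx\frac{p_2\mathbf{c}}{3p_1}$ and $b\approx\frac23\sqrt{k_0T}$ off the target. Carrying out your own computation gives something else.

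Write $K_j=K_j(\mathbf{c}^2/(k_0T))$. Then $b=\mathbf{c}p_1\int\frac{v_1^2}{v_0}M\,dv=\frac{4\pi\mathbf{c}p_0p_1}{3}\int_0^\infty\frac{r^4}{\sqrt{\mathbf{c}^2+r^2}}e^{-\frac{\mathbf{c}\sqrt{\mathbf{c}^2+r^2}}{k_0T}}dr$. Formula \eqref{rbdl-ap-lp1-aaa-1} with $n=4$ gives the radial integral as $\frac{(2k_0T)^2\Gamma(5/2)}{\Gamma(1/2)}K_2=3(k_0T)^2K_2$, so $b=k_0Tp_1$. The paper's factor $\frac23$ comes from inserting $\Gamma(\frac32)$ instead of $\Gamma(\frac52)$.

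Next use $\int v_1^2M\,dv=p_1^{-2}$, $k_0Tp_1^2=K_2/K_3$ and the formula for $p_3$. This gives $a=\frac{\mathbf{c}p_2}{p_1}-p_2p_3b=\frac{p_2k_0T}{p_1\mathbf{c}}\frac{K_2}{K_3}$. The $O(\mathbf{c})$ parts cancel exactly, and $a\to\sqrt{2k_0T/3}$.

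Two independent checks confirm this:
\begin{itemize}
\item Cauchy--Schwarz gives $|a|=|(\tilde v_1\chi_1,\chi_4)|\le\mathbf{c}\|\chi_1\|\|\chi_4\|=\mathbf{c}$. So $a\sim\mathbf{c}^2/\sqrt{k_0T}$ is impossible once $\mathbf{c}^2\gg k_0T$.
\item For $k_0T=1$ the limits are the classical values $a=\sqrt{2/3}$, $b=1$, giving sound speed $\sqrt{5/3}$.
\end{itemize}
So the following are wrong, and no argument can produce them: the stated closed forms for $a$ and $b$; the term $\frac{2k_0T}{3}$ in $b_2$, which is actually $b/p_1=k_0T$; and all four asymptotic formulas. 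What is true is $a\to\sqrt{2k_0T/3}$, $b\to\sqrt{k_0T}$, $b_2\to\frac53k_0T$ and $b_3\to k_0T$.

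\textbf{Two further points.}

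First, the $z_{11}$ decomposition you describe, which is also the paper's, gives $-\frac{\mathbf{c}p_2ab}{p_1^2(a^2+b^2)}+\frac{\mathbf{c}p_2b^2}{p_1(a^2+b^2)}(v_1\chi_1,\chi_4)$. This is the negative of the stated $b_3$. With the correct $a,b$, the two terms are $\mp\frac25\mathbf{c}^2$ to leading order, which is exactly the cancellation you feared. You can bypass it. We have $(\chi_1,\tilde v_1E_0)=(-ab+ba)/\sqrt{a^2+b^2}=0$, and parity kills the other components, so $\tilde v_1E_0\perp N_0$. Hence $z_{11}=\mathbf{c}\int\frac{v_1^2}{v_0}E_0^2\,dv>0$. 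This is also why the $\chi_1$ correction you mention vanishes.

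Second, there is a slip in Step 2. The correct expression is $I_4=4\pi\mathbf{c}p_0\int_0^\infty\frac{r^6}{\sqrt{\mathbf{c}^2+r^2}}e^{-\frac{\mathbf{c}\sqrt{\mathbf{c}^2+r^2}}{k_0T}}dr$, with no factor $\frac13$. As you wrote it, $b_1=p_1^2I_4/15$ would come out as $k_0T/3$.
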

\begin{proof}
By changing variable $v_i\rightarrow-v_i$, we have
$$
(P_1(v_i\chi_j),\tilde{v}_k\chi_l)=-(P_1(v_i\chi_j),\tilde{v}_k\chi_l),\quad (k,l)\neq(i,j)~\mathrm{or}~(j,i).
$$
This implies that $\vartheta_{ijkl} =0$ for $(k,l)\neq(i,j)~\mathrm{or}~(j,i)$.

By \eqref{mmv-dif}, \eqref{chii-1}--\eqref{mami} and $P_1(v_1\chi_2)=v_1\chi_2 $, it can be verify that
\bma
(P_1(v_1\chi_2),\tilde{v}_1\chi_2)&=\mathbf{c}p_0p_1^2\int_{0}^{\infty}\frac{r^6}{\sqrt{\mathbf{c}^2+r^2}}e^{-\frac{\mathbf{c}\sqrt{\mathbf{c}^2+r^2}}{k_0T}}dr\int_0^{2\pi}\cos^2\theta d\theta\int_0^{\pi}\sin^3\varphi\cos^2\varphi d\varphi\nnm\\
&=\frac{4\pi\mathbf{c}}{15}p_0p_1^2\int_{0}^{\infty}\frac{r^6}{\sqrt{\mathbf{c}^2+r^2}}e^{-\frac{\mathbf{c}\sqrt{\mathbf{c}^2+r^2}}{k_0T}}dr\nnm\\
&=\frac{4\pi\mathbf{c}}{15}\frac{\(k_0T\frac{K_3(\frac{\mathbf{c}^2}{k_0T})}{K_2(\frac{\mathbf{c}^2}{k_0T})}\)^{-1}}{4\pi\mathbf{c}k_0TK_2(\frac{\mathbf{c}^2}{k_0T})}\frac{(2k_0T)^{3}\Gamma\(\frac{7}{2}\)}{\Gamma\(\frac{1}{2}\)}K_{3}\(\frac{\mathbf{c}^{2}}{k_0T}\)\nnm\\
&=k_0T=:b_1,\label{rbdl-ap-lp1-1-1}
\ema
where we have used $ \Gamma\(\frac{n+1}{2}\)=\frac{(n-1)!!}{2^n}\Gamma\(\frac{1}{2}\)$.

By \eqref{mmv-dif}, \eqref{chii-1}--\eqref{mami} and $P_1(v_1\chi_1)=v_1\chi_1-(v_1\chi_1,\chi_0)\chi_0-(v_1\chi_1,\chi_4)\chi_4 $, we can obtain
\bma
(P_1(v_1\chi_1),\tilde{v}_1\chi_1)
&=(v_1\chi_1,\tilde{v}_1\chi_1)-\frac{1}{p_1}(\chi_0,\tilde{v}_1\chi_1)-(v_1\chi_1,\chi_4)(\chi_4,\tilde{v}_1\chi_1)\nnm\\
&=\frac{4\pi\mathbf{c}}{5}p_0p_1^2\int_{0}^{\infty}\frac{r^6}{\sqrt{\mathbf{c}^2+r^2}}e^{-\frac{\mathbf{c}\sqrt{\mathbf{c}^2+r^2}}{k_0T}}dr-\frac{b}{p_1}-a(v_1\chi_1,\chi_4)\nnm\\
&=3b_1-b_2,\label{rbdl-ap-lp1-1-2}\\
(P_1(v_1\chi_1),\tilde{v}_2\chi_2)
&=(v_1\chi_1,\tilde{v}_2\chi_2)-\frac{1}{p_1}(\chi_0,\tilde{v}_2\chi_2)-a(v_1\chi_1,\chi_4)\nnm\\
&=\frac{4\pi\mathbf{c}}{15}p_0p_1^2\int_{0}^{\infty}\frac{r^6}{\sqrt{\mathbf{c}^2+r^2}}e^{-\frac{\mathbf{c}\sqrt{\mathbf{c}^2+r^2}}{k_0T}}dr-\frac{b}{p_1}-a(v_1\chi_1,\chi_4)\nnm\\
&=b_1-b_2,\label{rbdl-ap-lp1-1-3}
\ema
where $b_2=\frac{b}{p_1}+a(v_1\chi_1,\chi_4).$

By \eqref{mmv-dif}, \eqref{chii-1}--\eqref{mami} and Lemma \ref{rbdl-ap-lp1-aaa}, we have
\bmas
b&=(\tilde{v}_1\chi_0,\chi_1)\\
&=\mathbf{c}p_0p_1\int_{0}^{\infty}\frac{r^4}{\sqrt{\mathbf{c}^2+r^2}}e^{-\frac{\mathbf{c}\sqrt{\mathbf{c}^2+r^2}}{k_0T}}dr\int_0^{2\pi} d\theta\int_0^{\pi}\sin\varphi\cos^2\varphi d\varphi\\
&=\frac{4\pi\mathbf{c}}{3}\frac{p_1}{4\pi\mathbf{c}k_0TK_2(\frac{\mathbf{c}^2}{k_0T})}\frac{(2k_0T)^{2}\Gamma\(\frac{3}{2}\)}{\Gamma\(\frac{1}{2}\)}K_{2}\(\frac{\mathbf{c}^{2}}{k_0T}\)=\frac{2k_0Tp_1}{3},
\\
a&=(\tilde{v}_1\chi_1,\chi_4)=(\tilde{v}_1\chi_1,p_2(v_0-p_3)\chi_0)\\
&=\frac{\mathbf{c}p_2}{p_1}-p_2p_3b=\frac{p_2}{p_1}\( \mathbf{c}-\frac{2k_0T}{3}  p_1^2p_3\)\\
&=\frac{p_2}{3p_1}\bigg( \mathbf{c}+\frac{2k_0T}{\mathbf{c}}\frac{K_2(\frac{\mathbf{c}^2}{k_0T})}{K_3(\frac{\mathbf{c}^2}{k_0T})}\bigg),
\\
(v_1\chi_1,\chi_4)&=p_1p_2(v_1^2\chi_0,v_0\chi_0)-\frac{p_2p_3}{p_1}\\
&=p_0p_1p_2\int_0^{\infty}r^4\sqrt{r^2+\mathbf{c}^2}e^{-\frac{\mathbf{c}\sqrt{r^2+\mathbf{c}^2}}{k_0T}}dr\int_0^{2\pi} d\theta\int_0^{\pi}\sin\varphi\cos^2\varphi d\varphi-\frac{p_2p_3}{p_1}\\
&=\frac{p_1p_2}{3\mathbf{c}k_0TK_2(\frac{\mathbf{c}^2}{k_0T})}\(15(k_0T)^{3}K_{3}\(\frac{\mathbf{c}^{2}}{k_0T}\)+3\mathbf{c}^{2}(k_0T)^{2}K_{2}\(\frac{\mathbf{c}^{2}}{k_0T}\)\)-\frac{p_2p_3}{p_1}\\
&=\frac{5(k_0T)^2p_1p_2}{\mathbf{c}}\frac{K_3(\frac{\mathbf{c}^2}{k_0T})}{K_2(\frac{\mathbf{c}^2}{k_0T})}+\mathbf{c}k_0Tp_1p_2-\frac{p_2p_3}{p_1}\\
&=\frac{p_2}{p_1}\(\frac{5k_0T}{\mathbf{c}}+\mathbf{c}k_0Tp_1^2-p_3\)
=\frac{p_2}{p_1}\(\frac{6k_0T}{\mathbf{c}}+\mathbf{c}\(\frac{K_2(\frac{\mathbf{c}^2}{k_0T})}{K_3(\frac{\mathbf{c}^2}{k_0T})}-\frac{K_3(\frac{\mathbf{c}^2}{k_0T})}{K_2(\frac{\mathbf{c}^2}{k_0T})}\)\).
\emas
By combining \eqref{rbdl-ap-lp1-1-1}--\eqref{rbdl-ap-lp1-1-3}, we can obtain \eqref{rbdl-ap-lp1-1}.

Note that
$$(P_1(v_1E_0),\tilde{v}_1E_0)=\frac{bp_2}{\sqrt{a^2+b^2}}(P_1(v_0v_1\chi_0),\tilde{v}_1E_0).$$
By \eqref{rbdl-e0} and \eqref{p2}, we have
\bma
 &\quad(P_1(v_0v_1\chi_0),\tilde{v}_1E_0)=(v_0v_1\chi_0,\tilde{v}_1E_0)-(P_0(v_0v_1\chi_0),\tilde{v}_1E_0)\nnm\\
&=-\frac{\mathbf{c}a}{\sqrt{a^2+b^2}}(v_1\chi_0,v_1\chi_0)+\frac{\mathbf{c}b}{\sqrt{a^2+b^2}}(v_1\chi_0,v_1\chi_4)+\frac{a}{\sqrt{a^2+b^2}}(v_0v_1\chi_0,\chi_1)(\chi_1,\tilde{v}_1\chi_0)\nnm\\
&\quad-\frac{b}{\sqrt{a^2+b^2}}(v_0v_1\chi_0,\chi_1)(\chi_1,\tilde{v}_1\chi_4)\nnm\\
&=-\frac{\mathbf{c}a}{\sqrt{a^2+b^2}}(v_1\chi_0,v_1\chi_0)+\frac{\mathbf{c}b}{\sqrt{a^2+b^2}}(v_1\chi_0,v_1\chi_4)\nnm\\
&=-\frac{\mathbf{c}a}{p_1^2\sqrt{a^2+b^2}}+\frac{\mathbf{c}b}{p_1\sqrt{a^2+b^2}}(v_1\chi_1,\chi_4).
\ema

Thus, from Lemma \ref{p123}, we can obtain
\bmas
&a=\sqrt{\frac{2}{3}}\frac{\mathbf{c}^2}{3\sqrt{k_0T}}\(1+O\(\frac{k_0T}{\mathbf{c}^2}\)\),\quad b=\frac{2\sqrt{k_0T}}{3}\(1+O\(\frac{k_0T}{\mathbf{c}^2}\)\),\\
&b_2=\frac{2}{9}\mathbf{c}^2\(1+O\(\frac{k_0T}{\mathbf{c}^2}\)\),\quad b_3=\frac{2}{3}k_0T\(1+O\(\frac{k_0T}{\mathbf{c}^2}\)\).
\emas
The proof of the lemma is completed.
\end{proof}

\begin{lem}\label{rbdl-aj-est}
It holds that
\be
\left\{\bln
&A_0=-\frac{(a+bp_2p_3)^2}{a^2+b^2}\(L^{-1}P_1\tilde{v}_1\chi_0,\tilde{v}_1\chi_0\),\\
&A_{\pm1}=-\frac{(b-ap_2p_3)^2}{2a^2+2b^2}\(L^{-1}P_1\tilde{v}_1\chi_0,\tilde{v}_1\chi_0\)-\frac{1}{2}\(L^{-1}P_1\tilde{v}_1\chi_1,\tilde{v}_1\chi_1\),\\
&A_{k}=- \(L^{-1}P_1\tilde{v}_1 \chi_2,\tilde{v}_1 \chi_2\),\quad k=2,3,
\eln\right.\label{rbdl-aj-est-1}
\ee
where $p_l$ $(l=1,2,3)$ are given in \eqref{rbdl-p-123}, and $a,b$ are defined in \eqref{rbdl-a}--\eqref{rbdl-b}. Moreover, it holds that
$$A_i\sim 1 ,~~ i=0,2,3, \quad A_{\pm1}\sim \mathbf{c}^4.$$
\end{lem}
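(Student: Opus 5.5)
We split the proof into an algebraic reduction of the three formulas in \eqref{rbdl-aj-est-1}, followed by an estimate of the two basic quantities $\(L^{-1}P_1\tilde{v}_1\chi_0,\tilde{v}_1\chi_0\)$ and $\(L^{-1}P_1\tilde{v}_1\chi_j,\tilde{v}_1\chi_j\)$ in terms of $\mathbf{c}$.

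\textbf{Step 1: reduction to $P_1$ of simple moments.} By rotational invariance we take $\omega=e_1$ in the definition \eqref{sp4} of $A_j$, so that $A_j=-(L^{-1}P_1(\tilde{v}_1E_j),\tilde{v}_1E_j)$. For each basis function we compute $P_1(\tilde v_1\chi_k)$:
\begin{itemize}
\item $P_1(\tilde v_1\chi_2)=\tilde v_1\chi_2$, by oddness in $v_1$ and $v_2$; this gives $A_{2,3}$ directly.
\item $\tilde v_1\chi_0$ and $\tilde v_1\chi_4$ are odd in $v_1$, so only their $\chi_1$-components are removed. Since $\chi_4=p_2(v_0-p_3)\chi_0$ and $\tilde v_1 v_0=\mathbf{c}v_1$, we get $\tilde v_1\chi_4=p_2\mathbf{c}v_1\chi_0-p_2p_3\tilde v_1\chi_0$. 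Using $\mathbf{c}v_1\chi_0=(\mathbf{c}/p_1)\chi_1\in N_0$, this yields $P_1(\tilde v_1\chi_4)=-p_2p_3P_1(\tilde v_1\chi_0)$.
\end{itemize}
Hence
\[
P_1(\tilde v_1E_0)=-\frac{a+bp_2p_3}{\sqrt{a^2+b^2}}P_1(\tilde v_1\chi_0),
\]
which gives $A_0$. For $E_{\pm1}$ (with $\omega=e_1$), the $\chi_0,\chi_4$ part gives the coefficient $(b-ap_2p_3)/\sqrt{2a^2+2b^2}$ times $P_1(\tilde v_1\chi_0)$, and the $\mp\chi_1/\sqrt2$ part gives $P_1(\tilde v_1\chi_1)$, which is even in $v_1$. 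The cross terms vanish by parity: $L$ commutes with $v_1\to-v_1$, so $L^{-1}P_1$ of an odd function is odd and its pairing with an even function is zero. This yields \eqref{rbdl-aj-est-1}.

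\textbf{Step 2: size of the coefficients.} By Lemma \ref{p123} and Lemma \ref{rbdl-ap-lp1}, we have $a\sim\mathbf{c}^2$, $b\sim1$, $p_2p_3\sim\mathbf{c}^2$. One must check that $a+bp_2p_3$ does not cancel; indeed both terms are positive, so $(a+bp_2p_3)^2/(a^2+b^2)\sim1$. For $A_{\pm1}$ the factor $(b-ap_2p_3)^2/(a^2+b^2)\sim\mathbf{c}^4$, because $ap_2p_3\sim\mathbf{c}^4\gg b$.

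\textbf{Step 3: the quadratic forms.} It remains to show that $-(L^{-1}P_1g,g)\sim$ the correct power of $\mathbf{c}$ for $g=\tilde v_1\chi_0,\tilde v_1\chi_1,\tilde v_1\chi_2$. Using \eqref{LF-F} together with the boundedness $\|Lh\|\le C\|\nu h\|$ and \eqref{rbdl-nu1}, we have
\[
C^{-1}\|\nu^{-1/2}P_1g\|^2\lesssim -(L^{-1}P_1g,g)\lesssim \mu^{-1}\|P_1g\|^2 .
\]
For the lower bound, the cleanest route is $-(L^{-1}h,h)\ge \|h\|^2/\|L\|_{\text{weighted}}$, obtained via Cauchy–Schwarz with $(h,h)=(L^{-1}h,Lh)$ in the $\nu$-weighted norm. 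The constants must be independent of $\mathbf{c}$, which should follow from the uniform bounds stated for $\nu$ and $K$.

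\textbf{Step 4: computing the norms.} We then compute $\|P_1g\|^2$ with Lemma \ref{rbdl-ap-lp1-aaa}.
\begin{itemize}
\item $\|\tilde v_1\chi_2\|^2\sim1$ and $\|P_1\tilde v_1\chi_1\|^2\sim1$, since $\tilde v\approx v$ on the bulk $|v|\lesssim\sqrt{k_0T}$ of $M$; this gives $A_{2,3}\sim1$ and bounds the second term of $A_{\pm1}$ by $O(1)$.
\item For $P_1(\tilde v_1\chi_0)=\tilde v_1\chi_0-b\chi_1$, we have $\tilde v_1-bp_1v_1=v_1(\mathbf{c}/v_0-bp_1)$, and $\mathbf{c}/v_0-bp_1=O(k_0T/\mathbf{c}^2)$ on the bulk. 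Hence $\|P_1(\tilde v_1\chi_0)\|^2\sim\mathbf{c}^{-4}$, and both the upper and lower bounds must capture exactly this order.
\end{itemize}
Then $A_0\sim1\cdot\mathbf{c}^{-4}\cdot\mathbf{c}^4=1$, using $(a+bp_2p_3)^2/(a^2+b^2)\sim\mathbf{c}^4/\mathbf{c}^4\cdot$; more carefully, the numerator is $\sim\mathbf{c}^4$ and the denominator $\sim\mathbf{c}^4$, so the ratio is $\sim1$ and $A_0\sim\mathbf{c}^{-4}$. This conflicts with the claim $A_0\sim1$ and will need rechecking. Likewise $A_{\pm1}\sim\mathbf{c}^{8}/\mathbf{c}^4\cdot\mathbf{c}^{-4}$.

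\textbf{Main obstacle.} The delicate step is this cancellation in $P_1(\tilde v_1\chi_0)$: it requires a precise expansion of $\mathbf{c}/v_0-bp_1$, and the prefactors must be balanced correctly to land on $A_0\sim1$ and $A_{\pm1}\sim\mathbf{c}^4$ as stated. I would first verify the exact sizes via the Bessel asymptotics of Lemma \ref{rbdl-bessel-kj}, then obtain the two-sided bounds by the coercivity argument in the $\nu$-weighted norm.
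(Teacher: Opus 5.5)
Your Step 1 is correct and is the paper's reduction: rotate to $\omega=e_1$ and use $P_1(\tilde v_1\chi_4)=-p_2p_3P_1(\tilde v_1\chi_0)$. Your parity argument for the vanishing cross terms in $A_{\pm1}$ is a point the paper leaves implicit.

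\textbf{The gap.} The proposal ends at $A_0\sim\mathbf{c}^{-4}$, which contradicts the claim, so no order of magnitude is actually established. The contradiction comes from an inconsistency between your Steps 2 and 4.
\begin{itemize}
\item Step 4 is only valid with the true projection coefficient, $bp_1=K_2/K_3=1+O(k_0T/\mathbf{c}^2)$ (Bessel functions at $\mathbf{c}^2/k_0T$); otherwise $\mathbf{c}/v_0-bp_1$ would not be small.
\item Step 2 imports $a\sim\mathbf{c}^2$ from Lemma \ref{rbdl-ap-lp1}, which rests on the closed form $b=\frac23k_0Tp_1$ in \eqref{rbdl-b}. That closed form is wrong. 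From $\frac{\mathbf{c}v_1}{v_0}M=-k_0T\,\partial_{v_1}M$ and $\int M\,dv=1$ one gets $b=(\tilde v_1\chi_0,\chi_1)=k_0Tp_1$ exactly. The paper writes $\Gamma(\frac32)$ where Lemma \ref{rbdl-ap-lp1-aaa} with $n=4$ gives $\Gamma(\frac52)$.
\end{itemize}

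\textbf{Correcting $a$.} Insert $b=k_0Tp_1$ into the correct identity $a=\frac{\mathbf{c}p_2}{p_1}-p_2p_3b$. The leading terms cancel, giving $a=\frac{p_2}{p_1}\frac{k_0T}{\mathbf{c}}\frac{K_2}{K_3}\approx\sqrt{2k_0T/3}$. So $a\sim b\sim1$, and $\sqrt{a^2+b^2}\to\sqrt{5k_0T/3}$, the classical sound speed, which is a useful sanity check. Then $\frac{(a+bp_2p_3)^2}{a^2+b^2}\sim\mathbf{c}^4$ exactly compensates your correct $\|P_1(\tilde v_1\chi_0)\|^2\sim\mathbf{c}^{-4}$.

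\textbf{A cleaner route for $A_0$.} Eliminate $\chi_0$ in favour of $\chi_4$: $P_1(\tilde v_1E_0)=\frac{b+a/(p_2p_3)}{\sqrt{a^2+b^2}}P_1(\tilde v_1\chi_4)$. Here $P_1(\tilde v_1\chi_4)=\tilde v_1\chi_4-a\chi_1$ tends, as $\mathbf{c}\to\infty$, to the classical heat-flux moment $\sqrt{2/3}\,v_1\big(\frac{|v|^2}{2k_0T}-\frac52\big)$ times the square root of the Gaussian limit of $M$. So its norm is $\sim1$, and your two-sided coercivity bounds give $A_0\sim1$ with no cancellation to track.

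\textbf{The claim $A_{\pm1}\sim\mathbf{c}^4$ is false.} The same rewriting gives $P_1(\tilde v_1E_{\pm1})=\frac{a-b/(p_2p_3)}{\sqrt{2a^2+2b^2}}P_1(\tilde v_1\chi_4)\mp\frac{1}{\sqrt2}P_1(\tilde v_1\chi_1)$ (with $\omega=e_1$). All coefficients and norms are of order one, hence $A_{\pm1}\sim1$. Your tentative count for $A_{\pm1}$ was right, and $A_{\pm1}\sim\mathbf{c}^4$ cannot be proved by any route. Your own Step 3 upper bound gives $A_{\pm1}\le\mu^{-1}\|\tilde v_1E_{\pm1}\|^2=O(1)$. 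Even crudely it is at most $\mu^{-1}\mathbf{c}^2$, since $|\tilde v|<\mathbf{c}$ and $\|E_{\pm1}\|=1$. This needs only that the coercivity constant is independent of $\mathbf{c}$, which the paper's own bound $-(L^{-1}P_1g,g)\le C_1\|P_1g\|^2$ presupposes.

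\textbf{Where the paper's proof goes wrong.} It pairs the erroneous $a\sim\mathbf{c}^2$ with the tacit claim $-(L^{-1}P_1\tilde v_1\chi_0,\tilde v_1\chi_0)\sim1$. It bounds only the $\chi_2$ quadratic form and then uses the same order for $\chi_0$, which is exactly what your Step 4 refutes. For $A_0$ the two errors compensate each other, so $A_0\sim1$ is true. For $A_{\pm1}$ the prefactor happens to have the right order $\mathbf{c}^4$, so only the second error survives.

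What you can complete is the exact formulas \eqref{rbdl-aj-est-1}, together with $A_0,A_2,A_3\sim1$ and $A_{\pm1}\sim1$. The $\mathbf{c}^4$ assertion should be reported as false rather than left to be ``rechecked''.
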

\begin{proof}
Let  $\mathbb{O}$ be a rotation in $\R^3$ with $\mathbb{O}^T\omega=(1,0,0)$. By changing variable $v\to \mathbb{O}v$, we obtain
$$A_j=-\(L^{-1}P_1(\tilde{v}\cdot\omega)E_j ,(\tilde{v}\cdot\omega)E_j \)=-\(L^{-1}P_1\tilde{v}_1F_j,\tilde{v}_1F_j\),$$
where $A_j$ $(j=-1,0,1,2,3)$ are given in \eqref{sp4}, and
\bmas
\left\{\bal
F_{\pm1}=\frac{b}{\sqrt{2a^2+2b^2}}\chi_0\mp\sqrt{\frac{1}{2}}\chi_1+\frac{a}{\sqrt{2a^2+2b^2}}\chi_4,\\
F_0=-\frac{a}{\sqrt{a^2+b^2}}\chi_0+\frac{b}{\sqrt{a^2+b^2}}\chi_4,\\
F_l=\chi_{l},\quad l=2,3.
\ea\right.
\emas
By \eqref{chii-1}, we can obtain
\bmas
 &\quad\(L^{-1}P_1\tilde{v}_1F_0,\tilde{v}_1F_0\)\\
&=\frac{a^2}{a^2+b^2}\(L^{-1}P_1\tilde{v}_1\chi_0,\tilde{v}_1\chi_0\)+\frac{b^2}{a^2+b^2}\(L^{-1}P_1\tilde{v}_1p_2(v_0-p_3)\chi_0,\tilde{v}_1p_2(v_0-p_3)\chi_0\)\\
&\quad-\frac{2ab}{a^2+b^2}\(L^{-1}P_1\tilde{v}_1\chi_0,\tilde{v}_1p_2(v_0-p_3)\chi_0\) \\
&=\frac{a^2}{a^2+b^2}\(L^{-1}P_1\tilde{v}_1\chi_0,\tilde{v}_1\chi_0\)+\frac{b^2p_2^2p_3^2}{a^2+b^2}\(L^{-1}P_1\tilde{v}_1\chi_0,\tilde{v}_1\chi_0\)+\frac{2abp_2p_3}{a^2+b^2}\(L^{-1}P_1\tilde{v}_1\chi_0,\tilde{v}_1\chi_0\)\\
&=\frac{(a+bp_2p_3)^2}{a^2+b^2}\(L^{-1}P_1\tilde{v}_1\chi_0,\tilde{v}_1\chi_0\),\\
 &\quad\(L^{-1}P_1\tilde{v}_1F_{\pm1},\tilde{v}_1F_{\pm1}\)\\
&=\frac{b^2}{2a^2+2b^2}\(L^{-1}P_1\tilde{v}_1\chi_0,\tilde{v}_1\chi_0\)+\frac{a^2}{2a^2+2b^2}\(L^{-1}P_1\tilde{v}_1p_2(v_0-p_3)\chi_0,\tilde{v}_1p_2(v_0-p_3)\chi_0\)\\
&\quad+\frac{2ab}{2a^2+2b^2}\(L^{-1}P_1\tilde{v}_1\chi_0,\tilde{v}_1p_2(v_0-p_3)\chi_0\)+\frac{1}{2}\(L^{-1}P_1\tilde{v}_1\chi_1,\tilde{v}_1\chi_1\)\\
&=\frac{b^2}{2a^2+2b^2}\(L^{-1}P_1\tilde{v}_1\chi_0,\tilde{v}_1\chi_0\)+\frac{a^2p^2_2p^2_3}{2a^2+2b^2}\(L^{-1}P_1\tilde{v}_1\chi_0,\tilde{v}_1\chi_0\)\\
&\quad-\frac{2abp_2p_3}{2a^2+2b^2}\(L^{-1}P_1\tilde{v}_1\chi_0,\tilde{v}_1\chi_0\)+\frac{1}{2}\(L^{-1}P_1\tilde{v}_1\chi_1,\tilde{v}_1\chi_1\)\\
&=\frac{(b-ap_2p_3)^2}{2a^2+2b^2}\(L^{-1}P_1\tilde{v}_1\chi_0,\tilde{v}_1\chi_0\)+\frac{1}{2}\(L^{-1}P_1\tilde{v}_1\chi_1,\tilde{v}_1\chi_1\),\\
 &\quad\(L^{-1}P_1\tilde{v}_1F_{k},\tilde{v}_1F_{k}\)=\(L^{-1}P_1\tilde{v}_1\chi_2,\tilde{v}_1 \chi_2\), \quad k=2,3.
\emas
These implies that \eqref{rbdl-aj-est-1}.

Note that
\bmas
C_1( P_1\tilde{v}_1\chi_j,P_1\tilde{v}_1\chi_j)&\ge -(L^{-1}P_1\tilde{v}_1\chi_0,\tilde{v}_1\chi_0)\ge  C_2(\nu^{-1}P_1\tilde{v}_1\chi_j,P_1\tilde{v}_1\chi_j)\\
&\ge C_3((1+|v|)^{-1}P_1\tilde{v}_1\chi_j,P_1\tilde{v}_1\chi_j).
\emas
Thus, it can be verify that
\bma
\( \frac{1}{1+|v|}P_1\tilde{v}_1v_2\chi_0,P_1\tilde{v}_1v_2\chi_0\)&=\(\frac{1}{1+|v|}\tilde{v}_1v_2\chi_0,\tilde{v}_1v_2\chi_0\)\nnm\\
&= C\mathbf{c}^2 p_0\int_0^{\infty}\frac1{1+r} \frac{r^6}{\mathbf{c}^2+r^2}e^{-\frac{\mathbf{c}\sqrt{\mathbf{c}^2+r^2}}{k_0T}}dr\nnm\\
&\geq C\mathbf{c}^2p_0\int_1^{\infty}\frac{r^5}{\mathbf{c}^2+r^2}e^{-\frac{\mathbf{c}\sqrt{\mathbf{c}^2+r^2}}{k_0T}}dr\nnm\\
&\geq C(k_0T)^{\frac32}\frac{K_{\frac52}(\frac{\mathbf{c}^2}{k_0T})}{K_2(\frac{\mathbf{c}^2}{k_0T})},
\ema
where we have used
\bmas
\int_1^{\infty}\frac{ r^n}{\mathbf{c}^2+r^2}e^{-\frac{\mathbf{c}\sqrt{\mathbf{c}^2+r^2}}{k_0T}}dr&=\int_{0}^{\infty}\frac{r^{n-1}}{\sqrt{\mathbf{c}^2+r^2}}e^{-\frac{\mathbf{c}\sqrt{\mathbf{c}^2+r^2}}{k_0T}}d\sqrt{\mathbf{c}^2+r^2}\nnm\\
&=\mathbf{c}^{n-1}\int_{1}^{\infty}\frac{(t^2-1)^{\frac{n-1}{2}}}{t}e^{-\frac{\mathbf{c}^2t}{k_0T}}dt 
\nnm\\
&\ge \mathbf{c}^{n-1}\int_{1}^{\infty} (t^2-1)^{\frac{n-1}{2}} e^{-\frac{\mathbf{c}^2t}{k_0T}}dt
\nnm\\
&=\frac{(2k_0T)^{\frac{n}{2}}\Gamma\(\frac{n+1}{2}\)}{\mathbf{c}\Gamma\(\frac{1}{2}\)}K_{\frac{n}{2}}\(\frac{\mathbf{c}^{2}}{k_0T}\).
\emas
Moreover,
$$
(  P_1\tilde{v}_1v_2\chi_0,P_1\tilde{v}_1v_2\chi_0)\leq Cp_0\mathbf{c}\int_0^{\infty}\frac{r^6}{\sqrt{\mathbf{c}^2+r^2}}e^{-\frac{\mathbf{c}\sqrt{\mathbf{c}^2+r^2}}{k_0T}}dr\leq C(k_0T)^2\frac{K_3(\frac{\mathbf{c}^2}{k_0T})}{K_2(\frac{\mathbf{c}^2}{k_0T})}.
$$
Thus,
$$-(L^{-1}P_1\tilde{v}_1v_2\chi_0,\tilde{v}_1v_2\chi_0)\sim 1.$$
Moreover,
\bmas
\frac{(a+bp_2p_3)^2}{a^2+b^2}&=\frac{(1+\frac{bp_2p_3}{a})^2}{1+\frac{b^2}{a^2}}=9\(1+O(\frac{k_0T}{\mathbf{c}^2})\),\\
\frac{(b-ap_2p_3)^2}{2a^2+2b^2}&=\frac{(p_2p_3)^2(1-\frac{b}{ap_2p_3})^2}{2+\frac{2b^2}{a^2}}=\frac{\mathbf{c}^4}{3(k_0T)^2}\(1+O(\frac{k_0^2T^2}{\mathbf{c}^4})\).
\emas
Thus, we have proved the lemma.
\end{proof}

\subsection{Estimation of $\nu(v)$ and $k(u,v)$}

In this subsection, we estimate $\nu(v)$ and $k(u,v)$. Recall that
\bma
\nu(v)&=\int_{\R^3}\int_{\S^2}\frac{\mathbf{c}g\sqrt{s}}{4u_0v_0}M(u)d\omega du,\label{rbdl-ap-nu}\\
Kf&=\int_{\R^3}\int_{\S^2}\frac{\mathbf{c}g\sqrt{s}}{4u_0v_0}\big(f(v')\sqrt{M(u')}+f(u')\sqrt{M(v')}\big)\sqrt{M(u)}d\omega du\nnm\\
&\quad-\int_{\R^3}\int_{\S^2}\frac{\mathbf{c}g\sqrt{s}}{4u_0v_0}\sqrt{M(u)}\sqrt{M(v)}f(u)d\omega du\nnm\\
&=\int_{\R^3}k_2(v,u)f(u)du-\int_{\R^3}k_1(v,u)f(u)du=\int_{\R^3}k(v,u)f(u)du.\label{rbdl-ap-K}
\ema


\begin{lem}\label{ap-est-nuv}

For $\mathbf{c}\geq1$, there exist two positive constants $\nu_0$ and $\nu_1$ are independent of $\mathbf{c}$ such that
\bq\label{rbdl-ap-hbnugj}
\nu_0w(v)\leq\nu(v)\leq\nu_1w(v),
\eq
where
\be
w(v)=
\left\{\bal
1+|v|,\quad&|v|\leq\mathbf{c},\\
\mathbf{c},\quad&|v|\geq\mathbf{c}.
\ea\right.\label{rbdl-wccc}
\ee
\end{lem}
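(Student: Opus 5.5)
The plan is to work directly with the representation \eqref{rbdl-ap-nu}. Write $s=2(u_0v_0-u\cdot v+\mathbf{c}^2)$ and $g^2=s-4\mathbf{c}^2$, and integrate out $\omega$ (the integrand does not depend on $\omega$ since $\sigma=1$). This gives
\[
\nu(v)=\pi\int_{\R^3}\frac{\mathbf{c}\,g\sqrt{s}}{u_0v_0}M(u)\,du .
\]
The first step is to record uniform-in-$\mathbf{c}$ two-sided bounds for the Møller velocity $v_M=\frac{\mathbf{c}g\sqrt s}{4u_0v_0}$. The standard relativistic inequalities (Glassey--Strauss type) give
\[
\frac{|u-v|}{\sqrt{u_0v_0}}\lesssim g\lesssim |u-v|, \qquad 4\mathbf{c}^2\le s=g^2+4\mathbf{c}^2 .
\]
One can check these by writing $g^2=2(u_0v_0-u\cdot v-\mathbf{c}^2)$ and $u_0v_0-\mathbf{c}^2\ge \frac12(|u|^2+|v|^2)\cdot\frac{\mathbf{c}^2}{u_0v_0}$, together with a careful expansion. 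The Maxwellian $M(u)=p_0e^{-\mathbf{c}u_0/(k_0T)}$ rewrites as $M(u)=p_0e^{-\mathbf{c}^2/(k_0T)}e^{-\mathbf{c}(u_0-\mathbf{c})/(k_0T)}$. Since $\mathbf{c}(u_0-\mathbf{c})=\frac{\mathbf{c}|u|^2}{u_0+\mathbf{c}}\ge \frac{|u|^2}{2}$ for $|u|\le\mathbf{c}$, and $\ge \frac{\mathbf{c}|u|}{2}$ for $|u|\ge \mathbf{c}$, and since Lemma \ref{p123} and Lemma \ref{rbdl-bessel-kj} give $p_0e^{-\mathbf{c}^2/(k_0T)}\sim (k_0T)^{-3/2}$ uniformly in $\mathbf{c}\ge1$, the weight $M(u)$ is bounded above by a fixed Gaussian-type profile independent of $\mathbf{c}$. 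It is bounded below by $C^{-1}e^{-|u|^2/(k_0T)}$ on $|u|\le 1$. In effect $M$ concentrates on $|u|\lesssim 1$, uniformly in $\mathbf{c}$.

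The second step is to split into the regimes $|v|\le\mathbf{c}$ and $|v|\ge\mathbf{c}$ and restrict the $u$-integral to $|u|\le 1$, at the cost of exponentially small tails.

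\textbf{Case $|v|\le\mathbf{c}$.} Here $v_0\sim\mathbf{c}$ and $u_0\sim\mathbf{c}$ for $|u|\le 1$, so $s\sim\mathbf{c}^2$ and $g\sim|u-v|$. Hence
\[
\frac{\mathbf{c}g\sqrt s}{u_0v_0}\sim|u-v|.
\]
Integrating against $M$ gives $\nu(v)\sim\int|u-v|M(u)\,du\sim 1+|v|$, exactly as in the Newtonian hard-sphere case. For the upper bound one uses the tail of $M$ with $g\le|u-v|$ and $\sqrt s\le C(\mathbf{c}+g)$.

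\textbf{Case $|v|\ge\mathbf{c}$.} Now $v_0\sim|v|$, while $u_0\sim\mathbf{c}$ for $|u|\le 1$. The key computation is $g^2\approx 2(\mathbf{c}v_0-\mathbf{c}^2)-2u\cdot v+O(|u|^2)$, so $g^2\sim\mathbf{c}v_0\sim\mathbf{c}|v|$ and $s\sim\mathbf{c}|v|$. Therefore
\[
\frac{\mathbf{c}\,g\sqrt{s}}{u_0v_0}\sim\frac{\mathbf{c}\cdot\mathbf{c}|v|}{\mathbf{c}|v|}=\mathbf{c},
\]
which yields $\nu(v)\sim\mathbf{c}$. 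For the upper bound over all $u$, use $g\sqrt s\le s\le 4u_0v_0$, which gives the crude bound $v_M\le\mathbf{c}$ uniformly. This bound also covers the upper estimate in the first case when combined with $g\le|u-v|$. Finally, $\int M\,du=1$ by the normalization in \eqref{mmv-dif}.

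\textbf{Main obstacle.} The delicate step is the lower bound for $g$ in the ultra-relativistic regime $|v|\ge\mathbf{c}$, uniformly in $\mathbf{c}$. One must show $u_0v_0-u\cdot v-\mathbf{c}^2\ge c_0\,\mathbf{c}|v|$ for $|u|\le 1$, with $c_0$ independent of $\mathbf{c}$. I would write
\[
u_0v_0-u\cdot v-\mathbf{c}^2\ge v_0(u_0-|u|)-\mathbf{c}^2,
\]
note that $u_0-|u|\ge\mathbf{c}-1$, and then handle the near-threshold range $|v|\approx\mathbf{c}$ separately by the first-case argument. The remaining issue is to keep all constants, including $p_0$ via the Bessel asymptotics, independent of $\mathbf{c}\ge1$.
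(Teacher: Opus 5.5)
Your overall route is the paper's: formula \eqref{rbdl-hbnueq}, the uniform two-sided bound on $p_0e^{-\mathbf{c}u_0/(k_0T)}$, the split at $|v|=\mathbf{c}$, and the upper bounds via $g\sqrt s\le s\le 4u_0v_0$ and $g\le|u-v|$. The upper bounds are fine. The gap is in the lower bound for $|v|\ge\mathbf{c}$, exactly at the step you flag.

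Writing $u_0v_0-u\cdot v-\mathbf{c}^2\ge v_0(u_0-|u|)-\mathbf{c}^2$ and then using $u_0-|u|\ge\mathbf{c}-1$ gives nothing near $\mathbf{c}=1$. At $\mathbf{c}=1$ the right side equals $-1$ for every $v$. For general $\mathbf{c}$, the bound $v_0(\mathbf{c}-1)-\mathbf{c}^2\ge c_0\mathbf{c}v_0$ holds only when $(1-c_0)\mathbf{c}>1$ and $v_0\ge\mathbf{c}^2/((1-c_0)\mathbf{c}-1)$, and this threshold's ratio to $\mathbf{c}$ blows up as $\mathbf{c}$ decreases toward $1/(1-c_0)$. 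So the leftover ``near-threshold'' set is not of the form $\mathbf{c}\le|v|\le K\mathbf{c}$ with $K$ fixed. Your first-case argument cannot absorb it uniformly, because its constants depend on $v_0/\mathbf{c}$; for $|v|\gg\mathbf{c}$ it only gives a bound of order $\mathbf{c}^{3/2}|v|^{-1/2}\ll\mathbf{c}$. The heuristic $g^2\approx2\mathbf{c}(v_0-\mathbf{c})-2u\cdot v+O(|u|^2)$ is also off. The error term is $O(v_0|u|^2/\mathbf{c})$, and for $\mathbf{c}\sim1$ the two leading terms nearly cancel when $u$ is parallel to $v$.

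The root cause is a missing factor $\mathbf{c}$ in your Glassey--Strauss lower bound. The correct form, used in the paper as \eqref{rbdl-hbggeq}, is $g^2\ge(|u\times v|^2+\mathbf{c}^2|u-v|^2)/(u_0v_0)$. Your version $g\gtrsim|u-v|/\sqrt{u_0v_0}$ would give only $g\gtrsim|u-v|/\mathbf{c}$ in your first case, so even your claim $g\sim|u-v|$ there needs the corrected inequality.

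With the corrected inequality and $\sqrt s\ge g$, for $|u|\le1$ and $|v|\ge\mathbf{c}$ you get
$\frac{\mathbf{c}g\sqrt s}{u_0v_0}\ge\frac{\mathbf{c}^3|u-v|^2}{u_0^2v_0^2}\ge\frac{\mathbf{c}|u-v|^2}{4|v|^2}$.
Since $\int_{|u|\le1}|u-v|^2du\ge\frac{4\pi}{3}|v|^2$ (the cross term integrates to zero), this gives $\nu(v)\ge C\mathbf{c}$ with no threshold analysis. This is essentially the paper's computation, which integrates over all $u$ against $e^{-|u|^2/(k_0T)}$.

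An even more elementary fix is to keep the sign of $u\cdot v$ rather than bounding $u\cdot v\le|u|v_0$. On the half-ball $\{|u|\le1,\ u\cdot v\le0\}$ one has $g^2\ge2(u_0v_0-\mathbf{c}^2)\ge2\mathbf{c}(v_0-\mathbf{c})\ge(2-\sqrt2)\mathbf{c}v_0$ for $|v|\ge\mathbf{c}$. Hence $\frac{\mathbf{c}g\sqrt s}{u_0v_0}\ge\frac{(2-\sqrt2)\mathbf{c}^2}{u_0}\ge\frac{(2-\sqrt2)\mathbf{c}}{\sqrt2}$ there, which again yields $\nu(v)\ge C\mathbf{c}$ uniformly in $\mathbf{c}\ge1$. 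The paper uses the same half-space restriction in its case $|v|\le\mathbf{c}$.
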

\begin{proof}
By \eqref{rbdl-ap-nu}, we have
\be\label{rbdl-hbnueq}
\nu(v)= \pi\mathbf{c}p_0 \int_{\R^3} \frac{g\sqrt{s}}{u_0v_0}e^{-\frac{\mathbf{c}u_0}{k_0T}} du.
\ee

Since $ g^2=2(u_0v_0-u\cdot v-\mathbf{c}^2)$, it follows that (\cite{Glassey-2})
\be
\frac{|u\times v|^2+\mathbf{c}^2|u-v|^2}{u_0v_0}\leq g^2\leq |u-v|^2,\label{rbdl-hbggeq}
\ee
where we have used $|u\cdot v+\mathbf{c}^2|\le  u_0v_0.$ By  Lemma \ref{rbdl-bessel-kj}, we have for $\mathbf{c}\gg1$,
$$
K_{2}\(\frac{\mathbf{c}^2}{k_0T}\)\sim \sqrt{\frac{\pi k_0T}{2 }}\mathbf{c}^{-1}e^{-\frac{\mathbf{c}^2}{k_0T}}.
$$
This and \eqref{rbdl-kjzdif} implies that
\be
p_0=\frac{1}{4\pi\mathbf{c}k_0TK_2(\frac{\mathbf{c}^2}{k_0T})}\sim \frac1{(2\pi k_0T)^{\frac32}}e^{\frac{\mathbf{c}^2}{k_0T}}.\label{p0}
\ee

Note that (cf. \cite{Wang-2})
\bma
\mathbf{c}^2-\mathbf{c}u_0&=-\frac{|u|^2}{1+\sqrt{1+\frac{|u|^2}{\mathbf{c}^2}}}\geq-|u|^2, \label{cu1}\\
\mathbf{c}^2-\mathbf{c}u_0&\leq-\frac{|u|^2}{1+\sqrt{1+|u|^2}}\leq-\sqrt{1+|u|^2}+1\leq-|u|+1. \label{cu2}
\ema
These implies that
\be
\frac{C_1}{(2\pi k_0T)^{\frac32}}e^{-\frac{|u|^2}{k_0T}}\leq p_0e^{-\frac{\mathbf{c}u_0}{k_0T}}\leq  \frac{C_2}{(2\pi k_0T)^{\frac32}}e^{-\frac{|u|}{k_0T}},\label{rbdl-p0eu0}
\ee
where $C_1,C_2>0$ are constants independent of $\mathbf{c}$.

Thus, by \eqref{rbdl-vmgs}, \eqref{rbdl-p0eu0} and \eqref{rbdl-hbnueq}--\eqref{rbdl-hbggeq}, we have for $|v|\geq\mathbf{c}$,
\bma
\nu(v)
&\geq C\mathbf{c}\int_{\R^3}\frac{g^2}{u_0v_0}e^{-\frac{|u|^2}{k_0T}}du\nnm\\
&\geq C\mathbf{c}^3v_0^{-2}\int_{\R^3}\frac{|u-v|^2}{u_0^{2}}e^{-\frac{|u|^2}{k_0T}}du\nnm\\
&= C\mathbf{c}^3v_0^{-2}\int_{\R^3}\frac{(|u|^2-2u\cdot v + |v|^2)}{u_0^{2}}e^{-\frac{|u|^2}{k_0T}}du\nnm\\
&= C\mathbf{c}^3v_0^{-2}\int_{0}^{\infty}\int_{0}^{\pi}\frac{r^{2}-2r |v|\cos\theta+|v|^2}{\mathbf{c}^2+r^2}e^{-\frac{r^2}{k_0T}}r^2\sin\theta dr d\theta\nnm\\
&= C\mathbf{c}^3v_0^{-2}\int_{0}^{\infty}\frac{r^{4} +r^2|v|^2}{\mathbf{c}^2+r^2}e^{-\frac{r^2}{k_0T}}dr \nnm\\
&\geq \nu_4\mathbf{c}v_0^{-2}(1+|v|^{2}),\label{rbdl-hbenuvL}
\ema
where we have used
\be \label{kkk}
\int_{0}^{\infty}\frac{r^{k}}{(r^2+\mathbf{c}^2)^l}e^{-\frac{r^2}{k_0T}}dr
\geq\frac{1}{(5\mathbf{c}^2)^l}\int_{1}^{2}e^{-\frac{r^2}{k_0T}}r^kdr\geq C\mathbf{c}^{-2l},\quad k,l\ge 0.
\ee

For $|v|\leq\mathbf{c}$, we have by \eqref{rbdl-vmgs} and \eqref{rbdl-hbnueq}--\eqref{rbdl-hbggeq} that
\bma
\nu(v)
&\geq C\mathbf{c}^2\int_{\R^3}\frac{g}{u_0v_0}e^{-\frac{|u|^2}{k_0T}}du\nnm\\
&\geq C\mathbf{c}^3v_0^{-\frac{3}{2}}\int_{\R^3}\frac{|u-v|}{u_0^{\frac{3}{2}}}e^{-\frac{|u|^2}{k_0T}}du\nnm\\
&= C\mathbf{c}^3v_0^{-\frac{3}{2}}\int_{\R^3}\frac{(|u|^2-2u\cdot v + |v|^2)^{\frac{1}{2}}}{u_0^{\frac{3}{2}}}e^{-\frac{|u|^2}{k_0T}}du\nnm\\
&\geq C\mathbf{c}^3v_0^{-\frac{3}{2}}\int_{0}^{\infty}\int_{\frac{\pi}{2}}^{\pi}\frac{(r^{2}-2r |v|\cos\theta+|v|^2)^{\frac{1}{2}}}{(\mathbf{c}^2+r^2)^{\frac{3}{4}}}e^{-\frac{r^2}{k_0T}}r^2\sin\theta dr d\theta\nnm\\
&\geq C\mathbf{c}^3v_0^{-\frac{3}{2}} \int_{0}^{\infty}\frac{r^3+r^2|v|}{(\mathbf{c}^2+r^2)^{\frac{3}{4}}}e^{-\frac{r^2}{k_0T}}dr\nnm\\
&\geq \nu_2\mathbf{c}^{\frac{3}{2}}v_0^{-\frac{3}{2}}(1 +|v|),\label{rbdl-hbenuvL0}
\ema
where we have used $r|v|\cos\theta\le 0$ for $\theta\in [\pi/2,\pi]$ and \eqref{kkk}.

By \eqref{rbdl-hbnueq}, \eqref{p0} and using $g^2\le 4u_0v_0$ and $ s\le 4u_0v_0$, we have that for $|v|\geq\mathbf{c}$,
\be
\nu(v)\le 4\pi\mathbf{c}\int_{\R^3} p_0 e^{-\frac{\mathbf{c}u_0}{k_0T}}du\le C\mathbf{c}\int_{\R^3}e^{-\frac{|u|}{k_0T}}du\leq \nu_3\mathbf{c},\label{rbdl-hbenuvU}
\ee
where we have used
$$
\int_{\R^3}e^{-\frac{|u|}{k_0T}}du=4\pi\int_{0}^{\infty}e^{-\frac{r}{k_0T}}r^2dr.
$$

For $|v|\leq\mathbf{c}$, we have by \eqref{rbdl-hbggeq} and $ s\le 4u_0v_0$ that
\bma
\nu(v)
&\leq 2\pi\mathbf{c}p_0\int_{\R^3}\frac{|u-v|}{(u_0v_0)^{\frac{1}{2}}}e^{-\frac{\mathbf{c}u_0}{k_0T}}du\nnm\\
&\le C\int_{\R^3}|u-v|e^{-\frac{|u|}{k_0T}}du\leq \nu_3(1+|v|),\label{rbdl-hbenuvU-1}
\ema
where we have used
\be
\int_{\R^3}|u-v|e^{-\frac{|u|}{k_0T}}du \leq\int_{\R^3}(|u|+|v|)e^{-\frac{|u|}{k_0T}}du\leq C(1+|v|).\label{kkk-1}
\ee
By combining \eqref{rbdl-hbenuvL}--\eqref{rbdl-hbenuvU}, we can obtain \eqref{rbdl-ap-hbnugj}. The proof of the lemma is completed.
\end{proof}


Next, we estimate $k_1(u,v)$ and $k_2(u,v)$. Define
\bq\label{rbdlap-lj}
l=:\frac{u_0+v_0}{2k_0T},\quad j=:\frac{|u\times v|}{gk_0T}.
\eq
It can be verify that (cf.\cite{Glassey-2})
\bma
l^2-j^2&=\frac{(u_0v_0-u\cdot v-\mathbf{c}^2)(2\mathbf{c}^2+|u|^2+|v|^2+2u_0v_0)}{2g^2(k_0T)^2}-\frac{|u\times v|^2}{g^2(k_0T)^2}\nnm\\
&=\frac{(|u|^2+|v|^2)\frac{s}{2}+2|u|^2|v|^2-2(u\cdot v)(\frac{s}{2}+u\cdot v)}{2g^2(k_0T)^2}-\frac{|u\times v|^2}{g^2(k_0T)^2}\nnm\\
&=\frac{|u\times v|^2}{g^2(k_0T)^2}+\frac{s|u-v|^2}{4g^2(k_0T)^2}-\frac{|u\times v|^2}{g^2(k_0T)^2}\nnm\\
&=\frac{s|u-v|^2}{4g^2(k_0T)^2}.\label{rbdl-l2j2}
\ema

From \cite{Cao-1,STRIN-1,Wang-2}, we have
\bma
k_1(v,u)&=\frac{\pi\mathbf{c}p_0g\sqrt{s}}{2u_0v_0}e^{-\mathbf{c}l}\int_{0}^{\pi}\sin\theta d\theta,\label{rbdl-k1}\\
k_2(v,u)&=\frac{\pi\mathbf{c}p_0s^{\frac{3}{2}}}{4gu_0v_0}\int_0^{\infty}\frac{y(1+\sqrt{1+y^2})}{\sqrt{1+y^2}}e^{-\mathbf{c}l\sqrt{1+y^2}}I_0(\mathbf{c}jy)dy.\label{rbdl-k2}
\ema


\begin{lem}[\cite{Glassey-2}]\label{rbdlap-l1}
Let $R>r\geq0$ and consider
$$
J_1(R,r)=\int_0^{\infty}ye^{-R\sqrt{1+y^2}}I_0(ry)dy,\quad J_2(R,r)=\int_0^{\infty}y(1+y^2)^{-\frac{1}{2}}e^{-R\sqrt{1+y^2}}I_0(ry)dy.
$$
Thus,
$$
J_1(R,r)=\frac{R}{R^2-r^2}\(1+\frac{1}{\sqrt{R^2-r^2}}\)e^{-\sqrt{R^2-r^2}},\quad J_2(R,r)=\frac{1}{\sqrt{R^2-r^2}}e^{-\sqrt{R^2-r^2}}.
$$
\end{lem}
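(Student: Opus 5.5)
The plan is to first obtain the closed form for $J_2$ by a Lorentz-invariance argument, and then recover $J_1$ from it by differentiating in $R$. Set $\lambda=\sqrt{R^2-r^2}>0$.

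\emph{Step 1: rewrite $J_2$ as a planar integral.} Use the angular representation $I_0(z)=\frac{1}{2\pi}\int_0^{2\pi}e^{z\cos\varphi}\,d\varphi$. Write $p\in\R^2$ in polar coordinates $p=(y\cos\varphi,y\sin\varphi)$. Then
\begin{equation*}
J_2(R,r)=\frac{1}{2\pi}\int_{\R^2}\frac{1}{\sqrt{1+|p|^2}}\,e^{-(R\sqrt{1+|p|^2}-rp_1)}\,dp .
\end{equation*}
The integrand is bounded by $e^{-(R-r)\sqrt{1+|p|^2}}$, so the integral converges absolutely because $R>r$.

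\emph{Step 2: remove $r$ by a Lorentz boost.} The measure $\frac{dp}{\sqrt{1+|p|^2}}$ is the Lorentz-invariant measure on the mass hyperboloid $\{(p^0,p)\,:\,p^0=\sqrt{1+|p|^2}\}$ in $1+2$ dimensions. The exponent equals minus the Minkowski pairing of $(p^0,p_1,p_2)$ with the timelike vector $(R,r,0)$, whose Minkowski length is $\lambda$. Take the boost in the $(p^0,p_1)$ plane with rapidity $\theta$, where $\tanh\theta=r/R$. It maps the hyperboloid to itself, preserves the measure, and turns the exponent into $-\lambda\sqrt{1+|p|^2}$. Concretely, I would use the change of variables
\begin{equation*}
p_1'=p_1\cosh\theta-p^0\sinh\theta,\qquad p_2'=p_2 .
\end{equation*}
Its Jacobian is $p^{0\prime}/p^0$, and this is what gives the invariance of the measure. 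I expect verifying this change of variables cleanly to be the only step needing care; the rest is bookkeeping.

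\emph{Step 3: evaluate the radial integral.} After the boost, return to polar coordinates:
\begin{equation*}
J_2=\int_0^\infty \frac{y}{\sqrt{1+y^2}}\,e^{-\lambda\sqrt{1+y^2}}\,dy .
\end{equation*}
Substitute $t=\sqrt{1+y^2}$, so that $dt=\frac{y}{\sqrt{1+y^2}}\,dy$. This gives $J_2=\int_1^\infty e^{-\lambda t}\,dt=\lambda^{-1}e^{-\lambda}$, which is the stated formula.

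\emph{Step 4: obtain $J_1$ by differentiation.} Since $J_1=-\partial_R J_2$, we need to justify differentiating under the integral sign. Locally in $R>r$, the $R$-derivative of the integrand is dominated by $y\,e^{-(R_0-r)\sqrt{1+y^2}}I_0(ry)e^{-r\sqrt{1+y^2}}$ for some $R_0>r$, and $I_0(ry)\le e^{ry}$ keeps this integrable. Then
\begin{equation*}
\frac{d}{d\lambda}\big(\lambda^{-1}e^{-\lambda}\big)=-e^{-\lambda}\big(\lambda^{-1}+\lambda^{-2}\big),\qquad \partial_R\lambda=\frac{R}{\lambda}.
\end{equation*}
Combining these yields
\begin{equation*}
J_1=\frac{R}{\lambda^2}\Big(1+\frac1\lambda\Big)e^{-\lambda}=\frac{R}{R^2-r^2}\Big(1+\frac{1}{\sqrt{R^2-r^2}}\Big)e^{-\sqrt{R^2-r^2}},
\end{equation*}
as claimed.

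\emph{Alternative if the boost is awkward.} One can instead expand $I_0(ry)=\sum_k (ry/2)^{2k}/(k!)^2$ and integrate term by term. This gives $J_2$ as a power series in $r^2$, namely $(\partial_R^2)^k$ applied to $e^{-R}/R$, which should be resummed to $e^{-\lambda}/\lambda$. The resummation would be the main obstacle in this route, so the invariance argument is preferable.
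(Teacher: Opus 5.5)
Your proof is correct. The paper gives no argument for this lemma; it simply quotes the two formulas from \cite{Glassey-2}. So what you have written is a self-contained derivation rather than a variant of something in the text.

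The step you flagged goes through. For fixed $p_2$, the map $p_1\mapsto p_1'=p_1\cosh\theta-p^0\sinh\theta$ has derivative $\cosh\theta-(p_1/p^0)\sinh\theta=p'^0/p^0>0$. It behaves like $p_1e^{\mp\theta}$ as $p_1\to\pm\infty$. Hence $(p_1,p_2)\mapsto(p_1',p_2)$ is a bijection of $\R^2$ with Jacobian $p'^0/p^0$, where $p'^0=p^0\cosh\theta-p_1\sinh\theta=\sqrt{1+|p'|^2}$ and $Rp^0-rp_1=\lambda p'^0$. Tonelli covers the interchange in Step 1, and your dominating function in Step 4 is adequate.

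The more common route to $J_2$ is to substitute $t=\sqrt{1+y^2}$. This turns $J_2$ into the Laplace transform $\int_1^\infty e^{-Rt}I_0(r\sqrt{t^2-1})\,dt$, which is a standard tabulated pair. Your series alternative also closes. After integrating term by term with the integral representation of $K_{k+1/2}$, the resummation is exactly the multiplication theorem $\sum_{k\ge0}\frac{1}{k!}\big(\frac{r^2}{2R}\big)^kK_{k+\frac12}(R)=\big(\frac{R}{\lambda}\big)^{1/2}K_{\frac12}(\lambda)$, valid because $r<R$.

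Those routes are shorter if one accepts tables or Bessel identities. Yours derives the formula from first principles and explains why only the invariant $\lambda=\sqrt{R^2-r^2}$ survives. That fits the relativistic origin of the kernel $k_2$, where $R=\mathbf{c}l$ and $r=\mathbf{c}j$.

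If you prefer to avoid differentiating under the integral, you can boost $J_1=\frac{1}{2\pi}\int_{\R^2}e^{-(Rp^0-rp_1)}\,dp$ directly. Write $dp=p^0\,dp'/p'^0$ with $p^0=p'^0\cosh\theta+p_1'\sinh\theta$. The term odd in $p_1'$ vanishes, and $J_1=\cosh\theta\int_1^\infty te^{-\lambda t}\,dt=\frac{R}{\lambda^2}\big(1+\frac{1}{\lambda}\big)e^{-\lambda}$.
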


Let
\bma
&\tilde{K}_{\beta}(l,j;\mathbf{c})=\int_0^{\infty}y(1+y^2)^{\frac{\beta}{4}}e^{-\mathbf{c}l\sqrt{1+y^2}}I_0(\mathbf{c}jy)dy.\label{rbdlap-tkb}
\ema

By Lemma \ref{rbdlap-l1}, we can obtain
\begin{lem}\label{rbdlap-l2}
For $l,j$ defined in \eqref{rbdlap-lj}, $\mathbf{c}\geq1$, $ |\gamma|<1$ and $|\beta|\leq2$, we have
\bma
&\tilde{K}_{\beta}(l,j;\mathbf{c})\leq C\mathbf{c}^{-1}(l^2-j^2)^{-1-\frac{\beta}{4}}l^{1+\frac{\beta}{2}}e^{-\mathbf{c}\sqrt{l^2-j^2}}.\label{rbdlap-l2-1}
\ema
\end{lem}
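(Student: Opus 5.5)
We reduce the estimate to the two closed forms of Lemma \ref{rbdlap-l1} by comparing the weight $(1+y^2)^{\beta/4}$ with $1$, $(1+y^2)^{1/2}$ or $(1+y^2)^{-1/2}$, and then interpolating where necessary. Set $R=\mathbf{c}l$, $r=\mathbf{c}j$ and $\rho=\sqrt{R^2-r^2}=\mathbf{c}\sqrt{l^2-j^2}$. By \eqref{rbdl-l2j2} we have $l>j\ge 0$, so $R>r$ and Lemma \ref{rbdlap-l1} applies. A direct substitution does not work, because $\tilde K_\beta$ involves the fractional weight $(1+y^2)^{\beta/4}$. We therefore use H\"older's inequality in the measure $d\mu=ye^{-R\sqrt{1+y^2}}I_0(ry)\,dy$ together with a localization of where the integrand concentrates.

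\textbf{Step 1 (endpoint cases).} For $\beta=0$, Lemma \ref{rbdlap-l1} gives $J_1=\frac{R}{\rho^2}(1+\rho^{-1})e^{-\rho}$. For $\beta=-2$ it gives $J_2=\rho^{-1}e^{-\rho}$. For $\beta=2$ we differentiate in $R$: $\int y(1+y^2)^{1/2}e^{-R\sqrt{1+y^2}}I_0\,dy=-\partial_RJ_1$, which is computable explicitly. Its leading behaviour is $\frac{R^2}{\rho^3}(1+\rho^{-1}+\rho^{-2})e^{-\rho}$ up to constants, since $\partial_R\rho=R/\rho$.

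\textbf{Step 2 (interpolation).} For $-2\le\beta\le 2$ the weight is a geometric interpolation of the endpoint weights. H\"older's inequality with exponents chosen linearly in $\beta$ then bounds $\tilde K_\beta$ by products of the endpoint values. In every case this yields a bound of the form $C R^{1+\beta/2}\rho^{-2-\beta/2}(1+\rho^{-1})^{k}e^{-\rho}$.

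\textbf{Step 3 (scaling in $\mathbf{c}$).} Substituting $R=\mathbf{c}l$ and $\rho=\mathbf{c}\sqrt{l^2-j^2}$ turns the leading term into
$$\mathbf{c}^{1+\beta/2}l^{1+\beta/2}\,\mathbf{c}^{-2-\beta/2}(l^2-j^2)^{-1-\beta/4}e^{-\mathbf{c}\sqrt{l^2-j^2}}.$$
This is exactly $\mathbf{c}^{-1}(l^2-j^2)^{-1-\beta/4}l^{1+\beta/2}e^{-\mathbf{c}\sqrt{l^2-j^2}}$, the claimed bound.

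\textbf{Main obstacle: the factor $(1+\rho^{-1})^k$.} This factor must be bounded by a constant, which requires $\rho\gtrsim 1$. By \eqref{rbdl-l2j2}, $\rho^2=\mathbf{c}^2 s|u-v|^2/(4g^2k_0^2T^2)$. Using $g\le|u-v|$ from \eqref{rbdl-hbggeq} and $s\ge 4\mathbf{c}^2$, we get $\rho\ge \mathbf{c}^2/(k_0T)$, which is bounded below uniformly for $\mathbf{c}\ge1$ (treating $k_0T$ as fixed). Hence all lower-order terms are absorbed into $C$. The only delicacy is making sure that H\"older's inequality does not lose powers of $l$ relative to $\sqrt{l^2-j^2}$. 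If it does, the fallback is to split $\tilde K_\beta$ into the regions $y\le 2R/\rho$ and $y>2R/\rho$. On the first region $(1+y^2)^{\beta/4}\lesssim (R/\rho)^{\beta/2}$, and the case $\beta=0$ applies; the second region is exponentially smaller and is handled the same way.
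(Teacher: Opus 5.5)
Your proposal is correct and follows essentially the same route as the paper: Hölder interpolation between the closed forms $J_2$, $J_1$ and $-\partial_R J_1$ (the paper's $-G'(\mathbf{c}l)$), with the lower-order factors in $\rho^{-1}$ absorbed into $C$. Your explicit bound $\rho\ge \mathbf{c}^2/(k_0T)$ justifies that absorption, which the paper uses only implicitly. The fallback localization is unnecessary, because the endpoint bounds $R^{1+\beta/2}\rho^{-2-\beta/2}$ lie on a geometric line, so Hölder loses nothing.
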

\begin{proof}
For $\beta<0$, we have by \eqref{rbdlap-tkb} and Lemma \ref{rbdlap-l1} that
\bma
\tilde{K}_{\beta}(l,j;\mathbf{c})&=\int_0^{\infty}y(1+y^2)^{-\frac{|\beta|}{4}}e^{-\mathbf{c}l\sqrt{1+y^2}}I_0(\mathbf{c}jy)dy\nnm\\
&\leq \(\int_0^{\infty}y(1+y^2)^{-\frac{1}{2}}e^{-\mathbf{c}l\sqrt{1+y^2}}I_0(\mathbf{c}jy)dy\)^{\frac{|\beta|}{2}}\(\int_0^{\infty}ye^{-\mathbf{c}l\sqrt{1+y^2}}I_0(\mathbf{c}jy)dy\)^{1-\frac{|\beta|}{2}}\nnm\\
&=\(\frac{e^{-\mathbf{c}\sqrt{l^2-j^2}}}{\mathbf{c}\sqrt{l^2-j^2}}\)^{\frac{|\beta|}{2}}\(\frac{l}{\mathbf{c}l^2-\mathbf{c}j^2}\(1+\frac{1}{\mathbf{c}\sqrt{l^2-j^2}}\)e^{-\mathbf{c}\sqrt{l^2-j^2}}\)^{1-\frac{|\beta|}{2}}\nnm\\
&=\mathbf{c}^{-1}\(\frac{1}{\sqrt{l^2-j^2}}\)^{\frac{|\beta|}{2}}\(\frac{l}{l^2-j^2}\(1+\frac{1}{\mathbf{c}\sqrt{l^2-j^2}}\)\)^{1-\frac{|\beta|}{2}}e^{-\mathbf{c}\sqrt{l^2-j^2}}.\label{rbdlap-l2-1-1}
\ema

Thus, it follows from  \eqref{rbdlap-l2-1-1} that
\be
\tilde{K}_{\beta}(l,j;\mathbf{c}) \leq C\mathbf{c}^{-1}(l^2-j^2)^{-1+\frac{|\beta|}{4}}l^{1-\frac{|\beta|}{2}}e^{-\mathbf{c}\sqrt{l^2-j^2}},
\ee
which implies that \eqref{rbdlap-l2-1} for $\beta<0$.

For $\beta\ge 0$, we have
\bma
\tilde{K}_{\beta}(l,j;\mathbf{c})&=\int_0^{\infty}y(1+y^2)^{\frac{|\beta|}{4}}e^{-\mathbf{c}l\sqrt{1+y^2}}I_0(\mathbf{c}jy)dy\nnm\\
&\leq \(\int_0^{\infty}y(1+y^2)^{\frac{1}{2}}e^{-\mathbf{c}l\sqrt{1+y^2}}I_0(\mathbf{c}jy)dy\)^{\frac{|\beta|}{2}}\(\int_0^{\infty}ye^{-\mathbf{c}l\sqrt{1+y^2}}I_0(\mathbf{c}jy)dy\)^{1-\frac{|\beta|}{2}}.\label{rbdl-tkbg0-1}
\ema
Let
$$G(z)=\int_{z}^{\infty}\int_0^{\infty}y(1+y^2)^{\frac{1}{2}}e^{-r\sqrt{1+y^2}}I_0(\mathbf{c}jy)dydr.$$
By Lemma \ref{rbdlap-l1}, it holds that for $z>\mathbf{c} j$,
\bma
G(z)
&=\int_0^{\infty}yI_0(\mathbf{c}jy)\int_{z}^{\infty}e^{-r\sqrt{1+y^2}}d(r(1+y^2)^{\frac{1}{2}})dy\nnm\\
&=\int_0^{\infty}ye^{-z\sqrt{1+y^2}}I_0(\mathbf{c}jy)dy\nnm\\
&=\frac{z}{z^2-\mathbf{c}^2j^2}\(1+\frac{1}{\sqrt{z^2-\mathbf{c}^2j^2}}\)e^{-\sqrt{z^2-\mathbf{c}^2j^2}}. \label{gz}
\ema
Thus, we have by \eqref{gz} that
\bma
G'(\mathbf{c}l)&= -\int_0^{\infty}y(1+y^2)^{\frac{1}{2}}e^{-\mathbf{c}l\sqrt{1+y^2}}I_0(\mathbf{c}jy)dy\nnm\\
&=\frac{\big[\big((\mathbf{c}^2l^2-\mathbf{c}^2j^2)+3(\mathbf{c}^2l^2-\mathbf{c}^2j^2)^{\frac{1}{2}}+3\big)-\frac{(l^2-j^2)}{l^2}-\frac{\mathbf{c}(l^2-j^2)^{\frac{3}{2}}}{l^2}\big]}{\mathbf{c}^3(l^2-j^2)^{\frac{5}{2}}}l^2e^{-\mathbf{c}\sqrt{l^2-j^2}}\nnm\\
&\leq C\mathbf{c}^{-1}(l^2-j^2)^{-\frac{3}{2}}l^2e^{-\mathbf{c}\sqrt{l^2-j^2}},
\ema
which together with \eqref{rbdl-tkbg0-1} implies that
\bma
\tilde{K}_{\beta}(l,j;\mathbf{c})&\leq C\(\frac{l^2}{\mathbf{c}(l^2-j^2)^{\frac{3}{2}}}e^{-\mathbf{c}\sqrt{l^2-j^2}}\)^{\frac{\beta}{2}}\(\frac{l}{\mathbf{c}(l^2-j^2) }e^{-\mathbf{c}\sqrt{l^2-j^2}}\)^{1-\frac{\beta}{2}}\nnm\\
&\leq C\mathbf{c}^{-1}(l^2-j^2)^{-1-\frac{\beta}{4}}l^{1+\frac{\beta}{2}}e^{-\mathbf{c}\sqrt{l^2-j^2}}.
\ema
The proof is completed.
\end{proof}

\begin{lem}\label{rbdl-estk1k2}
For $\mathbf{c}\geq1$, there exists a constant $C>0$ such that
\bma
k_1(v,u)&\leq C \frac{\mathbf{c}|u-v|s^{\frac{1}{2}}}{u_0v_0} e^{-\frac{|u|+|v|}{2k_0T}},\label{rbdl-hbk1ineq}\\
k_2(v,u)&\leq C\frac{(u_0+v_0)s^{\frac{1}{2}}}{u_0v_0|u-v|}E(u,v),\label{rbdl-hbk2ineq}
\ema
where
\be
E(u,v)=e^{-\frac{|u-v|}{4k_0T}}e^{-\frac{\mathbf{c}^3}{4(u_0v_0+\mathbf{c}^2)^2 }\frac{(|u|^2-|v|^2)^2}{|u-v|^2\sqrt{4\mathbf{c}^2+|u-v|^2} }\(\mathbf{c}+\frac{|u|^2|v|^2}{(v_0+\mathbf{c})(u_0+\mathbf{c})(u_0+v_0)}\)^2}.\label{rbdl-euv}
\ee
\end{lem}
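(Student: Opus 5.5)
For $k_1$ the plan is to work directly from the explicit formula \eqref{rbdl-k1}. The angular integral equals $2$, so
$$k_1(v,u)=\frac{\pi\mathbf{c}p_0 g\sqrt{s}}{u_0v_0}e^{-\mathbf{c}l},\qquad e^{-\mathbf{c}l}=e^{-\frac{\mathbf{c}u_0}{2k_0T}}e^{-\frac{\mathbf{c}v_0}{2k_0T}}.$$
Three facts then finish this part:
\begin{itemize}
\item $g\le |u-v|$, by \eqref{rbdl-hbggeq};
\item the normalization $p_0\sim (2\pi k_0T)^{-3/2}e^{\mathbf{c}^2/k_0T}$ from \eqref{p0};
\item the two-sided bound \eqref{cu2}, used in the form $e^{(\mathbf{c}^2-\mathbf{c}u_0)/(2k_0T)}\le Ce^{-|u|/(2k_0T)}$ for $u$ and likewise for $v$.
\end{itemize}
Splitting $e^{\mathbf{c}^2/k_0T}$ evenly between the two factors gives \eqref{rbdl-hbk1ineq} with a constant independent of $\mathbf{c}$.

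For $k_2$ I start from \eqref{rbdl-k2}. Since $y(1+\sqrt{1+y^2})/\sqrt{1+y^2}\le y+y(1+y^2)^{-1/2}\cdot(1+y^2)^{1/2}$, the $y$-integral is bounded by $C\tilde K_0(l,j;\mathbf{c})$, up to a lower-order $\tilde K_{-2}$ term. Lemma \ref{rbdlap-l2} with $\beta=0$ then gives
$$\tilde K_0\le C\mathbf{c}^{-1}(l^2-j^2)^{-1}\,l\,e^{-\mathbf{c}\sqrt{l^2-j^2}}.$$
Inserting \eqref{rbdl-l2j2}, $l^2-j^2=s|u-v|^2/(4g^2(k_0T)^2)$, the prefactor becomes
$$\frac{\mathbf{c}p_0s^{3/2}}{gu_0v_0}\cdot\frac{g^2}{\mathbf{c}\,s|u-v|^2}\cdot(u_0+v_0)=p_0\frac{(u_0+v_0)s^{1/2}g}{u_0v_0|u-v|^2}.$$
Using $g\le|u-v|$ this is $\le p_0\frac{(u_0+v_0)s^{1/2}}{u_0v_0|u-v|}$, which matches the algebraic part of \eqref{rbdl-hbk2ineq}. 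What remains is to show
$$p_0e^{-\mathbf{c}\sqrt{l^2-j^2}}\le C\,E(u,v).$$

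That exponential estimate is the main obstacle. Because $p_0$ carries $e^{\mathbf{c}^2/k_0T}$, I need
$$\mathbf{c}\sqrt{l^2-j^2}-\frac{\mathbf{c}^2}{k_0T}=\frac{\mathbf{c}\sqrt{s}\,|u-v|/g-2\mathbf{c}^2}{2k_0T}$$
to dominate the exponent of $E$. My plan is to write $\sqrt{s}|u-v|/g-2\mathbf{c}$ by rationalizing, with $s=g^2+4\mathbf{c}^2$:
$$\frac{\sqrt{s}|u-v|}{g}-2\mathbf{c}=\frac{s|u-v|^2-4\mathbf{c}^2g^2}{g(\sqrt{s}|u-v|+2\mathbf{c}g)}=\frac{g^2|u-v|^2+4\mathbf{c}^2(|u-v|^2-g^2)}{g(\sqrt{s}|u-v|+2\mathbf{c}g)}.$$
The first piece is comparable to $|u-v|$ and yields $e^{-|u-v|/(4k_0T)}$. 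For the second piece I would compute $|u-v|^2-g^2$ exactly, following Strain/Glassey. It equals a perfect square, $\frac{(u_0-v_0)^2\cdots}{\cdots}$, and the identity
$$u_0-v_0=\frac{|u|^2-|v|^2}{u_0+v_0}$$
together with $v_0-\mathbf{c}=|v|^2/(v_0+\mathbf{c})$ reorganizes it into $(|u|^2-|v|^2)^2\bigl(\mathbf{c}+\frac{|u|^2|v|^2}{(v_0+\mathbf{c})(u_0+\mathbf{c})(u_0+v_0)}\bigr)^2/(u_0v_0+\mathbf{c}^2)^2$, up to constants. The denominator is then bounded using $g\le|u-v|$ and $\sqrt{s}\le\sqrt{4\mathbf{c}^2+|u-v|^2}$. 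This produces exactly the second exponential in \eqref{rbdl-euv}.

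The delicate point is verifying this algebraic identity for $|u-v|^2-g^2$ with constants independent of $\mathbf{c}$. I would first check it in the frame $v=0$ and then handle the general case by direct expansion.
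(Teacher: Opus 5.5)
You handle $k_1$, the prefactor of $k_2$, and the rationalization of $\sqrt{s}|u-v|/g-2\mathbf{c}$ exactly as the paper does; the rationalization is its \eqref{rbdl-vvv-1}. The genuine gap is in the exponential estimate. You claim that the first piece
\begin{equation*}
P_1=\frac{g|u-v|^2}{\sqrt{s}\,|u-v|+2\mathbf{c}g}
\end{equation*}
is comparable to $|u-v|$, more precisely that $\mathbf{c}P_1\ge \frac12|u-v|-C$, so that it alone yields $e^{-|u-v|/(4k_0T)}$. This fails whenever $g\ll|u-v|$, i.e.\ for large, nearly parallel momenta. Take $v=(V,0,0)$ and $u=(V+\delta,0,0)$ with $V\gg\mathbf{c}$. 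Then $g\approx \mathbf{c}\delta/V$ and $\sqrt{s}\approx 2\mathbf{c}$, so $\mathbf{c}P_1\approx \mathbf{c}\delta^2/(2V)$. With $\delta=V^{1/3}$ this tends to $0$, while $|u-v|=\delta\to\infty$. In this regime all the decay sits in your second piece, $\mathbf{c}P_2\approx 2\mathbf{c}V$. So you cannot assign one factor of $E$ to each piece of the sum.

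The missing idea is to bound the whole exponent from below in two different ways and then interpolate; this is what the paper does.

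First lower bound: $s|u-v|^2/g^2=|u-v|^2+4\mathbf{c}^2|u-v|^2/g^2\ge 4\mathbf{c}^2+|u-v|^2$, which is \eqref{rbdl-l-jxjc}. Hence $\mathbf{c}\sqrt{s}|u-v|/g-2\mathbf{c}^2\ge \mathbf{c}\sqrt{4\mathbf{c}^2+|u-v|^2}-2\mathbf{c}^2\ge |u-v|-2$, by the computation behind \eqref{cu2}; this is \eqref{rbdl-p0eu-v}.

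Second lower bound: drop $P_1\ge 0$ and keep $P_2$. With your denominator bounds this is \eqref{rbdl-vvv-5}.

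Take half of each lower bound for $\mathbf{c}\sqrt{l^2-j^2}-\mathbf{c}^2/(k_0T)$. This gives $p_0e^{-\mathbf{c}\sqrt{l^2-j^2}}\le CE(u,v)$, and the averaging is where the factors $\frac14$ in $E$ come from. With this repair the rest of your plan goes through.

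The identity you flag as delicate is in fact immediate and exact. From $g^2=2(u_0v_0-u\cdot v-\mathbf{c}^2)$ and $u_0^2-|u|^2=\mathbf{c}^2$ you get $|u-v|^2-g^2=(u_0-v_0)^2$. Then use $u_0-v_0=(|u|^2-|v|^2)/(u_0+v_0)$ together with $\frac{u_0v_0+\mathbf{c}^2}{u_0+v_0}=\mathbf{c}+\frac{|u|^2|v|^2}{(u_0+\mathbf{c})(v_0+\mathbf{c})(u_0+v_0)}$. This gives the form in \eqref{rbdl-euv} with no loss of constants. Checking only in the frame $v=0$ would not suffice, since $|u-v|$ is not Lorentz invariant.

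If you track $k_0T$ carefully, the second exponent picks up a factor $1/(k_0T)$, as in its later use in Lemma~\ref{rbdl-prok1k2lem}.
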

\begin{proof}
First, we prove \eqref{rbdl-hbk1ineq}. By \eqref{cu2}, \eqref{rbdl-hbggeq} and \eqref{rbdl-k1}, we have
\bma
k_1(v,u)&=\frac{\pi\mathbf{c}p_0g\sqrt{s}}{2u_0v_0}e^{-\mathbf{c}l}\int_{0}^{\pi}\sin\theta d\theta= \frac{\pi\mathbf{c}g\sqrt{s}}{u_0v_0}p_0e^{-\frac{\mathbf{c}(u_0+v_0)}{2k_0T}}\nnm\\
&\leq C\mathbf{c}\frac{ |u-v|s^{\frac{1}{2}}}{u_0v_0}e^{-\frac{|u|+|v|}{2k_0T}}.
\ema
This gives \eqref{rbdl-hbk1ineq}.

Next, we prove \eqref{rbdl-hbk2ineq}. On the one hand, we have by \eqref{rbdl-hbggeq} and \eqref{rbdl-l2j2} that
\be
l^2-j^2=\frac{s|u-v|^2}{4g^2(k_0T)^2}=\frac{ |u-v|^2+4\mathbf{c}^2\frac{|u-v|^2}{g^2}}{4(k_0T)^2}\geq\frac{4\mathbf{c}^2+|u-v|^2}{4(k_0T)^2}.\label{rbdl-l-jxjc}
\ee
This and \eqref{rbdl-p0eu0} implies that
\be
-\frac{|u-v|^2}{4}\leq \mathbf{c}^2-k_0T\mathbf{c}\sqrt{l^2-j^2}\leq \mathbf{c}^2-\frac{\mathbf{c}\sqrt{4\mathbf{c}^2+|u-v|^2}}{2}\leq  -\frac{|u-v|}{2}.\label{rbdl-p0eu-v}
\ee

On the other hand,
\bma
&\quad\mathbf{c}^2-k_0T\mathbf{c}\sqrt{l^2-j^2}\nnm\\
&=\mathbf{c}^2-\frac{\mathbf{c}\sqrt{s}}{2g}|u-v|=\frac{\mathbf{c}^4-\frac{\mathbf{c}^2(g^2+4\mathbf{c}^2)}{4g^2}|u-v|^2}{\mathbf{c}^2+\frac{\mathbf{c}\sqrt{s}}{2g}|u-v|}\nnm\\
&=\frac{-\frac{1}{4}|u-v|^2+\mathbf{c}^2-\frac{\mathbf{c}^2}{g^2}|u-v|^2}{1+\frac{\sqrt{s}}{2\mathbf{c}g}|u-v|}\leq\frac{\frac{\mathbf{c}^2}{g^2}\(g^2-|u-v|^2\)}{1+\frac{\sqrt{s}}{2\mathbf{c}g}|u-v|}.\label{rbdl-vvv-1}
\ema
By \eqref{rbdl-vmgs}, we have (cf.\cite{Wang-2})
\bma
g^2-|u-v|^2&=2u_0v_0-2u\cdot v-2\mathbf{c}^2-|u-v|^2\nnm\\
&=\frac{2(|u|^2+\mathbf{c}^2)(|v|^2+\mathbf{c}^2)-2\mathbf{c}^4}{u_0v_0+\mathbf{c}^2}-|u|^2-|v|^2\nnm\\
&=\frac{2|u|^2|v|^2+\mathbf{c}^2(|u|^2+|v|^2)-(|u|^2+|v|^2)u_0v_0}{u_0v_0+\mathbf{c}^2}\nnm\\
&=\frac{2|u|^2|v|^2(u_0v_0+\mathbf{c}^2)-(|u|^2+|v|^2)(|u|^2|v|^2+\mathbf{c}^2(|v|^2+|u|^2))}{(u_0v_0+\mathbf{c}^2)^2}\nnm\\
&=-\frac{(|u|^2v_0-|v|^2u_0)^2}{(u_0v_0+\mathbf{c}^2)^2}.\label{rbdl-vvv-2}
\ema
It can be verify that
\bma
|u|^2v_0-|v|^2u_0&=\mathbf{c}|u|^2-\mathbf{c}|v|^2+|u|^2(v_0-\mathbf{c})-|v|^2(u_0-\mathbf{c})\nnm\\
&=\mathbf{c}|u|^2-\mathbf{c}|v|^2+\frac{|u|^2|v|^2}{(v_0+\mathbf{c})}-\frac{|v|^2|u|^2}{(u_0+\mathbf{c})}\nnm\\
&=\mathbf{c}|u|^2-\mathbf{c}|v|^2+\frac{|u|^2|v|^2(|u|^2-|v|^2)}{(v_0+\mathbf{c})(u_0+\mathbf{c})(u_0+v_0)}.\label{rbdl-vvv-3}
\ema
By combining \eqref{rbdl-hbggeq} and \eqref{rbdl-vvv-1}--\eqref{rbdl-vvv-3} and $s\le 4u_0v_0$, we have
\bma
&\quad\mathbf{c}^2-k_0T\mathbf{c}\sqrt{l^2-j^2}\nnm\\
&\leq-\frac{\mathbf{c}^2(|u|^2-|v|^2)^2}{(u_0v_0+\mathbf{c}^2)^2(g^2+\frac{g\sqrt{s}|u-v|}{2\mathbf{c}})}\(\mathbf{c}+\frac{|u|^2|v|^2}{(v_0+\mathbf{c})(u_0+\mathbf{c})(u_0+v_0)}\)^2\nnm\\
&\leq-\frac{\mathbf{c}^2}{(u_0v_0+\mathbf{c}^2)^2}\frac{(|u|^2-|v|^2)^2}{|u-v|^2(1+\frac{\sqrt{s}}{2\mathbf{c}}) }\(\mathbf{c}+\frac{|u|^2|v|^2}{(v_0+\mathbf{c})(u_0+\mathbf{c})(u_0+v_0)}\)^2\nnm\\
&\leq-\frac{\mathbf{c}^3}{2(u_0v_0+\mathbf{c}^2)^2 }\frac{(|u|^2-|v|^2)^2}{|u-v|^2 \sqrt{4\mathbf{c}^2+ |u-v|^2 }}\(\mathbf{c}+\frac{|u|^2|v|^2}{(v_0+\mathbf{c})(u_0+\mathbf{c})(u_0+v_0)}\)^2.\label{rbdl-vvv-5}
\ema
Thus, we have by \eqref{p0}, \eqref{rbdl-p0eu-v} and \eqref{rbdl-vvv-5} that
$$
p_0e^{-\mathbf{c}\sqrt{l^2-j^2}}\leq CE(u,v).
$$

By \eqref{rbdl-l2j2}, \eqref{rbdl-k2}, \eqref{rbdl-p0eu-v} and Lemma \ref{rbdlap-l2}, we have that for $\mathbf{c}\geq1$,
\bma
k_2(v,u)&=\frac{\pi\mathbf{c}p_0s^{\frac{3}{2}}}{4gu_0v_0}\int_0^{\infty}\frac{y(1+\sqrt{1+y^2})}{\sqrt{1+y^2}}e^{-\mathbf{c}l\sqrt{1+y^2}}I_0(\mathbf{c}jy)dy\nnm\\
&\leq\frac{\pi\mathbf{c}p_0s^{\frac{3}{2}}}{2gu_0v_0}\int_0^{\infty}ye^{-\mathbf{c}l\sqrt{1+y^2}}I_0(\mathbf{c}jy)dy\nnm\\
&\leq C\frac{s^{\frac{3}{2}}(l^2-j^2)^{-1}l}{gu_0v_0}p_0e^{-\mathbf{c}\sqrt{l^2-j^2}}\nnm\\
&\leq C\frac{s^{\frac{1}{2}}(u_0+v_0)}{u_0v_0|u-v|}E(u,v),
\ema
which implies that \eqref{rbdl-hbk2ineq}. The proof of the lemma is completed.
\end{proof}

\begin{lem}\label{rbdl-prok1k2lem}
Letting $m>-3$, $r\geq0$, $\alpha\in\R$  and $ p, q\geq0$, it holds that
\bma
&\quad\int_{\R^3}\frac{|u-v|^{m}s^{p}u_0^{\alpha}}{(1+|u|)^{r}}e^{-\frac{|u|+|v|}{2k_0T}}du\leq C\mathbf{c}^{2p}v_0^{\alpha}e^{-\frac{|v|}{4k_0T}},\label{rbdl-prok1k2lem-1}\\
&\quad\int_{\R^3}\frac{|u-v|^{m}s^{p}(u_0+v_0)^{q}u_0^{\alpha}}{(1+|u|)^{r}}E(u,v)du\leq C\mathbf{c}^{2p}v_0^{q+\alpha}
(1+|v|)^{-r}w(v)^{-1}, \label{rbdl-prok1k2lem-2}
\ema
where $E(u,v)$ is given in \eqref{rbdl-euv}, and $w(v)$ is given in \eqref{rbdl-wccc}.
\end{lem}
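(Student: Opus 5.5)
Both estimates reduce to elementary integrals in polar coordinates once the exponential and the factors $s$, $u_0$ are dominated by simple weights uniform in $\mathbf{c}$. Throughout, $s=2(u_0v_0-u\cdot v+\mathbf{c}^2)\le 4u_0v_0$, and since $u_0,v_0\ge\mathbf{c}$ one also has $s\ge 4\mathbf{c}^2$. Note that $s^p$ grows like $(u_0v_0)^p$, so the claimed factor $\mathbf{c}^{2p}$ must come from $u_0v_0\sim\mathbf{c}^2$ on the effective region of integration.

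\textbf{Proof of \eqref{rbdl-prok1k2lem-1}.} The factor $e^{-\frac{|u|}{2k_0T}}$ localizes $u$, so I first bound $u_0^{\alpha}$. For $|u|\le 1+|v|$ one has $u_0\sim v_0$ up to constants only if $|u|$ and $|v|$ are comparable, which is false in general. Instead I use a weight. When $\alpha\ge0$, $u_0^\alpha\le C\mathbf{c}^\alpha(1+|u|)^{\alpha}$ and $v_0^{\alpha}\ge\mathbf{c}^\alpha$. When $\alpha<0$, $u_0^\alpha\le\mathbf{c}^\alpha$ and $v_0^{\alpha}\ge \mathbf{c}^\alpha(1+|v|)^{\alpha}$.

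In either case the deficit is a polynomial in $(1+|u|)$ or $(1+|v|)$, and it is absorbed by splitting $e^{-\frac{|v|}{2k_0T}}=e^{-\frac{|v|}{4k_0T}}e^{-\frac{|v|}{4k_0T}}$ and $e^{-\frac{|u|}{2k_0T}}$ likewise. The same applies to $s^p\le 4^pu_0^pv_0^p\le C\mathbf{c}^{2p}(1+|u|)^p(1+|v|)^p$.

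What remains is $\int|u-v|^m(1+|u|)^{k}e^{-\frac{|u|}{4k_0T}}du$ with $m>-3$. This is bounded by $C(1+|v|)^{\max(m,0)}$: split $|u-v|\le1$, using local integrability, from $|u-v|\ge1$, using $|u-v|^m\le(1+|u|+|v|)^{|m|}$ and Peetre's inequality for $m<0$. That polynomial in $|v|$ is again absorbed into the spare exponential.

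\textbf{Proof of \eqref{rbdl-prok1k2lem-2}.} This is the main obstacle, because $E(u,v)$ only gives decay in $|u-v|$ plus a directional (anisotropic) factor.

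\emph{Step 1.} Use $e^{-\frac{|u-v|}{4k_0T}}$ to place the bulk of the mass where $|u-v|\lesssim 1$. There $(1+|u|)^{-r}\sim(1+|v|)^{-r}$, $u_0\sim v_0$ and $(u_0+v_0)^q\sim v_0^q$. In general Peetre's inequality gives $(1+|u|)^{-r}\le (1+|v|)^{-r}(1+|u-v|)^{r}$ and $u_0^\alpha\le Cv_0^\alpha(1+|u-v|)^{|\alpha|}$; the polynomial losses in $|u-v|$ are killed by the exponential. Likewise $s^p\le C(u_0v_0)^p$, and this matches $\mathbf{c}^{2p}v_0^{2p}\mathbf{c}^{-2p}$. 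More precisely, the bound needs $s\lesssim \mathbf{c}^2$ whenever $|u-v|\lesssim1$; this holds because $s=4\mathbf{c}^2+g^2$ and $g\le|u-v|$ by \eqref{rbdl-hbggeq}. So $s^p\le C\mathbf{c}^{2p}(1+|u-v|)^{2p}$ holds globally.

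\emph{Step 2.} It remains to prove $\int|u-v|^{m}(1+|u-v|)^{N}E(u,v)\,du\le C w(v)^{-1}$. For $|v|\le1$ this is immediate from the exponential, since $w\sim1$.

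For $|v|\ge1$, write $u=v+z$ with $z=\rho\,\omega$ and set $\cos\theta=\omega\cdot v/|v|$. Then $(|u|^2-|v|^2)/|u-v|=2|v|\cos\theta+\rho$. The second exponential in \eqref{rbdl-euv} is therefore at most $\exp(-\kappa(v)(2|v|\cos\theta+\rho)^2)$, where $\kappa(v)\ge c\,\mathbf{c}^{3}\cdot\mathbf{c}^2/((u_0v_0+\mathbf{c}^2)^2\sqrt{4\mathbf{c}^2+\rho^2})$, using the factor $(\mathbf{c}+\dots)^2\ge\mathbf{c}^2$.

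For $|v|\le\mathbf{c}$ and bounded $\rho$, $\kappa\gtrsim \mathbf{c}^{5}/\mathbf{c}^{5}\sim1$. Integrating over $\cos\theta$ then yields the factor $C/|v|\sim w(v)^{-1}$. For $|v|\ge\mathbf{c}$, the bound $w^{-1}=\mathbf{c}^{-1}$ should come from the trivial estimate on the angular integral (at most $2$) combined with the crude bound times a factor, or equivalently from the term $\frac{|u|^2|v|^2}{\cdots}$ inside $E$, which compensates the large $u_0v_0$. This large-velocity regime is the delicate part, and I would check it by tracking $\kappa(v)|v|^2$ against $\mathbf{c}^2$.
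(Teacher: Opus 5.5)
Your argument for the first estimate is fine. You dominate $s^p\le 4^p(u_0v_0)^p$ and $u_0^\alpha$ by polynomial weights in $|u|,|v|$ and absorb them into the spare halves of the exponentials. The paper instead bounds everything by powers of $1+|u-v|$, via $s\le 4\mathbf{c}^2+|u-v|^2$ and $u_0\le\sqrt2(|u-v|+v_0)$; both routes work. Your Step 1 reduction of the second estimate to $\int|u-v|^m(1+|u-v|)^N E(u,v)\,du\le Cw(v)^{-1}$ is also correct, and so is the case $|v|\le 1$.

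The gap is that this reduced estimate is never proved for $|v|\ge\mathbf{c}$, which is exactly where the gain $w(v)^{-1}=\mathbf{c}^{-1}$ has to be produced. Your first suggested mechanism cannot work: bounding the angular integral by $2$ gives $O(1)$, not $O(\mathbf{c}^{-1})$. The only lower bound on $\kappa$ you actually write down does not suffice either. Using $(\mathbf{c}+\cdots)^2\ge\mathbf{c}^2$ and $u_0\sim v_0\sim|v|$ gives only $\kappa\gtrsim\mathbf{c}^4/|v|^4$ for bounded $\rho$. The Gaussian angular integral is then $\lesssim|v|^{-1}\kappa^{-1/2}\sim|v|/\mathbf{c}^2$, which exceeds $\mathbf{c}^{-1}$ once $|v|\gg\mathbf{c}$. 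In addition, $\kappa$ depends on $u$ (through $u_0$ and $|u|$), hence on $\theta$. So your ``bounded $\rho$'' must be replaced by a region on which $u_0$ is controlled by $v_0$ and $\kappa$ has a $\theta$-independent lower bound.

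The missing step, which is how the paper argues, is to split into $\{|u-v|\ge|v|/2\}$ and $\{|u-v|\le|v|/2\}$.

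On the first set, $e^{-|u-v|/(4k_0T)}\le e^{-|v|/(16k_0T)}e^{-|u-v|/(8k_0T)}$. Its contribution is therefore $\le Ce^{-|v|/(16k_0T)}\le C(1+|v|)^{-1}\le Cw(v)^{-1}$ for every $v$.

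On the second set, $|v|/2\le|u|\le3|v|/2$. If $|v|\ge\mathbf{c}$ you must use the other summand in $E$. There $\frac{|u|^2|v|^2}{(v_0+\mathbf{c})(u_0+\mathbf{c})(u_0+v_0)}\gtrsim|v|$ and $(u_0v_0+\mathbf{c}^2)^2\lesssim|v|^4$. This gives the $\theta$-independent bound $\kappa\gtrsim\mathbf{c}^3/(|v|^2\sqrt{4\mathbf{c}^2+\rho^2})$, i.e. $\kappa|v|^2\gtrsim\mathbf{c}^2(4+\rho^2/\mathbf{c}^2)^{-1/2}$. The substitution $y=\rho+2|v|\cos\theta$ then yields $\int_0^\pi e^{-\kappa(\rho+2|v|\cos\theta)^2}\sin\theta\,d\theta\le\frac{1}{2|v|}\sqrt{\pi/\kappa}\le C\mathbf{c}^{-1}(1+\rho)^{1/2}$. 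The extra factor $(1+\rho)^{1/2}$ is absorbed by $e^{-\rho/(4k_0T)}$ in the radial integral, which converges because $m>-3$.

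For $1\le|v|\le\mathbf{c}$ the same set gives $\rho\le\mathbf{c}/2$ and $u_0,v_0\lesssim\mathbf{c}$. Hence $\kappa\gtrsim1$ and the bound is $C|v|^{-1}\le Cw(v)^{-1}$, which is your argument made precise.
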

\begin{proof}
 Firstly, we want to prove \eqref{rbdl-prok1k2lem-1}. Note that
\be
u_0\leq\sqrt{2}|u-v|+\sqrt{2}v_0,\quad v_0\leq\sqrt{2}|u-v|+\sqrt{2}u_0.\label{rbdl-u0v0ineq}
\ee
Thus, it holds that
\bmas &u_0^{ \alpha}\le (u_0+v_0)^{ \alpha}= v_0^ \alpha\(1+\frac{u_0}{v_0}\)^ \alpha \le  Cv_0^{ \alpha}\(\frac{|u-v|}{v_0}+ 1\)^{ \alpha },\quad  \alpha>0,\\
&(u_0+v_0)^{ \alpha}\le u_0^{ \alpha}=v_0^{ \alpha}\(\frac{v_0}{u_0}\)^{- \alpha}\le  Cv_0^{ \alpha}\(\frac{|u-v|}{u_0}+ 1\)^{- \alpha },\quad  \alpha\le 0.
\emas

By using $u_0,v_0\ge \mathbf{c}$, we can obtain
\be u_0^{ \alpha},(u_0+v_0)^{ \alpha}\le Cv_0^{ \alpha}\(\frac{|u-v|}{\mathbf{c}}+ 1\)^{| \alpha| }. \label{rbdl-uuu}\ee
Thus, we have that for $\mathbf{c}\geq1$,
\bma
&\quad\int_{\R^3}\frac{|u-v|^{m}s^{p}u_0^{\alpha}}{(1+|u|)^{r}}e^{-\frac{|u|+|v|}{2k_0T}}du\nnm\\
&\leq e^{-\frac{|v|}{4k_0T}}\int_{\R^3}|u-v|^{m}s^{p}u_0^{\alpha}e^{-\frac{|u-v|}{4k_0T}}du\nnm\\
&\leq \mathbf{c}^{2p}v_0^{\alpha}e^{-\frac{|v|}{4k_0T}}\int_{\R^3}|u-v|^{m}(|u-v|^2+1)^{p+\frac{|\alpha|}{2}}e^{-\frac{|u-v|}{4k_0T}}du\nnm\\
&\leq C\mathbf{c}^{2p}v_0^{\alpha}e^{-\frac{|v|}{4k_0T}},\label{rbdl-prok1k2lem-1-1}
\ema
where we have used
\bma
&\quad\int_{\R^3}|u-v|^{m}(|u-v|^2+1)^{p+\frac{|\alpha|}{2}}e^{-\frac{|u-v|}{4k_0T}}du\nnm\\
&=4\pi\int_{0}^{\infty}r^{m+2}(r^2+1)^{p+\frac{|\alpha|}{2}}e^{-\frac{r}{4k_0T}}du\leq C,\quad m>-3.\label{rbdl-uuu2}
\ema
This proves  \eqref{rbdl-prok1k2lem-1}.

Now, we want to show \eqref{rbdl-prok1k2lem-2}. By \eqref{rbdl-euv}, \eqref{rbdl-uuu}  and \eqref{rbdl-uuu2}, we have for $|u-v|\geq\frac{|v|}{2}$,
\bma
&\quad\int_{\{|u-v|\geq\frac{|v|}{2}\}}\frac{|u-v|^{m}s^{p}(u_0+v_0)^{q}u_0^{\alpha}}{(1+|u|)^{r}}E(u,v)du\nnm\\
&=\int_{\{|u-v|\leq\frac{|v|}{2}\}}\frac{|u-v|^{m}s^{p}(u_0+v_0)^{q}u_0^{\alpha}}{(1+|u|)^{r}}e^{-\frac{|u-v|}{4k_0T}}\nnm\\
&\qquad\times e^{-\frac{\mathbf{c}^3}{4(u_0v_0+\mathbf{c}^2)^2 }\frac{(|u|^2-|v|^2)^2}{|u-v|^2\sqrt{4\mathbf{c}^2+|u-v|^2} }\(\mathbf{c}+\frac{|u|^2|v|^2}{(v_0+\mathbf{c})(u_0+\mathbf{c})(u_0+v_0)}\)^2}du\nnm\\
&\leq \mathbf{c}^{2p}v_0^{q+\alpha}e^{-\frac{|v|}{32k_0T}}\int_{\{|u-v|\geq\frac{|v|}{2}\}}|u-v|^{m}(|u-v|^2+1)^{p+\frac{q+|\alpha|}{2}}e^{-\frac{|u-v|}{8k_0T}}du\nnm\\
&\leq C\mathbf{c}^{2p}v_0^{q+\alpha}e^{-\frac{|v|}{32k_0T}}.\label{rbdl-prok1k2lem-2-1}
\ema
For $|u-v|\leq\frac{|v|}{2}$ and $|v|\geq\mathbf{c}$, we have by \eqref{rbdl-uuu}  and \eqref{rbdl-uuu2} that
\bma
&\quad\int_{\{|u-v|\leq\frac{|v|}{2}\}}\frac{|u-v|^{m}s^{p}(u_0+v_0)^{q}u_0^{\alpha}}{(1+|u|)^{r}}E(u,v)du\nnm\\
&\le C(1+|v|)^{-r}\int_{\{|u-v|\leq\frac{|v|}{2}\}}|u-v|^{m}s^{p}(u_0+v_0)^{q}u_0^{\alpha}e^{-\frac{|u-v|}{4k_0T}}\nnm\\
&\qquad \times e^{-\frac{\mathbf{c}^3}{4k_0T(u_0v_0+\mathbf{c}^2)^2 }\frac{(|u|^2-|v|^2)^2}{|u-v|^2\sqrt{4\mathbf{c}^2+|u-v|^2} }\(\mathbf{c}+\frac{|u|^2|v|^2}{(v_0+\mathbf{c})(u_0+\mathbf{c})(u_0+v_0)}\)^2}du\nnm\\
&\leq C\mathbf{c}^{2p}(1+|v|)^{-r}v_0^{q+\alpha}\int_{\{|u-v|\leq\frac{|v|}{2}\}}|u-v|^{m}(|u-v|^2+1)^{p+\frac{q+|\alpha|}{2}}e^{-\frac{|u-v|}{4k_0T}}\nnm\\
&\qquad \times e^{-\frac{\mathbf{c}^2|u|^4|v|^4}{4k_0T(u_0v_0+\mathbf{c}^2)^2(v_0+\mathbf{c})^2(u_0+\mathbf{c})^2(u_0+v_0)^2 }\frac{\mathbf{c}(|u|^2-|v|^2)^2}{ |u-v|^2\sqrt{4\mathbf{c}^2+|u-v|^2}}}du\nnm\\
&\leq C\mathbf{c}^{2p}(1+|v|)^{-r}v_0^{q+\alpha}\int_{\{|u-v|\leq\frac{|v|}{2}\}}|u-v|^{m}(|u-v|^2+1)^{p+\frac{q+|\alpha|}{2}}e^{-\frac{|u-v|}{4k_0T}}e^{- \frac{d_1\mathbf{c}^3}{|v|^2}\frac{(|u|^2-|v|^2)^2}{ |u-v|^2\sqrt{4\mathbf{c}^2+|u-v|^2}}}du\nnm\\
&=2\pi C\mathbf{c}^{2p}(1+|v|)^{-r}v_0^{q+\alpha}\int_{0}^{\frac{|v|}{2}}r^{m+2}(r^2+1)^{p+\frac{q+|\alpha|}{2}}e^{-\frac{r}{4k_0T}}\(\int_0^{\pi}e^{- \frac{d_1\mathbf{c}^3}{|v|^2\sqrt{4\mathbf{c}^2+r^2}}(r+2|v|\cos\theta)^2}\sin\theta d\theta \)dr\nnm\\
&\leq C \mathbf{c}^{2p-1}(1+|v|)^{-r}v_0^{q+\alpha}\int_{0}^{\infty}r^{m+2}(r^2+1)^{p+\frac{q+|\alpha|}{2}}\(\frac{r^2}{\mathbf{c}^2}+4\)^{\frac14}e^{-\frac{r}{4k_0T}}dr\nnm\\
&\leq C\mathbf{c}^{2p-1}v_0^{q+\alpha}(1+|v|)^{-r},\label{rbdl-prok1k2lem-2-2}
\ema
where we have used $\frac{|v|}{2}\le |u|\le \frac{3|v|}{2}$,  and
\bma
&\quad\frac{\mathbf{c}^2|u|^4|v|^4}{4k_0T(u_0v_0+\mathbf{c}^2)^2(v_0+\mathbf{c})^2(u_0+\mathbf{c})^2(u_0+v_0)^2 }\nnm\\
&\geq\frac{\mathbf{c}^2|u|^4|v|^4}{4^3k_0T (u_0v_0+v_0^2)^2 v_0^2u_0^2(u_0+v_0)^2 }\ge d_1\frac{\mathbf{c}^2}{|v|^2},\quad |v|\geq\mathbf{c},~~ |u|\geq\frac{\mathbf{c}}{2}.\label{rbdl-prok1k2lem-2-2-aaa}
\ema

For $|u-v|\leq\frac{|v|}{2}$ and $1\leq|v|\leq\mathbf{c}$, we have by \eqref{rbdl-uuu} and \eqref{rbdl-uuu2} that
\bma
&\quad\int_{\{|u-v|\leq\frac{|v|}{2}\}}\frac{|u-v|^{m}s^{p}(u_0+v_0)^{q}u_0^{\alpha}}{(1+|u|)^{r}}E(u,v)du\nnm\\
&\leq C\mathbf{c}^{2p}v_0^{q+\alpha}(1+|v|)^{-r}\int_{\{|u-v|\leq\frac{|v|}{2}\}}|u-v|^{m}(|u-v|^2+1)^{p+\frac{q+|\alpha|}{2}}e^{-\frac{|u-v|}{4k_0T}}\nnm\\
&\qquad \times e^{-\frac{\mathbf{c}^5}{4k_0T(u_0v_0+\mathbf{c}^2)^2\sqrt{4\mathbf{c}^2+|u-v|^2}}\frac{(|u|^2-|v|^2)^2}{ |u-v|^2}}du\nnm\\
&\leq C\mathbf{c}^{2p}v_0^{q+\alpha}(1+|v|)^{-r}\int_{\{|u-v|\leq\frac{|v|}{2}\}}|u-v|^{m}(|u-v|^2+1)^{p+\frac{q+|\alpha|}{2}}e^{-\frac{|u-v|}{4k_0T}}e^{-d_2\frac{(|u|^2-|v|^2)^2}{|u-v|^2}}du\nnm\\
&=2\pi C\mathbf{c}^{2p}v_0^{q+\alpha}(1+|v|)^{-r}\int_{0}^{\frac{|v|}{2}}r^{m+2}(r^2+1)^{p+\frac{q+|\alpha|}{2}}e^{-\frac{r}{4k_0T}}\(\int_0^{\pi}e^{-d_2(r+2|v|\cos\theta)^2}\sin\theta d\theta \)dr\nnm\\
&\leq C \mathbf{c}^{2p}v_0^{q+\alpha}|v|^{-1}(1+|v|)^{-r}\int_{0}^{\infty}r^{m+2}(r^2+1)^{p+\frac{q+|\alpha|}{2}}e^{-\frac{r}{4k_0T}}dr\nnm\\
&\leq C\mathbf{c}^{2p}v_0^{q+\alpha}|v|^{-1}(1+|v|)^{-r},\label{rbdl-prok1k2lem-2-3}
\ema
where we have used 
$$
 \frac{\mathbf{c}^5}{4k_0T(u_0v_0+\mathbf{c}^2)^2\sqrt{4\mathbf{c}^2+|u-v|^2}}>d_2,\quad |u|\leq\frac{3\mathbf{c}}{2},~~ |v|\leq\mathbf{c}.
$$

For $|u-v|\leq\frac{|v|}{2}$ and $|v|\leq1$, we have by \eqref{rbdl-uuu}, \eqref{rbdl-uuu2} and \eqref{rbdl-prok1k2lem-2-3} that
\bma
&\quad\int_{\{|u-v|\leq\frac{|v|}{2}\}}\frac{|u-v|^{m}s^{p}(u_0+v_0)^{q}u_0^{\alpha}}{w(u)^{r}}E(u,v)du\nnm\\
&\leq \mathbf{c}^{2p}v_0^{q+\alpha}\int_{\{|u-v|\leq\frac{|v|}{2}\}}\frac{|u-v|^{m}(|u-v|^2+1)^{p+\frac{q+|\alpha|}{2}}}{(1+|u|)^{r}}e^{-\frac{|u-v|}{4k_0T}}du\nnm\\
&\leq C \mathbf{c}^{2p}v_0^{q+\alpha}(1+|v|)^{-r}\int_{0}^{\infty}r^{m+2}(r^2+1)^{p+\frac{q+|\alpha|}{2}}e^{-\frac{r}{4k_0T}}dr\nnm\\
&\leq C\mathbf{c}^{2p}v_0^{q+\alpha}(1+|v|)^{-r}.\label{rbdl-prok1k2lem-2-4}
\ema
This and \eqref{rbdl-prok1k2lem-2-1}--\eqref{rbdl-prok1k2lem-2-3} implies that \eqref{rbdl-prok1k2lem-2}. The proof of the lemma is completed.
\end{proof}

With help of Lemmas \ref{rbdl-estk1k2}--\ref{rbdl-prok1k2lem}, we have some integral properties of $k(v,u)$ as follows.
\begin{lem}\label{rbdl-svku1alfa}
Given $\alpha\ge 0$. There exists a constant $C>0$ independent of $\mathbf{c}$ so that
\bma
&\int_{\R^3}|k(v,u)|^2du\leq C(1+|v|)^{-1},\label{rbdl-2svku1}\\
&\int_{\R^3}|k(v,u)|(1+|u|)^{-\alpha}du\leq C(1+|v|)^{-\alpha-1}.\label{rbdl-svkualfa}
\ema
\end{lem}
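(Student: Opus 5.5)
The natural approach is to split $k=k_2-k_1$ and use the pointwise bounds of Lemma \ref{rbdl-estk1k2} together with the integral estimates of Lemma \ref{rbdl-prok1k2lem}. Since $|k|\le k_1+k_2$ and $|k|^2\le 2k_1^2+2k_2^2$, it suffices to prove both inequalities separately for $k_1$ and $k_2$. The normalizing quantities will be handled by $s\le 4u_0v_0$ and $u_0,v_0\ge\mathbf{c}$, which give $\mathbf{c}s^{1/2}/(u_0v_0)\le 2\mathbf{c}(u_0v_0)^{-1/2}\le 2$.

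\emph{The $k_1$ part.} By \eqref{rbdl-hbk1ineq},
$$
k_1(v,u)\le C|u-v|e^{-\frac{|u|+|v|}{2k_0T}},
$$
which gives exponential decay in $v$. Applying \eqref{rbdl-prok1k2lem-1} with $p=0$ and $\alpha=0$ (and with $m=1$ or $m=2$, and $r=\alpha$ or $r=0$ as needed) yields
$$
\int k_1(1+|u|)^{-\alpha}\,du+\int k_1^2\,du\le Ce^{-\frac{|v|}{4k_0T}},
$$
which is far better than required. For $k_1^2$ I would bound one factor $e^{-|u|/k_0T}$ by $e^{-|u|/(2k_0T)}$ before applying the lemma.

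\emph{The $k_2$ part.} By \eqref{rbdl-hbk2ineq},
$$
k_2\le C\frac{(u_0+v_0)s^{1/2}}{u_0v_0}\,|u-v|^{-1}E(u,v).
$$
I would write $s^{1/2}\le 2(u_0v_0)^{1/2}$, so that
$$
k_2\le C\frac{(u_0+v_0)}{(u_0v_0)^{1/2}}\,|u-v|^{-1}E.
$$
Now apply \eqref{rbdl-prok1k2lem-2} with $m=-1$, $p=0$, $q=1$, exponent $-\tfrac12$ on $u_0$, and $r=\alpha$, and multiply by $v_0^{-1/2}$. This gives
$$
\int k_2(1+|u|)^{-\alpha}\,du\le C\,v_0^{1-\frac12-\frac12}(1+|v|)^{-\alpha}w(v)^{-1}=C(1+|v|)^{-\alpha}w(v)^{-1}.
$$
For $|v|\le\mathbf{c}$ we have $w(v)^{-1}=(1+|v|)^{-1}$, which is exactly \eqref{rbdl-svkualfa}.

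For $|v|\ge\mathbf{c}$ we only get $w^{-1}=\mathbf{c}^{-1}$, and this is the main obstacle. Here one must look back into the proof of Lemma \ref{rbdl-prok1k2lem}: in the region $|u-v|\le|v|/2$, $|v|\ge\mathbf{c}$ (see \eqref{rbdl-prok1k2lem-2-2}), the angular integral actually gives $\min\{1,|v|\mathbf{c}^{-3/2}(4\mathbf{c}^2+r^2)^{1/4}/|v|\}$. I would try to redo that angular integration keeping the factor $|v|^{-1}$ from the Gaussian in $\cos\theta$, i.e.\ width $\sim |v|\mathbf{c}^{-3/2}(\mathbf{c}^2+r^2)^{1/4}/|v|$. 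If this fails to give $(1+|v|)^{-1}$, the fallback is to use the extra relativistic smallness of $(u_0+v_0)s^{1/2}/(u_0v_0)\lesssim \mathbf{c}/v_0$ relative to the exponent, or to accept the weaker weight $w(v)^{-1}$, which is what \eqref{rbesplamdaa-im} genuinely needs in the form $\int|k|(1+|v|)^{-1}dv$.

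\emph{The $L^2$ bound.} For \eqref{rbdl-2svku1}, square the $k_2$ bound:
$$
k_2^2\le C\frac{(u_0+v_0)^2}{u_0v_0}\,|u-v|^{-2}E^2,
$$
where $m=-2>-3$ is admissible and $E^2\le E$. Then apply \eqref{rbdl-prok1k2lem-2} with $q=2$, exponent $-1$ on $u_0$, and $r=0$, and multiply by $v_0^{-1}$. This yields $\int k_2^2\,du\le Cw(v)^{-1}$, and hence $(1+|v|)^{-1}$ up to the same large-$|v|$ caveat discussed above.
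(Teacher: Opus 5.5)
\textbf{There is a genuine gap in the $k_2$ part.} Your overall architecture is the paper's: split $k=k_2-k_1$, use the pointwise bounds of Lemma \ref{rbdl-estk1k2}, and integrate with Lemma \ref{rbdl-prok1k2lem}. Your $k_1$ estimate is fine.

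The problem is the step $s^{1/2}\le 2(u_0v_0)^{1/2}$, which discards exactly the factor you later find missing. Consider the region where $E(u,v)$ carries its mass ($|u-v|\lesssim 1$) with $|v|\ge\mathbf{c}$. There $s=g^2+4\mathbf{c}^2\le |u-v|^2+4\mathbf{c}^2\sim\mathbf{c}^2$ by \eqref{rbdl-hbggeq}, whereas $u_0v_0\sim|v|^2$. So you lose a factor $|v|/\mathbf{c}$, which is precisely the ratio between $w(v)^{-1}=\mathbf{c}^{-1}$ and $(1+|v|)^{-1}$.

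Your first proposed repair cannot work. The angular width you wrote, $|v|\mathbf{c}^{-3/2}(4\mathbf{c}^2+r^2)^{1/4}/|v|$, is independent of $|v|$. Hence in the region $|u-v|\le |v|/2$ the angular integration yields only $O(\mathbf{c}^{-1})$, with no factor $|v|^{-1}$; the extra decay has to come from the prefactor. Settling for the weight $w(v)^{-1}$ does not prove the stated inequality. It also would not supply the $\mathbf{c}$-uniform gain of a full power of $(1+|v|)$ used downstream: in Lemma \ref{rbegf8j2}, and for the $R^{-2}$ in the proof of Lemma \ref{rbesplamdaa}.

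\textbf{The fix.} The missing step is the one your second fallback only gestures at. Keep $s^{1/2}$ and use $s\le 4\mathbf{c}^2+|u-v|^2$; this is where the factor $\mathbf{c}^{2p}$ in Lemma \ref{rbdl-prok1k2lem} comes from. Pull $v_0^{-1}$ out of \eqref{rbdl-hbk2ineq} and apply \eqref{rbdl-prok1k2lem-2} with $p=\tfrac12$, $m=-1$, $q=1$, exponent $-1$ on $u_0$, and $r=\alpha$. This gives
\[
\int_{\R^3}k_2(v,u)(1+|u|)^{-\alpha}du\le C\mathbf{c}\,v_0^{-1}(1+|v|)^{-\alpha}w(v)^{-1}.
\]
Moreover $\mathbf{c}v_0^{-1}w(v)^{-1}\le C(1+|v|)^{-1}$ in both regimes:
\begin{itemize}
\item for $|v|\le\mathbf{c}$, since $\mathbf{c}\le v_0$ and $w(v)=1+|v|$;
\item for $|v|\ge\mathbf{c}$, since $\mathbf{c}\,w(v)^{-1}=1$ and $v_0\ge\frac12(1+|v|)$.
\end{itemize}

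The same correction repairs \eqref{rbdl-2svku1}. Pull out $v_0^{-2}$ and take $p=1$, $m=-2$, $q=2$, exponent $-2$ on $u_0$, $r=0$, together with $E^2\le E$. This yields
\[
\int_{\R^3} k_2^2\,du\le C\mathbf{c}^2v_0^{-2}w(v)^{-1}\le C(1+|v|)^{-1}.
\]
This is exactly how the paper's proof proceeds.
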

\begin{proof}
By \eqref{rbdl-ap-K} and Lemmas \ref{rbdl-estk1k2}--\ref{rbdl-prok1k2lem}, we have
\bma
\int_{\R^3}|k(v,u)|(1+|u|)^{-\alpha}du&\leq \int_{\R^3}|k_1(u,v)|(1+|u|)^{-\alpha}du+\int_{\R^3}|k_2(u,v)|(1+|u|)^{-\alpha}du\nnm\\
&\leq C\mathbf{c}v_0^{-1}\int_{\R^3}|u-v|s^{\frac{1}{2}}u_0^{-1}(1+|u|)^{-\alpha}e^{-\frac{|u|+|v|}{2k_0T}}du\nnm\\
&\quad+Cv_0^{-1}\int_{\R^3}\frac{s^{\frac{1}{2}}(u_0+v_0)u_0^{-1}}{(1+|u|)^{\alpha}|u-v|} E(u,v)du\nnm\\
&\leq C\(\mathbf{c}^{2}v_0^{-2}e^{-\frac{|v|}{4k_0T}}+\mathbf{c}v_0^{-1}(1+|v|)^{-\alpha}w(v)^{-1}\)\nnm\\
&\leq C\(e^{-\frac{|v|}{4k_0T}}+(1+|v|)^{-\alpha-1}\),\label{rbdl-hbsvku1-1}
\ema
where we have used $\mathbf{c}v_0^{-1}w(v)^{-1}\leq (1+|v|)^{-1}$ for $|v|\leq \mathbf{c}$ and $\mathbf{c}v_0^{-1}w(v)^{-1}\leq v_0^{-1} $ for $|v|\geq \mathbf{c}$ with $w(v)$ given in \eqref{rbdl-wccc}. Thus, we can obtain \eqref{rbdl-svkualfa}.

By \eqref{rbdl-ap-K} and Lemmas \ref{rbdl-estk1k2}--\ref{rbdl-prok1k2lem},  we have
\bma
\int_{\R^3}|k(v,u)|^2du&\leq 2\int_{\R^3}|k_1(u,v)|^2du+2\int_{\R^3}|k_2(u,v)|^2du\nnm\\
&\leq C\mathbf{c}^2v_0^{-2}\int_{\R^3}|u-v|^2su_0^{-2}e^{-\frac{|u|+|v|}{k_0T}}du\nnm\\
&\quad+Cv_0^{-2}\int_{\R^3}\frac{s(u_0+v_0)^2u_0^{-2}}{|u-v|^2} E(u,v)^2du\nnm\\
&\leq C \(\mathbf{c}^{4}v_0^{-4}e^{-\frac{|v|}{2k_0T}}+\mathbf{c}^{2}v_0^{-2}w(v)^{-1}\)\nnm\\
&\leq C \(e^{-\frac{|v|}{2k_0T}}+(1+|v|)^{-1}\).\label{rbdl-hb2svku1-1}
\ema
This proves \eqref{rbdl-2svku1}. The proof of the lemma is completed.
\end{proof}


\begin{lem}\label{rbegf8j2}
For any $k>0$ and $f\in L^2(\R^3_v)\cap L^{\infty}_{v,k-1}$, it holds that
\bma
&\|Kf\|_{L^{\infty}_{v,0}}\leq C\|f\|_{L^2_v},\label{rbegf8j2-1}\\
&\|Kf\|_{L^{\infty}_{v,k}}\leq C\|f\|_{L^{\infty}_{v,k-1}}.\label{rbegf8j2-2}
\ema
\end{lem}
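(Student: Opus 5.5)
Both bounds follow from Lemma \ref{rbdl-svku1alfa}, applied pointwise in $v$ to $Kf(v)=\int_{\R^3}k(v,u)f(u)\,du$.

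\emph{Proof of \eqref{rbegf8j2-1}.} By Cauchy--Schwarz in $u$,
$$|Kf(v)|\le \Big(\int_{\R^3}|k(v,u)|^2du\Big)^{\frac12}\|f\|_{L^2_v}.$$
By \eqref{rbdl-2svku1} the first factor is at most $C(1+|v|)^{-\frac12}\le C$. Taking the supremum over $v$ gives \eqref{rbegf8j2-1}, since $\|\cdot\|_{L^\infty_{v,0}}$ is the plain supremum. If the lemma is used with an $x$-dependence, as in the proof of Lemma \ref{rbedl5}, the same pointwise-in-$v$ bound goes through with $|f(u)|$ replaced by $\|f(\cdot,u)\|_{H^N_x}$. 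This is justified by Minkowski's integral inequality, $\|Kf(\cdot,v)\|_{H^N_x}\le\int_{\R^3}|k(v,u)|\,\|f(\cdot,u)\|_{H^N_x}du$, because $K$ acts only in $v$.

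\emph{Proof of \eqref{rbegf8j2-2}.} Write $|f(u)|\le (1+|u|)^{-(k-1)}\|f\|_{L^\infty_{v,k-1}}$. Then
$$(1+|v|)^{k}|Kf(v)|\le (1+|v|)^{k}\|f\|_{L^\infty_{v,k-1}}\int_{\R^3}|k(v,u)|(1+|u|)^{-(k-1)}du.$$
\begin{itemize}
\item If $k\ge1$, apply \eqref{rbdl-svkualfa} with $\alpha=k-1\ge0$. The integral is bounded by $C(1+|v|)^{-k}$, which cancels the weight exactly.
\item If $0<k<1$, the exponent $\alpha=k-1$ is negative, so \eqref{rbdl-svkualfa} does not apply directly. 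Instead bound $(1+|u|)^{1-k}\le (1+|v|)^{1-k}(1+|u-v|)^{1-k}$. This reduces the estimate to showing $\int_{\R^3}|k(v,u)|(1+|u-v|)\,du\le C(1+|v|)^{-1}$.
\end{itemize}

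\emph{The case $0<k<1$.} This bound should follow from the proof of Lemma \ref{rbdl-svku1alfa} rather than from its statement. I would use Lemmas \ref{rbdl-estk1k2}--\ref{rbdl-prok1k2lem}, noting that the kernels carry the exponential factor $e^{-|u-v|/(4k_0T)}$. The extra factor $|u-v|$ is absorbed by raising the power $m$ in Lemma \ref{rbdl-prok1k2lem}, which is allowed for any $m>-3$. Alternatively, in the paper $k$ is only applied with $k\ge1$ (weights up to $3$, iterated down from $k-1\ge0$), so one may simply restrict to $k\ge1$.

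\emph{Main obstacle.} The only delicate point is keeping the constants independent of $\mathbf{c}$. This is already built into Lemma \ref{rbdl-svku1alfa}, where the bounds $\mathbf{c}v_0^{-1}w(v)^{-1}\le C(1+|v|)^{-1}$ were established. Taking the supremum in $v$ then completes the proof of \eqref{rbegf8j2-2}.
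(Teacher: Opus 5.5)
Your proof is correct and follows the paper's route: Cauchy--Schwarz with \eqref{rbdl-2svku1} for the first bound, and the weighted kernel bound \eqref{rbdl-svkualfa} with $\alpha=k-1$ for the second. Your separate treatment of $0<k<1$ is justified, since there the paper applies \eqref{rbdl-svkualfa} outside its stated range $\alpha\ge0$; your reduction via $(1+|u|)^{1-k}\le(1+|v|)^{1-k}(1+|u-v|)^{1-k}$ to $\int_{\R^3}|k(v,u)|(1+|u-v|)\,du\le C(1+|v|)^{-1}$ is valid, because that bound follows from Lemma~\ref{rbdl-prok1k2lem} with $m$ raised by one (and, as you note, only integer $k\ge1$ is actually used in Lemma~\ref{rbedl5}).
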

\begin{proof}
By \eqref{rbdl-ap-K} and Lemma \ref{rbdl-svku1alfa}, it follows that
\bma
\|Kf\|_{L^{\infty}_{v,0}}&\leq\sup_{v\in\R^3}\int_{\R^3}|k(v,u)f|du\nnm\\
&\leq \sup_{v\in\R^3}\(\int_{\R^3}|k(v,u)|^2du\)^{\frac12}\|f\|_{L^2_v}\leq C\|f\|_{L^2_v}.
\ema
This proved \eqref{rbegf8j2-1}.

By \eqref{rbdl-ap-K} and Lemma \ref{rbdl-svku1alfa}, we have
\bma
\|Kf\|_{L^{\infty}_{v,k}}&\leq \sup_{v\in\R^3}(1+|v|)^{k}\int_{\R^3}|k(v,u)f|du\nnm\\
&\leq \(\sup_{v\in\R^3}(1+|v|)^{k}\int_{\R^3}|k(v,u)|(1+|u|)^{-k+1}du\)\(\sup_{u\in\R^3}(1+|u|)^{k-1}|f(u)|\)\nnm\\
&\leq C\|f\|_{L^{\infty}_{v,k-1}}.
\ema
The Lemma is proved.
\end{proof}

\begin{lem}\label{dl-rbe-estgamfg}
For any given $k\geq0$, $q\geq1$ and  $l\in [0,1]$, there exists a constant $C>0$ such that 
\bma
&\|\nu^{-1}\Gamma(f,h)\|_{L^{\infty}_{v,k}(L^q_x)}\leq C\|f\|_{L^{\infty}_{v,k}(L^r_x)}\|h\|_{L^{\infty}_{v,k}(L^s_x)},\label{dl-rbe-estgamfg-1}\\
&\|\nu^{-l}\Gamma(f,h)\|_{L^{2}_{v}(L^q_x)}\leq C\(\|w^{1-l}f\|_{L^2_v(L^r_x)}\|h\|_{L^2_v(L^s_x)}+\|f\|_{L^2_v(L^r_x)}\|w^{1-l}h\|_{L^2_v(L^s_x)}\),\label{dl-rbe-estgamfg-2}
\ema
 where $r>0,s>0$ satisfies $\frac{1}{r}+\frac{1}{s}=\frac{1}{q}$.
\end{lem}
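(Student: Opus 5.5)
The plan is to work directly from the definition $\Gamma(f,h)=\frac{1}{\sqrt M}Q(\sqrt Mf,\sqrt Mh)$ with $\sigma=1$, splitting it as $\Gamma=\Gamma_+-\Gamma_-$ into gain and loss parts. For the loss part we have
\[
\Gamma_-(f,h)(v)=\frac12\int_{\R^3}\int_{\S^2}v_M\sqrt{M(u)}\big(f(v)h(u)+f(u)h(v)\big)d\omega du ,
\]
and $\int_{\S^2}v_M\sqrt{M(u)}\,d\omega$ is controlled, exactly as in Lemma \ref{ap-est-nuv}, by $C\mathbf{c}|u-v|\sqrt s\,(u_0v_0)^{-1}\sqrt{M(u)}$. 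Using \eqref{rbdl-p0eu0} to replace $\sqrt{M(u)}$ by $Ce^{-|u|/(2k_0T)}$, and the same $\mathbf{c}$-uniform computation as \eqref{rbdl-hbenuvU}--\eqref{rbdl-hbenuvU-1}, this weight is bounded by $C\nu(v)\,e^{-|u|/(4k_0T)}$, i.e. by $Cw(v)$ times an integrable weight in $u$. For \eqref{dl-rbe-estgamfg-1} I would then take the $L^q_x$ norm pointwise in $(u,v)$ and apply H\"older in $x$ with $\frac1r+\frac1s=\frac1q$, obtaining $\|f(\cdot,v)\|_{L^r_x}\|h(\cdot,u)\|_{L^s_x}$. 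Since the $u$-weight decays exponentially, the factor $(1+|u|)^{-k}$ coming from $h$ is harmless, and after dividing by $\nu(v)$ we keep $(1+|v|)^{-k}$ from $f$; the symmetric term is treated the same way. For \eqref{dl-rbe-estgamfg-2} I would use the same pointwise bound together with Cauchy--Schwarz in $u$. The loss term contributes $\nu(v)^{1-l}|f(v)|\,\|h\|_{L^2_u}$, and $\nu^{1-l}\sim w^{1-l}$ by \eqref{rbdl-nu1}.

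For the gain part
\[
\Gamma_+(f,h)(v)=\frac12\int\int v_M\sqrt{M(u)}\big(f(v')h(u')+f(u')h(v')\big)d\omega du
\]
we use $M(u)M(v)=M(u')M(v')$. Applying Minkowski and H\"older in $x$ first reduces everything to the velocity-only quantity $\int\int v_M\sqrt{M(u)}F(v')H(u')$ with $F=\|f(\cdot,\cdot)\|_{L^r_x}$ and $H=\|h\|_{L^s_x}$.

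For the weighted $L^\infty_{v,k}$ bound, I would use $|F(v')|\le\|f\|_{L^\infty_{v,k}}(1+|v'|)^{-k}$ and the analogous bound for $H(u')$. The task is then to show
\[
\int\int v_M\sqrt{M(u)}(1+|v'|)^{-k}(1+|u'|)^{-k}\,d\omega du\le C\nu(v)(1+|v|)^{-k}.
\]
This follows from energy conservation: $v_0+u_0=v_0'+u_0'$ gives $\max(|v'|,|u'|)\gtrsim|v|-C|u|$, while the region $|u|\gtrsim|v|$ is absorbed by the Gaussian-type decay of $\sqrt{M(u)}$ from \eqref{rbdl-p0eu0}. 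The remaining integral is then bounded by $C\nu(v)$.

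For the $L^2_v$ bound, I would apply Cauchy--Schwarz in $(u,\omega)$ with weight $v_M\sqrt{M(u)}$:
\[
|\Gamma_+|^2\le\Big(\int\int v_M\sqrt{M(u)}\Big)\Big(\int\int v_M\sqrt{M(u)}|F(v')|^2|H(u')|^2\Big).
\]
The first factor is at most $C\nu(v)$. For the second factor I would integrate in $v$ and use the pre-post collisional change of variables $(v,u)\to(v',u')$, which preserves $v_Md\omega\,du\,dv$ in the relativistic setting (Glassey--Strauss). Bounding $\sqrt{M(u)}\le1$ and splitting the weight $\nu(v)^{1-2l}$ gives, after the change of variables, $\nu(v')^{1-l}$ or $\nu(u')^{1-l}$ factors; these yield $\|w^{1-l}f\|\|h\|+\|f\|\|w^{1-l}h\|$, with $\nu(v)\lesssim\nu(v')+\nu(u')$ again coming from energy conservation.

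I expect the main obstacle to be the $\mathbf{c}$-independence in the gain term. The Jacobian of the relativistic collision map and the bound $v_M\le C\mathbf{c}\min(1,|u-v|/\mathbf{c})$-type behavior must be handled so that no power of $\mathbf{c}$ appears. My first approach is to avoid the $\omega$-representation altogether and use the kernel form already estimated, namely $k_2$ in Lemma \ref{rbdl-estk1k2} together with Lemma \ref{rbdl-prok1k2lem}. This treats $\Gamma_+$ with one factor frozen in the $L^\infty_{v,k}$ norm, as a $K$-type operator, reusing the $\mathbf{c}$-uniform integrals $\int |k|(1+|u|)^{-\alpha}du\le C(1+|v|)^{-\alpha-1}$.
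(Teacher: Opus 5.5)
\textbf{There is a genuine gap: the gain-term estimate in (2) fails for $\frac12<l\le1$.} The rest of your plan is sound. The loss terms, estimate (1), and the gain term of (2) for $0\le l\le\frac12$ all go through. For (1), the paper uses $(1+|v|)\le(1+|v'|)(1+|u'|)$ in place of your splitting $|u|\lessgtr|v|/2$; either works.

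The problem comes from your Cauchy--Schwarz with the weight $v_M\sqrt{M(u)}$ in both factors. It leaves the prefactor $\nu(v)^{1-2l}$, whose exponent is negative when $l>\frac12$. After you bound $\sqrt{M(u)}\le C$ and change variables with the invariant measure $v_M\,d\omega\,du\,dv$, you need
\begin{equation*}
\nu(v')^{1-2l}\,v_M(v,u)\lesssim w(v)^{2-2l}+w(u)^{2-2l}.
\end{equation*}
Your only tool is $\nu(v)\lesssim\nu(v')+\nu(u')$, equivalently $\nu(v')\lesssim\nu(v)+\nu(u)$ after renaming. It bounds $\nu(v')$ from above, which is useless for a negative power. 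Pointwise the required inequality is false. Take $l=1$, $v=0$, and the non-deflecting $\omega$, so that $v'=0$ and $u'=u$. Then $v_M(0,u)=\frac{\mathbf{c}|u|}{2u_0}\sim\min(\mathbf{c},|u|)$, while the right side equals $2$. Rescuing this would need a new averaging estimate over the relativistic collision surface, uniform in $\mathbf{c}$, which your plan does not contain. The damage is done when you discard $\sqrt{M(u)}$: that factor localizes the partner velocity and gives $v_M\lesssim w(v)(1+|u|)$.

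\textbf{How the paper handles it.} It puts all of $v_M$ into the first factor:
\begin{equation*}
\Big(\int\!\!\int v_M\sqrt{M(u)}\,F(v')H(u')\Big)^2\le\Big(\int\!\!\int v_M^2\sqrt{M(u)}\Big)\Big(\int\!\!\int\sqrt{M(u)}\,F(v')^2H(u')^2\Big).
\end{equation*}
Since $\int\!\!\int v_M^2\sqrt{M(u)}\,d\omega\,du\le Cw(v)^2$, the weight becomes $w(v)^{2-2l}$, with nonnegative exponent for every $l\in[0,1]$. Then $w(v')\le C(w(v)+w(u))$ is all that is needed. Because $v_M$ is no longer in the measure, the paper uses the Jacobian $\frac{\partial(u',v')}{\partial(u,v)}=-\frac{u_0'v_0'}{u_0v_0}$. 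It keeps the Maxwellian to absorb this factor, using $\sqrt{M(u')}\,u_0'\le C\mathbf{c}$ and $u_0v_0\ge\frac{\mathbf{c}}{2}v_0'$. This step is also where $\mathbf{c}$-uniformity is settled.

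\textbf{Your fallback does not apply.} Treating $\Gamma_+$ through the $k_2$ bounds of Lemmas \ref{rbdl-estk1k2}--\ref{rbdl-prok1k2lem} fails because those bounds rely on the explicit Maxwellian factors in the gain part of $K$. That is what produces the factor $E(u,v)$. Freezing $f$ only in $L^\infty_{v,k}$ gives a polynomial weight $(1+|v'|)^{-k}$, not a Maxwellian.
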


\begin{proof}
By \eqref{rbepz-1}, we have
\bmas
\Gamma(f,h)&= \int_{\R^3}\int_{\S^2}v_M \sqrt{M(u)}\big( f(v') h(u')+ f(u') h(v')- f(v) h(u)-f(u) h(v)\big)d\omega du\\
&=:I_1+I_2-I_3-I_4.
\emas

Firstly, we prove \eqref{dl-rbe-estgamfg-1} and only estimate $I_1$ as follows.
From Lemma \ref{ap-est-nuv}, one has
\bma
&\quad\|\nu^{-1}(1+|v|)^{k}I_1\|_{L^q_x}\nnm\\
&\leq\nu(v)^{-1}\int_{\R^3}\int_{\S^2}v_M\sqrt{M(u)} (1+|v|)^{k}\|f(v')h(u')\|_{L^q_x}d\omega du\nnm\\
&\leq\nu(v)^{-1}\int_{\R^3}\int_{\S^2}v_M\sqrt{M(u)}(1+|v'|)^{k}\|f(v')\|_{L^r_x}(1+|u'|)^{k}\|h(u')\|_{L^s_x}d\omega du\nnm\\
&\leq C\|f\|_{L^{\infty}_{v,k}(L^r_x)}\|h\|_{L^{\infty}_{v,k}(L^r_x)}\nu(v)^{-1}\int_{\R^3}\int_{\S^2}v_M\sqrt{M(u)}d\omega du\nnm\\
&\leq C\|f\|_{L^{\infty}_{v,k}(L^r_x)}\|h\|_{L^{\infty}_{v,k}(L^r_x)},\label{dl-rbe-estgamfg-1-1-1}
\ema
where $\frac{1}{r}+\frac{1}{s}=\frac{1}{q}$, and we have used (cf. \cite{Wang-2})
$$
(1+|v|)\leq(1+|v'|)(1+|u'|).
$$
Similarly, we can show that  $I_j$ $(j=2,3,4)$ also satisfies \eqref{dl-rbe-estgamfg-1-1-1}.

Finally, we prove \eqref{dl-rbe-estgamfg-2} and only estimate $I_1$ as follows. By Lemma \ref{ap-est-nuv}, change of the variables $(u',v')\rightarrow(u,v)$ and the Jacobian \cite{Glassey-3}
$$
\frac{\partial(u',v')}{\partial(u,v)}=-\frac{u_0'v_0'}{u_0v_0},
$$
we have that for $0\le l\le 1$,
\bma
\|\nu^{-l}I_1\|^2_{L^2_v(L^q_x)}&\leq\int_{\R^3}w(v)^{-2l}\(\int_{\R^3}\(\int_{\R^3}\int_{\S^2}v_M\sqrt{M(u)}f(v')h(u')d\omega du\)^{q} dx\)^{\frac{2}{q}}dv\nnm\\
&\leq\int_{\R^3}w(v)^{-2l}\(\int_{\R^3}\int_{\S^2}v_M\sqrt{M(u)}\(\int_{\R^3}|f(v')|^q||h(u')|^qdx\)^{\frac{1}{q}}d\omega du\)^{2}dv\nnm\\
&\leq \int_{\R^3}w(v)^{-2l}\(\int_{\R^3}\int_{\S^2}v_M^2\sqrt{M(u)}d\omega du\)\(\int_{\R^3}\int_{\S^2}\sqrt{M(u)}\|f(v')h(u')\|^2_{L^q_x}d\omega du\)dv \nnm\\
&\leq C\int_{\R^3} w(v)^{2-2l}\int_{\R^3}\int_{\S^2}\sqrt{M(u)}\|f(v')\|^2_{L^r_x}\|h(u')\|^2_{L^s_x}d\omega dudv\nnm\\
&=C\int_{\R^3}\int_{\R^3}\int_{\S^2}w(v')^{2-2l}\|f(v)\|_{L^r_x}^2\|h(u)\|^2_{L^s_x}\frac{\sqrt{M(u')}u_0'v_0'}{u_0v_0}d\omega dudv\nnm\\
&\leq C\(\|w^{1-l}f\|^2_{L^2_v(L^r_x)}\|h\|^2_{L^2_v(L^s_x)}+\|f\|^2_{L^2_v(L^r_x)}\|w^{1-l}h\|^2_{L^2_v(L^s_x)}\),\label{dl-rbe-estgamfg-2-1}
\ema
where $\frac{1}{r}+\frac{1}{s}=\frac{1}{q}$, and we have used $w(v')\leq C(w(v)+w(u))$ and
\bmas
(v_0u_0)^2&\geq\mathbf{c}^2(\mathbf{c}^2+|u|^2+|v|^2)\ge\frac{\mathbf{c}^2}{2}(2\mathbf{c}^2+|u|^2+|v|^2)\nnm\\
&\geq\frac{\mathbf{c}^2}{4}(u_0+v_0)^2=\frac{\mathbf{c}^2}{4}(u'_0+v'_0)^2\geq\frac{\mathbf{c}^2}{4}(v'_0)^2,\\
 \sqrt{M(u')}u_0'&\leq C e^{-\frac{|u'|}{2k_0T}}u_0'\leq C\mathbf{c} ,\quad \mathbf{c}\gg1.
\emas
Similarly, we can show that  $I_j$ $(j=2,3,4)$ also satisfies \eqref{dl-rbe-estgamfg-2-1}. The proof is completed.
\end{proof}

\medskip
\noindent {\bf Conflict of interest:} The authors declared that they have no conflicts of interest to this work.

\medskip
\noindent {\bf Data availability:} No data was used for the research described in the article.

\medskip
\noindent {\bf Acknowledgements:} The research of this work was supported  the special foundation for Guangxi Ba Gui Scholars, and the National Natural Science Foundation of China  (No.  12671272).


\end{document}